\documentclass[a4paper,11pt]{article}

\usepackage{amssymb, amstext, amsmath, amsthm, latexsym, textcomp}

\newtheorem{defi}{Definition}[section]

\newtheorem{lemma}[defi]{Lemma}
\newtheorem{prop}[defi]{Proposition}
\newtheorem{rem}[defi]{Remark}

\newtheorem{co}[defi]{Corollary}

\DeclareMathOperator{\ad}{ad}
\DeclareMathOperator{\st}{st}
\DeclareMathOperator{\Ad}{Ad}

\DeclareMathOperator{\prr}{Rad}

\DeclareMathOperator{\Ch}{char}

\DeclareMathOperator{\Spec}{Spec}
\DeclareMathOperator{\pideg}{p.i.deg}
\DeclareMathOperator{\Id}{Id}

\DeclareMathOperator{\Ker}{Ker}
\DeclareMathOperator{\Hom}{Hom}
\DeclareMathOperator{\End}{End}

\DeclareMathOperator{\PDer}{PDer}
\DeclareMathOperator{\Der}{Der}

\DeclareMathOperator{\Sym}{Sym}

\DeclareMathOperator{\Jr}{J}

\DeclareMathOperator{\CM}{CM}
\DeclareMathOperator{\Ann}{Ann}

\begin{document}

\date{}

\title{Orthogonal completion of algebraic systems II} 

\author{A.~Yu.~Golubkov} 

\maketitle

\begin{abstract}
In this paper, ideas developed by K.~I.~Beidar and A.~V.~Mikhalev are implemented in the description 
of orthogonally complete non-degenerate alternative, linear Jordan, special 
Lie and Mal'tsev algebras based on well-known classification of their 
strongly prime quotient algebras. 
\end{abstract}

\section{Introduction}

Continuing the line of \cite{GolO}, we will consider another approach to 
constructing the orthogonal completion of semiprime algebras (of signature 
$\Omega$ from \cite{Raz}), which appeared in \cite{BD} in the discussion of 
almost classical localizations and appears in \cite{Mont, Mont1}. 
Its application together with the ideas of \cite{BD, BMS, BMO} allows us to 
transform the classification of strongly prime algebras into a description 
of orthogonally complete non-degenerate algebras for the classes of 
alternative, linear Jordan, special Lie and Mal'tsev algebras in the spirit 
of a generalization of the Posner theorem (Fisher --- Martindale --- 
Markov --- Armendariz --- Steinberg theorem \cite{Fish, Mart1, Mar3, ArmS}) 
for semiprime associative $PI$--algebras and its Jordan version in 
\cite{Mont}. 

Everywhere below, in the absence of additional conditions, $F$ is any 
associative commutative ring with 1, all $F$--modules are unitary, 
both left and right with identical action of $F$, the action of their 
endomorphisms will be written on the right, all $F$--algebras are 
linear, classes of $F$--algebras contain the zero algebra and are closed 
under taking isomorphic copies. If $F$ is a field, then 
$\overline{F}$ is the algebraic closure of $F$. If we talk about the 
algebra (triple system) $R$ without mentioning $F$, then by default $R$ is an 
$F$--algebra ($F$--system). As usual, ${\Hom(M, N)_A}$ 
(${\Hom_A(M, N)}$) is the Abelian group ($F$--module) of homomorphisms of the
right (left) modules $M$ and $N$ over the associative ring ($F$--algebra) $A$ 
from $M$ to $N$, $\End(M)_A$ ($\End_A(M)$) is the 
ring ($F$--algebra) of endomorphisms of $M_A$ (${}_A M$), 
${\Ann_A L=\{x\in A\mid L x=\{0\}\}}$ is the annihilator in $A$ of the set 
${L\subseteq M_A}$, $\Id_M$ is the identical isomorphism of the module $M$, an 
essential submodule of $M$ is a submodule of $M$ that has a non-zero intersection 
with any non-zero submodule of $M$, ${R^1=F\oplus R}$ is the 
$F$--algebra obtained from $R$ by standard addition of 1 (over which $F$ 
$R$ and $R^1$ are considered follows from the context). Algebras (triple systems) 
without $k$--torsion are $k$--torsion--free algebras (systems) in additive group, 
${k\geq 1}$ (${k x\ne 0}$ for all ${x\ne 0}$). 

For any $F$--algebra $R$ and set ${B\subseteq R}$, $F B$, $\langle B\rangle$ 
and $(B)_R$ are the $F$--submodule, subalgebra and ideal of $R$ generated by 
$B$, $\End_F(R)$ is the $F$--algebra of endomorphisms of the $F$--module $R$, 
${\mathcal{E}(R)=\{I\lhd R\mid I\cap J\ne \{0\}\ \forall \{0\}\ne J\lhd R\}}$ 
is the set of essential ideals of $R$, $R^{(-)}$ ($R^{(+)}$ for $F$ with $1/2$)
is the algebra obtained from $R$ by replacing multiplication with 
${[x, y]=x y -y x}$ (${x\cdot y=1/2 (x y+y x)}$), ${x, y\in R}$, 
$\Der(R)$ ($\Der_F(R)$ if we need to indicate $F$) is the Lie algebra of 
$F$--derivations of $R$ (a subalgebra of the Lie algebra 
$\End_F(R)^{(-)}$),\linebreak 
${M^R(B)=\langle t_x\mid t=l, r,\ x\in B\rangle}$, 
${M^R(B)'=F \Id_R+M^R(B)}$, ${M(R)=M^R(R)}$ ($M(R)'$) is the 
algebra of multiplications (with 1) of $R$, 
${\Ann_t B=\{x\in R\mid B t_x=\{0\}\}}$, ${t=l, r}$,
$\Ann B=\Ann_l B\cap \Ann_r B$ (${\Ann_t B=\Ann_t(R, B)}$, 
${\Ann B=\Ann(R, B)}$ if it is necessary to indicate $R$), 
$N(R)$, $K(R)$ and $Z(R)$ is the associative center, commutative center and 
center of $R$, 
\begin{gather*}
N(R)\ =\ \{x\in R\mid (x, R, R)=(R, x, R)=(R, R, x)=\{0\}\}\,,
\\
K(R)\ =\ \{x\in R\mid [x, R]=\{0\}\}\,, 
Z(R)\ =\ N(R)\cap K(R)\ =\ \{x\in R\mid l_x=r_x\in Z(M(R))\}\,,
\end{gather*}
where ${l_x: y\longmapsto x y}$ and  
${r_x: y\longmapsto y x, x, y\in R}$, are the operators of left and right 
multiplication by $x$, ${(x, y, z)=(x y) z-x (y z)}$, ${x, y, z\in R}$, 
${\Ann R\subseteq Z(R)}$. In case $R$ is anticommu\-tative, 
${\Ann B=\Ann_l B=\Ann_r B}$, ${B\subseteq R}$,  
${K(R)=\Ann 2 R=\{x\in R\mid 2 x\in \Ann R\}}$ and for ${R=2 R}$ or 
$R$ without $2$--torsion one has ${K(R)=Z(R)=\Ann R}$. If $R$ is a 
Lie algebra, then\linebreak ${\ad_x=r_x=-l_x}$, ${x\in R}$, the ideal of 
inner derivations ${\ad(R)=\{\ad_x\mid x\in R\}\lhd \Der(R)}$ is the 
image of $R$ under the action of the canonical epimorphism 
${\ad: x\longmapsto \ad_x, x\in R}$, $\Ker \ad=\Ann R$, where the choice 
${\ad=r}$ is related with the right action of $\End_F(R)$ on $R$ in the 
paper. 

An algebra $R$ is \emph{prime} (\emph{semiprime}) if ${I J\ne \{0\}}$ 
(${I^2\ne \{0\}}$) for all ${\{0\}\ne I, J\lhd R}$. The primeness of $R$ is 
equivalent to its semiprimeness and ${I\cap J\ne \{0\}}$ for all 
${\{0\}\ne I, J\lhd R}$. An ideal ${P\lhd R}$ is a \emph{prime ideal of $R$} 
if $R/P$ is prime, $\Spec(R)$ is the set of all prime ${P\lhd R}$, 
${\prr(R)=\bigcap\limits_{P\in \Spec(R)} P}$ is the \emph{prime radical of $R$}, 
${\prr(R)=\prr({}_{\mathbb{Z}} R)}$ is the smallest element of 
${\{I\lhd R\mid \prr(R/I)=\{0\}\}=
\{I\lhd {}_{\mathbb{Z}} R\mid \prr({}_{\mathbb{Z}} R/I)=\{0\}\}}$, where 
${}_{\mathbb{Z}} R$ is $R$ as a ring.

For an alternative, Jordan or Lie (Mal'tsev without $2$--torsion) algebra 
$R$, \emph{non-degeneracy} is the absence of ${0\ne x\in R}$ with the 
corresponding condition: ${l_x r_x=0}$, ${U_x=0}$, where 
${U_x=2 r_x^2-r_{x^2}}$ for $F$ with $1/2$, or 
${\ad_x (F \Id_R+\ad(R)) \ad_x=\{0\}}$ (${r_x^2=0}$), \emph{strong primeness} 
is primeness and non-degeneracy. Non-degenerate algebras from the classes 
of alternative and Jordan algebras form their semisimple subclasses, the 
upper radicals determined by them are the Slin'ko --- McCrimmon and 
McCrimmon radicals $Mc$ \cite{ZShS}, Ch. 8, 14, \cite{GolO}, the observations 
after Lemma 2.6, \cite{GolA}, Addition 1 (the analogue of $Mc$ on the classes 
of Lie and Mal'tsev algebras is the Kostrikin radical $K$, although its 
radicality in the Kurosh --- Amitsur sense has so far been established for 
the field $F$, ${\Ch F=0}$, and with lesser restrictions on $F$ for individual 
subclasses), the strongly prime spectrum 
${\Spec_{Mc}(R)=\{P\in \Spec(R)\mid Mc(R/P)=\{0\}\}}$ is associated with 
$Mc$.

An element with associative powers $x$ of an algebra $R$ is \emph{integral} if 
there is a polynomial 
${{}_x f(t)=t^n+{}_x f_{n-1} t^{n-1}+\ldots+{}_x f_1 t\in F[t]}$, 
${{}_x f(x)=0}$. For an anticommutative $R$, an element ${x\in R}$ 
is \emph{strongly algebraic} if ${r_x=-l_x}$ is integral in $M(R)$, and 
\emph{algebraic} if for any ${y\in R}$ there exists 
${{}_{y, x} f(t)=t^{n_{y, x}}+{}_{y, x} f_{n_{y, x}-1} t^{n_{y, x}-1}+\ldots
+{}_{y, x} f_1 t\in F[t], y {}_{y, x} f(r_x)=0}$. 
Mal'tsev (Lie) algebras consisting of strongly algebraic elements are 
\emph{algebras with an algebraic regular} (\emph{adjoint}) 
\emph{representation}, and Mal'tsev (Jordan, alternative) algebras 
consisting of algebraic (integral) elements are \emph{algebraic algebras} 
(see the conditions of algebraicity of algebras in \cite{Gol3}).

All constructions of the paper devoted to central closures of semiprime 
algebras relate to non-zero algebras. In \cite{GolO} we collect the 
necessary information about the Martindale centroid $\CM(R)$, the extended 
centroid ${}_R C$, and the central closures $P(R)$ and ${}_R Q$ of the 
semiprime algebra $R$,
\[
{}_R C\ \cong\ \CM(R)\ =\ \End(P(R))_{M(R)'}\,,\quad
{}_R Q\cong P(R)\ =\ \CM(R) R\ =\ \End(I(R))_{M(R)'} R\,,
\]
$I(R)$ is the injective hull of the module $R_{M(R)'}$ chosen for 
constructing ${P(R)=P(P(R))}$, $\CM(R)$ is a commutative regular 
$F$--algebra (a field for the prime $R$), $P(R)$ is the quasi-injective 
hull of $R_{M(R)'}$ in $I(R)$ and a semiprime (prime for the prime $R$) 
$\CM(R)$--algebra whose operations are extended from $R$ by 
$\CM(R)$--linearity. 

As in \cite{GolO}, 
${B(R)=\{\alpha\in \CM(R)\mid \alpha=\alpha^2\}}$ is an orthogonally 
complete Boolean ring with multiplication of $\CM(R)$ and addition 
${\alpha\oplus \beta=(\alpha-\beta)^2=
\alpha+\beta-2 \alpha \beta}$, ${\alpha, \beta\in B(R)}$, 
$E\subseteq B(R)$ is \emph{dense} in $B(R)$ if ${\Ann_{B(R)} E=\{0\}}$ 
(${\Ann_{\CM(R)} E=\{0\}}$), and \emph{orthogonal} if ${\alpha \beta=0}$ 
for all ${\alpha, \beta\in E}$, ${\alpha\ne \beta}$. We fix $I(P(R))$ 
and denote by $\mathcal{U}(R)$ the set of all ultrafilters in 
${B(R)=B(P(R))}$. A set ${S\subseteq I(P(R))}$ is \emph{orthogonally 
complete} if for any dense orthogonal 
${\{\xi_a\}_{a\in I}\subseteq B(R)}$, ${\{x_a\}_{a\in I}\subseteq S}$ 
there exists ${x\in S}$, ${\xi_a x=\xi_a x_a}$ for all ${a\in I}$. Such a 
uniquely defined $x$ will be denoted by 
${{\sum\limits_{a\in I}}^{\perp} \xi_a x_a}$. An \emph{orthogonal completion} 
$O(S)$ of a set ${S\subseteq I(P(R))}$ is the intersection of orthogonally 
complete ${S'\subseteq I(P(R))}$, ${S\subseteq S'}$, 
${I(P(R))=O(I(P(R)))}$, ${O(S)=O(O(S))}$ is the set of all 
${{\sum\limits_{a\in I}}^{\perp} \xi_a x_a\in I(P(R))}$ for a dense 
orthogonal ${\{\xi_a\}_{a\in I}\subseteq B(R)}$, 
${\{x_a\}_{a\in I}\subseteq S}$, $O(P(R))$ is a semiprime $\CM(R)$--algebra 
with the operations  
\[
{\sum\limits_{a\in I}}^{\perp} \xi_a x_a\star 
{\sum\limits_{b\in J}}^{\perp} \chi_b y_b\ =\ 
{\sum\limits_{a\in I,\ b\in J}}^{\perp} \xi_a \chi_b (x_a\star y_b)\quad 
(\star=+, \cdot)
\] 
for dense orthogonal ${\{\xi_a\}_{a\in I}, \{\chi_b\}_{b\in J}\subseteq B(R)}$, 
${\{x_a\}_{a\in I}, \{y_b\}_{b\in J}\subseteq P(R)}$, 
$O(R)$ is its semi\-prime $F$--subalgebra, $O(P(R))$ is a quasi-injective 
(complete invariant) $M(P(R))'$--submodule of $I(P(R))$, 
\[
\End(O(P(R)))_{M(P(R))'}\ =\ \End(O(P(R)))_{M(O(P(R)))'}\ =\ 
\CM(R) \Id_{O(P(R))}
\] 
\cite{GolO}, Proposition 2.4. Since $R_{M(R)'}$ is essential in $I(R)$, for 
$R$ without $k$--torsion, ${k\geq 1}$, $I(R)$, $P(R)$, $I(P(R))$, $O(P(R))$ 
and $O(R)$ have no $k$--torsion. 

In the text, the short notation for orthogonal completeness of $R$ is 
${R=O(R)}$, 
${S_B=S/P S}$, ${P S=\{\alpha x\mid \alpha\in P,\ x\in S\}\lhd S}$ for 
${S=R, \CM(R)}$, ${R=O(R)}$, ${B\in \mathcal{U}(R)}$ and the maximal 
ideal ${P=B(R)\setminus B\lhd B(R)}$, ${\nu(T)=\inf T}$ is the exact 
lower bound of ${T\subseteq B(R)}$, $Q(A)$ and $Q^s_m(A)$ are the complete 
(maximal) and symmetric Martindale rings of quotients of the left exact 
(${\Ann_l A=\{0\}}$) associative ring $A$ \cite{BMO}, Lemma 1.16, 
introductions to \cite{GolO, GolA}, many equalities are understood up to 
isomorphism (in particular, this applies to the identification of 
$\CM(R)$ and $Z(R)$ for ${R=P(R)}$ with 1).

From \cite{Fe}, Vol. 2, Theorem 19.14 A, p. 111, \cite{GolO}, Proposition 2.4 
it immediately follows 

\begin{rem} 
If $P(R)$ is a finitely generated $\CM(R)$--module, then 
\[
P(R)\ =\ I(R)\ =\ I(P(R))\ =\ O(P(R))\,.
\]
\end{rem}

\begin{rem}
If ${R=R_1\oplus \ldots\oplus R_n}$ is a direct sum of semiprime 
${R_i=P(R_i)}$, ${I=I M(R_i)}$ for any ${I\lhd R_i}$, 
${i=1, \ldots, n}$, ${n\geq 1}$, then ${\CM(R)\cong \CM(R_1)\oplus \ldots\oplus \CM(R_n)}$, 
${R=P(R)}$ and ${O(R)\cong O(R_1)\oplus \ldots \oplus O(R_n)}$. 
\end{rem}

\begin{proof}
We will identify $\{R_i\}$ with the ideals of $R$. If 
${\phi\in \Hom(J, R)_{M(R)'}}$, ${J\lhd R}$ and $\pi_i$ is the canonical 
projection of $R$ onto $R_i$, ${\pi_i: x_1+\ldots+x_n\longmapsto x_i}$, 
${x_i\in R_i}$, ${i=1, \ldots, n}$, then   
${J_i=\pi_i J=J_i M^R(R_i)=J M^R(R_i)=J\cap R_i\lhd R_i}$, 
${\phi J_i=(\phi J) M^R(R_i)\subseteq R_i}$,  
$\phi|_{J_i}=\phi_i \Id_{J_i}\in \Hom(J_i, R_i)_{M^R(R_i)'}=
\Hom(J_i, R_i)_{M(R_i)'}$ for some ${\phi_i\in \CM(R_i)}$, 
${J=J_1\oplus \ldots \oplus J_n}$ and 
$\phi=\Bigl(\sum\limits_{i=1}^n \phi_i \pi_i\Bigr) \Id_J$, 
${\sum\limits_{i=1}^n \phi_i \pi_i\in \CM(R)}$, 
${\sum\limits_{i=1}^n \phi_i \pi_i: x_1+\ldots+x_n\longmapsto 
\phi_1 x_1+\ldots+\phi_n x_n}$, ${x_i\in R_i}$. 

Thus, ${R=P(R)}$, ${\CM(R)=\CM(R_1)\oplus \ldots\oplus \CM(R_n)}$, 
${B(R)=B(R_1)\oplus\ldots \oplus B(R_n)}$,
to any dense orthogonal ${\{\xi^{(i)}_a\}_{a\in A_i}\subseteq B(R_i)}$, 
${i=1, \ldots, n}$, there corresponds a dense orthogonal 
${\Bigl\{\xi_{(a_1, \ldots, a_n)}=
\sum\limits_{i=1}^n \xi^{(i)}_{a_i} \pi_i\Bigr\}_
{(a_1, \ldots, a_n)\in A}\subseteq B(R)}$, ${A=A_1\times \cdots\times A_n}$,
and to each dense orthogonal ${\{\chi_b\}_{b\in B}\subseteq B(R)}$ there 
correspond dense orthogonal ${\{\chi_b|_{R_i}\}_{b\in B}\subseteq B(R_i)}$, 
${i=1, \ldots, n}$, the mapping 
\begin{multline*}
\biggl({\sum_{a_1\in A_1}}^{\perp} \xi^{(1)}_{a_1} r^{(1)}_{a_1}, \ldots, 
{\sum_{a_n\in A_n}}^{\perp} \xi^{(n)}_{a_n} r^{(n)}_{a_n}\biggr)\ 
\longmapsto
\\
{\sum_{(a_1, \ldots, a_n)\in A}}^{\perp} 
\xi_{(a_1, \ldots, a_n)} (r^{(1)}_{a_1}+\ldots+r^{(n)}_{a_n})\ =\ 
\sum_{i=1}^n
{\sum_{(a_1, \ldots, a_n)\in A}}^{\perp} \xi_{(a_1, \ldots, a_n)} 
r^{(i)}_{a_i}\quad (\{r^{(i)}_{a_i}\}_{a_i\in A_i}\subseteq R_i)
\end{multline*}
and its inverse 
\begin{multline*}
{\sum_{b\in B}}^{\perp} \chi_b (r^{(1)}_b+\ldots+r^{(n)}_b)\ =\ 
\sum_{i=1}^n {\sum_{b\in B}}^{\perp} \chi_b r^{(i)}_b\ \longmapsto
\\
\biggl({\sum_{b\in B}}^{\perp} \chi_b r^{(1)}_b, \ldots, 
{\sum_{b\in B}}^{\perp} \chi_b r^{(n)}_b\biggr)\quad 
(\{r^{(i)}_b\}_{b\in B}\subseteq R_i)
\end{multline*}
are $\CM(R)$--isomorphisms of ${O(R_1)\oplus \ldots\oplus O(R_n)}$ 
and ${O(R)=O'(R_1)\oplus \ldots \oplus O'(R_n)}$ ($O(R_i)$ in $I(R_i)$, 
$O'(R_i)$ in ${O(R)\subseteq I(R)}$, ${O(R_i)\cong O'(R_i)}$ over 
${\CM(R_i)=\{\alpha|_{J_i}\mid \alpha\in \CM(R)\}}$).
\end{proof}

\begin{rem}
If $R$ semiprime, ${Z(R)=Q(Z(R))}$, ${I\cap Z(R)\ne \{0\}}$ for all 
${\{0\}\ne I\lhd R}$, then ${R=P(R)}$, ${\CM(R)=Z(R) \Id_R\cong Z(R)}$.
\end{rem}

\begin{proof}
From ${1\in Z(R)=Q(Z(R))=P(Z(R))\cong \CM(Z(R))}$ and ${\Ann_t Z(R)=\{0\}}$, 
${t=l, r}$ (see observations after Lemma 2.6) it follows that ${1\in R}$. 
If ${\phi\in \Hom(I, R)_{M(R)}}$, ${\{0\}\ne I\lhd R}$, then  
${\phi|_{I\cap Z(R)}\in \Hom(I\cap Z(R), Z(R))_{Z(R)}}$, 
${\phi|_{I\cap Z(R)}=\alpha \Id_{I\cap Z(R)}}$ for suitable 
${\alpha\in Z(R)}$, $J=(\phi-\alpha \Id_I)((I^2)_R)\subseteq I$, 
\[
\{0\}\ =\ (I\cap Z(R)) J\ =\ (J\cap Z(R))^2\ =\ J\cap Z(R)\ =\ J\ =\ 
((\phi-\alpha \Id_I) I)^2\ =\ (\phi -\alpha \Id_I) I\,,
\]
${\phi=\alpha \Id_I}$. Therefore ${R=P(R)}$, 
${\CM(R)=Z(R) \Id_R=\{l_z=r_z\mid z\in Z(R)\}\cong Z(R)}$. 
\end{proof}                                                           
     
From Remarks 1.1, 1.2, 1.3 it follows that ${R=P(R)=O(R)}$, 
${\CM(R)=Z(R) \Id_R\cong Z(R)}$ if $R$ is a finite direct sum of algebras 
of the form $M_n(F)$ and ${\mathcal{O}_F(\mu, \beta, \gamma)}$ over 
${F=Q(F)}$ (not necessarily over one $F$), where 
$M_n(F)$ is the algebra of ${n\times n}$ matrices over $F$, ${n\geq 1}$, 
${\mathcal{O}_F(\mu, \beta, \gamma)}$ is the Cayley --- Dickson algebra over 
$F$ for invertible ${\mu, \beta, \gamma\in F}$. It is sufficient\linebreak to 
note that ${F=Q(F)=P(F)=I(F)\cong \CM(F)}$ is regular, $F$, $M_n(F)$, 
${\mathcal{O}_F(\mu, \beta, \gamma)}$ are subdirect products of fields $F/P$, 
central simple $F/P$--algebras 
\[
M_n(F)/P M_n(F)\ \cong\ M_n(F/P)\,,\ 
\mathcal{O}_F(\mu, \beta, \gamma)/P \mathcal{O}_F(\mu, \beta, \gamma)\ \cong\ 
\mathcal{O}_{F/P}(\mu+P, \beta+P, \gamma+P)\,,
\] 
${F\ne P\in \Spec(F)}$, $M_n(F)$, ${\mathcal{O}_F(\mu, \beta, \gamma)}$ are 
non-degenerate and semiprimitive, $Z(A)=F 1\cong F$ for 
${1\in A}$, ${I\cap Z(A)\ne \{0\}}$ for all 
${\{0\}\ne I\lhd A=M_n(F), \mathcal{O}_F(\mu, \beta, \gamma)}$ 
\cite{ZShS}, Theorems 3, 7, p. 212, 225 (${\mathcal{O}_F(\mu, \beta, \gamma)}$ 
is a free $F$--module with basis ${\{e_0=1, e_1, \ldots, e_7\}}$ and 
structure constants in it from \cite{BD}, Theorem 3.1.11, 
${z=f_1 e_1+\ldots+f_7 e_7\in Z(\mathcal{O}_F(\mu, \beta, \gamma))}$, 
${f_i\in F}$, is equivalent to ${z\in P \mathcal{O}_F(\mu, \beta, \gamma)}$ for all 
${P\in \Spec(F)}$, ${z=0}$).

The paper also deals with $\Gamma$--graded $F$--algebras 
${(R, \{R_{\gamma}\}_{\gamma\in \Gamma})}$, where $\Gamma$ is an 
Abelian group in additive notation, 
$\{R_{\gamma}\}_{\gamma\in \Gamma}$ are $F$--modules 
${R=\bigoplus\limits_{\gamma\in \Gamma} R_{\gamma}}$, 
${R_{\alpha} R_{\beta}\subseteq R_{\alpha+\beta}}$ for all  
${\alpha, \beta\in \Gamma}$. Recall that $I$ is a \emph{homogeneous ideal} 
of $R$, ${I\lhd_{gr} R}$, if  
${I=\bigoplus\limits_{\gamma\in \Gamma} (I\cap R_{\gamma})}$, $R$ is 
\emph{homogeneously simple} if ${R^2\ne \{0\}}$ and in $R$ there are no 
homogeneous ideals different from $\{0\}$ and $R$, and is 
\emph{homogeneously prime} (\emph{semiprime}) if 
${I J\ne \{0\}}$ (${I^2\ne \{0\}}$) for all ${\{0\}\ne I, J\lhd_{gr} R}$, 
${\mathcal{E}_{gr}(R)=\{I\lhd_{gr} R\mid \{0\}\ne I\cap J\ \forall 
\{0\}\ne J\lhd_{gr} R\}}$. The graded central closure $P_{gr}(R)$ and the 
Martindale centroid $\CM_{gr}(R)$ of a homogeneously semiprime $R$ are 
described in \cite{Gol6}, where 
${P_{gr}(R)=\End(I_{gr}(R))_{M(R)'-gr} R}$ and $I_{gr}(R)$ are the graded 
quasi-injective and injective hulls of the $\Gamma$--graded module $R$ 
over the $\Gamma$--graded algebra 
${(M(R)', \{M_{\gamma}\}_{\gamma\in \Gamma})}$ with ${M_0=F \Id_R+M(R)_0}$, 
${M_{\gamma}=M(R)_{\gamma}}$ for ${0\ne \gamma\in \Gamma}$,  
\[
M(R)_{\alpha}\ =\ \Biggl\{\sum_{k=1}^m t_{k 1}\cdots t_{kn_k}\Biggl| 
t_{ki}\in \{l_{x_{ki}}, r_{x_{ki}}\},\ x_{ki}\in R_{\gamma_{ki}},\ 
\sum_{i=1}^{n_k} \gamma_{ki}=\alpha,\ m, n_k\geq 1\Biggr\}\quad 
(\alpha\in \Gamma)\,,
\]
$\End(I_{gr}(R))_{M(R)'-gr}$ and ${\CM_{gr}(R)=\End(P_{gr}(R))_{M(R)'-gr}}$
are the algebras of homogeneous $M(R)'$--endomorphisms of $I_{gr}(R)$ and 
$P_{gr}(R)$, $\CM_{gr}(R)$ is a commutative regular $F$--algebra (a field for 
the homogeneously prime $R$), 
${(P_{gr}(R)=\CM_{gr}(R) R, \{\CM_{gr}(R) R_{\gamma}\}_{\gamma\in \Gamma})}$ 
is a $\Gamma$--graded homogeneously semiprime $\CM_{gr}(R)$--algebra 
(homogeneously prime for the homogeneously prime $R$). Similarly to the 
ungraded case, we define an orthogonally complete Boolean ring 
${B_{gr}(R)=\{\alpha\in \CM_{gr}(R)\mid \alpha=\alpha^2\}}$, the orthogonal 
completeness of ${S\subseteq I_{gr}(P_{gr}(R))}$ (the notation is  
${S=O_{gr}(S)}$) as the presence of 
${{\sum\limits_{a\in I}}^{\perp} \xi_a x_a\in S}$ for any dense 
orthogonal $\{\xi_a\}_{a\in I}\subseteq B_{gr}(R)$, 
${\{x_a\}_{a\in I}\subseteq S}$, the orthogonal completion $O_{gr}(S)$ with 
the correction that its existence is established for $R$ with finite grading, 
${|\{\gamma\in \Gamma\mid R_{\gamma}\ne \{0\}\}|< \infty}$ \cite{GolO}, 
beginning of part 3, graded version of Proposition 2.4. 

In the constructions of the paper $F\langle X\rangle$ and 
${F_*\langle X\rangle, *=Ass, Alt, Jor, Lie, Mal}$, are free non-associative 
and free associative, alternative, linear Jordan, Lie, Mal'tsev 
$F$--algebras with a countable set of free generators 
${X=\{x_i\}_{i=1}^{\infty}}$. An element ${f\in F_{Ass}\langle X\rangle}$ is 
called \emph{proper} if $F$ coincides with its ideal generated by the 
coefficients of the irreducible record of $f$. An associative $F$--algebra $A$ 
is a \emph{$PI$--algebra} if the identity ${f=0}$ holds on $A$ for the proper 
${f\in F_{Ass}\langle X\rangle}$. Any $PI$--algebra is a $PI$--ring with the
identity ${\st_n^k=0}$ for some ${n, k\geq 1}$, where\linebreak 
\[
\st_n\ =\ \st_n(x_1, \ldots, x_n)\ =\ \sum_{\sigma\in \mathfrak{S}_n} 
(-1)^{\sigma} x_{\sigma(1)}\cdots x_{\sigma(n)}
\]
is the standard polynomial of degree $n$, $\mathfrak{S}_n$ is the symmetric 
group of degree $n$, $(-1)^{\sigma}$ is the sign of ${\sigma\in \mathfrak{S}_n}$. 
An alternative, linear Jordan over $F$ with $1/2$, Lie and Mal'tsev 
$PI$--algebra can be defined as an algebra from the corresponding variety with 
the identity ${f=0}$ for ${f\in F_*\langle X\rangle, 
*=Alt, Jor, Lie, Mal}$, with a proper image under the action of the canonical homomorphisms 
${F_{Alt}\langle X\rangle\longrightarrow F_{Ass}\langle X\rangle}$, 
${F_{Jor}\langle X\rangle\longrightarrow F_{Ass}\langle X\rangle^{(+)}}$, 
${F_*\langle X\rangle\longrightarrow F_{Ass}\langle X\rangle^{(-)}}$, 
${*=Lie, Mal}$, extending the identification ${x_i\longmapsto x_i}$, ${i\geq 1}$ 
(see introduction to \cite{Gol2}). The Lie and Mal'tsev $PI$--algebras over a 
field $F$ considered below can be defined as algebras with the identity 
${f=0}$ for ${0\ne f\in F_*\langle X\rangle}$, ${*=Lie, Mal}$.

The \emph{polynomial degree} $\pideg A$ of an associative $PI$--algebra $A$ is 
the integer part $[\frac{n}2]$, where $n$ is the smallest of all ${m\geq 1}$, 
${\st_m=0}$ is the identity of $A/\prr(A)$, ${\pideg A=0}$ for ${A=\prr(A)}$ and  
${\pideg A=\sqrt{\max\limits_{A\ne P\in \Spec(A)} \dim_{\CM(A/P)} P(A/P)}}$
for ${A\ne \prr(A)}$.

The construction of orthogonal completion of semiprime algebras from \cite{GolO} 
described above applies not only to the linear algebras with two binary 
operations considered in the paper, but also to any algebras of signature 
$\Omega$ from \cite{Raz} (with multilinear operations). This is close to the 
original degree of generality of the orthogonal completeness method from 
\cite{BM, BMO} for orthogonally complete $\Omega$--algebras (see definitions 
below from \cite{BD}).

A Boolean ring $B$ is \emph{orthogonally complete} if any of the equivalent 
conditions is satisfied:
\begin{enumerate}

\item for any dense orthogonal ${\{\xi_a\}_{a\in I}\subseteq B}$ and 
${\{x_a\}_{a\in I}\subseteq B}$ there is ${x\in B}$, ${\xi_a x=\xi_a x_a}$ 
for all ${a\in I}$;

\item $\Ann_B E$ is a principal ideal of $B$ for any ${E\subseteq B}$

\end{enumerate}
\cite{BD}, p. 2.1.3, 2.1.2, \cite{BMO}, p. 1.7. For such a $B$, an algebra $R$ 
of signature $\Omega$ is an \emph{$\Omega$--algebra over $B$} if there exists a 
mapping ${B\times R\ni (b, x)\longmapsto b x\in R}$, 
\begin{enumerate}

\item ${(b c) x=b (c x)}$, ${(b\oplus c) x=b x+c x-2 b c x}$ for all 
${b, c\in B}$, ${x\in R}$; 

\item ${\omega(b x_1, \ldots, b x_n)=b \omega(x_1, \ldots, x_n)}$ for 
any ${n\geq 1}$, $n$--ary operation ${\omega\in \Omega}$, ${x_i\in R}$, 
${b\in B}$,

\end{enumerate}
an $\Omega$--algebra $R$ over $B$ is \emph{orthogonally complete} if $R$ is 
non-singular ($\Ann_B x$ is a principal ideal for all ${x\in R}$) and $R$ 
is an orthogonally complete set (${S\subseteq R}$ for a non-singular $R$ is 
orthogonally complete if for any dense orthogonal 
${\{\xi_a\}_{a\in I}\subseteq B}$, ${\{x_a\}_{a\in I}\subseteq S}$ there is 
${x\in S}$, ${\xi_a x=\xi_a x_a}$ for all ${a\in I}$) \cite{BD}, p. 2.2.1, 
2.2.5, \cite{BMO}, p. 2.11, 5.2.

A formula $\Psi$ of the first stage is a \emph{Horn formula of signature 
$\Omega$} if $\Psi$ equivalent to a formula of the form 
${(Q_1 x_1)\ldots (Q_n x_n)(\Psi_1\wedge \cdots \wedge \Psi_n)}$,
where ${n\geq 1}$, ${Q_i\in \{\exists, \forall\}}$, each $\Psi_i$ is one 
of the formulas of the form: 
\begin{enumerate}

\item ${t_1\in T_1}$;

\item ${(t_1\notin T_1)\vee\cdots \vee (t_k\notin T_k)}$;

\item ${(t_1\notin T_1)\vee\cdots \vee (t_k\notin T_k)\vee 
(t_{k+1}\in T_{k+1})}$

\end{enumerate}
for some ${k\geq 1}$, terms $t_j$ of signature $\Omega$, orthogonally 
complete ${0\in T_j\subseteq R}$ \cite{BD}, p. 2.3.2, \cite{BMO}, p. 5.13 
(instead of ${t\in \{0\}}$ and ${t\notin \{0\}}$, equivalent formulas 
${t=0}$ and ${t\ne 0}$ are used, $t$ is a term of signature $\Omega$).

The key result of \cite{BD} for our constructions is Theorem 2.3.9 
(\cite{BMO}, Theorem 5.21, Corollary 5.22): if $R$ is an orthogonally 
complete $\Omega$--algebra over $B$, ${\{\Psi_i(x_1, \ldots, x_n)\}_{i=1}^k}$ 
are Horn formulas, ${k\geq 1}$, ${\Phi(y_1, \ldots, y_m)}$ is a hereditary 
formula in the $\Omega$--algebra $R$ such that 
${\neg \Phi(y_1, \ldots, y_m)}$ is a Horn formula ($\{x_i\}$, $\{y_j\}$ are 
free variables; for any ${b, c\in B}$, ${a_1, \ldots, a_m\in R}$ from 
$c=c b\ne 0$ and the truth of ${\Phi(b a_1, \ldots, b a_m)}$ in $b R$ 
follows the truth of ${\Phi(c a_1, \ldots, c a_m)}$ in $c R$), 
${u_1, \ldots, u_n, v_1, \ldots, v_m\in R}$, ${\Phi(v_1, \ldots, v_m)}$ is true 
in $R$ and for any ideal ${B\ne P\in \Spec(B)}$ there exists $i$, 
${\Phi(\pi_P v_1, \ldots, \pi_P v_m)\to \Psi_i(\pi_P u_1, \ldots, \pi_P u_n)}$ 
is true in ${R/P R}$, where $\pi_P$ is the canonical epimorphism of $R$ onto 
$R/P R$, then there are pairwise orthogonal ${e_1, \ldots, e_k\in B}$, 
${1=e_1\oplus \ldots \oplus e_k}$, ${\Psi_i(e_i u_1, \ldots, e_i u_n)}$ is 
true in $e_i R$ for ${e_i\ne 0}$. More precisely, as in Theorem 3.1.11, 
\cite{BD}, we will use the proof of Theorem 2.3.9, in which the existence of 
${e_1, \ldots, e_k}$ is deduced from the truth of 
${\Psi_i(\pi_P u_1, \ldots, \pi_P u_n)}$ in $R/P R$ for any  
${B\ne P\in \Spec(B)}$ and some ${1\leq i=i(P)\leq k}$.

In all Horn formulas below, ${R=O(R)}$ is any semiprime orthogonally complete 
algebra over $B(R)$; the question of their truth relates to the algebras under 
consideration, which can also be denoted by $R$. 

\section{Almost classical localizations and orthogonal completion}

The construction of an orthogonal completion of a semiprime algebra is 
possible not only in the ijective hull of its central closure 
as a module over its algebra of multiplication with unity \cite{GolO}, 
Proposition 2.4, but also in the almost classical localization of its 
central closure, the construction of which from \cite{BD} is considered 
in this section. Almost classical localizations of $F$--algebras do not 
require the mandatory presense of 1 in $F$ and, accordingly, for $F$ 
without 1, the unitarity of $F$--algebras as $F$--modules. 

Let the ring $F$ be semiprime, $R$ be an $F$--algebra, 
${\mathcal{F}=\{I\lhd F\mid \Ann_F I=\{0\}\}=\mathcal{E}(F)}$, 
$R$ does not have $\mathcal{F}$--torsion, i.e. 
${I x\ne \{0\}}$ for any ${0\ne x\in R}$, ${I\in \mathcal{F}}$. On the set 
of pairs 
${\mathcal{C}=\{(\phi, I)\mid I\in \mathcal{F},\ \phi\in \Hom(I, R)_F\}}$ 
we introduce the equivalence relation: ${(\phi, I)\sim (\psi, J)}$ if 
${(\phi-\psi) (I\cap J)=\{0\}}$ (equivalent to ${(\phi-\psi) I'=\{0\}}$ 
for some ${I'\in \mathcal{F}}$, ${I'\subseteq I\cap J}$, due to   
${\{0\}=(\phi-\psi) ((I\cap J) I')=(\phi-\psi)(I\cap J) I'=
(\phi-\psi)(I\cap J)}$), and transform the quotient set 
${R_{\mathcal{F}}=\mathcal{C}/\sim}$ into an $F$--algebra with 
the operatons of equivalence classes 
\begin{gather*}
[(\phi, I)]+[(\psi, J)]\ =\ [(\phi+\psi, I J)]\,,\quad 
[(\phi, I)] [(\psi, J)]\ =\ [(\tau, I J)]\,,
\\ 
f [(\phi, I)]\ =\ [(\phi, I)] f\ =\ [(f \phi, I)]\quad 
((\phi, I), (\psi, J)\in \mathcal{C},\ f\in F)\,,
\end{gather*}
${\tau \sum\limits_{i=1}^k x_i y_i=\sum\limits_{i=1}^k 
(\phi  x_i) (\psi y_i)}$, ${x_i\in I}$, ${y_i\in J}$, ${k\geq 1}$. Then 
${x\longmapsto [(r_x, F)]}$, ${x\in R}$, ${r_x: f\longmapsto f x}$, 
${f\in F}$, is an embedding of $F$--algebras 
${R\hookrightarrow R_{\mathcal{F}}}$, $R_{\mathcal{F}}$ has no 
$\mathcal{F}$--torsion, ${z [(\phi, I)]=[(r_{\phi z}, F)]}$ for all 
${[(\phi, I)]\in R_{\mathcal{F}}}$, ${z\in I}$, and hence, modulo the 
identification of $R$ with its image in $R_{\mathcal{F}}$, for any 
${0\ne r\in R_{\mathcal{F}}}$ there is ${I\in \mathcal{F}}$, 
${\{0\}\ne I r\subseteq R}$, $R$ is an essential $F$--submodule of 
$R_{\mathcal{F}}$, $R \theta=\{0\}$ for ${\theta\in M(R_{\mathcal{F}})'}$ is 
equivalent to ${\theta=0}$, ${M^{R_{\mathcal{F}}}(R)'\cong M(R)'}$, 
${R_{\mathcal{F}}\hookrightarrow I(R)}$, $R_{\mathcal{F}}$ inherits the 
homogeneous identities of $R$, as well as the non-degeneracy (strong primeness) 
of $R$ from the class of algebras with such a condition 
(elementwise characterization of strong primeness), 
for any ${I\in \mathcal{F}, \phi\in \Hom(I, R_{\mathcal{F}})_F}$ there exists 
${x\in R_{\mathcal{F}}, \phi z=z x, z\in I}$ \cite{BD}, 
Propositions 2.4.2, 2.4.4, 
${R_{\mathcal{F}}\cong (R_{\mathcal{F}})_{\mathcal{F}}}$. For $F$ this 
construction gives a complete ring of quotients 
\[
F_{\mathcal{F}}\ =\ Q(F)\ =\ P(F)\ =\ I(F)\ =\ \CM(F)\, 1\ \cong\ \CM(F)\,, 
\]
which is regular and self-injective \cite{BD}, p. 2.4.3, \cite{Lamb}, 
Proposition 1, p. 75, \cite{Fe}, Vol. 2, Theorem\linebreak 19.14 A, p. 111 
(self-injectivity of ${Q(F)=Q(Q(F))}$ with 1 is equivalent to quasi-injectivity 
of $Q(F)_{Q(F)}$ \cite{Ut}, (1.15), \cite{Lamb}, Lemma 1, p. 147, 
\cite{Fe}, p. 19.1.4, p. 103, for any $F$--submodule\linebreak ${M\subseteq Q(F)_F}$, 
${\sigma\in \Hom(M, Q(F))_F}$ $\sigma$ can be extended by $Q(F)$--linearity 
to an element\linebreak ${\Hom(M Q(F), Q(F))_{Q(F)}}$ (if 
${\sum\limits_i x_i q_i=0, x_i\in M, q_i\in Q(F)}$, then there are 
${H_i\in \mathcal{F}}$, ${q_i H_i\subseteq F}$, 
${H=\bigcap\limits_i H_i\in \mathcal{F}}$, 
${\sum\limits_i (\sigma x_i) q_i h=
\sigma\Bigl(\sum\limits_i x_i q_i h\Bigl)=0}$ for all ${h\in H}$, 
${\sum\limits_i (\sigma x_i) q_i=0}$), and so there is\linebreak  
${q\in Q(F)}$, ${\sigma=l_q=r_q}$, 
${\End(Q(F))_F=\End(Q(F))_{Q(F)}=M(Q(F))\cong Q(F)}$). The action of $F$ on 
$R_{\mathcal{F}}$ can be continued to the action of $F_{\mathcal{F}}$ on 
$R_{\mathcal{F}}$ by setting 
${\langle (\alpha, H)\rangle [(\phi, I)]=[(\xi, H I)]}$, 
${\langle (\alpha, H)\rangle\in F_{\mathcal{F}}}$, 
${[(\phi, I)]\in R_{\mathcal{F}}}$, 
${\xi \Bigl(\sum\limits_i x_i y_i\Bigr)=\sum\limits_i (\alpha x_i) 
(\phi y_i)}$, ${x_i\in H}$, ${y_i\in I}$. The 
$F_{\mathcal{F}}$--algebra $R_{\mathcal{F}}$\linebreak defined in this way is 
orthogonally complete over the orthogonally complete Boolean ring 
${B(F)=\{f\in F_{\mathcal{F}}\mid f=f^2\}}$ and is called an 
\emph{almost classical localization of $R$} 
\cite{BD}, Proposition 2.4.4, \cite{GolO}, conclusions after Lemma 2.3. 

If ${R\ne \{0\}}$ is semiprime, ${\mathcal{F}=\mathcal{E}(\CM(R))}$, then  
${I P(R)\in \mathcal{E}(P(R))}$ for all ${I\in \mathcal{F}}$, since otherwise 
for ${I\in \mathcal{F}}$ there are ${\{0\}\ne K\lhd P(R)}$, 
${I P(R)\cap K=\{0\}}$, and ${0\ne \theta=\theta^2\in \CM(R)}$, 
${\theta|_K=\Id_K}$, ${\theta I P(R)=\{0\}}$, ${\theta I=\{0\}}$, 
${\theta=0}$?! Therefore ${(x)_{P(R)} I P(R)\ne \{0\}}$ and ${I x\ne \{0\}}$ 
for all ${0\ne x\in P(R)}$, ${I\in \mathcal{F}}$, $P(R)$ has no 
$\mathcal{F}$--torsion. Due to the self-injectivity of $\CM(R)$ \cite{Fe}, 
Theorem 19.27, p. 123, \cite{GolO}, Lemma 2.3, 
${\CM(R)=\CM(R)_{\mathcal{F}}=Q(\CM(R))}$, the $\CM(R)$--algebra 
$P(R)_{\mathcal{F}}$ is orthogonally complete over ${B(R)=B(\CM(R))}$. This 
allows us to define orthogonal completion of $O(R)$ and $O(P(R))$ in 
${P(R)_{\mathcal{F}}=O(P(R)_{\mathcal{F}})\subseteq I(P(R))}$. 

\begin{prop}
The algebra $P(R)_{\mathcal{F}}$ is the orthogonal 
completion of the algebra $P(R)$, 
\[
P(R)_{\mathcal{F}}\ =\ O(P(R))\ =\ P(P(R)_{\mathcal{F}})\,,\quad 
\CM(P(R)_{\mathcal{F}})\ =\ \CM(R) \Id_{P(R)_{\mathcal{F}}}\,.
\]
\end{prop}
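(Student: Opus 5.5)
The plan is to prove $P(R)_{\mathcal{F}}=O(P(R))$ by two inclusions inside $I(P(R))$, where $P(R)_{\mathcal{F}}$ has already been placed. Write $Q=P(R)_{\mathcal{F}}$.

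For $O(P(R))\subseteq Q$: since $Q$ is orthogonally complete over $B(R)$ and contains $P(R)$, and $O(P(R))$ is the intersection of all orthogonally complete subsets containing $P(R)$, the inclusion follows. To make this legitimate, one checks that the orthogonal sums ${\sum}^{\perp}$ computed in $Q$ and in $I(P(R))$ coincide. This holds because $\xi_a x=\xi_a x_a$ determines $x$ uniquely in $I(P(R))$ for dense $\{\xi_a\}$, via essentiality of $P(R)$ and density.

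For $Q\subseteq O(P(R))$: take $q=[(\phi,I)]\in Q$ with $I\in\mathcal{F}=\mathcal{E}(\CM(R))$. The key step is to exhibit $q$ as an orthogonal sum of elements of $P(R)$.
\begin{enumerate}
\item Because $\CM(R)$ is regular, every element of $I$ has the form $\alpha e$ with $e=e^2\in I$. Zorn's lemma then gives a maximal orthogonal family $\{\xi_a\}\subseteq B(R)\cap I$.
\item This family is dense in $B(R)$: if $\eta\in\Ann_{B(R)}\{\xi_a\}$ is nonzero, then $\eta I\ne\{0\}$ since $I$ is essential, and regularity produces a nonzero idempotent in $\eta I$, contradicting maximality.
\item Set $x_a=\phi\xi_a\in P(R)$. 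Then $\xi_a q=[(r_{\phi\xi_a},\CM(R))]=\xi_a x_a$, using the identity $z[(\phi,I)]=[(r_{\phi z},\CM(R))]$ for $z\in I$.
\end{enumerate}
Hence $q={\sum}^{\perp}\xi_a x_a\in O(P(R))$. I expect the main obstacle to be this step together with the identification of sums across the two ambient modules.

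For $P(Q)=Q$ and $\CM(Q)=\CM(R)\Id_Q$: now $Q=O(P(R))$, and \cite{GolO}, Proposition 2.4 gives
\[
\End(O(P(R)))_{M(O(P(R)))'}=\CM(R)\Id_{O(P(R))},
\]
with $O(P(R))$ quasi-injective in $I(P(R))$. Any $\psi\in\Hom(J,Q)_{M(Q)'}$ with $J\in\mathcal{E}(Q)$ extends to $\tilde\psi\in\End(Q)_{M(Q)'}$ by quasi-injectivity. The extension is unique, since $I(P(R))$ is an essential extension of $J$. This gives $\CM(Q)=\CM(R)\Id_Q$.

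Finally, $P(Q)=\CM(Q)Q=\CM(R)Q=Q$, because $Q$ is a $\CM(R)$-algebra. This step also uses the identification $I(Q)=I(P(R))$, which holds since $P(R)\subseteq Q$ is essential over $M(P(R))'$.
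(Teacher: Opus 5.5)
Your proof that $P(R)_{\mathcal{F}}=O(P(R))$ is the paper's argument. A maximal orthogonal family $\{\xi_a\}$ in $I\cap B(R)$ is dense by regularity of $\CM(R)$ and $\Ann_{\CM(R)} I=\{0\}$. Then $x={\sum}^{\perp}\xi_a(\xi_a x)$ with $\xi_a x\in P(R)$, by uniqueness of orthogonal sums.

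For the centroid and central-closure claims you take a genuinely different route. You feed the identification $Q=O(P(R))$ into Proposition 2.4 of \cite{GolO}, which gives quasi-injectivity of $O(P(R))$ over $M(P(R))'$ with endomorphism ring $\CM(R)\Id_{O(P(R))}$. The paper does not use the first half here and works directly inside the localization. It takes $\psi\in\Hom(H,Q)_{M(Q)'}$ for an arbitrary ideal $H$ and restricts it to $H'=P(R)\cap\psi^{-1}P(R)$. It gets a scalar $\overline{\psi'}\in\CM(R)$ from $P(R)=P(P(R))$. It then spreads $\psi=\overline{\psi'}\Id_H$ over all of $H$, using that $Q$ has no $\mathcal{F}$-torsion. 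Your route is shorter but relies on the full strength of the cited result about $O(P(R))$. The paper's route is self-contained on the localization side, handles non-essential ideals as well, and in effect re-proves those properties of $O(P(R))$ for $P(R)_{\mathcal{F}}$.

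Two small points need tidying.
\begin{enumerate}
\item Quasi-injectivity over $M(P(R))'$ only gives an extension in $\End(Q)_{M(P(R))'}$. It is the equality of this ring with $\CM(R)\Id_Q$ that makes the extension $M(Q)'$-linear, and you should say so. The same observation, applied to any $M(Q)'$-submodule of $Q$, shows that $Q$ is quasi-injective over $M(Q)'$, hence $Q=P(Q)$ directly. So the identification $I(Q)=I(P(R))$ is unnecessary, and your essentiality remark does not justify it anyway, since the two hulls are taken over the different rings $M(Q)'$ and $M(P(R))'$.
\item Uniqueness of the extension is quickest by regularity. Suppose $(\alpha-\beta)J=\{0\}$ and $e=e^2$ generates $(\alpha-\beta)\CM(R)$. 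Then $eQ\lhd Q$ and $eQ\cap J=\{0\}$, so $eQ=\{0\}$ and hence $e=0$, because $P(R)\subseteq Q$.
\end{enumerate}
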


\begin{proof} 
If ${x\in P(R)_{\mathcal{F}}}$, ${I\in \mathcal{F}}$, ${I x\subseteq P(R)}$ and 
${\{\xi_a\}_{a\in A}}$ is any maximal orthogonal system in 
${I\cap B(R)}$, then 
${\{0\}=\Ann_{I\cap B(R)} \{\xi_a\}_{a\in A}=
\Ann_I \{\xi_a\}_{a\in A}\supseteq I \Ann_{\CM(R)} \{\xi_a\}_{a\in A}}$ 
(regularity of $\CM(R)$), 
${\Ann_{\CM(R)} \{\xi_a\}_{a\in A}=\{0\}}$, 
$\{\xi_a\}_{a\in A}$ is dense in $B(R)$, ${\xi_a (x-x')=0}$ for all ${a\in A}$\linebreak 
and ${x'={\sum\limits_{a\in A}}^{\perp} \xi_a x\in 
O(P(R))\subseteq P(R)_{\mathcal{F}}}$, ${x=x'}$ \cite{GolO}, Remark 2.2. 
So, ${O(P(R))=P(R)_{\mathcal{F}}}$.

If ${\psi\in \Hom(H, P(R)_{\mathcal{F}})_{M(P(R)_{\mathcal{F}})'}}$, 
${H\lhd P(R)_{\mathcal{F}}}$, ${H'=P(R)\cap \psi^{-1} P(R)}$, then 
\[
\psi|_{H'}\ =\ \psi'\ =\ \overline{\psi'} \Id_{H'}\in 
\Hom(H', P(R))_{M^{P(R)_{\mathcal{F}}}(P(R))'}\ =\ 
\Hom(H', P(R))_{M(P(R))'}\,,
\]
${\overline{\psi'}\in \CM(R)}$ (${P(R)=P(P(R))}$), for any ${[(\phi, I)]\in H}$ 
and ${\psi [(\phi, I)]=[(\phi', I')]\in P(R)_{\mathcal{F}}}$
\[
(I\cap I') (\psi [(\phi, I)])\ =\ \psi' ((I\cap I') [(\phi, I)])\ =\ 
\overline{\psi'} (I\cap I') [(\phi, I)]
\]
and ${\psi=\overline{\psi'} \Id_H=
\overline{\psi'} \Id_{P(R)_{\mathcal{F}}}|_H}$ 
($P(R)_{\mathcal{F}}$ has no $\mathcal{F}$--torsion). Hence 
$P(R)_{\mathcal{F}}$ is a quadi-injective $M(P(R)_{\mathcal{F}})'$--module, 
${P(R)_{\mathcal{F}}=P(P(R)_{\mathcal{F}})}$, 
${\CM(P(R)_{\mathcal{F}})=\CM(R) \Id_{P(R)_{\mathcal{F}}}}$ 
\cite{GolO}, Proposition 2.4, p. 4. If $R$ is prime, then $\CM(R)$ is a field, 
${P(R)=P(R)_{\mathcal{F}}}$. 
\end{proof}

A semiprime algebra $R$ with a semiprime (prime) algebra $M(R)$ is called 
a \emph{multiplicatively semiprime} (\emph{prime}), hereinafter a 
\emph{$m.s.p.$--algebra} (\emph{$m.p.$--algebra}). Each $m.p.$--algebra 
is prime. For a semiprime $R$, the semiprimeness conditions of $M(R)$ 
and $M(R)'$ are equivalent, since ${M(R)\lhd M(R)'}$, 
${\prr(M(R))=M(R)\cap \prr(M(R)')}$, for the $m.s.p.$--algebra $R$ the 
presence of ${I\lhd M(R)'}$, ${I^2=\{0\}}$, implies 
\begin{gather*}
I M(R)\subseteq I\cap M(R)\subseteq \prr(M(R))\ =\ \{0\}\,,
\\
\{0\}\ =\ R (I M(R))\ =\ (R I) M(R)\ =\ (R I)^2\ =\ R I
\end{gather*}
(${R J\lhd R}$ for all ${J\lhd M(R)' (J\lhd M(R))}$), ${I=\{0\}}$, 
${\prr(M(R)')=\{0\}}$. The proof of Theorem 4.3, \cite{Cab3}, allows us to 
conclude that
\begin{gather*}
P(M(R))\cong M(P(R))\,,\quad P(M(R)')\cong M(P(R))'\,,
\\
\CM(M(R))\ =\ \CM(R) \Id_{P(M(R))}\cong \CM(R)\cong 
\CM(M(R)')\ =\ \CM(R) \Id_{P(M(R)')}\,.
\end{gather*}

If $R$ is a subdirect product of $m.p.$--algebras $R/P_a$, 
${R\ne P_a\in \Spec(R)}$, ${a\in A}$, then $M(R)$ is a subdirect 
product of ${M(R/P_a)\cong M(R)/C(R, P_a)}$, ${a\in A}$,
\[
\bigcap\limits_{a\in A} C(R, P_a)\ =\ C(R, \prr(R))\ =\ C(R, \{0\})\ =\ \{0\}\,,
\]
where ${C(R, I)=\{\phi\in M(R)\mid R \phi\subseteq I\}}$ is the kernel of the 
epimorphism ${M(R)\longrightarrow M(R/I)}$ induced by the natural epimorphism 
${R\longrightarrow R/I}$, ${I\lhd R}$, and $R$ is a $m.s.p.$--algebra. If $R$ 
is semiprime, $P(R)$ is simple, ${\dim_{\CM(R)} P(R)=n< \infty}$ 
(${\CM(R)=\CM(P(R))}$ is a field), then by the density theorem 
\[ 
M(P(R))\ =\ M(P(R))'\ =\ \CM(R) M^{P(R)}(R)\ \cong\ M_n(\CM(R))
\]
is a simple finite-dimensional $\CM(R)$--algebra \cite{Lamb}, Proposition 3, p. 92, 
${M^{P(R)}(R)\cong M(R)}$ is prime, satisfies the same homogeneous identities 
as $M(P(R))$, including all ${\st_m=0}$, ${m\geq 2 n}$, $R$ is a 
$m.p.$--algebra. In particular, the latter is true if $R$ is prime, 
${S=Z(R)\setminus \{0\}\ne \emptyset}$ and the ring of quotients 
${R S^{-1}}$ is a central simple algebra over the field 
${Z(R S^{-1})=Z(R) S^{-1}}$, ${\dim_{Z(R S^{-1})} R S^{-1}=n< \infty}$, since 
in this case ${P(R)=R S^{-1}}$, ${\CM(R)=Z(R S^{-1})}$ 
\cite{CFGM}, Theorem 1.7, \cite{Gol6}, ${\S 3}$.  

As a consequence, if $R$ is a subdirect product of $R/P_a$, 
${R\ne P_a\in \Spec(R)}$, $P(R/P_a)$ is simple, 
${\dim_{\CM(R/P_a)} P(R/P_a)=n_a< \infty}$, ${a\in A}$, then $R$ is a 
$m.s.p.$--algebra (and $M(R)$ is a $PI$--algebra iff  
${n=\sup\limits_{a\in A} n_a< \infty}$ ($M(R)$ satisfies ${\st_m=0}$, 
${m\geq 2 n}$)).  

\begin{rem}
If $R$ is a $m.s.p.$--algebra and there exist ${k, n\geq 1}$ 
such that the elements of $M(P(R))$ are sums of $k$ products 
of at most $n$ operators $t_x$, ${t=l, r}$, ${x\in P(R)}$, 
then  ${O(M(P(R)))=M(O(P(R)))}$, ${O(M(P(R))')=M(O(P(R)))'}$. 
\end{rem}

\begin{proof}
In this case, $M(P(R))'$ is identified with $M^{O(P(R))}(P(R))'$, due to 
\[
M(P(R))'\ =\ \{\psi|_{P(R)}\mid \psi\in M^{O(P(R))}(P(R))'\}\ \cong\ 
M^{O(P(R))}(P(R))'\,.
\] 
Since each ${\psi\in M(O(P(R)))'}$ has the form  
${\psi=\tau \Id_{O(P(R))}+\sum\limits_{i=1}^l 
t^{(i1)}_{x(i, 1)}\cdots t^{(im_i)}_{x(i, m_i)}}$, 
${l, m_i\geq 1}$, ${\tau\in \CM(R)}$, ${t^{(ij)}\in \{l, r\}}$, 
${x(i, j)={\sum\limits_{a\in A_{ij}}}^{\perp} \xi(i, j)_a x(i, j)_a
\in O(P(R))}$ for some dense orthogonal 
${\{\xi(i, j)_a\}_{a\in A_{ij}}\subseteq B(R)}$, 
${\{x(i, j)_a\}_{a\in A_{ij}}\subseteq P(R)}$, ${\Bigl\{\xi(\{a_{ij}\})=
\prod\limits_{i, j} \xi(i, j)_{a_{ij}}\Bigr\}_{\{a_{ij}\}\in A}\subseteq B(R)}$ 
is dense and orthogonal, ${A=\{\{a_{ij}\}\mid a_{ij}\in A_{ij}\}}$, and 
\[
\xi(\{a_{ij}\}) \psi\ =\ \xi(\{a_{ij}\}) 
\biggl(\tau \Id_{O(P(R))}+\sum_{i=1}^l 
t^{(i1)}_{x(i, 1)_{a_{i1}}}\cdots t^{(im_i)}_{x(i, m_i)_{a_{i m_i}}}\biggr)
\in M(P(R))'\quad (\{a_{ij}\}\in A)\,,
\]
${M(O(P(R)))\subseteq O(M(P(R)))}$, ${M(O(P(R)))'\subseteq O(M(P(R))')}$ 
without additional condition with ${k, n\geq 1}$. For any dense orthogonal 
${\{\xi_a\}_{a\in A}\subseteq B(R)}$, ${\{\phi_a\}_{a\in A}\subseteq M(P(R))'}$,  
\[
\phi_a\ =\ \tau_a \Id_{O(P(R))}+
\sum_{j=1}^n \sum_{i=1}^k \sum_{T=(t^{(1)}, \ldots, t^{(j)})\in 
\{l, r\}^j} t^{(1)}_{x(a, i, T, 1)}\cdots t^{(j)}_{x(a, i, T, j)}
\]
for ${\tau_a\in \CM(R)}$, ${x(a, i, T, j)\in P(R)}$, ${a\in A}$, where the 
sum of ${1+k (2^{n+1}-2)}$ terms contains no more than ${k+1}$ non-zero terms, 
\[
{\sum_{a\in A}}^{\perp} \xi_a \phi_a\ =\ 
\tau \Id_{O(P(R))}+
\sum_{j=1}^n \sum_{i=1}^k \sum_{T=(t^{(1)}, \ldots, t^{(j)})\in 
\{l, r\}^j} t^{(1)}_{x(i, T, 1)}\cdots t^{(j)}_{x(i, T, j)}\in 
M(O(P(R)))'\,,
\]
${\tau\in \CM(R)}$, ${\xi_a \tau=\xi_a \tau_a}$, ${a\in A}$, 
${x(i, T, l)={\sum\limits_{a\in I}}^{\perp} \xi_a x(a, i, T, l)}$ 
\cite{GolO}, proofs of Lemmas 2.3, 2.6. Therefore,  
${O(M(P(R)))=M(O(P(R)))}$ inherits the condition for $M(P(R))$ 
with the same ${k, n\geq 1}$, ${O(M(P(R))')=M(O(P(R)))'}$.
\end{proof}

In particular, $M(P(R))$ satisfies the condition of Remark 2.2 with suitable 
${k, n\geq 1}$ if $M(P(R))$ is a finitely generated $\CM(R)$--module.

Remark 2.3 follows directly from \cite{BD}, Theorem 2.3.14. Similarly, 
Lemmas 2.8, 2.9 in \cite{GolO} and similar conclusions can be 
considered as variants of \cite{BD}, Theorem 2.3.12.

\begin{rem}
If ${R=O(R)}$, then ${Z(R)=O(Z(R))}$, ${Z(R_B)=(Z(R)+P R)/P R\cong Z(R)_B}$ 
for any ${B\in \mathcal{U}(R)}$, ${P=B(R)\setminus B}$.
\end{rem}

\begin{proof}
By virtue of ${Z(R)=\{x\in R\mid Z_x=\{0\}\}}$, where   
${Z_x=\{f(x, y_2, \ldots, y_8)\mid y_i\in R\}}$ for all ${x\in R}$ and 
\[
f(x_1, x_2, \ldots, x_8)\ =\ (x_1, x_2, x_3)+(x_4, x_1, x_5)+
(x_6, x_7, x_1)+[x_1, x_8]\in F\langle X\rangle\,,
\]
for any dense orthogonal ${\{\xi_a\}_{a\in A}\subseteq B(R)}$, 
${\{x_a\}_{a\in A}\subseteq Z(R)}$ and 
${x={\sum\limits_{a\in A}}^{\perp} \xi_a x_a}$ 
\[
\xi_a Z_x\ =\ Z_{\xi_a x}\ =\ Z_{\xi_a x_a}\ =\ \xi_a Z_{x_a}\ =\ \{0\}
\quad (a\in A)\,,
\]
${Z_x=\{0\}}$, ${x\in Z(R)=O(Z(R))}$ and, as a consequence, 
${B(R) Z(R)\subseteq Z(R)}$. 

If ${x\in R}$, ${x+P R\in Z(R_B)}$, ${P=B(R)\setminus B}$, then 
${Z_x=O(Z_x)\subseteq P R}$ and there is ${\alpha\in B}$, 
${\alpha Z_x=Z_{\alpha x}=\{0\}}$ \cite{GolO}, Proposition 2.4, p. 6, 
${\alpha x\in Z(R)}$, 
\[
x+P R\ =\ \alpha x+P R\in (Z(R)+P R)/P R\ \cong\ 
Z(R)/P Z(R)\ =\ Z(R)_B
\]
(${P Z(R)=Z(R)\cap P R}$).
\end{proof} 

\begin{lemma}
If ${R=P(R)=O(R)}$, ${S_Q=Z(R/Q)\setminus \{0\}\ne \emptyset}$,  
${R_Q=R/Q\, S_Q^{-1}}$ is a central simple finite-dimensional algebra over 
the field ${K_Q=Z(R/Q) S_Q^{-1}=Z(R_Q)}$ for all $R\ne Q\in \Spec(R)$, 
${m=\sup\limits_{R\ne Q\in \Spec(R)} \dim_{K_Q} R_Q< \infty}$, then 
${m(R)=\max\limits_{B\in \mathcal{U}(R)} \dim_{K_B} R_B}$ is the minimum 
number of generators of the $Z(R)$--module $R$, ${m(R)\leq m}$, where 
\[
R_B\ =\ R/P R\ =\ P(R_B)\,,\quad K_B\ =\ Z(R_B)\ =\ \CM(R_B)\ =\ \CM(R)_B
\]
for any ${B\in \mathcal{U}(R)}$, ${P=B(R)\setminus B}$, and ${M(R)=O(M(R))}$.
\end{lemma}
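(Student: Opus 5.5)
We reduce everything to the Boolean ultrafilter quotients $R_B$, use the Beidar--Mikhalev transfer (Theorem 2.3.9 of \cite{BD}) to lift generators from the fibres, and then read off the statement about $M(R)$ from Remark 2.2.

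\emph{Step 1: the fibres.} Fix $B\in\mathcal{U}(R)$ and $P=B(R)\setminus B$. Since $R=O(R)$, $R_B=R/PR$ is prime (\cite{GolO}, Proposition 2.4), so $PR\in\Spec(R)$. By Remark 2.3 and ${R=P(R)}$ we have $\CM(R)=Z(R)\Id_R$, hence
\[
K_B=Z(R_B)=(Z(R)+PR)/PR\cong\CM(R)_B ,
\]
and $\CM(R)_B$ is a field because $\CM(R)$ is regular and $P\CM(R)$ is maximal. Apply the hypothesis with $Q=PR$. Then $Z(R/Q)$ is already a field, so $S_Q^{-1}$ changes nothing and $R_B=R_Q$ is central simple over $K_B$ with $\dim_{K_B}R_B\le m$. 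Remarks 1.1 and 1.3 then give $R_B=P(R_B)$ and $\CM(R_B)=K_B$.

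\emph{Step 2: upper bound for the number of generators.} Let $d_B=\dim_{K_B}R_B\le m$. Choose $u_1,\dots,u_{d_B}\in R$ whose images form a $K_B$-basis of $R_B$. The property ``$y\in\sum_i Z(R)u_i$ for all $y$'' is not directly a Horn formula. Instead, use the fact that $R_B$ is central simple of dimension $d$ and express spanning by the nonvanishing of a suitable central polynomial or discriminant-type element in $Z$ (for example, the determinant of the trace form or a Capelli polynomial evaluation). The Horn-formula machinery of Theorem 2.3.9, in its ``proof'' form stated in the introduction, then produces a finite orthogonal decomposition $1=e_1\oplus\dots\oplus e_k$ together with a dense orthogonal family. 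On each piece $e_iR$ is a free $e_iZ(R)$-module of rank $d_i\le m$, and gluing the bases orthogonally gives $N=\max d_i$ generators of $R$ over $Z(R)$.

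The main obstacle is making this gluing rigorous. Compactness of the Stone space of $B(R)$ (ultrafilters) reduces the situation to finitely many $d$'s, since the sets $\{B: d_B=d\}$ are clopen. To see this, note that ``$u_1,\dots,u_d$ span modulo $P$'' holds on a neighbourhood: the coordinate expressions of a multiplication basis lie in the orthogonally complete $Z(R)$, and nonvanishing of a determinant in $\CM(R)_B$ lifts to some idempotent $\alpha\in B$. Hence $m(R)$ is attained as a maximum, $m(R)\le m$, and $R$ is generated by $m(R)$ elements. Conversely, any generating set of size $s$ maps onto a spanning set of each $R_B$, so $s\ge\max_B d_B$. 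Therefore $m(R)$ is exactly the minimum number of generators.

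\emph{Step 3: $M(R)=O(M(R))$.} $R$ is a finitely generated $Z(R)$-module and $\CM(R)=Z(R)\Id_R$, so $M(R)=M(P(R))$ is a finitely generated $\CM(R)$-module. It embeds in $\End_{Z(R)}(R)$, and locally it is $M_{d_B}(K_B)$. Moreover $R$ is an $m.s.p.$-algebra by the observations preceding Remark 2.2, since each $R/Q$ has a finite-dimensional simple central closure. By the comment after Remark 2.2, finite generation yields the bounded $k,n$ condition. Remark 2.2 then gives $O(M(R))=M(O(R))=M(R)$.
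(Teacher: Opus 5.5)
Your Step 2 has a genuine gap, and it sits exactly where the content of the lemma lies.

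You say the spanning condition is not a Horn formula, but it is. By Remark 2.3, $Z(R)=O(Z(R))$, so $\Phi_k=(\forall x)(\exists y_1)\ldots(\exists y_k)\bigl[\bigwedge_{i}(y_i\in Z(R))\wedge(\sum_i y_ix_i=x)\bigr]$ is built from admissible atomic formulas. The paper simply observes that $\Psi_{m(R)}=(\exists x_1)\ldots(\exists x_{m(R)})\Phi_{m(R)}$ is true in every $R_B$, because $\dim_{K_B}R_B\le m(R)$ and $K_B=Z(R_B)=(Z(R)+PR)/PR$. It then transfers this to $R$ by Theorem 2.3.9 of \cite{BD}, and a refined formula $\Psi'_k$ yields the free pieces $\eta_iR$.

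Your substitute, an unspecified central-polynomial or discriminant condition, is never formulated. Your compactness argument rests on the claim that lifts $u_1,\dots,u_d$ spanning $R_B$ still span $R_{B'}$ for every $B'$ containing some $\alpha\in B$, and that claim is not proved. For each single $x$ you do get $\beta_x\in B$ with $\beta_x(x-\sum_i z_iu_i)=0$. What you need is one $\alpha$ that works for all $x\in R$ simultaneously. Lifting a nonvanishing determinant to an idempotent gives no such uniform control unless you already have a $Z(R)$-linear coordinate map $R\to Z(R)^d$, i.e.\ unless you already know the finite generation you are trying to prove. Controlling the $\forall x$ uniformly is precisely what orthogonal completeness supplies through Theorem 2.3.9. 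Note also that $\{d_B\}$ is finite simply because $d_B\le m$; compactness is not what is needed there.

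Step 3 has a second gap: finite generation of $R$ over $Z(R)$ does not give finite generation of $M(R)$ over $\CM(R)$. $M(R)$ is spanned by words of unbounded length in $l_{u_i},r_{u_i}$. It sits inside $\End_{Z(R)}(R)\hookrightarrow R^{m(R)}$, but over a non-Noetherian regular ring, submodules of finitely generated modules need not be finitely generated; the paper explicitly treats the Noetherian case as special. The paper closes this with the Horn formula $\Psi''_k$. Since $\dim_{K_B}M(R_B)\le m(R)^2$, in every $R_B$ each product of $m(R)^2+1$ operators $t_{x_i}$ agrees on the generators $x_1,\dots,x_{m(R)}$ with a $Z(R)$-combination of shorter products, and this transfers to $R$. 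Only then does Remark 2.2 give $M(R)=O(M(R))$. A Nakayama-type local--global argument could replace $\Psi''_k$ once the free decomposition is available, but some such argument is required.

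In Step 1, the equality $\CM(R)=Z(R)\Id_R$ is not what Remark 2.3 says. The paper instead uses $Z(R)\lhd\CM(R)$ via $z\mapsto l_z$. Its image $Z(R)_B=Z(R_B)$ is a nonzero ideal of the field $\CM(R)_B$ (nonzero by the hypothesis $S_Q\ne\emptyset$ with $Q=PR$), hence all of it. This is what makes $Z(R_B)$ a field and gives $R_B=R_Q=P(R_B)$.
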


\begin{proof}
By condition, $R$ is a $m.s.p.$--algebra with ${Z(R)\ne \{0\}}$ and a 
$PI$--algebra $M(R)$, ${m=\pideg M(R)}$, ${R_Q=P(R/Q)}$, ${K_Q=\CM(R/Q)}$, 
$R_B$ is prime and  
\begin{gather*}
\Ann_{\CM(R)} R_B\ =\ \{\alpha\in \CM(R)\mid \alpha R\subseteq P R\}\ =\ 
P \CM(R)\,,
\\
\CM(R) \Id_{R_B}\ \cong\ \CM(R)/\Ann_{\CM(R)} R_B\ =\ \CM(R)_B
\end{gather*}
for any ${R\ne Q\in \Spec(R), B\in \mathcal{U}(R), P=B(R)\setminus B}$ 
(Remark 2.3, the observations before it, \cite{GolO}, Corollary 2.7; 
if ${\alpha\in \Ann_{\CM(R)} R_B}$, then ${\alpha R=O(\alpha R)\subseteq P R}$, 
${\beta \alpha R=\{0\}}$, ${\beta \alpha=0}$ for some ${\beta\in B}$, 
${\alpha\in P \CM(R)}$ \cite{GolO}, Proposition 2.4, p. 6). In view of the 
isomorphisms $z\longmapsto l_z=r_z\in \CM(R)$, ${z\in Z(R)}$, 
${\CM(R)_B\cong \CM(R) \Id_{R_B}}$, we can assume that ${Z(R)\lhd \CM(R)}$, 
${Z(R)_B=Z(R_B)\subseteq \CM(R)_B\subseteq \CM(R_B)}$. Since $\CM(R)$ is 
regular and $P \CM(R)$ is its maximal ideal \cite{GolO}, the observations 
before Lemma 2.5, $\CM(R)_B=Z(R)_B$ is a field, ${R_B=P(R_B)}$, 
${K_B=Z(R_B)=\CM(R_B)=\CM(R)_B}$. 

Similar to \cite{BD}, the proof of Theorem 3.1.11, from the truth of the 
Horn formula 
$\Psi_k=(\exists x_1)\ldots (\exists x_k) \Phi_k(x_1, \ldots, x_k)$ for 
${k=m(R)}$ in all $R_B$, ${B\in \mathcal{U}(R)}$, 
\[
\Phi_k(x_1, \ldots, x_k)\ =\ (\forall x)(\exists y_1)\ldots 
(\exists y_k)\biggl[\biggl\{\bigwedge_{i=1}^k (y_i\in Z(R))
\biggr\}\wedge \biggl\{\sum_{i=1}^k y_i x_i=x\biggr\}\biggr]\,,
\]
one can deduce its truth in $R$ \cite{BD}, Theorem 2.3.9, \cite{BMO}, 
Corollary 5.22. Therefore, the $Z(R)$--module $R$ has $m(R)$ 
generators, the minimality of the number of generators $m(R)$ follows 
from the definition of $m(R)$. 
Since ${\dim_{K_B} R_B=k}$, ${1\leq k\leq m(R)}$, ${B\in \mathcal{U}(R)}$, 
if and only if the Horn formula ${\Psi_k'=(\exists x_1)\ldots (\exists x_k) 
[\Phi_k(x_1, \ldots, x_k)\wedge \Phi_k'(x_1, \ldots, x_k)]}$ is true in $R_B$,
\[
\Phi_k'(x_1, \ldots, x_k)\ =\ (\forall y_1)\ldots (\forall y_k)
\biggl[ \biggl\{\bigvee_{i=1}^k (y_i\notin Z(R))\biggr\} \vee 
\biggl\{\bigwedge_{i=1}^k (y_i=0)\biggr\} \vee 
\biggl\{\sum_{i=1}^k y_i x_i\ne 0\biggr\} \biggr]\,,
\]
one can select ${0\ne \eta_i\in B(R)}$, ${\eta_i \eta_j=\delta_{ij} \eta_i}$, 
${\Id_R=\eta_1+\ldots+\eta_l}$, $\Psi_{k_i}'$ is true in $\eta_i R$, 
$\{k_i\}_{i=1}^l\subseteq \{\dim\limits_{K_B} R_B\mid B\in \mathcal{U}(R)\}$, 
${k_0=0< k_1<\ldots < k_l=m(R)}$ \cite{BD}, Theorem 2.3.9, \cite{BMO}, 
Corollary 5.22, the $Z(\eta_i R)$--module $\eta_i R$ has a system of 
generators $\{x_{i j}\}_{j=1}^{k_i}$ through which its elements are uniquely 
expressed over ${Z(\eta_i R)=\eta_i Z(R)}$, ${i=1, \ldots, l}$, 
${R=\sum\limits_{s=1}^{m(R)} Z(R) x_s}$, 
${x_s=\sum\limits_{j=i}^l x_{j s}}$, ${k_{i-1}+1\leq s\leq k_i}$.

It remains to note that ${\dim_{K_B} M(R_B)\leq m(R)^2}$, 
${B\in \mathcal{U}(R)}$, and hence, in $R_B$, ${B\in \mathcal{U}(R)}$, and 
$R$ the Horn formula  
${\Psi''_k=(\exists x_1)\ldots (\exists x_k)[\Phi_k(x_1, \ldots, x_k)\wedge 
\Phi_k''(x_1, \ldots, x_k)]}$, 
\begin{multline*}
\Phi_k''(x_1, \ldots, x_k)\ =\ 
(\exists y_{11})\ldots (\exists y_{m n}) 
\biggl[\biggl\{\bigwedge_{i=1}^m \bigwedge_{j=1}^n (y_{ij}\in Z(R))\biggr\}
\wedge
\\
\biggl\{\bigwedge_{i=1}^m \bigwedge_{l=1}^k 
\biggl(\sum_{j=1}^n y_{ij} x_l \omega_j(x_1,\ldots x_k)\ =\ 
x_l \upsilon_i(x_1, \ldots, x_k)\biggr)\biggr\}\biggr]
\end{multline*}
is true for ${k=m(R)}$, ${m=(2 k)^{k^2+1}}$, ${n=((2 k)^{k^2+1}-1)/(2 k-1)}$, where 
\begin{gather*}
W_i(z_1, \ldots, z_k)\ =\ \{t_1\cdots t_i\mid t_i\in \{t_{z_i}\mid t=l, r,\ 
i=1, \ldots, k\}\}\ \subseteq\ M(F\langle X\rangle)\quad (i\geq 1)\,,
\\
W_{m(R)^2+1}(z_1, \ldots, z_k)\ =\ \{\upsilon_i(z_1, \ldots, z_k)\}_{i=1}^m\,,
\quad  
\bigcup_{l=1}^{m(R)^2} W_l(z_1, \ldots, z_k)\ =\ 
\{\omega_j(z_1, \ldots, z_k)\}_{j=1}^n
\end{gather*}
for any ${\{z_l\}\subseteq X}$ and selected numbering methods. As a 
consequence, ${R=\sum\limits_{i=1}^{m(R)} Z(R) x_i}$,
\[
t_1\ldots t_s\in 
\sum_{j=1}^n Z(R) \omega_j(x_1, \ldots, x_{m(R)})\quad 
(s\geq m,\ t_i\in \{t_x\mid t=l, r,\ x\in R\})\,,
\]
${M(R)=\sum\limits_{j=1}^n Z(R) \omega_j(x_1, \ldots, x_{m(R)})=O(M(R))}$ 
(Remark 2.2; if $\CM(R)$ is Noetherian, then this also follows from 
the Noetherian property of the $\CM(R)$--module $\End_{\CM(R)}(R)$ as a 
homomorphic image of a subalgebra of $M_{m(R)}(\CM(R))$).
\end{proof}  

In particular, this is true for any associative $PI$--algebra  
${R=P(R)=O(R)}$ with $m(R)\leq (\pideg R)^2$. 
If in Lemma 2.4 each $K_B$--algebra $R_B$, ${B\in \mathcal{U}(R)}$, 
possesses a basis $\{x_i\}_{i=1}^{n_B}$,  
${x_i x_j=\sum\limits_{k=1}^{n_B} \alpha_{ij}^k x_k}$, 
${n_B=\dim_{K_B} R_B\leq m(R)}$, with relations for the structure 
constants $\{\alpha_{ij}^k\}\subseteq K_B$ given by some Horn formula,
then we can assume that in the decomposition from the proof of Lemma 2.4
${R=\eta_1 R\oplus \ldots\oplus \eta_l R}$ the 
structure constants in $Z(\eta_i R)$ for the generators of the 
$Z(\eta_i R)$--module $\eta_i R$ are related by the corresponding Horn 
formula, ${i=1, \ldots, l}$ \cite{BD}, proof of Theorem 3.1.11. 

\begin{prop}
If ${R=P(R)=O(R)}$, ${B\in \mathcal{U}(R)}$, ${\nu(B)\in B}$ or (and) $R$ is a 
$m.s.p.$--algebra with a $PI$--algebra ${M(R)=O(M(R))}$, then ${R_B=P(R_B)}$, 
${\CM(R_B)=\CM(R)_B}$.
\end{prop}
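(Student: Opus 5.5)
We will prove the two equalities by showing that every $M(R_B)'$--homomorphism $\phi\colon \bar J\to R_B$, $\{0\}\ne \bar J\lhd R_B$, is multiplication by an element of $\CM(R)_B$. The preliminary facts are as in the proof of Lemma 2.4 and \cite{GolO}, Corollary 2.7, Proposition 2.4. Namely, $R_B$ is prime, $P\CM(R)=\Ann_{\CM(R)} R_B$, and $\CM(R)_B$ is a field embedded in $\CM(R_B)$ via $\alpha+P\CM(R)\longmapsto \alpha \Id_{R_B}$. This holds because $P\CM(R)$ is a maximal ideal of the regular ring $\CM(R)$, and $\alpha R=O(\alpha R)\subseteq PR$ forces $\beta\alpha=0$ for some $\beta\in B$. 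So $\CM(R)_B\subseteq \CM(R_B)$, and the task is the reverse inclusion together with $R_B=P(R_B)$. The latter follows once every such $\phi$ is given by an element of $\CM(R)_B$ acting on $R_B\subseteq P(R_B)$. Indeed, then $\CM(R_B)=\CM(R)_B\Id_{R_B}$ and $\CM(R_B)R_B=R_B$.

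\emph{Case $\nu(B)\in B$.} Then $B$ is a principal ultrafilter generated by the atom $e=\nu(B)$. Thus $P=\{\alpha\in B(R)\mid \alpha e=0\}=(1-e)B(R)$ and $PR=(1-e)R$, so that $R_B\cong eR$. Here $eR$ is a direct summand of $R$ and an ideal with $eR\,M(R)=eR$, and $\CM(eR)=e\CM(R)$; this is Remark 1.2 applied to $R=eR\oplus(1-e)R$, with the same argument for $\CM$. Since $e$ is an atom of $B(R)=B(eR)\oplus B((1-e)R)$, the ring $e\CM(R)$ has no nontrivial idempotents. Being regular, it is a field isomorphic to $\CM(R)_B$. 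Hence $eR=P(eR)$ is prime (we will use $P(eR)=eP(R)=eR$), and $\CM(R_B)=\CM(eR)=\CM(R)_B$.

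\emph{Case $R$ m.s.p.\ with $PI$--algebra $M(R)=O(M(R))$.} We will try to bring ourselves into the setting of Lemma 2.4. By the observations before Remark 2.2 and \cite{Cab3}, $P(M(R))\cong M(P(R))=M(R)$ with $\CM(M(R))\cong\CM(R)$. The semiprime $PI$--algebra $M(R)=P(M(R))=O(M(R))$ is then an associative algebra to which Lemma 2.4 applies (see the remark after Lemma 2.4). This relies on Posner--Rowen for the prime quotients $M(R)/Q$, which are central simple finite-dimensional over $\CM(M(R)/Q)$ of bounded dimension. Hence $M(R)$ is a finitely generated $Z(M(R))=\CM(R)$--module with $m(M(R))\le(\pideg M(R))^2$. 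Moreover, $M(R)_B=M(R)/PM(R)$ is a central simple algebra over the field $\CM(R)_B$ with $M(R)_B=P(M(R)_B)$. Next we identify $M(R)_B$ with $M(R_B)$. The natural epimorphism $M(R)\to M(R_B)$ has kernel $C(R,PR)$, and $PM(R)\subseteq C(R,PR)$. Conversely, if $R\theta\subseteq PR$, then $R\theta=O(R\theta)$ lies in $PR$, and orthogonal completeness gives $\beta\in B$ with $\beta R\theta=\{0\}$, hence $\beta\theta=0$ and $\theta\in PM(R)$. Thus $M(R_B)\cong M(R)_B$ is a finite-dimensional central simple $\CM(R)_B$--algebra. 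Then $R_B$ is a faithful module over the simple finite-dimensional algebra $M(R_B)'=M(R_B)$, hence semisimple. Its $M(R_B)$--endomorphism ring equals the centralizer, which by the double centralizer theorem is $Z(M(R_B))=\CM(R)_B$. Any $\phi\in\Hom(\bar J,R_B)_{M(R_B)'}$ extends to $R_B$, since $\bar J$ is a direct summand of the semisimple module $R_B$. So $\phi$ is a scalar from $\CM(R)_B$. This gives $R_B=P(R_B)$ and $\CM(R_B)=\CM(R)_B$.

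The main obstacle is the identification $\ker(M(R)\to M(R_B))=PM(R)$. It requires orthogonal completeness of $R\theta$, which we intend to take from \cite{GolO}, Proposition 2.4, p.~6, as in Lemma 2.4. It also requires that $M(R)$ and $R$ share the same Boolean ring $B(R)$, which follows from $\CM(M(R))\cong\CM(R)$. Establishing the central simplicity of $M(R)_B$ via Horn formulas, as in Lemma 2.4, is the second delicate point.
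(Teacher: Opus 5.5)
Your proposal is correct and follows essentially the paper's own route: for $\nu(B)\in B$ the paper likewise transports maps through $R_B\cong\nu(B)R$ and uses $R=P(R)$, and in the m.s.p.\ case it likewise applies Lemma 2.4 to $M(R)$ and uses $M(R_B)\cong M(R)_B$, which you reprove via the orthogonal completeness of $R\theta$. Two small repairs are needed: Remark 1.2 presupposes $R_i=P(R_i)$ and so cannot give $\CM(eR)=e\CM(R)$, so argue directly, as the paper does, that an $M(eR)'$--map between ideals of $eR$ is an $M(R)'$--map into $R$ and hence lies in $\CM(R)$; and the equality of the centralizer of $M(R_B)$ with $Z(M(R_B))$ does not follow from the double centralizer theorem alone, but needs the commutativity of the centroid of the prime algebra $R_B$, which forces $R_B$ to be a simple $M(R_B)$--module.
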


\begin{proof}
First, note that ${\CM(R) \Id_{R_B}\cong \CM(R)_B}$, 
${M(R_B)'\cong M(R)'_B}$, $R_B$ is prime (Lemma 2.4, \cite{GolO}, 
Lemmas 2.6, 2.5; ${M(P(R/Q))=\CM(R/Q) M^{P(R/Q)}(R/Q)}$ and 
$M^{P(R/Q)}(R/Q)\cong M(R/Q)\cong M(R)/C(R, Q)$, ${Q\in \Spec(R)}$; 
the primeness of $R_B$ can also be deduced from the 
${M(R)_B\cong M(R_B)}$ \cite{GolO}, Lemma 2.6, Corollary 2.7 and  
$\Ann_t R_B=\{0\}$, ${t=l, r}$ 
(if ${t=l, r}$, ${x\in R}$, ${R t_x=O(R t_x)\subseteq P R}$, then 
${\beta R t_x=R t_{\beta x}=\{0\}}$, ${\beta x=0}$ for some 
${\beta\in B}$ \cite{GolO}, Proposition 2.4, p. 6, ${x\in P R}$, 
${P=B(R)\setminus B}$)). 

If ${\nu(B)\in B}$, ${\psi\in \Hom(H, R_B)_{M(R_B)'}}$, ${H\lhd R_B}$, 
then ${\nu(B) P=\{0\}}$, ${P=B(R)\setminus B}$, the isomorphisms 
${\alpha_S: S_B\longrightarrow \nu(B) S}$, 
${\alpha_S: x+P S\longmapsto \nu(B) x}$, ${x\in S=R, \CM(R)}$, allow 
us to write ${\alpha_R \psi \alpha_R^{-1}=\beta \Id_{\alpha_R H}\in 
\Hom(\alpha_R H, \nu(B) R)_{M(R)'}}$ for some ${\beta\in \CM(R)}$, 
\begin{multline*}
\psi (x+P R)\ =\ \psi (\nu(B) x+P R)
\ =\ \psi \alpha_R^{-1} \alpha_R (x+P R)\ =
\\ 
\alpha_R^{-1} \beta \alpha_R (x+P R)\ =\ 
\beta (x+P R)\ =\ \nu(B) \beta (x+P R)\quad (x+P R\in H)\,,
\end{multline*}
${\psi=\beta \Id_H=\beta \Id_{R_B}|_H}$. 
If ${H=(H'+P R)/P R\in \mathcal{E}(R_B)}$, ${H'\in \mathcal{E}(R)}$ 
(the transition to such $H$ and its preimage $H'$ in $R$ is always 
possible), then 
${\nu(B) \beta}$ is uniquely defined, since ${\gamma H'\subseteq P R}$ 
for ${\gamma\in \CM(R)}$ is equivalent to ${\nu(B) \gamma H'=\{0\}}$, 
${\nu(B) \gamma=\{0\}}$, ${\gamma\in P \CM(R)}$. So, 
${R_B=P(R_B)}$, ${\CM(R_B)=\CM(R)_B}$.

For $R$ with a semiprime $PI$--algebra $M(R)$ the following conditions 
are equivalent:\linebreak ${M(R)=O(M(R))}$; $M(R)$ is a finitely generated 
$\CM(R)$--module; there are ${k, n\geq 1}$ such that the elements of 
$M(R)$ are sums of $k$ products of at most $n$ operators $t_x$, ${t=l, r}$, 
${x\in R}$ (Remark 2.2, Lemma 2.4, ${M(R)=P(M(R))}$, 
${\CM(M(R))=\CM(R) \Id_{M(R)}}$). When they (any of them) are satisfied, 
${M(R_B)\cong M(R)_B=P(M(R)_B)}$ is a central simple finite-dimensional 
algebra over the field 
\[
\CM(M(R)_B)\ =\ Z(M(R)_B)\ =\ \CM(M(R))_B\ \cong\ 
\CM(R)_B\ =\ \CM(R_B)\,,
\]
$R_B$ is a $m.p.$--algebra, ${P(R_B)=\CM(R)_B R_B=\CM(R) R_B=R_B}$
(Lemma 2.4, \cite{GolO}, Lemma 2.6).
\end{proof}

If ${R=P(R), R^1=\CM(R)\oplus R}$, then ${Z(R^1)=Z(R)^1=\CM(R)\oplus Z(R)}$, 
$R^1$ is semiprime, since the presence of ${I\lhd R^1}$, ${I^2=\{0\}}$, implies  
${(\alpha+x)^2=\alpha^2+(2 \alpha x+x^2)=\alpha=0}$ for all ${\alpha+x\in I}$, 
${\alpha\in \CM(R)}$, ${x\in R}$, ${I\subseteq R}$, ${I=\{0\}}$. For $R$ with 1 
${(\Id_R-1) R=R (\Id_R-1)=\{0\}}$ and therefore the primeness of $R$ with 1 
does not imply the primeness of $R^1$. At the same time, if $R$ 
is prime, anticommutative, and 2--torsion-free, then $R^1$ is prime, due to 
${I\cap R\ne \{0\}}$ for all ${\{0\}\ne I\lhd R^1}$ (otherwise if 
${\{0\}=I\cap R\ne I\lhd R^1}$, ${0\ne \alpha+x\in I}$, 
${0\ne \alpha\in \CM(R)}$, ${x\in R}$, then 
${0=(\alpha+x) y=y (\alpha+x)=x y=y x=\alpha y=y}$ for all ${y\in R}$?!).

Let ${R=P(R)=O(R)}$ and ${\mathcal{F}=\mathcal{E}(\CM(R))}$. Then $R$, 
$\CM(R)$, $R^1$ have no $\mathcal{F}$--torsion (see above), for any 
${I\in \mathcal{F}}$, ${\phi\in \Hom(I, R^1)_{\CM(R)}}$ we can write 
${\phi=\pi_{\CM(R)} \phi+\pi_R \phi}$, 
$\pi_{\CM(R)} \phi\in \Hom(I, \CM(R))_{\CM(R)}$, 
${\pi_R \phi\in \Hom(I, R)_{\CM(R)}}$, where 
$\pi_{\CM(R)}$ and $\pi_R$ are the canonical projections of $R^1$ onto 
$\CM(R)$ and $R$, ${\pi_{\CM(R)} \phi: \beta\longmapsto \beta \alpha}$, 
${\pi_R \phi: \beta\longmapsto \beta x}$, 
${\phi: \beta\longmapsto \beta (\alpha+x)}$, ${\beta\in I}$, for some 
${\alpha\in \CM(R)}$, ${x\in R}$ and so 
${[(\phi, I)]=[(r_{\alpha+x}, \CM(R))]}$, ${R^1=R^1_{\mathcal{F}}=O(R^1)}$ 
\cite{BD}, Proposition 2.4.4, \cite{GolO}, Proposition 2.4, p. 2 
(${\CM(R)=O(\CM(R)), R=O(R)}$) and up to isomorphism 
${R^1_B=\CM(R)_B\oplus R_B=({}_{\CM(R)_B} R_B)^1}$ for any 
${B\in \mathcal{U}(R)}$ (${P R^1=P \CM(R)\oplus P R}$, ${P=B(R)\setminus B}$). 

\begin{lemma}
If ${R=2 R=P(R)=O(R)}$ is anticommutative, ${R_B=R_B^2}$ for all 
$B\in \mathcal{U}(R)$, 
${\sup\limits_{B\in \mathcal{U}(R)} \dim_{\CM(R)_B} R_B=m(R)< \infty}$, 
then $m(R)$ is the minimum number of generators of the $\CM(R)$--module 
${R=R^2}$ and ${M(R)=O(M(R))}$. 
\end{lemma}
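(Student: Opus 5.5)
The plan is to follow Lemma 2.4, replacing $Z(R)$ by $\CM(R)$ and working in $R^1=\CM(R)\oplus R$. By the observations before the lemma, $R^1=O(R^1)$, $Z(R^1)=\CM(R)\oplus Z(R)$, and $R^1_B=(\CM(R)_B\oplus R_B)$ for every $B\in\mathcal{U}(R)$.

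\emph{Step 1 (the quotients $R_B$).} For anticommutative $R=2R$ one has $Z(R)=\Ann R$, and $\Ann R=\{0\}$ by semiprimeness. So the center of $R^1$ reduces to $\CM(R)$, identified with $\CM(R)\Id_R$ as in the proof of Lemma 2.4. Since $\CM(R)$ is regular and $P\CM(R)$ is a maximal ideal, $\CM(R)_B$ is a field. Arguing as in Lemma 2.4 (using \cite{GolO}, Proposition 2.4, p. 6), $\Ann_{\CM(R)}R_B=P\CM(R)$, so $R_B$ is a vector space over $\CM(R)_B$ of dimension $\le m(R)$. Primeness of $R_B$ follows as in the proof of Proposition 2.5, via $\Ann R_B=\{0\}$. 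Hence $R_B$ is a finite-dimensional prime anticommutative algebra with $R_B=R_B^2$.

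\emph{Step 2 (generators of $R$).} Apply \cite{BD}, Theorem 2.3.9 (\cite{BMO}, Corollary 5.22) in $R^1$. The Horn formula is $\Psi_k=(\exists x_1)\ldots(\exists x_k)\Phi_k$, where $\Phi_k$ states that every $x$ equals $\sum y_i x_i$ with $y_i\in\CM(R)$. Here $\CM(R)$ is an orthogonally complete subset $T$ with $0\in T$, so it may appear in atomic formulas. $\Psi_k$ with $k=m(R)$ holds in every $R^1_B$, hence in $R^1$, so $R$ is generated by $m(R)$ elements over $\CM(R)$. Minimality comes from the definition of $m(R)$: if fewer generators sufficed, their images would span every $R_B$.

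As in Lemma 2.4, I also take the decomposition $\Id_R=\eta_1+\ldots+\eta_l$ by the exact dimension $k_i$, using $\Psi'_k$. Each $\eta_i R$ is then free over $\eta_i\CM(R)$ of rank $k_i$.

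\emph{Step 3 ($R=R^2$).} For each $B$ pick a basis of $R_B$ contained in $R_B^2$. Equivalently, $\Psi_k$ holds with each $x_i$ of the form $\sum_{j\le N}u_{ij}v_{ij}$, where $N$ is at most $\dim R_B^2=\dim R_B\le m(R)$. This is again a Horn formula: existential quantifiers over $u_{ij},v_{ij}$ followed by the equalities $x_i=\sum u_{ij}v_{ij}$. Transferring it to $R$ gives generators $x_i\in R^2$ over $\CM(R)$. Since $\CM(R)R^2\subseteq R^2$ because $\CM(R)$ acts by multiplication-commuting maps, we get $R=R^2$.

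\emph{Step 4 ($M(R)=O(M(R))$).} In each $R_B$ we have $\dim M(R_B)\le m(R)^2$. A product of more than $m(R)^2$ operators $r_{x_i}$ on the generators is therefore a $\CM(R)_B$-combination of shorter words. I write this as the formula $\Phi''_k$ of Lemma 2.4 with $l_x=-r_x$, now requiring $y_{ij}\in\CM(R)$ and applying both sides to all $x_l$. The formula holds in all $R_B$, hence in $R$. Using $\CM(R)$-linearity of $r_x$, every element of $M(R)$ becomes a sum of a bounded number of products of bounded length. Remark 2.2 then gives $M(R)=O(M(R))$; one needs $R$ to be an m.s.p.-algebra, which holds because each $R_B$ is finite-dimensional prime. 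Alternatively, $M(R)$ is finitely generated over $\CM(R)$ and one invokes the equivalences in the proof of Proposition 2.5.

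\emph{Main obstacle.} The hard point is justifying $R^1_B$ and $\CM(R)_B$ as the right quotients, namely that $\CM(R)$ is the relevant orthogonally complete predicate inside $R^1$ and maps onto the scalars of $R_B$. The anticommutative, 2-divisible setting kills $Z(R)$, so $\CM(R)$ must be adjoined externally. The hypothesis $R_B=R_B^2$ is what lets us recover $R=R^2$.
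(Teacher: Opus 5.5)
Your overall plan is the paper's: Horn formulas true in every $R^1_B$ are transferred to $R^1$, followed by the decomposition $\Id_R=\eta_1+\ldots+\eta_l$ and the formula $\Phi''_k$ for $M(R)$. There is, however, a real gap in how the formulas are set up. They are interpreted in $R^1=\CM(R)\oplus R$, so every variable ranges over $R^1$, and nothing in them confines the variables to $R$.

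As written, Step 2 fails. In $R^1$ the formula $\Psi_{m(R)}$ asserts that $R^1$ itself is generated by $m(R)$ elements over $\CM(R)=Z(R^1)$. This is false in every $R^1_B=\CM(R)_B\oplus R_B$ with $\dim_{\CM(R)_B}R_B=m(R)$, because there $\dim_{\CM(R)_B}R^1_B=m(R)+1$. Such $B$ exist, since a bounded set of integers attains its supremum. So the transfer theorem cannot be applied.

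Step 3 fails as well. Since $1\in R^1$, the clause $x_i=\sum_j u_{ij}v_{ij}$ is satisfied by $u_{i1}=1$, $v_{i1}=x_i$ and $u_{ij}=v_{ij}=0$ for $j>1$. Hence it encodes neither $x_i\in R^2$ nor even $x_i\in R$. What is missing is an orthogonally complete subset of $R^1$ that carves out $R$, and $R^2$, inside $R^1$.

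The paper supplies this as follows. Put $k=m(R)$ and take $k+1$ generators of $R^1$ with $x_1^2=x_1$, $x_1x_i=x_i$, and $x_i\in[R,R]_k$ for $i\geq 2$. Here $[R,R]_k=[R^1,R^1]_k=O([R,R]_k)$ is the set of sums of $k$ commutators.
\begin{itemize}
\item The scalars are central in $R^1$, so this predicate lies in $R$.
\item Since $[x,y]=2xy$, $R=2R$ and $\dim R_B\leq k$, one has $[R_B,R_B]_k=R_B^2=R_B$, so $\Lambda_k$ holds in every $R^1_B$.
\item After the transfer, writing $1$ in terms of the generators forces $x_1=1$, and then $R=\sum_{i\geq 2}\CM(R)x_i\subseteq R^2$.
\end{itemize}

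Alternatively, you can repair your own formulas by using $R=O(R)\ni 0$ itself as a predicate. Use atoms $x_i\in R$, $u_{ij}\in R$, $v_{ij}\in R$ for the existential variables. Relativize the universal variable by the type-3 clauses $(x\notin R)\vee(y_i\in\CM(R))$ and $(x\notin R)\vee(\sum_i y_ix_i-x=0)$.

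With either repair, the rest of your argument goes through: the $\Psi'_k$-decomposition, and $\Phi''_k$, which is the paper's $\Lambda'''_k$.

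A smaller point concerns your claim that $R$ is m.s.p. This does not follow from ``each $R_B$ is finite-dimensional and prime'', because the observation before Remark 2.2 requires $P(R_B)$ to be simple. The paper does not argue m.s.p.\ here. It reads off $M(R)=\sum\CM(R)\,l_{x_{i_1}}\cdots l_{x_{i_l}}$ from $\Lambda'''_k$ and concludes as at the end of the proof of Lemma 2.4.
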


\begin{proof}
Since ${Z(R^1)=\CM(R), Z(R^1_B)=\CM(R)_B}$ for all ${B\in \mathcal{U}(R)}$ 
(see introduction), it is sufficient, by analogy with the proof of 
Lemma 2.4, to deduce the truth in $R^1$ of the Horn formula 
${\Lambda_k=(\exists x_1)\ldots (\exists x_{k+1}) 
[\Delta_k(x_1, \ldots, x_{k+1})\wedge \Phi_{k+1}(x_1, \ldots, x_{k+1})]}$ 
for ${k=m(R)}$, 
\begin{gather*}
\Delta_k(x_1, \ldots, x_{k+1})\ =\ (x_1^2=x_1)\wedge 
\biggl\{\bigwedge_{i=2}^{k+1} (x_1 x_i=x_i)\biggr\}\wedge 
\biggl\{\bigwedge_{i=2}^{k+1} (x_i\in [R, R]_k)\biggr\}\,,
\\
[R, R]_k\ =\ [R^1, R^1]_k\ =\ 
\{[x_1, y_1]+\ldots+[x_k, y_k]\mid x_i, y_i\in R\}\ =\ O([R, R]_k)
\end{gather*}
(${[R_B, R_B]_k=R_B^2=R_B, B\in \mathcal{U}(R)}$; 
${[x, y]=2 x y, x, y\in R}$); this is equivalent to the existence of 
generators $\{x_i\}_{i=1}^{m(R)+1}$ of the $\CM(R)$--module $R^1$, 
${x_1^2=x_1}$ and ${x_1 x_i=x_i\in [R, R]_{m(R)}}$, ${i=2, \ldots, m(R)+1}$. 
Therefore ${x_1=1}$, ${R=R^2=\sum\limits_{i=2}^{m(R)+1} \CM(R) x_i}$, the 
coincidence of $m(R)$ with the minimum number of generators of the 
$\CM(R)$--module $R$ follows from the definition of $m(R)$.

As in Lemma 2.4, due to the truth of the Horn formula 
\[
\Lambda_k'\ =\ (\exists x_1)\ldots (\exists x_{k+1}) 
[\Delta_k(x_1, \ldots, x_{k+1})\wedge \Phi_{k+1}(x_1, \ldots, x_{k+1})
\wedge \Phi_{k+1}'(x_1, \ldots, x_{k+1})]\,,
\] 
${1\leq k=\dim_{\CM(R)_B} R_B\leq m(R)}$ in each $R^1_B$, 
${B\in \mathcal{U}(R)}$, there are  
${0\ne \eta_i\in B(R)}$, ${\eta_i \eta_j=\delta_{ij} \eta_i}$, 
${\Id_{R^1}=\eta_1+\ldots+\eta_l}$, $\Lambda_{k_i}'$ is true in $\eta_i R^1$, 
${\{k_i\}_{i=1}^l\subseteq \{\dim\limits_{K_B} R_B\mid B\in \mathcal{U}(R)\}}$, 
$k_0=0< k_1<\ldots < k_l=m(R)$, the $Z(\eta_i R^1)$--module 
${\eta_i R^1=\eta_i \CM(R)\oplus \eta_i R}$ has a system of generators 
$\{x_{i j}\}_{j=1}^{k_i+1}$, ${x_{i 1}=\eta_i}$ and  
${x_{i j}\in [\eta_i R, \eta_i R]_{k_i}=\eta_i [R, R]_{k_i}}$, 
${j=2, \ldots, k_i+1}$, through which its elements are uniquely expressed over 
${Z(\eta_i R^1)=\eta_i \CM(R)}$, 
$\eta_i R=\eta_i R^2=\sum\limits_{j=2}^{k_i+1} 
\eta_i \CM(R) x_{i j}=\sum\limits_{j=2}^{k_i+1} \CM(R) x_{i j}$, 
${R=R^2=\sum\limits_{s=2}^{m(R)+1} \CM(R) x_s}$,          
${x_s=\sum\limits_{j=i}^l x_{j s}}$, ${k_{i-1}+1\leq s-1\leq k_i}$, 
${i=1, \ldots, l}$.

If in each $R^1_B$, ${B\in \mathcal{U}(R)}$, the Horn formula 
\begin{multline*}
\Lambda_k''\ =\ (\exists x_1)\cdots (\exists x_{k+1})
(\exists \alpha_{22}^2)\ldots (\exists \alpha_{k+1 k+1}^{k+1})
[\Delta_k'(x_1, \ldots, x_{k+1}, 
\alpha_{22}^2, \ldots, \alpha_{k+1 k+1}^{k+1})\wedge
\\ 
\shoveright{
\Phi_{k+1}(x_1, \ldots, x_{k+1})\wedge \Phi_{k+1}'(x_1, \ldots, x_{k+1})\wedge 
\Sigma_k(\alpha_{22}^2, \ldots, \alpha_{k+1 k+1}^{k+1})]\,,}
\\
\shoveleft{
\Delta_k'(x_1, \ldots, x_{k+1}, 
\alpha_{22}^2, \ldots, \alpha_{k+1 k+1}^{k+1})\ =\ 
\Delta_k(x_1, \ldots, x_{k+1})\wedge 
\biggl\{\bigwedge_{i, j=2}^{k+1} \biggl(x_i x_j=\sum_{m=2}^{k+1} \alpha_{ij}^m 
x_m\biggr)\biggr\}\,,}
\\
\Sigma_k(\alpha_{22}^2, \ldots, \alpha_{k+1 k+1}^{k+1})\ =\ 
(Q_1 y_1)\ldots (Q_{n_k} y_{n_k}) 
\biggl[\biggl\{\bigwedge_{i, j, m=2}^{k+1} 
(\alpha_{ij}^m\in Z(R))\biggr\}\wedge T_1\wedge \ldots \wedge T_{l_k}\biggr]\,,
\end{multline*}
${k=\dim_{\CM(R)_B} R_B}$ is true, where ${Q_i\in \{\exists, \forall\}}$ and 
the formulas $T_j$ define the relations between $\{\alpha_{ij}^m\}$ \cite{BD}, 
proof of Theorem 3.1.11, then $\Lambda_{k_i}''$ is true in $\eta_i R^1$, 
$\Sigma_{k_i}$ defines the relations between the structure constants for 
$\{x_{i j}\}_{j=2}^{k_i+1}$ in $\eta_i \CM(R)$, ${i=1, \ldots, l}$. At the 
same time, in $R^1$ the Horn formula 
\[
\Lambda_k'''\ =\ (\exists x_1)\ldots (\exists x_{k+1})
[\Delta_k(x_1, \ldots, x_{k+1})\wedge \Phi_{k+1}(x_1, \ldots, x_{k+1})\wedge 
\Phi_k''(x_2, \ldots, x_{k+1})]\,,
\]
${k=m(R)}$ is true, ${M(R)=\sum\limits_{
\genfrac{}{}{0pt}{1}{1\leq l\leq (k^{k^2+1}-1)/(k-1),}
{1\leq i_1, \ldots, i_l\leq k}} 
\CM(R) l_{x_{i_1}}\cdots l_{x_{i_l}}=O(M(R))}$ (Lemma 2.4, end of proof). 
\end{proof}

Let us consider separately the case of a semiprime $R$ with ${Z(R)\ne \{0\}}$ 
without $\mathcal{F}'$--torsion, ${\mathcal{F}'=\mathcal{E}(Z(R))}$. 
In particular, $R$ is without $\mathcal{F}'$--torsion if 
${I\cap Z(R)\ne \{0\}}$ for all ${\{0\}\ne I\lhd R}$, since for any 
${J\in \mathcal{F}'}$ 
\begin{gather*}
\Ann J\ =\ \Ann_l J\ =\ \Ann_r J\ =\ \{x\in R\mid x J=J x=\{0\}\}\lhd R\,,
\\
\{0\}\ =\ J (\Ann J\cap Z(R))\ =\ \Ann J\cap Z(R)\ =\ \Ann J\,.
\end{gather*}
Since ${Z(R)\subseteq Z(R)_{\mathcal{F}'}\subseteq Z(R_{\mathcal{F}'})}$ and 
for any ${x\in Z(R_{\mathcal{F}'})}$ 
${I x\subseteq R\cap Z(R_{\mathcal{F}'})=Z(R)}$ for some  
${I\in \mathcal{F}', x\in Z(R)_{\mathcal{F}'}, 
Z(R_{\mathcal{F}'})=Z(R)_{\mathcal{F}'}, 1\in R_{\mathcal{F}'}}$.
For ${R=P(R)}$ with ${Z(R)\ne \{0\}}$, the absence of $\mathcal{F}'$--torsion 
is equivalent to the absence of ${}_F \mathcal{F}'$--torsion, where 
\[
\mathcal{F}'\ =\ \mathcal{E}({}_{\CM(R)} Z(R))\ =\ 
\{\CM(R) I\mid I\in {}_F \mathcal{F}'\}\ \subseteq\ 
{}_F \mathcal{F}'\ =\ \mathcal{E}({}_F Z(R))\,.
\]
When they are satisfied,  
${Z(R) (\Ann_{\CM(R)} Z(R)) R=\{0\}}$, ${\Ann_{\CM(R)} Z(R)=\{0\}}$,
${Z(R)\in \mathcal{F}}$, 
$\mathcal{F}'=\{I\cap Z(R)\mid I\in \mathcal{F}\}\subseteq \mathcal{F}$, 
${\Hom(J, R)_{Z(R)}=\Hom(J, R)_{\CM(R)}}$ for all ${J\in \mathcal{F}'}$, 
\[
(\phi (\alpha x) -\alpha \phi x) J\ =\ \{0\}\,,\quad 
\phi (\alpha x)-\alpha \phi x\ =\ 0\quad 
(\phi\in \Hom(J, R)_{Z(R)},\ \alpha\in \CM(R),\ x\in J)\,,
\]
and every ${\psi\in \Hom(I, R)_{Z(R)}}$, ${I\in {}_F \mathcal{F}'}$, can be 
continued to ${\overline{\psi}\in \Hom(\CM(R) I, R)_{\CM(R)}}$, 
${\overline{\psi} \sum\limits_{i=1}^k \alpha_i x_i=
\sum\limits_{i=1}^k \alpha_i \psi x_i}$ for all 
${\alpha_i\in \CM(R)}$, ${x_i\in I}$, 
${k\geq 1}$, since ${\sum\limits_{i=1}^k \alpha_i x_i=0}$ implies 
\[
\biggl(\sum_{i=1}^k \alpha_i \psi x_i\biggr) y\ =\ 
\sum_{i=1}^k (\alpha_i y) \psi x_i\ =\ 
\psi\biggl(\sum_{i=1}^k (\alpha_i y) x_i\biggr)\ =\ 
\psi\biggl(\biggl(\sum_{i=1}^k \alpha_i x_i\biggr) y\biggr)\ =\ 0
\quad (y\in I)\,,
\]
${\biggl(\sum\limits_{i=1}^k \alpha_i (\psi x_i)\biggr) I=\{0\}}$, 
${\sum\limits_{i=1}^k \alpha_i (\psi x_i)=0}$. Thus, in this case 
${R_{\mathcal{F}'}=R_{\mathcal{F}}=O(R)}$ is isomorphic to 
$R_{{}_F \mathcal{F}'}$ as an $F$--algebra via  
${[(\psi, I)]\longmapsto [(\overline{\psi}, \CM(R) I)]}$, 
${\psi\in \Hom(I, R)_{Z(R)}}$, ${I\in {}_F \mathcal{F}'}$, the 
isomorphism of $R_{{}_F \mathcal{F}'}$ and $R_{\mathcal{F}'}$ over 
$F$ induces their isomorphism over 
\[
Z(R_{{}_F \mathcal{F}'})\ \cong\ Z(R_{\mathcal{F}'})\ =\ 
\CM(R_{\mathcal{F}'})\ =\ \CM(R) \Id_{R_{\mathcal{F}'}}\ =\ Q(Z(R))
\]
(Proposition 2.1; here ${Z(R)\lhd Q(Z(R))\cong \End(Z(R))_{Z(R)}}$).

\begin{prop}
If ${Z(I)=I\cap Z(R)\ne \{0\}}$ for all ${\{0\}\ne I\lhd R}$, then 
${R_{\mathcal{F}'}=P(R_{\mathcal{F}'})}$ and 
${\CM(R_{\mathcal{F}'})\cong Z(R_{\mathcal{F}'})=Z(R)_{\mathcal{F}'}}$.
\end{prop}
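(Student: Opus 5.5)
The plan is to reduce the statement to Remark 1.3 applied to the algebra $R_{\mathcal{F}'}$, with $R$ semiprime as in the surrounding discussion and ${\mathcal{F}'=\mathcal{E}(Z(R))}$. Remark 1.3 asks for three things, which I will verify in turn:
\begin{enumerate}
\item $R_{\mathcal{F}'}$ is semiprime;
\item ${Z(R_{\mathcal{F}'})=Q(Z(R_{\mathcal{F}'}))}$;
\item ${I\cap Z(R_{\mathcal{F}'})\ne \{0\}}$ for every ${\{0\}\ne I\lhd R_{\mathcal{F}'}}$.
\end{enumerate}
Once these hold, Remark 1.3 yields ${R_{\mathcal{F}'}=P(R_{\mathcal{F}'})}$ and ${\CM(R_{\mathcal{F}'})=Z(R_{\mathcal{F}'}) \Id_{R_{\mathcal{F}'}}\cong Z(R_{\mathcal{F}'})}$. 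The observations before the proposition give ${Z(R_{\mathcal{F}'})=Z(R)_{\mathcal{F}'}}$, which then completes the claim.

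First I set up the localization. The observations preceding the proposition show that the hypothesis ${I\cap Z(R)\ne\{0\}}$ makes $R$ free of $\mathcal{F}'$--torsion. Here the almost classical localization is taken over the semiprime commutative ring $Z(R)$; this ring is semiprime because a nilpotent ideal of $Z(R)$ generates a nilpotent ideal of $R$. Hence $R_{\mathcal{F}'}$ is defined and $R$ is an essential $Z(R)$--submodule of it. In particular, for every ${0\ne r\in R_{\mathcal{F}'}}$ there is ${J\in\mathcal{F}'}$ with ${\{0\}\ne J r\subseteq R}$. The same observations also give ${1\in R_{\mathcal{F}'}}$, ${Z(R)\subseteq Z(R_{\mathcal{F}'})}$ and ${Z(R_{\mathcal{F}'})=Z(R)_{\mathcal{F}'}=Q(Z(R))}$.

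Next come the two ideal-theoretic conditions (1) and (3). Let ${\{0\}\ne I\lhd R_{\mathcal{F}'}}$ and take ${0\ne r\in I}$. Then ${\{0\}\ne Jr\subseteq I\cap R}$, so ${I\cap R}$ is a nonzero ideal of $R$. For (1), suppose ${I^2=\{0\}}$; then ${(I\cap R)^2=\{0\}}$, which contradicts the semiprimeness of $R$, so $R_{\mathcal{F}'}$ is semiprime. For (3), the hypothesis applied to ${I\cap R}$ gives ${\{0\}\ne (I\cap R)\cap Z(R)\subseteq I\cap Z(R_{\mathcal{F}'})}$.

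For (2), I must show that ${Q(Z(R))}$ is its own complete ring of quotients. From Section 2, ${F_{\mathcal{F}}=Q(F)}$ is regular and self-injective, with ${\End(Q(F))_F=M(Q(F))\cong Q(F)}$. Applied with ${F=Z(R)}$, this gives ${Q(Q(Z(R)))=Q(Z(R))}$: every essential ideal of $Q(Z(R))$ meets $Z(R)$ in an element of $\mathcal{F}'$, and homomorphisms from it extend to multiplications. I expect this step to be the main (though mild) obstacle, because it needs care about how $\mathcal{E}$ is taken over $Q(Z(R))$ as opposed to over $Z(R)$. I would handle it via the isomorphism ${R_{\mathcal{F}'}\cong (R_{\mathcal{F}'})_{\mathcal{F}'}}$ and the self-injectivity statement quoted in Section 2.
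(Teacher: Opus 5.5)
Your reduction to Remark~1.3 is correct, but it is not the route the paper takes.

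\textbf{The paper's route.} The paper never invokes Remark~1.3. It starts from $\psi\in\Hom(H,R_{\mathcal{F}'})_{M(R_{\mathcal{F}'})}$ with $H\in\mathcal{E}(R_{\mathcal{F}'})$. It pulls $\psi$ back to $H'=R\cap\psi^{-1}R$, proving $H'\in\mathcal{E}(R)$ by clearing denominators with powers $T^n$ of ideals from $\mathcal{F}'$, and then to $H''=H'\cap Z(R)\in\mathcal{F}'$. It represents $\psi|_{H''}$ as multiplication by some $x\in R_{\mathcal{F}'}$ through the universal property of the almost classical localization. It then spreads the equality $\psi z=xz$ from $H''$ to $H'$ and to $H$, using $M$-linearity and the absence of $\mathcal{F}'$-torsion. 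Only at the end does it show $x\in Z(R_{\mathcal{F}'})$, via commutator and associator identities and the polynomial $f$ of Remark~2.3.

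\textbf{Your route.} You check three intrinsic properties of $R_{\mathcal{F}'}$: semiprimeness, that nonzero ideals meet the centre, and $Z(R_{\mathcal{F}'})=Q(Z(R_{\mathcal{F}'}))$. Remark~1.3 then does the rest, and centrality comes for free:
\begin{itemize}
\item an $M$-linear map sends $I\cap Z$ into the centre;
\item self-injectivity of the centre supplies the multiplier $\alpha$;
\item semiprimeness together with the ``ideals meet the centre'' property kills $\phi-\alpha\Id_I$.
\end{itemize}
Your argument is shorter and more modular. It avoids both the essentiality argument for $H'$ and the centrality computation for $x$. Its price is the extra input $Q(Q(Z(R)))=Q(Z(R))$, which the paper's direct argument never needs.

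\textbf{On that extra input.} The argument in your own sentence is the one that works. Let $K\lhd Q(Z(R))$ have zero annihilator. Then $K\cap Z(R)\in\mathcal{F}'$, because $Jk\subseteq K\cap Z(R)$ for some $J\in\mathcal{F}'$ and $Q(Z(R))$ has no $\mathcal{F}'$-torsion. A $Q(Z(R))$-linear map on $K$ is then multiplication by the element that represents its restriction to $K\cap Z(R)$. By contrast, the isomorphism $R_{\mathcal{F}'}\cong(R_{\mathcal{F}'})_{\mathcal{F}'}$ that you propose to use relocalizes at the same filter $\mathcal{F}'$ of $Z(R)$, not at $\mathcal{E}(Q(Z(R)))$. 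It therefore does not close this step, and you should rely on the direct argument instead.
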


\begin{proof}
Let ${\psi\in \Hom(H, R_{\mathcal{F}'})_{M(R_{\mathcal{F}'})}}$, 
${H\lhd R_{\mathcal{F}'}}$, and without loss of generality 
${H\in \mathcal{E}(R_{\mathcal{F}'})}$. Then 
${H'=R\cap \psi^{-1} R\in \mathcal{E}(R)}$, ${H''=H'\cap Z(R)\in \mathcal{F}'}$, 
since for any ${\{0\}\ne I\lhd R}$ there exist 
${0\ne x\in (I)_{R_{\mathcal{F}'}}\cap H}$, ${K, K', K\cap K'\in \mathcal{F}'}$, 
${K x\subseteq I\cap H}$, ${K' \psi x\subseteq R}$, 
${\{0\}\ne (K\cap K') x\subseteq I\cap H'}$ (${K=T^n}$, $T$ is the intersection 
of ${T'\in \mathcal{F}'}$, ${T' y\subseteq R}$, for all 
${y\in R_{\mathcal{F}'}}$ in the expression of $x$ as a non-associative 
polynomial of the elements of $I$ and $R_{\mathcal{F}'}$, $n$ is the maximum 
number of occurrences of $\{y\}$ in the monomials of this expression) and 
\[
\{0\}\ =\ (H''\cap \Ann H'')^2\ =\ H''\cap \Ann H''\ =\ 
H'\cap \Ann H''\cap Z(R)\ =\ \Ann H''
\]
(semiprimeness of $Z(R)$, ${\Ann H''\lhd R}$, ${H'\in \mathcal{E}(R)}$). 
Due to ${\psi'=\psi|_{H'}\in \Hom(H', R)_{M(R)'}}$, 
${\psi''=\psi'|_{H''}\in \Hom(H'', R)_{Z(R)}}$, there are 
${x\in R_{\mathcal{F}'}}$, ${\psi'': h\longmapsto h x}$, ${h\in H''}$
(see the beginning of this Sec.), 
\[
h \psi' g\ =\ \psi'(h g)\ =\ (\psi' h) g\ =\ (\psi'' h) g\ =\ 
(h x) g\ =\ h (x g)\quad (g\in H',\ h\in H'')\,,
\]
${\psi': g\longmapsto x g}$, ${g\in H'}$, and for any ${z\in H}$
there are ${J, J'\in \mathcal{F}'}$, ${J z, J' \psi z\subseteq R}$, 
\[
a \psi z\ =\ \psi(a z)\ =\ \psi'(a z)\ =\ 
a x z\quad (a\in J\cap J')\,,
\]
${\psi: z\longmapsto x z}$, ${z\in H}$, where for any ${z, z'\in H}$, 
${y\in R_{\mathcal{F}'}}$
\begin{gather*}
x (z y)\ =\ (x z) y\,,\quad x (y z)\ =\ y (x z)\,,\quad 
0\ =\ [x, z] z'\ =\ (x z) z'-(z x) z'\ =\ [x, z]\,,
\\
x (y z)\ =\ (y z) x\ =\ y (z x)\,,\quad (z x) z'\ =\ x (z z')\ =\ z (x z')
\end{gather*}
(${H''\in \mathcal{F}'}$, $R_{\mathcal{F}'}$ is without $\mathcal{F}'$--torsion), 
${f(x, z_2, \ldots, z_8)=0}$ for all ${z_i\in H}$, $f$ from Remark 2.3, 
${Z(H) x\subseteq Z(H)}$. For any ${y\in Z(H)}$ there is ${J\in \mathcal{F}'}$, 
\begin{gather*}
J y\ \subseteq\ R\cap Z(H)\ \subseteq\ 
Z(H\cap R)\ =\ H\cap R\cap Z(R)\ =\ H\cap R\cap Z(R_{\mathcal{F}'})\,,
\\
\{0\}\ =\ J f(y, x_2, \ldots, x_8)\ =\ f(y, x_2, \ldots, x_8)\quad 
(x_i\in R_{\mathcal{F}'})\,,
\end{gather*}
${y\in Z(R_{\mathcal{F}'})}$. Therefore ${Z(H)=H\cap Z(R_{\mathcal{F}'})}$,
\[ 
\{0\}\ =\ Z(H) f(x, x_2, \ldots, x_8)\ =\ f(x, x_2, \ldots, x_8)\quad 
(x_i\in R_{\mathcal{F}'})\,,
\]
${x\in Z(R_{\mathcal{F}'})}$, ${\psi=x \Id_H}$. Thus, 
${R_{\mathcal{F}'}=P(R_{\mathcal{F}'})}$, 
${\CM(R_{\mathcal{F}'})\cong Z(R_{\mathcal{F}'})}$.
If $R$ is prime, then $R_{\mathcal{F}'}=P(R)=R S^{-1}$ is a central simple 
over the field ${Z(R)_{\mathcal{F}'}=Z(R)_{\mathcal{F}'}=Q(Z(R))=Z(R) S^{-1}}$, 
${S=Z(R)\setminus \{0\}}$. 
\end{proof}

The condition of Proposition 2.7 is satisfied, for example, in all non-zero 
non-degenerate alternative $PI$--algebras \cite{Mar, Row}, \cite{ZShS}, 
Lemma 7, Theorems 3, 7, p. 224, 212, 225. 
                            
If $R$ is a semiprime subalgebra of the algebra $Q$ and for any ${0\ne q\in Q}$ 
there is ${I\in \mathcal{E}(R)}$, ${\{0\}\ne (q)_R I+I (q)_R\subseteq R}$, 
where ${(q)_R=q M^Q(R)'}$, then, as in \cite{GolO}, Theorem 3.10, we will call 
$Q$ an \emph{algebra of $\mathfrak{M}$--quotients of $R$}. Such $Q$ inherits 
the semiprimeness of $R$, ${Q\hookrightarrow I(R)}$ identically on $R$ and 
${(q)_R I+I (q)_R\ne \{0\}}$ for all ${0\ne q\in Q}$, ${I\in \mathcal{E}(R)}$ 
($R$ is an essential $M^Q(R)'$--submodule of $Q$; ${\{0\}\ne (q)_R\cap R\lhd R}$). 
In case ${N(R)\subseteq N(Q)}$, ${Z(R)\subseteq Z(Q)}$, since for any 
${z\in Z(R)}$, ${q\in Q}$, ${I\in \mathcal{E}(R)}$, 
${(q)_R I+I (q)_R\subseteq R}$, 
\begin{gather*}
[z, q] t_x\ =\ (z q-q z) t_x\ =\ [z, q t_x]\quad (x\in R,\ t=l, r)\,,
\\
([z, q])_R I+I ([z, q])_R\ =\ [z, (q)_R] I+I [z, (q)_R]\ =\ 
[z, (q)_R I+I (q)_R]\ =\ \{0\}\,,\quad [z, q]\ =\ 0\,.
\end{gather*}

\begin{prop}
If $R$ is a semiprime subalgebra of the algebra $Q$, 
${I\cap Z(R)\ne \{0\}}$ for all ${\{0\}\ne I\lhd R}$, for any 
${0\ne q\in Q}$ there is ${J\in \mathcal{E}(R)}$, 
${\{0\}\ne q J\subseteq R}$ or (and) ${\{0\}\ne J q\subseteq R}$, 
and ${Z(R)\subseteq Z(Q)}$, then $Q$ is an algebra of 
$\mathfrak{M}$--quotients of $R$, 
${Q_{\mathcal{F}'_Q}\cong Q_{\mathcal{F}'}=R_{\mathcal{F}'}}$,  
${\mathcal{F}'_Q=\mathcal{E}(Z(Q))}$, 
${P(Q_{M(R)'})=P(Q)}$, ${\CM(Q)\cong \CM(R)}$
and, in particular, ${O(R)=O(Q)}$ for ${R=P(R)}$.
\end{prop}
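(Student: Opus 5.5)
First I will show that $Q$ is an algebra of $\mathfrak{M}$--quotients of $R$. Take $0\ne q\in Q$ and $J\in\mathcal{E}(R)$ with, say, $\{0\}\ne qJ\subseteq R$; the case $Jq$ is symmetric. Put $J'=J\cap Z(R)$. I expect $J'\in\mathcal{F}'$ and $\Ann J'=\{0\}$, by the argument used before Proposition 2.7: $\Ann J'$ is an ideal of $R$ meeting $Z(R)$ trivially. Since $Z(R)\subseteq Z(Q)$, every element of $J'$ commutes and associates with $Q$, so
\[
(q)_R J'\ =\ (qJ')M^Q(R)'\ \subseteq\ R\,.
\]
Here $(q y)t_x=(q t_x)y$ for $y\in J'$, and $R$ is closed under $M(R)$. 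Hence $(q)_R I+I(q)_R\subseteq R$ for $I=(J')_R\in\mathcal{E}(R)$. To see this is non-zero, I would use that $qJ'\neq\{0\}$: if $qJ'=\{0\}$, then $qJ$ would be an ideal-like subset of $R$ annihilated by $J'$, hence zero. So $Q$ is an algebra of $\mathfrak{M}$--quotients of $R$. The rest then follows from the general facts recalled after that definition: $Q$ is semiprime, $Q\hookrightarrow I(R)$, and $R$ is essential in $Q_{M^Q(R)'}$.

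Next comes the localizations. $Q$ is without $\mathcal{F}'$--torsion, since $J'q=\{0\}$ with $J'\in\mathcal{F}'$ forces $q=0$ by the non-vanishing just shown. So $Q_{\mathcal{F}'}$ is defined. For each $q$ there is $J'\in\mathcal{F}'$ with $J'q\subseteq R$, so the map $[(\phi,I)]\mapsto[(\phi',I J')]$ embeds $Q_{\mathcal{F}'}$ into $R_{\mathcal{F}'}$, where $J'$ is chosen so that it pushes $\phi I$ into $R$ (intersecting finitely many members of $\mathcal{F}'$ as in the proof of Proposition 2.7). The obvious inclusion $R_{\mathcal{F}'}\subseteq Q_{\mathcal{F}'}$ gives the inverse, so $Q_{\mathcal{F}'}=R_{\mathcal{F}'}$.

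For $\mathcal{F}'_Q=\mathcal{E}(Z(Q))$, I would show that $Q$ satisfies the hypothesis of Proposition 2.7: for $\{0\}\ne K\lhd Q$ we have $K\cap R\ne\{0\}$ by essentiality, so $K\cap Z(R)\ne\{0\}$ and thus $K\cap Z(Q)\ne\{0\}$. The step I expect to be the main obstacle is relating $\mathcal{F}'$ and $\mathcal{F}'_Q$. I would try to show that $Z(R)\subseteq Z(Q)$ is an essential extension of commutative rings with $Z(Q)$ inside $Z(R)_{\mathcal{F}'}$: any $z\in Z(Q)$ has $J'z\subseteq R\cap Z(Q)\subseteq Z(R)$. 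Then $Q(Z(Q))=Q(Z(R))$ and $\mathcal{F}'=\{I\cap Z(R)\mid I\in\mathcal{F}'_Q\}$, so localizing at either filter gives isomorphic algebras: $Q_{\mathcal{F}'_Q}\cong Q_{\mathcal{F}'}$.

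Finally, by Proposition 2.7 applied to $R$ and to $Q$, both $R_{\mathcal{F}'}=Q_{\mathcal{F}'}$ are central closures with centroid $Z(R)_{\mathcal{F}'}\cong Q(Z(R))$. Since $Q$ lies between $R$ and $P(R)$ inside $I(R)$, and $R$ is essential, this gives $P(Q_{M(R)'})=P(Q)=P(R)$ and $\CM(Q)\cong\CM(R)$. The equality of quasi-injective hulls follows because $M(R)'$--essential and $M(Q)'$--essential extensions coincide here; the identification of $M^Q(R)'$ with the restriction of $M(Q)'$ is as in Proposition 2.1. For $R=P(R)$, the discussion before Proposition 2.7 gives $O(R)=R_{\mathcal{F}'}$, and likewise $O(Q)=Q_{\mathcal{F}'_Q}$, so $O(R)=O(Q)$.
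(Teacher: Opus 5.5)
The final paragraph rests on a false claim: $Q$ need not lie between $R$ and $P(R)$. Your earlier steps give only $R\subseteq Q\subseteq Q_{\mathcal{F}'}=R_{\mathcal{F}'}$. In general $R_{\mathcal{F}'}$ is much larger than $P(R)$; it equals $O(R)$ when $R=P(R)$. For instance, $R=\bigoplus_{i\in\mathbb{N}}M_2(k)\subseteq Q=\prod_{i\in\mathbb{N}}M_2(k)$ satisfies all the hypotheses (with $J=R$). Yet $P(R)=\CM(R)R=R$ while $P(Q)=Q$. So your conclusion $P(Q)=P(R)$ is wrong, and with it your derivation of $P(Q_{M(R)'})=P(Q)$ and of $\CM(Q)\cong\CM(R)$ collapses.

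Two further points do not close the gap. Proposition 2.7 describes only the Martindale centroids of the localizations, not those of $Q$ and $R$. And ``$M(R)'$-- and $M(Q)'$--essential extensions coincide'' does not compare quasi-injective hulls over two different multiplication algebras.

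The missing idea is the paper's comparison of $M(R)'$-- and $M(Q)'$--linear maps into $P(Q)$.
\begin{itemize}
\item For finitely many $\psi_i\in M(Q)'$ there is $G\in\mathcal{F}'$ with $G\psi_i\subseteq M^Q(R)'$. Take $G$ to be a power of a finite intersection of members of $\mathcal{F}'$ that move the elements of $Q$ occurring in the $\psi_i$ into $R$.
\item Since $P(Q)$ has no $\mathcal{F}'$--torsion, $\sum_i a_i\psi_i=0$ forces $\sum_i(\phi a_i)\psi_i=0$ for every $\phi\in\Hom(M,P(Q))_{M(R)'}$. So $\phi$ extends $M(Q)'$--linearly to $MM(Q)'$.
\item Hence $\End(P(Q))_{M(R)'}=\CM(Q)$ and $P(Q)=\End(I(R))_{M(R)'}Q=P(Q_{M(R)'})$.
\item The map $\beta\longmapsto\beta|_{P(R)}$ is then an isomorphism $\CM(Q)\to\CM(R)$. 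It is injective because $\beta R=\{0\}$ gives $H(\beta q)=\beta(Hq)=\{0\}$ for $H\in\mathcal{F}'$ with $Hq\subseteq R$.
\end{itemize}
For the centroid alone you could stay closer to your own route. Show directly that $\CM(A)\cong Q(Z(A))$ for $A=R,Q$, by restricting $M(A)'$--maps on essential ideals $I$ to $I\cap Z(A)$. Then combine this with $Q(Z(Q))=Q(Z(R))$, which you essentially have.

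The step ``$O(Q)=Q_{\mathcal{F}'_Q}$'' has a similar problem. It cannot be read off from the discussion before Proposition 2.7, which assumes $Q=P(Q)$. That fails in general: take the eventually constant sequences in $\prod_i M_2(k)$ over $R=\bigoplus_i M_2(k)$. Argue instead that $P(Q)=\CM(R)Q\subseteq O(R)$, and hence $O(R)\subseteq O(Q)\subseteq O(O(R))=O(R)$.

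There are also some smaller points to fix.
\begin{itemize}
\item To get $J\cap Z(R)\in\mathcal{F}'$, you need that $J\cap\Ann J'$ (not $\Ann J'$) meets $Z(R)$ trivially; then use essentiality of $J$.
\item Torsion-freeness for an arbitrary $H\in\mathcal{F}'$ comes from $H(qJ)=\{0\}\Rightarrow qJ\subseteq\Ann(R,H)=\{0\}$. It does not follow from $qJ'\neq\{0\}$ for your particular $J'$.
\item The possibly infinite set $\phi I$ cannot be pushed into $R$ by finitely many intersections. Use $I\cap\phi^{-1}R$ instead, which lies in $\mathcal{F}'$.
\item The claim $\mathcal{F}'=\{I\cap Z(R)\mid I\in\mathcal{F}'_Q\}$ is too strong; only $\supseteq$ holds, e.g.\ for $k[t^2,t^3]\subseteq k[t]$. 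So $Q_{\mathcal{F}'_Q}\cong Q_{\mathcal{F}'}$ needs the explicit restriction map, together with the extension $\overline{\psi}$ from $W$ to $WZ(Q)^1$ and a check that it is well defined.
\end{itemize}
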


\begin{proof}
Due to the observations before Proposition 2.7, $R$ has no   
$\mathcal{F}'$--torsion, for any ${H\in \mathcal{F}'}$
\[
\{0\}\ =\ \Ann(R, H)\ =\ R\cap \Ann(Q, H)\ =\ \Ann(Q, H)\ =\ 
\Ann_t(R, (H)_R)\quad (t=l, r)\,,
\]
$(H)_R=H R^1\in \mathcal{E}(R)$,
$Q$ is without $\mathcal{F}'$--torsion. If 
${I\in \mathcal{E}(R)}$, ${K=\Ann(R, I\cap Z(R))}$, then
${\{0\}=(I\cap K\cap Z(R))^2=I\cap K\cap Z(R)=I\cap K=K}$ 
(semiprimeness of $Z(R)$, ${K\lhd R}$ and ${I\cap K\cap Z(R)\lhd Z(R)}$), 
${I\cap Z(R)\in \mathcal{F}'}$, ${\Ann_t(Q, I)\subseteq \Ann(Q, I\cap Z(R))=\{0\}}$, 
$t=l, r$. As a consequence, for any ${0\ne q\in Q}$, ${I\in \mathcal{E}(R)}$, 
${q I\subseteq R}$ (${I q\subseteq R}$) 
\[
\{0\}\ \ne\ (I\cap Z(R)) q\ \subseteq\
(q)_R (I\cap Z(R))_R+(I\cap Z(R))_R (q)_R\ \subseteq\ R\,,
\]
${r_q|_{I\cap Z(R)}\in \Hom(I\cap Z(R), R)_{Z(R)}}$, $Q$ is an algebra 
of $\mathfrak{M}$--quotients of $R$, $Q\subseteq R_{\mathcal{F}'}\subseteq 
Q_{\mathcal{F}'}\subseteq (R_{\mathcal{F}'})_{\mathcal{F}'}$, 
${Q_{\mathcal{F}'}=R_{\mathcal{F}'}}$ (up to isomorphism). Since 
\[
\{0\}\ \ne\ (x M(Q)'\cap Q) H\ \subseteq\ (x H) M(Q)'\cap Q\quad 
(0\ne x\in P(Q),\ H\in \mathcal{F}')\,, 
\]
$P(Q)$ has no $\mathcal{F}'$--torsion, for any 
${\phi\in \Hom(M, P(Q))_{M^Q(R)'}=\Hom(M, P(Q))_{M(R)'}}$, 
$M_{M(R)'}\subseteq P(Q)_{M(R)'}$ 
(${M(R)'\cong M^{Q_{\mathcal{F}'}}(R)'\cong M^Q(R)'}$) 
from ${\sum\limits_{i=1}^k a_i \psi_i=0}$, ${a_i\in M}$, 
${\psi_i\in M(Q)'}$, ${k\geq 1}$, it follows that
\[
0\ =\ \phi\biggl(g \sum_{i=1}^k a_i \psi_i\biggr)\ =\ 
g \sum_{i=1}^k (\phi a_i) \psi_i\ =\ 
\sum_{i=1}^k (\phi a_i) \psi_i\quad (g\in G)
\]
for ${G\in \mathcal{F}'}$, ${G \psi_i\in M^Q(R)'}$ (${G=T^n}$, $T$ is the 
intersection of ${T'\in \mathcal{F}'}$, ${T' q\subseteq R}$, for all 
${q\in Q}$ in the expressions of $\{\phi_i\}$ as associative polynomials 
of $t_q$, ${t=l, r}$, $n$ is the maximum number of occurrences of 
$\{t_q\}$ in the monomials of these expressions) and hence $\phi$ 
can be correctly continued to 
${\hat{\phi} \Id_{M M(Q)'}\in \Hom(M M(Q)', P(Q))_{M(Q)'}}$, 
${\hat{\phi}\in \CM(Q)}$, 
${\hat{\phi}\Bigl(\sum\limits_{i=1}^k a_i \psi_i\Bigr)=
\sum\limits_{i=1}^k (\phi a_i) \psi_i}$, ${a_i\in M}$, ${\psi_i\in M(Q)'}$. 
Thus, 
\[
P(Q)\ =\ \End(P(Q))_{M(R)'} Q\ =\ P(Q_{M(Q)'})\ =\ \End(I(R))_{M(R)'} Q\,,
\]
${\CM(Q)=\End(P(Q))_{M(R)'}\cong \CM(R)}$, 
the last isomorphism is ${\beta\longmapsto \beta|_{P(R)}, 
\beta\in \CM(Q)}$ (if ${\beta R=\{0\}}$, then ${\beta=0}$, 
due to ${\beta(H q)=H (\beta q)=\{0\}}$, 
${\beta q=0}$ for all ${q\in Q}$, ${H\in \mathcal{F}'}$, 
${H q\subseteq R}$).  

If ${E\in \mathcal{F}'_Q}$, ${S=\Ann(R, E\cap R)}$, then 
${\{0\}=J q (S\cap Z(R))=q (S\cap Z(R))}$ for all ${q\in E}$, 
${J\in \mathcal{F}'}$, ${J q\subseteq E\cap R}$, 
${\{0\}=S\cap Z(R)=S}$, ${E\cap R=E\cap Z(R)\in \mathcal{F}'}$ 
(${\Ann_{Z(Q)} E=\{0\}}$, $Q$ has no $\mathcal{F}'$--torsion), 
${\Ann(Q, E)\subseteq \Ann(Q, E\cap R)=\{0\}}$, $Q$ has no 
$\mathcal{F}'_Q$--torsion. If ${H\in \mathcal{F}'}$, then 
${\Ann(Q, (H)_{Z(Q)})=\Ann(Q, H)=\{0\}}$, 
${(H)_{Z(Q)}=H Z(Q)^1\in \mathcal{F}'_Q}$. 

If ${I\in \mathcal{F}'_Q}$, ${\phi\in \Hom(I, Q)_{Z(Q)}}$, then 
${I''=R\cap \phi^{-1} R=\{x\in I\cap R\mid \phi x\in R\}\in \mathcal{F}'}$, 
$\phi|_{I''}\in \Hom(I'', R)_{Z(R)}$, since for ${U=\Ann(R, I'')}$ and any 
${q\in I}$ there are ${V, V'\in \mathcal{F}'}$, 
${V q, V' \phi q\subseteq R}$, ${(V\cap V') q\subseteq I''}$, 
${\{0\}=(V\cap V') q (U\cap Z(R))=q (U\cap Z(R))=U\cap Z(R)=U}$. 

Any ${\psi\in \Hom(W, R)_{Z(R)}}$, ${W\in \mathcal{F}'}$, extends  
to ${\overline{\psi}\in \Hom((W)_{Z(Q)}, Q)_{Z(Q)}}$,
$\overline{\psi} \Bigl(\sum\limits_{i=1}^k x_i z_i\Bigr)=
\sum\limits_{i=1}^k (\psi x_i) z_i$,
${x_i\in J}$, ${z_i\in Z(Q)^1}$, ${k\geq 1}$. Since for 
${\sum\limits_{i=1}^k x_i z_i=0}$ and ${N\in \mathcal{F}'}$, 
$N z_i\subseteq R\cap Z(Q)=Z(R)$, ${i=1, \ldots, k}$, 
\[
0\ =\ \psi \biggl(a \sum_{i=1}^k x_i z_i\biggr)\ =\ 
a \sum_{i=1}^k (\psi x_i) z_i\ =\ \sum_{i=1}^k (\psi x_i) z_i
\quad (a\in N)\,,
\]
such a continuation is correct. Therefore, 
${Q_{\mathcal{F}'_Q}\cong Q_{\mathcal{F}'}}$ via mutually inverse 
$F$--isomorphisms ${[(\phi, I)]\longmapsto [(\phi|_{I''}, I'')]}$ 
and ${[(\psi, W)]\longmapsto [(\overline{\psi}, (W)_{Z(Q)})]}$, 
${I\in \mathcal{F}'_Q}$, ${\phi\in \Hom(I, Q)_{Z(Q)}}$, 
${W\in \mathcal{F}'}$, ${\psi\in \Hom(W, R)_{Z(R)}}$,  
${h \phi q=\phi|_{I''} (h q)=\overline{\phi|_{I''}} (h q)=
h \overline{\phi|_{I''}} q}$ for all ${q\in (I'')_{Z(Q)}}$, 
${h\in H\in \mathcal{F}'}$, ${H q\subseteq I''}$, 
${\phi|_{(I'')_{Z(Q)}}=\overline{\phi|_{I''}}}$,
${[(\phi, I)]=[(\overline{\phi|_{I''}}, (I'')_{Z(Q)})]}$. 
For ${R=P(R)}$ one has $Q_{\mathcal{F}'}=O(R)=P(O(R))$, 
${Z(O(R))=\CM(R) \Id_{O(R)}=Q(Z(R))=\CM(O(R))}$ 
(${Q_{\mathcal{F}'}=R_{\mathcal{F}'}}$, Proposition 2.1 and the 
observation before Proposition 2.7), 
\[
h (\beta q)\ =\ \beta|_R (h q)\ =\ h (\beta|_R q)\,,\quad 
\beta q\ =\ \beta|_R q\quad (\beta\in \CM(Q),\ q\in Q,\ 
h\in H\in \mathcal{F}',\ H q\subseteq R)\,,
\]
$\beta|_R q$ is the product of ${\beta|_R\in \CM(R)}$ and $q$ in the 
$\CM(R)$--algebra $O(R)$, $P(Q)=\CM(R) Q\subseteq O(R)$, 
${\CM(Q)=\End(P(Q))_{M(Q)'}=\CM(R) \Id_{P(Q)}}$ (Proposition 2.1, 
\cite{GolO}, Proposition 2.4, p. 4), $O(Q)$ in $I(P(Q))$ can be identified 
with ${O(Q)=O(R)}$ in $O(R)$. If $Q$ is a $\CM(R)$--algebra, then 
${Q=P(Q)}$, ${Z(R)\subseteq Z(Q)}$ automatically (${Z(R)\lhd \CM(R)}$).
\end{proof}

Under the conditions of Proposition 2.8, its conclusion applies, in 
particular, to ${Q=R_{\mathcal{F}'}}$. In the general case, the 
orthogonal completion and the algebra of quotients of a semiprime 
algebra are different. So, for example, if $R$ is a simple Lie 
algebra with exterior derivations and $Q_m(R)$ is its maximal 
algebra of quotients \cite{Mol, GolO}, then ${R=P(R)=O(R)}$ 
and $R\cong \ad(R)\subsetneqq Q_m(R)=\Der(R)$. 

\begin{lemma}
If an algebra ${R=P(R)}$ is a free $\CM(R)$--module with basis 
$\{x_a\}_{a\in A}$, then $R$ is a subdirect product of algebras 
$R_B$, ${B\in \mathcal{U}(R)}$, 
\[
R\ \hookrightarrow\ C R\ \subseteq\ R'\ =\ 
\prod_{B\in \mathcal{U}(R)} R_B\,,\quad 
C R\ \hookrightarrow \overline{C} R\ \subseteq\ 
R''\ =\ \prod_{B\in \mathcal{U}(R)} 
(\overline{\CM(R)_B}\mathbin{\otimes_{\CM(R)_B}} R_B)\,, 
\]
where ${C=\prod\limits_{B\in \mathcal{U}(R)} \CM(R)_B}$ and  
${\overline{C}=\prod\limits_{B\in \mathcal{U}(R)} 
\overline{\CM(R)_B}}$ are direct products of fields $\CM(R)_B$, 
${B\in \mathcal{U}(R)}$, and their algebraic closures, 
$C$--algebra ${C R}$ and $\overline{C}$--algebra 
${\overline{C} R}$ are free modules over $C$ and $\overline{C}$ 
with basis $\{x_a\}_{a\in A}$, 
${C R=R'}$ and ${\overline{C} R=R''}$ for ${|A|< \infty}$.  
\end{lemma}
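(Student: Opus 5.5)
The plan is to reduce everything to coordinate computations in the fixed basis $\{x_a\}_{a\in A}$, using only that $\CM(R)$ is a commutative regular algebra and that $B(R)$ is its Boolean ring of idempotents.

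\emph{Step 1: the ideals $P\CM(R)$.} For $B\in \mathcal{U}(R)$ and ${P=B(R)\setminus B}$, the ideal $P\CM(R)$ is a maximal ideal of $\CM(R)$; this is used in the proof of Lemma 2.4, following \cite{GolO}. Conversely, every prime ideal $M$ of the regular ring $\CM(R)$ is generated by its idempotents: if $\alpha\in M$, then ${\alpha=\alpha\beta\alpha}$ and $\alpha\beta\in M\cap B(R)$. The set $B(R)\setminus M$ is an ultrafilter $B$, and then $M=P\CM(R)$. Hence $B\mapsto P\CM(R)$ is a bijection of $\mathcal{U}(R)$ onto the maximal ideals of $\CM(R)$. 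A regular ring has zero Jacobson radical, so
\[
\bigcap_{B\in \mathcal{U}(R)} P\CM(R)\ =\ \{0\}\,.
\]
It follows that $\CM(R)\hookrightarrow C$ via $\alpha\longmapsto(\alpha+P\CM(R))_B$.

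\emph{Step 2: the algebras $R_B$ and the embedding into $R'$.} Because $R=\bigoplus_a \CM(R)x_a$, we have ${PR=\bigoplus_a P\CM(R)\,x_a}$. Therefore $R_B=\bigoplus_a \CM(R)_B\,(x_a+PR)$ is a free module over the field $\CM(R)_B$, with basis the images of $\{x_a\}$. If $x=\sum_a \alpha_a x_a$ (a finite sum) lies in every $PR$, then each $\alpha_a$ lies in every $P\CM(R)$, so $\alpha_a=0$ by Step 1. Thus the diagonal map $R\to R'$ is an injective algebra homomorphism whose components $R\to R_B$ are surjective, so $R$ is a subdirect product of the $R_B$.

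\emph{Step 3: $CR$ and $\overline{C}R$.} Let $C$ act on $R'$ coordinatewise, and define $CR$ as the $C$-span of the image of $R$. The structure constants satisfy $x_ax_b=\sum_c\gamma^c_{ab}x_c$ with $\gamma^c_{ab}\in\CM(R)\subseteq C$, so $CR$ is a $C$-subalgebra of $R'$.

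Freeness of $CR$: suppose $\sum_a c_a x_a=0$ in $R'$ with finitely many nonzero $c_a\in C$. In each coordinate $B$ the images of the $x_a$ form a basis of $R_B$ over $\CM(R)_B$, so $c_a(B)=0$ for every $B$, hence $c_a=0$.

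For the $\overline{C}$ case, $\overline{\CM(R)_B}\otimes_{\CM(R)_B} R_B$ has basis $\{1\otimes x_a\}$ over $\overline{\CM(R)_B}$. The same coordinatewise argument then shows that $\overline{C}R\subseteq R''$ is a free $\overline{C}$-module with basis $\{x_a\}$, and that $CR\hookrightarrow\overline{C}R$ via $C\hookrightarrow\overline{C}$.

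When $|A|<\infty$, an arbitrary element of $R'$ (or of $R''$) has, in each coordinate $B$, a decomposition $\sum_a c_a(B)\,x_a$. Collecting the coefficients coordinatewise gives $c_a\in C$ (respectively $\in\overline{C}$), so the element equals $\sum_a c_a x_a$. Hence $CR=R'$ and $\overline{C}R=R''$. For infinite $A$ this fails, because the coordinates may use unboundedly many basis elements.

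\emph{Expected main obstacle.} The step requiring the most care is Step 1 together with the identity $PR=\bigoplus_a P\CM(R)x_a$. One must make sure that the ultrafilters of $B(R)$ exhaust all maximal ideals of $\CM(R)$, so that their intersection is genuinely zero and no nonzero coefficient survives every reduction. Everything after that is routine coordinatewise linear algebra.
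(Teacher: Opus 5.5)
Your proposal is correct and is essentially the paper's proof. Like the paper, you identify the maximal ideals of the regular ring $\CM(R)$ with the $P\CM(R)$, $P=B(R)\setminus B$, $B\in\mathcal{U}(R)$; you use $PR=\sum_a P\CM(R)x_a$ to obtain the bases $\{x_a+PR\}$ of the $R_B$ and $\bigcap PR=\{0\}$; and you then argue coordinatewise in $R'$ and $R''$. The only difference is that you derive this description of the maximal ideals of $\CM(R)$ directly from regularity, whereas the paper cites it from \cite{GolO}. Your closing aside should say only that $CR=R'$ \emph{may} fail for infinite $A$: it holds, for instance, whenever $\mathcal{U}(R)$ is finite, though the lemma makes no claim for infinite $A$ anyway.
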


\begin{proof}
Since the proper ideals of 
\begin{gather*}
\Spec(\CM(R))\ =\ \{P \CM(R)\mid P\in \Spec(B(R))\}\,,
\\
\Spec(B(R))\ =\ \{B(R)\setminus B\mid B\in \mathcal{U}(R)\}\cup \{B(R)\}\ =\ 
\{B(R)\cap Q\mid Q\in \Spec(\CM(R))\}
\end{gather*}
are the maximal ideals of $\CM(R)$, $B(R)$ \cite{GolO}, the observations before 
Remark 2.2 and Lemma 2.5, $\CM(R)$ is a subdirect product of the fields 
$\CM(R)_B$, ${B\in \mathcal{U}(R)}$, $R_B$ is a $\CM(R)_B$--space with basis 
${\{x_{a, B}=x_a+P R\}_{a\in A}}$, where ${x_{a, B}\ne 0}$ for all ${a\in A}$, 
${B\in \mathcal{U}(R)}$, ${P=B(R)\setminus B}$, 
\[
\bigcap_{P\in \Spec(B(R))} P R\ =\ \bigcap_{P\in \Spec(B(R))} 
\Bigl(\sum_{a\in A} P \CM(R) x_a\Bigr)\ =\ 
\sum_{a\in A} \biggl(\bigcap_{P\in \Spec(B(R))} P \CM(R)\biggr) x_a\ =\ \{0\}
\]
$R$ is a subdirect product of $R_B$, ${B\in \mathcal{U}(R)}$. We will 
identify the elements of ${S=\CM(R), R}$ with the functions 
${s: B\longmapsto s_B=s+P S}$, ${s\in S}$, ${B\in \mathcal{U}(R)}$, $\CM(R)$ 
and $R$ with the $F$--subalgebra $C$ and the $\CM(R)$--subalgebra of the 
direct product $R'$ of the algebras $R_B$, ${B\in \mathcal{U}(R)}$ 
(${\phi x: B\longmapsto \phi_B x_B}$, ${\phi\in \CM(R)}$, ${x\in R}$, 
${B\in \mathcal{U}(R)}$), $R_B$ with the $\CM(R)_B$--subalg\-ebra 
${1\otimes R_B}$ of its scalar extension 
${\overline{\CM(R)_B}\mathbin{\otimes_{\CM(R)_B}} R_B=
\overline{\CM(R)_B} R_B}$ over $\overline{\CM(R)_B}$, $B\in \mathcal{U}(R)$, 
$R'$ with the $C$--subalgebra of the $\overline{C}$--algebra $R''$. Then  
${C R=\sum\limits_{a\in A} C x_a\subseteq 
\overline{C} R=\sum\limits_{a\in A} \overline{C} x_a}$, 
${C R=R'}$ and ${\overline{C} R=R''}$ for ${|A|< \infty}$, the $C$--subalgebra 
${C R\subseteq R'}$ and the $\overline{C}$--subalgebra 
${\overline{C} R\subseteq R''}$ are free modules over $C$ and $\overline{C}$ 
with basis $\{x_a\}_{a\in A}$ ($\{x_{a, B}\}_{a\in A}$ is a basis of $R_B$ 
over $\CM(R)_B$ and $\overline{\CM(R)_B} R$ over 
$\overline{\CM(R)_B}$, ${B\in \mathcal{U}(R)}$). 
\end{proof}

Before discussing decompositions of orthogonally complete non-degenerate 
algebras, we make two useful observations. Let $R$ be a subdirect product 
of semiprime $\{R_a\}_{a\in A}$, $R'=\prod\limits_{a\in A} R_a$ be their 
direct product, $O'(R)$ be the orthogonal completion of ${R\subseteq R'}$ 
in $O(R')$ and ${\pi_a\in \End(R')_{M(R')'}\cap B(R')}$ 
(${\End(R')_{M(R')'}\hookrightarrow \CM(R')}$), 
${\pi_a|_{R_b}=\delta_{a, b} \Id_{R_b}}$, ${a, b\in A}$. Since 
$R'$ is an essential $M(R')'$--submodule of $P(R')$, $I(R')$, 
${\pi_a \pi_b R'=\{0\}}$, ${\pi_a \pi_b=0}$ for all ${a\ne b\in A}$, 
and for ${\alpha\in \Ann_{\CM(R')} \{\pi_a\}_{a\in A}}$ from 
${\pi_a (R'\cap \alpha P(R'))=\{0\}}$ for all ${a\in A}$ it follows 
that ${\{0\}=R'\cap \alpha P(R')=\alpha P(R')}$, ${\alpha=0}$. Hence 
$\{\pi_a\}_{a\in A}$ is dense and orthogonal in $B(R')$ and, due to 
${\pi_a R=R_a\subseteq B(R') R\subseteq O'(R)}$, ${a\in A}$, 
\[
R'\ =\ \biggl\{{\sum\limits_{a\in A}}^{\perp} \pi_a r_a\biggl| 
r_a\in R_a,\ a\in A\biggr\}\subseteq O'(R)\,,\quad O(R')\ =\ O'(R)\,.
\]
At the same time, ${O(R')=\prod\limits_{a\in A} \pi_a O(R')}$, 
${\pi_a O(R')=O'(R_a)\cong O(R_a)}$, ${a\in A}$, since 
\begin{multline*}
\Hom(M, \pi_a P(R'))_{M(R')'}\ =\ \Hom(M, \pi_a P(R'))_{M(R_a)'}\ =
\\ 
\pi_a \Hom(M, P(R'))_{M(R')'}\ =\ \pi_a \CM(R') \Id_M\quad 
(M_{M(R')'}\subseteq \pi_a P(R'))\,,
\\
\shoveleft{
\pi_a I(P(R')_{M(P(R'))'})\ =\ I(\pi_a P(R')_{M(P(R'))'})\ =\ 
I(\pi_a P(R')_{\pi_a M(P(R'))'})\ =}
\\ 
\shoveright{
I(\pi_a P(R')_{M(\pi_a P(R'))'})\ \cong\ I(P(R_a)_{M(P(R_a))'})\,,}
\\
\shoveleft{
\End(\pi_a P(R'))_{M(P(R'))'}\ =\ \End(\pi_a P(R'))_{M(R')'}\ =}
\\
\shoveright{\End(\pi_a P(R'))_{M(R_a)'}\ =\  
\pi_a \CM(R')\ \cong\ \CM(R_a)\,,}
\\
\shoveleft{\pi_a P(R')\ =\ P(\pi_a P(R')_{\pi_a M(P(R'))'})\ =\ 
P(\pi_a P(R')_{M(R_a)'})\ =}
\\ 
\CM(R') \pi_a R'\ =\ \CM(R') R_a\ \cong\ P(R_a)
\end{multline*}
for all ${a\in A}$ \cite{GolO}, the observations before Lemma 2.6 taking into 
account the essentiality of $M(R_a)'$--submodule ($M(R')'$--submodule)  
${R'\cap \pi_a P(R')=R_a}$ in $\pi_a P(R')$, 
\begin{gather*}
P(R')\ =\ \prod_{a\in A} \pi_a P(R')\ \cong\ 
\prod_{a\in A} P(R_a)\,,\quad 
\CM(R')\ =\ \prod_{a\in A} \pi_a \CM(R')\ \cong\ 
\prod_{a\in A} \CM(R_a)\,,
\\
I(P(R'))\ =\ \prod_{a\in A} \pi_a I(P(R'))\ \cong\ 
\prod_{a\in A} I(P(R_a)_{M(P(R_a))'})
\end{gather*}
\cite{GolO}, Remark 2.2 for a dense $\{\pi_a\}_{a\in A}$ 
(${x={\sum\limits_{a\in A}}^{\perp} \pi_a x}$ for all ${x\in I(P(R'))}$).

Of particular interest is the case of strongly prime $\{R_a\}_{a\in A}$ 
and $\{O(R_a)\}_{a\in I}$ \cite{GolO}, Proposition 2.4, the observations after 
Lemma 2.6.

Now let ${R=P(R)}$, ${\bigcap\limits_{a\in A} P_a=\{0\}}$ for some 
${R\ne P_a\in \Spec(R)}$, ${P=\bigcap\limits_{a\in B} P_a}$, 
${P'=\bigcap\limits_{a\in C} P_a}$, ${\{0\}=P\cap P'\ne P, P'}$, 
${A=B\sqcup C}$, ${\alpha\in B(R)}$, ${\alpha|_P=\Id_P}$, ${\alpha P'=\{0\}}$. 
Since for any ${a\in A}$ 
\[
P_a\ =\ \alpha P_a\oplus (\Id_R-\alpha) P_a\,,\quad 
R/P_a\ \cong\ 
(\alpha R/\alpha P_a)\oplus ((\Id_R-\alpha) R/(\Id_R-\alpha) P_a)\,, 
\]
either 
\[
\alpha R\ =\ \alpha P_a\,,\quad 
P_a\ =\ \alpha R\oplus (\Id_R-\alpha) P_a\,,\quad 
R/P_a\ =\ (\Id_R-\alpha) R/(\Id_R-\alpha) P_a\,,
\]
or 
\[
(\Id_R-\alpha) R\ =\ (\Id_R-\alpha) P_a\,,\quad 
P_a\ =\ \alpha P_a\oplus (\Id_R-\alpha) R\,,\quad 
R/P_a\ =\ \alpha R/\alpha P_a\,.
\]
Therefore, 
\begin{multline*}
P\ =\ \alpha P\ =\ 
\bigcap_{a\in B,\ \alpha P_a\ne \alpha R} \alpha P_a\ =\ 
\bigcap_{a\in B,\ P_a=\alpha P_a\oplus (\Id_R-\alpha) R} \alpha P_a\ =
\\ 
\biggl(\bigcap_{a\in B,\ P_a=\alpha P_a\oplus (\Id_R-\alpha) R} 
P_a\biggr)\cap 
\biggl(\bigcap_{a\in B,\ P_a=\alpha R\oplus (\Id_R-\alpha) P_a} P_a\biggr)
\ =
\\ 
(P\oplus (\Id_R-\alpha) R)\cap 
\biggl(\alpha R\oplus 
\bigcap_{a\in B,\ (\Id_R-\alpha) P_a\ne (\Id_R-\alpha) R} 
(\Id_R-\alpha) P_a\biggr)
\end{multline*}
and hence ${\{a\in B\mid (\Id_R-\alpha) P_a\ne (\Id_R-\alpha) R\}\ne \emptyset}$, 
\[
\bigcap_{a\in B,\ (\Id_R-\alpha) P_a\ne (\Id_R-\alpha) R} 
(\Id_R-\alpha) P_a\ =\ \{0\}\,,\quad 
\alpha R\ =\ \bigcap_{a\in B,\ (\Id_R-\alpha) P_a\ne (\Id_R-\alpha) R} P_a\,,
\]
${(\Id_R-\alpha) R}$ is a subdirect product of $R/P_a$, ${a\in B}$, 
${(\Id_R-\alpha) P_a\ne (\Id_R-\alpha) R}$. Similarly, $\alpha R$ is 
a subdirect product of $R/P_a$, ${a\in C}$, ${\alpha P_a\ne \alpha R}$. 

\section{Orthogonally complete non-degenerate algebras}

Following \cite{ZShS}, we denote by $U(R)$ and $D(R)$ the 
\emph{associative kernel} and \emph{associator ideal} of the 
algebra $R$, ${U(R)=\{x\in N(R)\mid x R\subseteq N(R)\}}$ 
is the largest of ${I\lhd R}$, ${I\subseteq N(R)}$, and 
\begin{gather*}
D(R)\ =\ ((R, R, R))_R\ =\ R^1 (R, R, R)\ =\ (R, R, R) R^1\,,
\\
D(R) U(R)\ =\ U(R) D(R)\ =\ (U(R)\cap D(R))^2\ =\ \{0\}\,,
\end{gather*}
${R^1=F\oplus R}$ \cite{ZShS}, Propositions 8, 9, 10, p. 203, 204, and 
call $R$ with ${U(R)=\{0\}}$ \emph{purely non-\linebreak associative} 
(\emph{purely alternative} for the alternative $R$). 

If ${R\ne \{0\}}$ is semiprime, then 
${D(R)\cap U(R)=\{0\}}$, there exists 
${\tau\in B(R)}$, ${\tau|_{U(R)}=\Id_{U(R)}}$, ${\tau D(R)=\{0\}}$, and in 
the case ${\tau R\subseteq R}$
\begin{gather*}
U(\tau R)\ =\ \tau U(R)\ =\ U(R)\,,\quad 
U((\Id_{P(R)}-\tau) R)\ =\ (\Id_{P(R)}-\tau) U(R)\ =\ \{0\}\,,
\\
D(\tau R)\ =\ \tau D(R)\ =\ \{0\}\,,\quad 
D((\Id_{P(R)}-\tau) R)\ =\ (\Id_{P(R)}-\tau) D(R)\ =\ D(R)\,,
\\
N(R)\ =\ \tau R\oplus N((\Id_{P(R)}-\tau) R)\ =\ 
\tau R\oplus (\Id_{P(R)}-\tau) N(R)\,,
\end{gather*}
${R=\tau R\oplus (\Id_{P(R)}-\tau) R}$ for semiprime associative 
${\tau R=U(R)}$ and purely non-associative ${(\Id_{P(R)}-\tau) R}$. Since  
\begin{gather*}
D(P(R))\ =\ \CM(R) D(R)\,,\quad \CM(R) N(R)\ \subseteq\ N(P(R))\,,\quad 
\CM(R) U(R)\ \subseteq\ U(P(R))\,,
\\
D(R)\ \subseteq\ D(O(R))\ \subseteq\ O(D(R))\,,\quad 
O(N(R))\ \subseteq\ N(O(R))\,,\quad O(U(R))\ \subseteq\ U(O(R))\,,
\\
U(O(R)) O(P(R))\subseteq N(O(P(R)))\,,\quad U(O(R))\subseteq U(O(P(R)))\,,
\end{gather*}
where $O(D(R))$, $O(N(R))$ are defined in $O(R)$, 
${O(N(R))=N(O(R))}$, ${O(U(R))=U(O(R))}$ if ${B(R) R\subseteq R}$, 
the decompositions into direct sums of semiprime associative and 
purely non-associative ideals of ${S=\tau S\oplus (\Id_{P(R)}-\tau) S}$, 
${\tau\in B(R)}$, ${\tau|_{U(S)}=\Id_{U(S)}}$, ${\tau D(S)=\{0\}}$, 
${S=P(R), O(R), O(P(R))}$, induce decompositions into direct sums of such 
ideals of $R_{\tau}=\tau R\oplus (\Id_{P(R)}-\tau) R$ 
(Proposition 2.1; the decomposition of $O(R)$ is induced by the 
decom\-position of $O(P(R))$; the components of the decompositions are 
uniquely defined (if ${I=N(I)}$, ${J\lhd R=I\oplus J}$, ${U(J)=\{0\}}$, then 
${U(R)=I\oplus (J\cap U(R))=I\oplus U(J)=I}$ and 
$I\cap \Ann_t I\subseteq \prr(I)=\{0\}$, ${J=\Ann_t I=\Ann I}$, ${t=l, r}$)). 
From $R$ from a variety with the non-degeneracy condition, the non-degeneracy 
of $S$ ($R$ if its non-degeneracy implies the non-degeneracy of $P(R)$) is 
inherited by the ideals of these decompositions, taking into account 
${\pi P(R)\cong P(\pi R)}$, ${P(R)=P(R)_{\pi}\cong P(R_{\pi})}$, 
${O(R)=O(R)_{\pi}=O(R_{\pi})}$, up to isomorphism ${\pi O(R)=O(\pi R)}$ in 
$I(P(R))$ is the orthogonal completion of $\pi R$ in $I(P(\pi R))$, 
${\pi\in B(R)}$ (see the observations at the end of Sec. 2 and in \cite{GolO} 
before and after Lemma 2.6). 

The non-degeneracy of the alternative (linear Jordan over $F$ with $1/2$) $R$ 
is inherited by\linebreak its ideals, $P(R)$ and $O(R)$ \cite{GolO}, the observations after 
Lemma 2.6, \cite{ZShS}, Lemma 7, Corollary 2, Lemma 3, p. 224, 377, 352. 
If $R$ is alternative, non-degenerate and non-associative, then 
${Z(R)\ne \{0\}}$ \cite{ZShS}, Theorem 7, p. 225. If $R$ is semiprime and 
purely alternative, then $Z(R)=N(R)$, ${S=P(R), O(R), O(P(R))}$ is 
semiprime and purely alternative \cite{ZShS}, Theorem 11, p. 205 
(see above, ${R\cap U(S)\subseteq U(R)=\{0\}}$, 
${U(O(R))\subseteq U(O(P(R)))=O(U(P(R)))=\{0\}}$).

Omitting the details from \cite{MLQA, GolO}, we will briefly discuss 
on an alternative generalization of the construction 
of the maximal right algebra of quotients of an associative algebra. 
If $R$ and $Q$ are alternative, $R$ is a subalgebra of $Q$, 
${N(R)\subseteq N(Q)}$ and for any ${x, y\in Q}$, ${x\ne 0}$, there is 
${z\in N(R)}$, ${x z\ne 0}$, ${y z\in R}$, then $Q$ is a 
\emph{right algebra of quotients of $R$}. A right ideal $I$ in $R$ is 
called \emph{dense} if for any ${x, y\in R}$, ${x\ne 0}$, there is 
${z\in N(R)}$ ${(z\in N(I)=I\cap N(R))}$, ${x z\ne 0}$, ${y z\in I}$.
We choose the sets $\mathcal{D}_r(R)$ and $\mathcal{D}_{nr}(R)$ of dense 
right ideals of $R$ and right ideals $J$ in $N(R)$ such that for any 
${0\ne x\in R}$, ${y\in N(R)}$, there is ${z\in N(R)}$ (${z\in J}$), 
${x z\ne 0}$, ${y z\in J}$, 
\[
\mathcal{D}_r^*(R)\ =\ \{N(J) R^1\mid J\in \mathcal{D}_r(R)\}\ =\ 
\{J R^1\mid J\in \mathcal{D}_{nr}(R)\}\subseteq \mathcal{D}_r(R)
\]
and for all ${I\in \mathcal{D}_r^*(R)}$ 
\begin{multline*} 
\Hom^*(I, R)_{N(R)}\ =
\\ 
\Bigl\{\phi\in \Hom(I, R)_{N(R)}\Bigl| \phi(y x)=(\phi y) x,\ \phi[y, z]\in N(R)\ 
\forall x\in R,\ y, z\in N(I)\Bigr\}\,.
\end{multline*}
Any $R$ with ${\Ann_l N(R)=\{0\}}$, ${\prr(D(R))=\{0\}}$ or $D(R)$ without 
2--torsion has right algebras of quotients (in particular, the non-degenerate 
$R$ has left and right algebras of quotients), they are embedded identically 
on $R$ in its complete (maximal) right algebra of quotients $Q(R)$. The latter 
is uniquely defined up to an isomorphism identical on $R$, and can be 
constructed as a quotient set $\mathcal{C}_{dr}^*(R)/\sim$ of the set 
${\mathcal{C}_{dr}^*(R)=\{(\phi, I)\mid 
\phi\in \Hom^*(I, R)_{N(R)},\ I\in \mathcal{D}_r^*(R)\}}$ under the equivalence 
relation $\sim$: ${(\phi, I)\sim (\phi', I')}$ if 
${\phi=\phi'}$ on ${I\cap I'}$ (on some  
${I''\in \mathcal{D}_r^*(R)}$, ${I''\subseteq I\cap I'}$), with the 
operations  
\begin{gather*}  
[(\phi, I R^1)]+[(\psi, J R^1)]\ =\ [(\phi+\psi, (I\cap J) R^1)]\,,
\\ 
[(\phi, I R^1)] [(\psi, J R^1)]\ =\ 
[(\phi \psi, (\psi^{-1}(I R^1)\cap J) R^1)]\,,
\\
f [(\phi, I R^1)]\ =\ [(f \Id_R, N(R) R^1)] [(\phi, I R^1)]\ =\ 
[(\phi, I R^1)] [(f \Id_R, N(R) R^1)]\ =\ [(f \phi, I R^1)]\,,
\end{gather*}
${(\phi, I R^1), (\psi, J R^1)\in \mathcal{C}_{dr}^*(R)}$, 
${I, J\in \mathcal{D}_{nr}(R)}$, ${f\in F}$, where 
\begin{gather*}
\phi \psi\Bigl(\sum_i x_i y_i\Bigr)\ =\ 
\sum_i (\phi \psi x_i) y_i\quad 
(x_i\in \psi^{-1}(I R^1)\cap J=\{x\in J\mid \psi x\in I R^1\},\ y_i\in R^1)\,,
\\
[(\phi, I R^1)] [(l_x, N(R) R^1)]\ =\ [(l_{\phi x}, I R^1)]\ =\ 
[(l_{\phi x}, N(R) R^1)]\quad (x\in I R^1)\,, 
\end{gather*}
unity ${[(\Id_R, N(R) R^1)]}$, ${R\hookrightarrow Q(R)}$, 
${x\longmapsto [(l_x, N(R) R^1)]}$, ${x\in R}$. The algebras of quotients of 
$R$ inherit its properties such as semiprimeness, primeness, non-degeneracy, 
and pure alternativity (${R\cap U(Q)\subseteq U(R)=\{0\}}$).
Using \cite{BD}, Theorem 3.1.11, \cite{BMS} and \cite{GolO}, we can deduce 

\begin{prop}
If an algebra $R$ is non-degenerate and purely alternative, then 
\[
Q(R)\ =\ Q(P(R))\ =\ O(P(R))\ =\ P(Q(R))
\] 
is a Cayley --- Dickson algebra over ${\CM(R)\cong Z(Q(R))}$.
\end{prop}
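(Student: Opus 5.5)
Our plan is to reduce everything to the prime case via the Beidar--Mikhalev transfer (\cite{BD}, Theorem 3.1.11, Theorem 2.3.9), using the Slater/Zhevlakov structure theory of non-degenerate purely alternative algebras. First, by the observations at the beginning of Section 3, $R$ semiprime and purely alternative gives $Z(R)=N(R)$, and non-degeneracy with non-associativity gives $Z(R)\ne \{0\}$. More precisely, we need $I\cap Z(R)\ne \{0\}$ for every $\{0\}\ne I\lhd R$. An ideal of $R$ is itself non-degenerate, and it is non-associative because $U(R)=\{0\}$ and $I\subseteq N(R)$ would force $I\subseteq U(R)$. Hence $Z(I)\ne\{0\}$. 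By \cite{ZShS}, Theorem 11, we have $Z(I)\subseteq Z(R)$ for semiprime purely alternative $R$. Proposition 2.7 then applies:
\[
R_{\mathcal{F}'}=P(R_{\mathcal{F}'})\,,\quad \CM(R_{\mathcal{F}'})\cong Z(R_{\mathcal{F}'})=Z(R)_{\mathcal{F}'}\,.
\]
By the discussion before Proposition 2.7 and Proposition 2.1, this also yields $R_{\mathcal{F}'}=R_{\mathcal{F}}=O(P(R))$ and $\CM(R)\cong Z(O(P(R)))$.

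Next we show that $O(P(R))$ is Cayley--Dickson. For every $B\in \mathcal{U}(R)$ the quotient $O(P(R))_B$ is prime (Proposition 2.5 and the primeness argument there) and non-degenerate. The latter holds because non-degeneracy passes to $P(R)$ and $O(R)$, and, using orthogonal completeness (\cite{GolO}, Proposition 2.4), an element $x$ with $l_xr_x\subseteq PR$ can be cut by some $\beta\in B$ to give $l_{\beta x}r_{\beta x}=0$. The quotient is also purely alternative. So it is strongly prime and non-associative, and by Slater's theorem (\cite{ZShS}, Theorem 7, p. 225) its central localization is a Cayley--Dickson algebra over the field of fractions of its center. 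Each $R_B$ then has $\dim_{K_B}R_B=8$, and it is central simple finite-dimensional, so Lemma 2.4 applies. Here $R_B=P(R_B)$ and $K_B=Z(R_B)=\CM(R)_B$, using Remark 2.3 and the fact that the localization is already performed.

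In each $R_B$ there is a standard basis $e_0=1,e_1,\dots,e_7$ whose structure constants are given by invertible $\mu,\beta,\gamma\in K_B$. This is a Horn formula in the sense of the remark after Lemma 2.4: existential quantifiers over $\mu,\beta,\gamma$ in $Z(R)$, existence of their inverses, and the multiplication table. Applying \cite{BD}, Theorem 2.3.9, as in the proof of Theorem 3.1.11, we obtain the same basis over $Z(O(P(R)))$ globally, with no splitting by idempotents since all dimensions equal $8$. Thus $O(P(R))=\mathcal{O}_{\CM(R)}(\mu,\beta,\gamma)$. By the remark after Remark 1.3, such an algebra satisfies $P=O$ with centroid equal to its center, which gives $P(Q(R))=Q(R)$ once $Q(R)$ is identified.

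The main obstacle is identifying $Q(R)$ with $O(P(R))$. The plan is to apply Proposition 2.8 with $Q=Q(R)$. We have $Z(R)\subseteq N(R)\subseteq N(Q(R))$ and the commutation argument before Proposition 2.8, so $Z(R)\subseteq Z(Q(R))$. The condition $\{0\}\ne qJ\subseteq R$ for some $J\in\mathcal{E}(R)$ follows from the density of the defining right ideals $I R^1$, since $N(I)$ meets $Z(R)=N(R)$ essentially. Proposition 2.8 then gives $Q(R)\subseteq R_{\mathcal{F}'}=O(P(R))$. For the reverse inclusion, $O(P(R))$ is itself a right algebra of quotients of $R$: given $x\ne0$ and $y$, pick $z\in Z(R)$ with $yz\in R$ and $xz\ne0$, which is possible by the $\mathcal{F}'$-torsion-freeness. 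Maximality of $Q(R)$ yields $O(P(R))\hookrightarrow Q(R)$ identically on $R$. Finally, $Q(P(R))=Q(R)$ follows by the same argument applied to $P(R)$, which satisfies the same hypotheses.
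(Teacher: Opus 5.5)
Your identification of $Q(R)$ with $O(P(R))$ works, but the Cayley--Dickson step rests on a result that does not apply. The weak point is the primeness of $S_B$, $S=O(P(R))$, for \emph{every} $B\in\mathcal{U}(R)$. Both Slater's theorem and the Horn transfer depend on it, since the transfer needs the formula in $S/PS$ for every maximal $P$. Proposition 2.5 does not give it. It requires either $\nu(B)\in B$, or that $S$ be an $m.s.p.$--algebra with a $PI$--algebra $M(S)=O(M(S))$, and you have established neither. For a non-principal ultrafilter this is a real issue: ideals of $S_B$ are generated by multiplication words of unbounded length, and orthogonal sums of such words need not lie in $M(S)$. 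That is why Proposition 3.17 explicitly assumes $M(R)=O(M(R))$. It is also why the paper's proof does not analyse the $R_B$ at all: it takes the Cayley--Dickson structure of $Q(R)\cong R_{\mathcal{F}''}$, $\mathcal{F}''=\mathcal{E}(Z(D(R)))$, from \cite{BD}, Theorem 3.1.18. Similarly, ``Lemma 2.4 applies'' is unjustified, because its hypothesis concerns all prime ideals of $S$, and prime quotients of a non-degenerate alternative algebra need not be strongly prime. You do not actually need Lemma 2.4.

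The gap closes with your own ingredients.
\begin{enumerate}
\item Non-degeneracy and $U(\cdot)=\{0\}$ are defined by conditions with boundedly many multiplications: $xSx$, and $(x,S,S)$, $(S,x,S)$, $(S,S,x)$, $(xS,S,S)$, $(S,xS,S)$, $(S,S,xS)$. So your cutting argument does carry both properties to $S_B$.
\item By Remark 2.3 and your Proposition 2.7 step, $Z(S_B)=Z(S)_B\cong\CM(S)_B$ is a field, and $1\in S$.
\item Apply your first-paragraph argument to $S_B$ itself. Every nonzero ideal of $S_B$ then contains a nonzero, hence invertible, central element, so $S_B$ is simple.
\item Slater's theorem now makes $S_B$ an $8$--dimensional Cayley--Dickson algebra over $K_B=\CM(R)_B$. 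The Horn transfer then goes through as in Propositions 3.7 and 3.17.
\end{enumerate}

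Two smaller repairs are also needed.
\begin{enumerate}
\item The commutation argument before Proposition 2.8 presupposes that $Q$ is already an algebra of $\mathfrak{M}$--quotients, which is circular here. Prove $Z(R)\subseteq Z(Q(R))$ directly: for $q=[(\phi, IR^1)]$ and $w\in I\subseteq Z(R)\subseteq N(Q(R))$ you get $[z,q]w=0$, and density of $I$ then gives $[z,q]=0$.
\item For $R\ne P(R)$, the identification $R_{\mathcal{F}'}\cong O(P(R))$ needs Proposition 2.8 with $Q=P(R)$, together with the observations before Proposition 2.7 applied to $P(R)$.
\end{enumerate}

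So repaired, your route genuinely differs from the paper's. You use $\mathcal{F}'=\mathcal{E}(Z(R))$ directly, and get $Q(R)\cong R_{\mathcal{F}'}$ from Proposition 2.8 plus the universal property of $Q(R)$ in both directions. The paper instead builds explicit isomorphisms $\iota$, $\rho$, $\xi$ from the $\Hom^*$ description, and imports the Cayley--Dickson structure and endofiniteness from \cite{BD}. Your version stays within the paper's Horn-formula machinery and avoids the $\Hom^*$ bookkeeping; the paper's avoids any analysis of the quotients $R_B$.
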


\begin{proof}
By our hypothesis, ${Z(R)=N(R)}$, 
${Z(D(R))=N(D(R))=D(R)\cap Z(R)\ne \{0\}}$ and $\Ann_t Z(R)=\{0\}$, 
${t=l, r}$ \cite{ZShS}, Theorems 11, 1, 2, 3, 7, corollary of 
Theorem 6, Lemma 7, p. 205, 210, 211, 212, 225, 224, \cite{GolO}, 
Remark 3.11. If ${I\in \mathcal{F}''=\mathcal{E}(Z(D(R)))}$, then 
$\Ann I=\Ann_t I\lhd R$, ${t=l, r}$, and for ${I'=\Ann I\cap D(R)}$ 
\begin{gather*}
Z(I')\ =\ N(I')\ =\ I'\cap Z(R)\ =\ I'\cap Z(D(R))\,,
\\
\{0\}\ =\ I Z(I')\ =\ (I\cap Z(I'))^2\ =\ I\cap Z(I')\ =\ Z(I')\ =\ I'
\end{gather*}
(see links above on \cite{ZShS}, semiprimeness of $Z(D(R))$),
${\Ann I\subseteq \Ann_t D(R)}$, ${t=l, r}$. Due to 
\[
(x y) J\ \subseteq\ x (y J+J y)+(x J) y\ =\ \{0\}\,,\quad 
(y x) J\ \subseteq\ (x y) J+y (x J)+x (y J)\ =\ \{0\}
\]
(${J (x y)=J (y x)=\{0\}}$) for all ${J\lhd R}$, ${x, y\in R}$, ${x J=\{0\}}$ 
(${J x=\{0\}}$), ${\Ann_t J\lhd R}$, ${t=l, r}$, 
\begin{gather*}
D(\Ann_t D(R))\ \subseteq\ D(R)\,,\quad 
\{0\}\ =\ D(\Ann_t D(R))^2\ =\ D(\Ann_t D(R))\,,
\\
\Ann_t D(R)\ =\ Z(\Ann_t D(R))\ =\ N(\Ann_t D(R))\ =\ 
\Ann_t D(R)\cap Z(R)\ \subseteq\ U(R)\ =\ \{0\}\,,
\end{gather*}
${D(R)\in \mathcal{D}_t(R)\cap \mathcal{E}(R)}$, ${\Ann I=\Ann Z(D(R))=\{0\}}$, 
${(I)_{Z(R)}=I Z(R)^1\in \mathcal{D}_{nt}(R)\cap \mathcal{F}''\cap \mathcal{F}'}$, 
${\mathcal{F}'=\mathcal{E}(Z(R))}$, for all ${I\in \mathcal{F}''}$, ${t=l, r}$ 
(for any ${0\ne x\in R}$, ${y\in Z(R)}$ there is ${z\in I}$, 
${x z\ne 0}$, ${y z\in I Z(R)^1}$), $R$ has no $\mathcal{F}''$--torsion. 

If ${J\in \mathcal{F}'}$, ${J'=J\cap D(R)=J\cap Z(D(R))}$ and 
${\Ann J'\ne \{0\}}$, then 
\[
J''\ =\ \Ann J'\cap D(R)\cap Z(R)\ =\ Z(\Ann J'\cap D(R))\ =\ 
\Ann J'\cap Z(D(R))\ \ne\ \{0\}
\] 
\cite{ZShS}, corollary of Theorem 6, p. 224, 
${J''\lhd Z(R)}$, ${\{0\}=J' J''=J'\cap J''=J\cap J''=J''}$?! So, 
${\mathcal{F}''\cap \mathcal{F}'=\{J\cap D(R)\mid J\in \mathcal{F}'\}=
\{I Z(R)^1\mid I\in \mathcal{F}''\}}$,
$R$ has no $\mathcal{F}'$--torsion.

Any ${\phi\in \Hom(I, R)_{Z(D(R))}}$, ${I\in \mathcal{F}''}$, can be 
continued to ${\overline{\phi}\in \Hom^*(I R^1, R)_{Z(R)}}$ 
by the rule: 
${\overline{\phi} \Bigl(\sum\limits_{i=1}^k x_i y_i\Bigr)=
\sum\limits_{i=1}^k (\phi x_i) y_i}$, 
${x_i\in I}$, ${y_i\in R^1}$, ${k\geq 1}$, since for 
${\sum\limits_{i=1}^k x_i y_i=0}$ 
\[
0\ =\ (\phi a) \sum_{i=1}^k x_i y_i\ =\ 
\sum_{i=1}^k \phi(x_i a) y_i\ =\ 
\biggl(\sum_{i=1}^k (\phi_i x_i) y_i\biggr) a\ =\ 
\sum_{i=1}^k (\phi x_i) y_i
\quad (a\in I)\,,
\]
for any ${x\in R}$ and ${y=\sum\limits_{i=1}^k x_i y_i, z\in 
Z(I R^1)=N(I R^1)=I R^1\cap Z(R)=I R^1\cap Z(D(R))}$ 
\begin{multline*}
0\ =\ \sum_{i=1}^k ((\phi a) (x_i y_i)) x-(\phi a) ((x_i y_i) x)\ =\ 
\sum_{i=1}^k ((((\phi a) x_i) y_i) x-((\phi a) x_i) (y_i x))\ =
\\
\sum_{i=1}^k (((\phi x_i) y_i) x-(\phi x_i) (y_i x)) a\ =\ 
\sum_{i=1}^k ((\phi x_i) y_i x-(\phi x_i) (y_i x))\ =\ 0\quad (a\in I)\,,
\end{multline*}
${\overline{\phi} (y x)=(\overline{\phi} y) x}$, 
${\overline{\phi} [y, z]=\overline{\phi} 0=0}$. If ${J\in \mathcal{D}_r(R)}$, 
then ${J Z(D(R))\subseteq J'=J\cap D(R)\in \mathcal{D}_r(R)}$, 
${Z(J')=N(J')=J\cap Z(D(R))\in \mathcal{F}''\cap \mathcal{F}'}$, 
${Z(J') R^1\in \mathcal{D}_r^*(R)}$ (see links above on \cite{ZShS}; 
for any ${0\ne x\in Z(R)}$ there exist ${y\in Z(D(R))}$,
${z\in Z(R)}$, ${0\ne x y z\in J\cap Z(D(R))=Z(J')}$).
If $\psi\in \Hom^*(Z(J) R^1, R)_{Z(R)}$, then 
${\psi (y x)=(\psi y) x}$ for all ${x\in R}$, ${y\in Z(J)}$ and 
\[
\psi'\ =\ \psi|_{Z(J') R^1}\ =\ \overline{\psi'|_{Z(J')}}\,,\, 
\psi'|_{Z(J')}\in \Hom(Z(J'), R)_{Z(R)}\,,\, 
[(\psi, Z(J) R^1)]\ =\ [(\psi', Z(J') R^1)]\,.
\]
Therefore, ${\iota: [(\phi, I)]\longmapsto [(\overline{\phi}, I R^1)]}$, 
${I\in \mathcal{F}''}$, ${\phi\in \Hom(I, R)_{Z(D(R))}}$, is an 
$F$--isomorphism of $R_{\mathcal{F}''}$ and ${\iota R_{\mathcal{F}''}=Q(R)}$, 
identical on ${R\cong \{[(l_x, Z(D(R)))]\mid x\in R\}}$, since 
\begin{gather*}
\iota [(\phi, I)]+\iota [(\psi, J)]\ =\ [(\overline{\phi}, I R^1)]+
[(\overline{\psi}, J R^1)]\ =\ 
[(\overline{\phi+\psi}, (I\cap J) R^1)]\ =\ \iota [(\phi+\psi, I J)]\,,
\\
\iota [(\phi, I)] \iota [(\psi, J)]\ =\ 
[(\overline{\phi}, I R^1)] [(\overline{\psi}, J R^1)]\ =\  
[(\overline{\phi} \overline{\psi}, (\overline{\psi}^{-1}(I R^1)\cap J) R^1)]\ =\ 
\iota([(\phi, I)] [(\psi, J)])\,,
\\
f \iota [(\phi, I)]\ =\ f [(\overline{\phi}, I R^1)]\ =\ 
[(f \overline{\phi}, I R^1)]\ =\ \iota (f [(\phi, I)])
\end{gather*}
for all ${I, J\in \mathcal{F}''}$, ${\phi\in \Hom(I, R)_{Z(D(R))}}$, 
${\psi\in \Hom(J, R)_{Z(D(R))}}$, ${f\in F}$, where 
\begin{gather*}
I J\ \subseteq\ \overline{\psi}^{-1}(I R^1)\cap J=
\{z\in J\mid \psi z\in I R^1\}\,,
\\
\overline{\phi} \overline{\psi} 
\Bigl(\sum_i z_i x_i\Bigr)\ =\ \sum_i (\overline{\phi} \psi z_i) x_i\quad 
(x_i\in R^1,\ z_i\in \psi^{-1}(I R^1)\cap J)\,,
\\
\overline{\phi} \overline{\psi} \Bigl(\sum_i (y_i z_i) x_i\Bigr)\ =\ 
\sum_i \overline{\phi}(y_i \psi z_i) x_i\ =\ 
\sum_i ((\phi y_i) (\psi z_i)) x_i\quad 
(x_i\in R^1,\ y_i\in I,\ z_i\in J)\,.
\end{gather*}
It induces isomorphisms ${\iota: R_{\mathcal{F}''}\longrightarrow Q(R)}$ over 
\begin{multline*}
Z(R_{\mathcal{F}''})\ =\ \{[(\phi, I)]\mid I\in \mathcal{F}'',\ 
\phi\in \Hom(I, Z(D(R)))_{Z(D(R))}\}\ =
\\ 
Z(D(R))_{\mathcal{F}''}\ =\ Q(Z(D(R)))\cong 
\iota Z(R_{\mathcal{F}''})\ =\ Z(Q(R))
\end{multline*}
and ${\iota \rho^{-1}: R_{\mathcal{F}'}\longrightarrow Q(R)}$ over $F$ and 
${Z(R_{\mathcal{F}'})\cong \iota \rho^{-1} Z(R_{\mathcal{F}'})=Z(Q(R))}$, 
identical on $R\cong \{[(l_x, Z(R))]\mid x\in R\}$,  
${\rho: [(\phi, I)]\longmapsto [(\overline{\phi}|_{I Z(R)^1}, I Z(R)^1)]}$, 
${I\in \mathcal{F}''}$, ${\phi\in \Hom(I, R)_{Z(D(R))}}$, is an 
$F$--isomorphism of $R_{\mathcal{F}''}$ and $R_{\mathcal{F}'}$ \cite{BD}, 
Lemma 3.1.8 and the oservations above. 

If ${\gamma\in \Hom(H, Q(R))_{M(Q(R))}}$ (${1\in Q(R)}$), ${H\lhd Q(R)}$, 
then without loss of generality 
$H\in \mathcal{E}(Q(R))$, ${Z(H)=N(H)=H\cap Z(Q(R))\ne \{0\}}$, 
\[
0\ =\ \gamma f(x, x_2, \ldots, x_8)\ =\ f(\gamma x, x_2, \ldots, x_8)\quad 
(x\in Z(H),\ x_i\in Q(R))\,,
\]
${\gamma|_{Z(H)}\in \Hom(Z(H), Z(Q(R)))_{Z(Q(R))}}$ and  
${\gamma|_{Z(H)}=r_q|_{Z(H)}}$ for some ${q\in Z(Q(R))}$ (see links 
above on \cite{ZShS}; $Q(R)$ is non-degenerate and purely alternative; 
$f$ from Remark 2.3; $Z(Q(R))\cong Q(Z(D(R)))$ is self-injective). 
Since for ${W=\Ann(Q(R), Z(H))}$
\begin{gather*}
Z(H\cap W)\ =\ H\cap W\cap Z(Q(R))\ =\ 
Z(H)\cap Z(W)\,,
\\ 
\{0\}\ =\ Z(H\cap W)^2\ =\ Z(H\cap W)\ =\ H\cap W\ =\ W\,,
\\
0\ =\ (\gamma z-q z) x\ =\ (\gamma x-q x) z\ =\ \gamma x-q x\quad (x\in H,\ z\in Z(H))\,,
\end{gather*}
${\gamma=r_q|_H=q \Id_H}$ (see links above on \cite{ZShS}), 
${Q(R)=P(Q(R))}$ (${Q(R)=I(Q(R))}$, due to the endofiniteness of 
the $M(Q(R))$--module ${Q(R)=\iota R_{\mathcal{F}''}}$ \cite{BD}, 
Theorem 3.1.11, \cite{Fe}, Vol. 2, Theorem 19.14 A, p. 111), 
${\CM(Q(R))=Z(Q(R)) \Id_{Q(R)}\cong Z(Q(R))}$ taking into account  
\[
\{0\}\ =\ (\Ann_{Z(Q(R))} Q(R))^2\ =\ 
\Ann_{Z(Q(R))} Q(R)\ =\ \Ann_{Z(Q(R))} R\,.
\]
If now ${\gamma\in \Hom(H, Q(R))_{M(R)'}}$, $H$ is a $M(R)'$--submodule of 
$Q(R)$, then we can assume that $H$ is essential in $Q(R)_{M(R)'}$, and 
hence ${H'=H\cap R\in \mathcal{E}(R), Z(H')=H\cap Z(R)\in \mathcal{F}'}$, 
$H''=Z(H')\cap D(R)=H\cap Z(D(R))\in \mathcal{F}''$, 
${\gamma|_{H''}=r_q|_{H''}\in \Hom(H'', Q(R))_{Z(R)}}$ for some uniquely 
defined ${q\in Q(R)}$ (see the reasoning above for ${H\in \mathcal{E}(Q(R))}$ 
with $H$ replaced by $H'$ and $Q(R)$ replaced by $R$, the beginning of the 
proof and the description of almost classical localizations in Sec. 2), 
\[
0\ =\ (\gamma a-q a) b\ =\ (\gamma b-q b) a\ =\ \gamma b-q b\quad 
(a\in H'',\ b\in H')\,, 
\]
${\gamma|_{H'}=r_q|_{H'}}$ (${Z(Q(R))=\iota Z(D(R))_{\mathcal{F}''}}$, 
${Q(R)=\iota R_{\mathcal{F}''}=\iota \rho^{-1} R_{\mathcal{F}'}}$ has 
no $\mathcal{F}''$--torsion and $\mathcal{F}'$--torsion). Since for any 
${x\in H}$ there is ${I\in \mathcal{F}''}$, ${I x\subseteq H'}$, 
\[
0\ =\ \gamma (x y)-q (x y)\ =\ (\gamma x-q x) y\ =\ \gamma x-q x\quad 
(y\in I)\,,
\]
${\gamma=r_q|_H}$, ${q (x \beta)=(q x) \beta}$ for all ${x\in H}$, 
${\beta\in M(R)'}$ (${\beta\in M^{Q(R)}(R)'}$) and, as a consequence, 
\begin{multline*}
0\ =\ q (x z)-q (x z)\ =\ (q x) z-x (q z)\ =\ [q, x] z\ =\ [q, x]\quad 
(x\in H,\ z\in Z(H'))\,,
\\
\shoveleft{
0\ =\ (q x) v-(q x) v\ =\ (q x) v-q (x v)\ =\ (q, x, v)\quad 
(v\in R,\ x\in H)\,,}
\\
\shoveleft{
0\ =\ [q, v z]\ =\ [q, v] z\ =\ [q, v]\ =}
\\ 
(q, v z, u)\ =\ (q, v, u) z\ =\ (q, v, u)\quad (v, u\in R,\ z\in Z(H'))\,,
\end{multline*}
${q\in Z(Q(R))}$ \cite{GolO}, Remark 3.12. Thus, 
\begin{gather*}
Q(R)\ =\ P(Q(R)_{M(R)'})\ =\ I(Q(R)_{M(R)'})\,,\quad 
\End(Q(R))_{M(R)'}\ =\ Z(Q(R)) \Id_{Q(R)}\,,
\\
R\ \subseteq\ P(R)\ =\ Z(Q(R)) R\ \subseteq\ Q(R)\,,\quad 
\CM(R)\ =\ Z(Q(R)) \Id_{P(R)}\cong Z(Q(R))\,.
\end{gather*}
The equality of $P(R)$ and $Q(R)$ is equivalent to the finite generation of 
$P(R)$ over $Z(Q(R))$. This is true, in particular, if the finitely 
generated $Z(Q(R))$--module $Q(R)$ is Noetherian ($Z(Q(R))$ is Noetherian; 
for example, for the prime $R$ and the field ${\CM(R)\cong Z(Q(R))}$).

It was noted earlier that $P(R)$ inherits the non-degeneracy and pure 
alternativity of $R$, ${D(P(R))=Z(Q(R)) D(R)}$, ${Z(P(R))\subseteq Z(Q(R))}$ 
and ${Z(D(P(R)))=D(P(R))\cap Z(Q(R))}$ (${\Ann_t(Q(R), R)=\{0\}}$, ${t=l, r}$ 
and 
${Z(P(R)) \Id_{P(R)}\subseteq \CM(P(R))=Z(Q(R)) \Id_{P(R)}}$), for any\linebreak 
${J\in \mathcal{F}'''=\mathcal{E}(Z(D(P(R))))}$, ${z\in J}$ there exists 
${K\in \mathcal{F}''}$, ${K z\subseteq R\cap J=Z(R)\cap J}$, 
($Z(D(R))\subseteq Z(D(P(R)))$), 
${K Z(D(R)) z\subseteq J'=J\cap Z(D(R))=J\cap D(R)}$, 
${K Z(D(R))\in \mathcal{F}''}$, and hence,  
${\{0\}=K Z(D(R)) z \Ann(P(R), J')=z \Ann(P(R), J')}$, 
${\Ann(P(R), J')\subseteq \Ann(P(R), J)=\{0\}}$,
${J'\in \mathcal{F}''}$ ($P(R)$ and $Q(R)$ have no $\mathcal{F}'''$--torsion 
and $\mathcal{F}''$--torsion (the beginning of the proof)). 
If\linebreak ${\psi\in \Hom(J, P(R))_{Z(D(P(R)))}}$, then 
${\psi|_{J'}\in \Hom(J', P(R))_{Z(D(R))}}$, there is a unique 
$q\in Q(R)=\iota R_{\mathcal{F}''}$, ${\psi|_{J'}=r_q|_{J'}}$ 
(see the description of almost classical localization in Sec. 2) and 
for any ${z\in J}$, ${K'\in \mathcal{F}''}$, ${K' z\in J'}$ (see above), 
${K' (\psi z-z q)=\{0\}}$, ${\psi z=z q}$, ${\psi=r_q|_J}$. 
If ${I\in \mathcal{F}''}$, then ${I Z(Q(R))\in \mathcal{F}'''}$ and any 
${\phi\in \Hom(I, R)_{Z(D(R))}}$ extends to 
${\hat{\phi}\in \Hom(I Z(Q(R)), P(R))_{Z(Q(R))}}$, 
${\hat{\phi} \Bigl(\sum\limits_{i=1}^k y_i z_i\Bigr)=
\sum\limits_{i=1}^k (\phi y_i) z_i, y_i\in J, z_i\in Z(Q(R)), k\geq 1}$, 
since for ${\sum\limits_{i=1}^k y_i z_i=0}$ there is ${M\in \mathcal{F}''}$, 
${M z_i\subseteq Z(D(R))}$, ${i=1, \ldots, k}$, 
\[
0\ =\ \phi \biggl(\biggl(\sum_{i=1}^k y_i z_i\biggr) z\biggr)\ =\ 
\sum_{i=1}^k (\phi y_i) (z_i z)\ =\ 
\biggl(\sum_{i=1}^k (\phi y_i) z_i\biggr) z\ =\ 
\sum_{i=1}^k (\phi y_i) z_i\quad (z\in M)\,.
\]
Therefore, ${\xi: [(\psi, J)]=[(r_q, J)]\longmapsto q, 
J\in \mathcal{F}''', \psi\in \Hom(J, P(R))_{Z(D(P(R)))}}$, is an 
$F$--isomor\-phism of $P(R)_{\mathcal{F}'''}$ and 
${Q(R)=\iota R_{\mathcal{F}''}}$, identical on 
${P(R)\cong \{[(l_x, Z(D(P(R))))]\mid x\in P(R)\}}$, 
${\xi^{-1} q=[(\hat{\phi}, I Z(Q(R)))]}$, ${[(\phi, I)]=\iota^{-1} q=
[(r_q, I)]}$, ${I\in \mathcal{F}''}$, ${I q\subseteq R}$. 

As a result, applying the previously obtained conclusions to $P(R)$, we obtain 
that, up to isomorphism, ${Q(R)=Q(P(R))=O(P(R))}$ (see observations 
before Proposition 2.7), and, by virtue of \cite{BD}, Theorem 3.1.18, 
${Q(R)\cong R_{\mathcal{F}''}=
\mathcal{O}_{Z(R_{\mathcal{F}''})}(\mu, \beta, \gamma)}$ 
is a Cayley --- Dickson algebra over $Z(R_{\mathcal{F}''})$ for some invertible 
${\mu, \beta, \gamma\in Z(R_{\mathcal{F}''})}$.
\end{proof}

\begin{prop}
If an associative algebra $R$ is semiprime and ${Z(J)\ne \{0\}}$ for all 
$\{0\}\ne J\lhd I\in \mathcal{D}_r(R)$, then 
${Q^s_m(R)=Q(R)=Q(P(R))=O(P(R))=P(Q(R))}$. 
\end{prop}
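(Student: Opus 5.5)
The plan is to reduce the statement to Proposition 2.8 and the almost classical localization $R_{\mathcal{F}'}$, $\mathcal{F}'=\mathcal{E}(Z(R))$, following the pattern of the proof of Proposition 3.1. Here the associative setting replaces the Cayley --- Dickson structure theory, and $\mathcal{D}_r(R)$ is the usual family of dense right ideals. Throughout, $Q(R)$ is the Utumi maximal right ring of quotients, $N(R)=R$, and $Z(R)$ is the centre.

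\textbf{Step 1: centrality of ideals.} Apply the hypothesis with $I=R\in \mathcal{D}_r(R)$ to obtain $Z(J)\ne \{0\}$ for every $\{0\}\ne J\lhd R$. For semiprime $R$, $Z(J)\subseteq J\cap Z(R)$: if $z\in Z(J)$, then $[z,x]J=[z,xJ]=0$ and $J[z,x]=0$, so $[z,x]\in \Ann J\cap J=\{0\}$. Hence $I\cap Z(R)\ne \{0\}$ for all $\{0\}\ne I\lhd R$. Consequently $R$ has no $\mathcal{F}'$--torsion, and by Proposition 2.7 the ring $R_{\mathcal{F}'}$ satisfies $R_{\mathcal{F}'}=P(R_{\mathcal{F}'})$ and has centre $Z(R)_{\mathcal{F}'}$.

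\textbf{Step 2: $Q(R)$ is covered by Proposition 2.8.} I would verify the hypotheses of Proposition 2.8 for the pair $R\subseteq Q=Q(R)$, and do the same for $Q^s_m(R)\subseteq Q(R)$.

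First I show that every dense right ideal $I$ of $R$ contains an element of $\mathcal{F}'$, namely $I\cap Z(R)$. The ideal $(I)_R$ is essential in $R$; the substantive point is that $I$ itself meets every nonzero ideal $J$. For this, use $Z(J\cap I')\ne \{0\}$, where $I'=I$ or $I'$ is the largest two-sided ideal of $R$ inside $I$. If $I'=\{0\}$ the argument fails, so here I must use the hypothesis on $J\lhd I$ regarded as a ring. The plan is to take $J=(x)_I$ for $x\in I\cap J_0$. Its centre is nonzero, and I would show that this centre lies in $Z(R)$ by combining density of $I$ with semiprimeness, as in Step 1. This gives $I\cap Z(R)\in \mathcal{F}'$.

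Then, for $q\in Q(R)$, the ideal $q^{-1}R=\{r\mid qr\in R\}$ is dense, so $\{0\}\ne q(q^{-1}R\cap Z(R))\subseteq R$. To show $Z(R)\subseteq Z(Q(R))$, note that $[z,q](q^{-1}R)=[z,q\,q^{-1}R]=0$, and a right annihilator of a dense right ideal in $Q(R)$ vanishes. The analogous statements for $Q^s_m(R)$ follow in the same way. Proposition 2.8 then gives $Q(R)_{\mathcal{F}'}=R_{\mathcal{F}'}$ and $\CM(Q(R))\cong \CM(R)$.

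\textbf{Step 3: identification of $Q(R)$ with $R_{\mathcal{F}'}$.} I would show $Q(R)=R_{\mathcal{F}'}$ directly. Each class $[(\phi,H)]$ with $H\in \mathcal{F}'$ extends to the right $R$--homomorphism $\overline{\phi}$ on the dense right ideal $HR^1$, exactly as in the extension argument in the proof of Proposition 3.1. Conversely, by Step 2, each element of $Q(R)$ comes from such a class. Since $Q^s_m(R)\subseteq Q(R)=R_{\mathcal{F}'}$ and, symmetrically, every element of $R_{\mathcal{F}'}$ is left and right multiplied into $R$ by some $H\in\mathcal{F}'$, we get $Q^s_m(R)=Q(R)$.

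\textbf{Step 4: the remaining equalities.} Apply Steps 1--3 to $P(R)$, which inherits the hypothesis because $Z(P(R))\supseteq \CM(R)Z(R)$. This gives $Q(P(R))=P(R)_{\mathcal{F}}$, using the observation before Proposition 2.7 that $R_{\mathcal{F}'}=R_{\mathcal{F}}$ when $R=P(R)$. Proposition 2.1 then gives $P(R)_{\mathcal{F}}=O(P(R))=P(O(P(R)))$. Combining these, $Q(R)=Q(P(R))=O(P(R))=P(Q(R))$.

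The main obstacle is the first part of Step 2: showing that an arbitrary dense right ideal meets the centre nontrivially. This is exactly where the hypothesis about ideals $J$ of dense right ideals $I$, rather than just ideals of $R$, must be used.
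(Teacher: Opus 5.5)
Your overall plan follows the paper's proof: show that dense right ideals meet $Z(R)$ essentially, extend $\phi$ to $\overline{\phi}$ on $HR^1$ to get $R_{\mathcal{F}'}\cong Q(R)$, get two-sided denominators from $Z(R)\subseteq Z(Q(R))$, then transfer to $P(R)$ via Proposition 2.1 and the observation before Proposition 2.7. However, Step 4 has a genuine gap.

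To rerun Steps 1--3 for $P(R)$ you need $Z(J)\ne\{0\}$ for every $\{0\}\ne J\lhd I\in\mathcal{D}_r(P(R))$. The inclusion $Z(P(R))\supseteq\CM(R)Z(R)$ says nothing about centres of ideals of dense right ideals of $P(R)$, so it does not justify this. The paper proves the inheritance for every subalgebra $B$ with $R\subseteq B\subseteq Q(R)$. This uses $P(R)=Z(Q(R))R\subseteq Q(R)$, which you also need, but never state, in order to write $Q(R)=Q(P(R))$. The steps are as follows.
\begin{itemize}
\item $\prr(B)=\prr(I)=\{0\}$.
\item $I\cap R\in\mathcal{D}_r(R)$: multiply the density witness $z\in B$ by a suitable $h\in H\in\mathcal{F}'$ with $Hz\subseteq R$.
\item $J^2Z(R)\subseteq JI\subseteq J$, so $\{0\}\ne J^2Z(R)\cap R\subseteq J\cap R\lhd I\cap R$.
\item Finally $\{0\}\ne Z(J\cap R)=J\cap Z(R)\subseteq Z(J)$, because $Z(R)\subseteq Z(Q(R))$.
\end{itemize}
Alternatively, you can bypass the inheritance. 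Proposition 2.8 applied to $R\subseteq P(R)$ gives $P(R)_{\mathcal{E}(Z(P(R)))}\cong R_{\mathcal{F}'}=Q(R)$. The observation before Proposition 2.7 and Proposition 2.1 identify the left side with $O(P(R))=P(O(P(R)))$. Then $Q(P(R))=Q(R)$ follows from $R\subseteq P(R)\subseteq Q(R)$.

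Step 2 is also thinner than it reads. Meeting every nonzero ideal of $R$ is automatic for a dense right ideal. What must actually be proved is $Z((x)_I)\subseteq Z(R)$, and your Step 1 computation does not apply because $(x)_I$ is not an ideal of $R$. You would need three facts:
\begin{itemize}
\item $I$ is semiprime (from $\Ann_l I=\{0\}$), so $Z((x)_I)\subseteq Z(I)$.
\item $Z(I)=I\cap Z(R)$ for a right ideal of a semiprime ring.
\item One more step to get $I\cap Z(R)\in\mathcal{E}(Z(R))$: for a nonzero ideal $K$ of $Z(R)$, take $0\ne w\in I\cap KR^1\cap Z(R)$. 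Then $\{0\}\ne wK\subseteq I\cap K$, since $wK=\{0\}$ would force $w^2=0$.
\end{itemize}
The paper's device is shorter. It applies the hypothesis to the right ideal $I\cap\Ann Z(I)\subseteq I$. Its centre $I\cap\Ann Z(I)\cap Z(R)$ has zero square in the reduced ring $Z(R)$, so $I\cap\Ann Z(I)=\{0\}$ and hence $\Ann Z(I)\subseteq\Ann_r I=\{0\}$. This gives at once $Z(I)\in\mathcal{E}(Z(R))$ and $Z(I)R^1\in\mathcal{E}(R)$ with $Z(I)R^1\subseteq I$.
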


\begin{proof}
Due to ${\prr(R)=\{0\}}$, ${Z(I)=I\cap Z(R)}$ for any right (left) ideal 
$I$ in $R$ \cite{ZShS}, Theorem 3, p. 212, \cite{Mart1}. If $I$ is a right 
ideal of $R$, ${\Ann_l I=\{0\}}$, then ${\prr(I)=\{0\}}$, 
${\Ann_r I=\{0\}}$, since\linebreak for a right ideal $A I$ in $R$, ${A\subseteq R}$, 
from ${(A I)^2=\{0\}}$ it follows that
${A I\subseteq \prr(R)=\{0\}}$, $A\subseteq \Ann_l I=\{0\}$, 
in particular, ${A=\{0\}}$ for any ${A\lhd I}$, 
${A^2=\{0\}}$, and ${A=\Ann_r I}$ \cite{Mart1} (similarly for the left ideal 
$I$ with ${\Ann_r I=\{0\}}$). If ${I\in \mathcal{D}_r(R)}$, 
${J\in \mathcal{E}(Z(R))}$, then 
$Z(I\cap \Ann Z(I))=I\cap \Ann Z(I)\cap Z(R)\lhd Z(R)$ and by the condition 
\begin{multline*}
\{0\}\ =\ Z(I\cap \Ann Z(I))^2\ =\ Z(I\cap \Ann Z(I))\ =\ I\cap \Ann Z(I)\ =
\\
(J\cap Z(\Ann J))^2\ =\ J\cap Z(\Ann J)\ =\ Z(\Ann J)\ =\ \Ann J\,,
\end{multline*}
${\Ann Z(I)\subseteq \Ann I=\Ann_t I=\{0\}, t=l, r}$, 
${Z(I) R^1, J R^1\in \mathcal{E}(R)\subseteq \mathcal{D}_r(R)}$ 
(${I\cap \Ann Z(I)\lhd I}$, ${R\in \mathcal{D}_r(R)}$; \cite{Mart1}, 
Theorem 3), ${Z(I) R^1\subseteq I}$, ${Z(I)\in \mathcal{E}(Z(R))}$. 
Therefore, as in the previous\linebreak proof, 
${\iota: [(\phi, J)]\longmapsto [(\overline{\phi}, J R^1)]}$, 
${J\in \mathcal{E}(Z(R))}$, ${\phi\in \Hom(J, R)_{Z(R)}}$, is an 
$F$--isomorphism of $R_{\mathcal{F}'}$ and $Q(R)$, identical on 
${R\cong \{[(l_x, Z(R))]\mid x\in R\}}$, 
${\overline{\phi}\Bigl(\sum\limits_{i=1}^k x_i y_i\Bigr)=
\sum\limits_{i=1}^k (\phi x_i) y_i}$, ${x_i\in J}$, $y_i\in R^1$, 
${k\geq 1}$. Since for each ${q\in Q(R)}$ there exists  
${I\in \mathcal{D}_r(R)}$, ${q I\subseteq R}$, and  
${Z(R)\subseteq Z(Q(R))}$, ${Z(I) R^1 q+q Z(I) R^1\subseteq R}$ for 
${Z(I) R^1\in \mathcal{E}(R)}$, 
\[
P(R)\ =\ Z(Q(R)) R\ \subseteq\ Q^s_m(R)\ =\ Q(R)\ =\ Q(P(R))\ =\ 
Q(Q(R))\ =\ O(Q(R))
\] 
(see introductions to \cite{GolO, GolA}, Sec. 3 \cite{GolO}, \cite{Har2}, 
Lemmas 3, 4), ${Q(R)=\iota R_{\mathcal{F}'}}$ has no $\mathcal{F}'$--torsion, 
$R$ is an essential $Z(R)$--submodule of $Q(R)$. So, if  
$B$ is a $F$--subalgebra of $Q(R)$, ${R\subseteq B}$, 
${\{0\}\ne J\lhd I\in \mathcal{D}_r(B)}$, then ${\prr(B)=\prr(I)=\{0\}}$ 
(${\prr(B)\cap R\subseteq \prr(R)=\{0\}}$, the beginning of the proof), for 
any ${x, y\in B}$, ${x\ne 0}$, we can choose ${z\in B}$, 
${H\in \mathcal{F}'}$, ${H x z\ne \{0\}}$, ${H y z\subseteq I\cap R}$, 
${H z\subseteq R}$, ${\{0\}\ne J^2 Z(R)\cap R\subseteq J\cap R\lhd I\cap R\in 
\mathcal{D}_r(R)}$ and by the condition  
\[
\{0\}\ \ne\ Z(J\cap R)\ =\ J\cap R\cap Z(I\cap R)\ =\ J\cap Z(R)\ =\ 
J\cap R\cap Z(Q(R))\ \subseteq\ Z(J)\,, 
\]
$B$ inherits the conditions on $R$. For ${B=P(R)}$, the previously obtained 
conclusions, Proposition 2.1 and the observations before Proposition 2.7 
give $Q(R)=Q(P(R))=O(P(R))=P(Q(R))$. 
\end{proof}

Under the conditions of Proposition 3.2 (they are satisfied, in particular, if 
$R$ is a $PI$--algebra \cite{Mart1}), ${R=P(R)=O(R)}$ is equivalent to ${R=Q(R)}$.
A regular associative algebra $R$ is \emph{strictly regular} if $Z(R)$ contains 
all idempotents of $R$. In other words, for any ${x\in R}$ there is 
${y\in R}$, ${x y\in Z(R)}$, ${x^2 y=x}$ (for ${z^2=z\in R}$ this gives 
${z\in Z(R)}$ and hence ${y x\in Z(R)}$). Proposition 3.3 corresponds to the 
Fisher --- Martindale --- Markov --- Armendariz --- Steinberg theorem 
\cite{Fish, Mart1, Mar3, ArmS}, its proof corresponds to Example 8.14, 
\cite{BMO} (it is given for full exposition).

\begin{prop}
If $R$ is a semiprime associative $PI$--algebra, then $Q(R)$ is a finite 
direct sum of matrix algebras over strictly regular algebras, finitely 
generated as modules over their centers. 
\end{prop}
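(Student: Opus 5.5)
First reduce to the orthogonally complete setting. By Martindale's theorem every nonzero ideal of a semiprime $PI$--algebra meets the center nontrivially, so the hypotheses of Proposition 3.2 hold. Hence $Q(R)=Q(P(R))=O(P(R))=P(Q(R))$, and we may replace $R$ by $S=Q(R)$. This $S$ is a semiprime $PI$--algebra with $S=P(S)=O(S)$ and $\CM(S)\cong Z(S)$ (Remark 1.3, Proposition 2.8), and $Z(S)$ is a commutative regular self-injective ring.

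Next apply Lemma 2.4 to $S$. For every prime $Q\ne S$, Posner's theorem says $(S/Q)S_Q^{-1}$ is central simple of dimension at most $(\pideg S)^2$. So Lemma 2.4 gives that $S$ is generated over $Z(S)$ by $m(S)\le(\pideg S)^2$ elements, and that $M(S)=O(M(S))$. By Proposition 2.5, each $S_B=S/PS$, $B\in\mathcal{U}(S)$, is a central simple algebra $M_{n_B}(D_B)$ over the field $\CM(S)_B=Z(S)_B$, with $n_B^2\dim D_B\le m(S)$. Only finitely many values $n\in\{1,\ldots,[\sqrt{m(S)}]\}$ occur.

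The core step is a Horn-formula argument (Theorem 2.3.9 of \cite{BD}, as used in Lemma 2.4), following Example 8.14 of \cite{BMO}. For each $n$ write the formula $\Theta_n$: there exist $e_{ij}$, $1\le i,j\le n$, with $e_{ij}e_{kl}=\delta_{jk}e_{il}$ and $\sum_i e_{ii}=1$, such that every nonzero $x\in e_{11}Se_{11}$ has a "central quasi-inverse". Since $e_{11}S_Be_{11}\cong D_B$ is a division ring, $\Theta_{n_B}$ holds in $S_B$. We must check that $\Theta_n$ is a Horn formula (matrix units are existential equalities; the division condition is a disjunction of the form $x\notin T\vee\exists y(\ldots)$ with orthogonally complete $T$), or replace it with the Horn condition "for every $x$ there is $y$ with $x^2y=x$ and $xy\in Z$", which is true in any $M_n(D_B)$ restricted to the corner. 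The theorem then produces orthogonal idempotents $\eta_1,\ldots,\eta_l\in B(S)$ with $\sum\eta_i=\Id_S$ such that $\eta_iS$ contains a full set of $n_i\times n_i$ matrix units. Hence $\eta_iS\cong M_{n_i}(A_i)$ with $A_i=e_{11}\eta_iSe_{11}$, and $S=\bigoplus_i\eta_iS$ by Remark 1.2.

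Finally, show that each $A_i$ is strictly regular and finitely generated over $Z(A_i)\cong\eta_iZ(S)$. Finite generation follows from Lemma 2.4 applied to the corner, or from $S$ being finitely generated over $Z(S)$ together with $A_i=e_{11}\eta_iSe_{11}$ being a $Z$--direct summand. Regularity comes from the Horn formula $\forall x\,\exists y\,(x^2y=x\wedge xy\in Z(A_i))$. It holds in every $D_B$, since a division ring is strictly regular, and it transfers to $A_i$ because $A_i=O(A_i)$ and its quotients at ultrafilters are exactly the $D_B$ (Proposition 2.5 and Remark 2.3, used for centers). Once $xy\in Z$ is known, strict regularity is immediate.

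The main obstacle is the transfer step. One has to express the "division ring corner" and "strict regularity" conditions as genuine Horn formulas with orthogonally complete parameter sets such as $Z(S)$. One also has to verify the identifications $Z(A_i)_B=Z(D_B)$ and $(A_i)_B=D_B$ for orthogonally complete corners, using Remark 2.3 and Proposition 2.5.
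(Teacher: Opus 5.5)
Your outline follows the paper's route. You pass to $S=Q(R)=P(S)=O(S)$ with $\CM(S)=Z(S)$. You use Posner and Lemma~2.4 to get $S_B\cong M_{n_B}(D_B)$ and finite generation of $S$ over $Z(S)$. You then transfer, via Theorem~2.3.9 of \cite{BD}, a Horn formula asserting a full set of $n\times n$ matrix units with strictly regular coefficient ring, and read off $S=\bigoplus_i\eta_iS$.

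The gap is the step you leave open: writing down that formula. That step is essentially the entire content of the paper's proof, and neither of your tentative versions works as stated.
\begin{itemize}
\item ``Every nonzero $x$ has a quasi-inverse'' needs the positive literal $x=0$ in a disjunction with another positive atom. That is not a Horn clause.
\item In ``$\forall x\,\exists y\,(x^2y=x\wedge xy\in Z)$'' on the corner $e_{11}Se_{11}$, the parameter sets must be fixed orthogonally complete subsets of $S$. The center of the corner is not fixed, since it depends on the quantified $e_{11}$. The natural fixed choice $Z=Z(S)$ (orthogonally complete by Remark~2.3) makes the formula false in $S_B$ whenever $n_B>1$. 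For $x=e_{11}$ you need $e_{11}y\in Z(S_B)\cap e_{11}S_B=\{0\}$, and then $x^2y=0\ne x$.
\end{itemize}

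The paper (with $R=Q(R)$) avoids this by working in the centralizer of the matrix units instead of the corner. In $\Xi_k$ every $x$ is written as $\sum e_{ij}z_{ij}$ with $[z_{ij},e_{pq}]=0$. There are $y_{ij}$ with $[y_{ij},e_{pq}]=0$, $z_{ij}^2y_{ij}=z_{ij}$ and $z_{ij}y_{ij}\in Z(R)$, and $\Pi_k$ gives uniqueness of the coefficients. This is a genuine Horn formula with the fixed parameter set $Z(R)$, and it holds in $R_B$. The reason is that if $C$ is the centralizer of a full set of matrix units, then $R\cong M_k(C)$ and $Z(C)=Z(R)$. So centrality in the coefficient ring is literally membership in $Z(R)$. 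In $R_B$ one has $C\cong D_B$ and takes $y_{ij}=z_{ij}^{-1}$ (or $0$). In your corner language, the equivalent repair is to require $xy=ce_{11}$ with $c\in Z(S)$.

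Two further remarks on your last step:
\begin{itemize}
\item Once strict regularity is built into the formula, the transfer already delivers it for the coefficient ring of $\eta_iS$. Your second transfer to $A_i$ is therefore unnecessary.
\item Your claim $(A_i)_B\cong D_B$ depends on that clause. Matrix units alone exist in $S_B$ for every $n\mid n_B$, and the resulting local corners $M_{n_B/n}(D_B)$ are not strictly regular.
\end{itemize}

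Your finite-generation argument, via the $Z(S)$-linear projection $x\mapsto e_{11}xe_{11}$ of the finitely generated $Z(S)$-module $\eta_iS$, is fine and matches the paper's appeal to Lemma~2.4.
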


\begin{proof} Without loss of generality ${R=Q(R)}$, ${\CM(R)=Z(R)}$. 
By the Posner theorem and Lemma 2.4 ${R_B=Q(R_B)=M_{n_B}(D_B)}$ is an 
algebra of ${n_B\times n_B}$ matrices over the division ring $D_B$, 
${\CM(R_B)=Z(R_B)=Z(R)_B=Z(D_B)}$ is a field, 
${\dim_{\CM(R_B)} D_B=m_B^2}$, where  
$1\leq n_B m_B=\pideg R_B\leq \sqrt{m(R)}\leq \pideg R$,
for all ${B\in \mathcal{U}(R)}$ \cite{Pos, Mar, Row} and so,  
in $R_B$ the Horn formula 
\[
\Xi_k\ =\ (\exists e_{11})\ldots (\exists e_{kk}) 
[\Gamma_k(e_{11}, \ldots, e_{kk})\wedge
\Theta_k(e_{11}, \ldots, e_{kk})\wedge \Pi_k(e_{11}, \ldots, e_{kk})]
\]
is true for ${k=n_B}$, 
${\Gamma_k(e_{11}, \ldots, e_{kk})=\biggl\{\bigwedge\limits_{i, j, p, q=1}^k 
(e_{ij} e_{pq}=\delta_{jp} e_{iq})\biggr\}}$, 
\begin{multline*}
\Theta_k(e_{11}, \ldots, e_{kk})\ =\ 
(\forall x) (\exists z_{11})\ldots (\exists z_{kk})
(\exists y_{11})\ldots (\exists y_{kk})
\\
\shoveleft{
\biggl[
\biggl\{x=\sum_{i, j=1}^k e_{ij} z_{ij}\biggr\}\wedge
\biggl\{\bigwedge_{i, j, p, q=1}^k ([z_{ij}, e_{pq}]=0)\biggr\}\wedge
\biggl\{\bigwedge_{i, j, p, q=1}^k ([y_{ij}, e_{pq}]=0)\biggr\}\wedge}
\\
\shoveright{ 
\biggl\{\bigwedge_{i, j, p, q=1}^k (z_{ij} y_{ij}\in Z(R))\biggr\}\wedge
\biggl\{\bigwedge_{i, j, p, q=1}^k (z_{ij}^2 y_{ij}=z_{ij})\biggr\}\biggr]\,,}
\\
\shoveleft{
\Pi_k(e_{11}, \ldots, e_{kk})\ =\ 
(\forall z_{11})\ldots (\forall z_{kk})\biggl[
\biggl\{\bigvee_{i, j, p, q=1}^k ([z_{ij}, e_{pq}]\ne 0)
\biggr\}\vee}
\\ 
\biggl\{\bigwedge_{i, j=1}^k (z_{ij}=0)\biggr\}\vee 
\biggl(\sum_{i, j=1}^k e_{ij} z_{ij}\ne 0\biggr)\biggr]\,.
\end{multline*}
Therefore, as in Lemmas 2.4, 2.6, there exist  
${0\ne \eta_i\in B(R), \eta_i \eta_j=\delta_{ij} \eta_i, 
1=\eta_1+\ldots+\eta_l}$ (${1\in R=Q(R)}$), $\Xi_{k_i}$ is true in $\eta_i R$, 
there is a system ${E_i=\{e^{(i)}_{pq}\}_{p, q=1}^{k_i}\subset \eta_i R}$, 
${e^{(i)}_{pq} e^{(i)}_{st}=\delta_{qs} e^{(i)}_{pt}}$, 
${\eta_i=\sum\limits_{p=1}^{k_i} e^{(i)}_{pp}}$, any ${x\in \eta_i R}$ is 
uniquely expressed in the form 
$x=\sum\limits_{p, q=1}^{k_i} e^{(i)}_{pq} z_{pq}, 
z_{pq}\in Z(\eta_i R, E_i)=\{z\in \eta_i R\mid [z, E_i]=\{0\}\}$, 
${Z(\eta_i R, E_i)}$ is a strictly regular subalgebra of $\eta_i R$ and, due to 
the finite generation of the $Z(R)$--module $R$ (Lemma 2.4), 
$\eta_i R$, ${Z(\eta_i R, E_i)}$ are finitely generated modules over 
${Z(\eta_i R)=\eta_i Z(R)}$, ${i=1, \ldots, l}$,  
${R=\bigoplus\limits_{i=1}^l \eta_i R\cong \bigoplus\limits_{i=1}^l 
M_{k_i}(Z(\eta_i R, E_i))}$. 
\end{proof}

The decomposition of a non-degenerate alternative ${R=P(R)}$ into a direct 
sum of the associative kernel and the purely alternative ideal 
${R=\tau R\oplus (\Id_{P(R)}-\tau) R=U(R)\oplus \Ann_t U(R)}$, ${t=l, r}$, 
from the beginning of Sec. 3 can be obtained from the last observation of 
Sec. 2 as a decomposition of $R$ into a direct sum of two ideals, the first 
of which is a subdirect product of prime associative algebras, and the 
second is a subdirect product of Cayley --- Dickson rings \cite{GolA}, 
Addition 1, \cite{ZShS}, Proposition 3, Theorem 9, p. 228, 229. 

\begin{co}
If an alternative algebra $R$ is non-degenerate, then 
\begin{multline*}
O(P(R))\ =\ O(P(R))_{\tau}\ \cong\ 
O(P(\tau R))\oplus O(P((\Id_{P(R)}-\tau) R))\ =
\\ 
O(P(\tau R))\oplus Q(P((\Id_{P(R)}-\tau) R))\ =\ 
O(P(\tau R))\oplus Q((\Id_{P(R)}-\tau) R)
\end{multline*}
and for the $PI$--algebra $R$ 
\begin{multline*}
O(P(R))\ \cong\ Q(P(\tau R))\oplus Q(P((\Id_{P(R)}-\tau) R))\ \cong 
\\
Q(P(R))\ \cong\  
Q(\tau R)\oplus Q((\Id_{P(R)}-\tau) R)\ \cong\ Q(R_{\tau})\,,
\end{multline*}
${Q((\Id_{P(R)}-\tau) R)}$ ($Q(\tau R)$ for the $PI$--algebra $R$) is 
described by Proposition 3.1 (3.2, 3.3).
\end{co}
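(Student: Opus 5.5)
The corollary follows by combining the splitting of Sec. 3 with Propositions 3.1, 3.2 and 3.3. I will work in $O(P(R))$.

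\emph{Step 1: the general decomposition.} Since $R$ is non-degenerate alternative, so are $P(R)$ and $O(P(R))$. By the discussion at the beginning of Sec. 3 there is $\tau\in B(R)$ with $\tau|_{U(R)}=\Id_{U(R)}$ and $\tau D(R)=\{0\}$. For $S=P(R),O(R),O(P(R))$ this gives $S=\tau S\oplus(\Id_{P(R)}-\tau)S$, with $\tau S$ semiprime associative and $(\Id_{P(R)}-\tau)S$ purely alternative. Both summands inherit non-degeneracy. Using the identifications $\pi P(R)\cong P(\pi R)$ and $\pi O(R)=O(\pi R)$ for $\pi\in B(R)$, recorded there and at the end of Sec. 2, I get $O(P(R))=O(P(R))_\tau\cong O(P(\tau R))\oplus O(P((\Id_{P(R)}-\tau)R))$. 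Remark 1.2 with $n=2$ gives the compatibility of orthogonal completion with finite direct sums.

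\emph{Step 2: the purely alternative summand.} The algebra $R_2=(\Id_{P(R)}-\tau)R$ is non-degenerate and purely alternative, so Proposition 3.1 applies to it and gives $Q(R_2)=Q(P(R_2))=O(P(R_2))$. Since $P(R_2)=(\Id_{P(R)}-\tau)P(R)$, this yields the second and third equalities of the first chain.

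\emph{Step 3: the $PI$ case.} If $R$ is $PI$, then $\tau R\cong U(R)$ is a semiprime associative $PI$--algebra, and so is $P(\tau R)$, since it satisfies the same multilinear identities. By Martindale's theorem, as noted after Proposition 3.2, the hypothesis $Z(J)\ne\{0\}$ for $\{0\}\ne J\lhd I\in\mathcal{D}_r$ holds. Proposition 3.2 then gives $Q(\tau R)=Q(P(\tau R))=O(P(\tau R))$. Combined with Step 2, this gives $O(P(R))\cong Q(P(\tau R))\oplus Q(P(R_2))\cong Q(\tau R)\oplus Q(R_2)$.

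\emph{Main obstacle: identifying the direct sum with $Q(P(R))$ and $Q(R_\tau)$.} The remaining identifications require the maximal right algebra of quotients to respect the finite direct decomposition $R_\tau=\tau R\oplus R_2$. I would show this directly from the construction of $Q$ via $\mathcal{D}_r^*$ and $\Hom^*$. A dense right ideal of $R_\tau$ meets each summand in a dense right ideal. The nucleus satisfies $N(R_\tau)=\tau R\oplus N(R_2)$, as computed at the start of Sec. 3. A map in $\Hom^*$ splits componentwise, because the annihilator relations $\tau R\cdot R_2=\{0\}$ force any $\phi$ to preserve the summands modulo a dense ideal, by the torsion-freeness of right algebras of quotients.

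This gives $Q(R_\tau)\cong Q(\tau R)\oplus Q(R_2)$, and the same argument applied to $P(R)=P(R)_\tau$ gives $Q(P(R))\cong Q(P(\tau R))\oplus Q(P(R_2))$. Finally, Propositions 3.1, 3.2 and 3.3 describe $Q(R_2)$ as a Cayley--Dickson algebra over $\CM(R_2)$, and $Q(\tau R)$ as a finite direct sum of matrix algebras over strictly regular algebras.
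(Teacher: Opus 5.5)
Your proposal is correct and follows the paper's route. Everything except $Q(R_\tau)\cong Q(\tau R)\oplus Q((\Id_{P(R)}-\tau)R)$ (and its analogue for $P(R)=P(R)_\tau$) is read off from the Sec.~3 splitting and Propositions 3.1, 3.2, and the paper proves that isomorphism exactly by your componentwise splitting of $\Hom^*$. The precise mechanism is $(\phi u)v=\phi(uv)=((\Id_{P(R)}-\pi)\phi u)v$ for $u=(\Id_{P(R)}-\pi)u$ and $v\in N(R_\tau)$, so $\pi\phi u\in\Ann_l N(R_\tau)=\{0\}$ by non-degeneracy. This is what your ``torsion-freeness'' has to mean, and it gives preservation of the summands on the whole domain, not just modulo a dense ideal.

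One minor slip: $\tau R\cong U(R)$ fails in general, since only $U(R)\subseteq\tau R$, with equality when $\tau R\subseteq R$. You only need that $\tau R$ is a semiprime associative $PI$--algebra, and that holds because $x\mapsto\tau x$ is a homomorphism of $R$ onto $\tau R$ and $D(\tau R)=\{0\}$.
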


\begin{proof}
In view of the observations made earlier, it is only necessary to establish 
that ${Q(R_{\tau})\cong Q(\tau R)\oplus Q((\Id_{P(R)}-\tau) R)}$ and, 
as a consequence, 
\[
Q(P(R))\ =\ Q(P(R)_{\tau})\ \cong 
Q(\tau P(R))\oplus Q((\Id_{P(R)}-\tau) P(R))\,.
\]

If ${I\in \mathcal{D}_{nr}(R_{\tau})}$, 
${\phi\in \Hom^*(I R_{\tau}^1, R_{\tau})_{N(R_{\tau})}}$, 
${\pi=\tau, \Id_{P(R)}-\tau}$, then ${\pi I\in \mathcal{D}_{nr}(\pi R)}$,
$\pi I R_{\tau}^1=(\pi I)(\pi R)^1\in \mathcal{D}^*_r(\pi R)$, 
${{}_{\pi} \phi\in \Hom^*(\pi I R_{\tau}^1, \pi R)_{N(R_{\tau})}=
\Hom^*(\pi I R_{\tau}^1, \pi R)_{N(\pi R)}}$, where
${}_{\pi} \phi \pi x=\pi \phi x$, 
${x\in I R_{\tau}^1}$, since for any ${x\in R_{\tau}}$, 
${y\in N(R_{\tau})=N(R_{\tau})_{\tau}}$, ${\pi x\ne 0}$, there exists 
${z\in N(R_{\tau})}$, ${(\pi x) z=(\pi x) (\pi z)\ne 0}$, ${y z\in I, 
\pi (y z)=(\pi y) (\pi z)\in \pi I, \pi z\in \pi N(R_{\tau})=N(\pi R)}$, 
and for any ${u=(\Id_{P(R)}-\pi) u\in I R_{\tau}^1}$ 
\[
(\phi u) v\ =\ \phi (u v)\ =\ (\phi u) (\Id_{P(R)}-\pi) v\ =\ 
((\Id_{P(R)}-\pi) \phi u) v\quad (v\in N(R_{\tau}))\,,
\]
${\pi \phi u\in \Ann_l N(R_{\tau})=\{0\}}$ ($P(R)$, $R_{\tau}$ are 
non-degenerate (see the beginning of Sec. 3, \cite{GolO}, ob-\linebreak servations 
after Lemma 2.6), \cite{GolO}, Remark 3.11), 
${\phi={}_{\tau} \phi \tau+{}_{\Id_{P(R)}-\tau} \phi (\Id_{P(R)}-\tau)}$.
If ${I=I_{\tau}}$ is a right ideal of $N(R_{\tau})$, 
${\pi I\in \mathcal{D}_{nr}(\pi R)}$, 
${\psi_{\pi}\in \Hom^*(\pi I R_{\tau}^1, \pi R)_{N(\pi R)}}$, 
${\pi=\tau, \Id_{P(R)}-\tau}$, then 
${I\in \mathcal{D}_{nr}(R_{\tau})}$, 
${\psi=\psi_{\tau} \tau+\psi_{\Id_{P(R)}-\tau} (\Id_{P(R)}-\tau)\in 
\Hom^*(I R_{\tau}^1, R_{\tau})_{N(R_{\tau})}}$ with 
${{}_{\pi} \psi=\psi_{\pi}}$, ${\pi=\tau}$, ${\Id_{P(R)}-\tau}$.  
It remains to note that  
\begin{multline*}
[(\phi, I R_{\tau}^1)]\ =\ 
[({}_{\tau} \phi \tau+{}_{\Id_{P(R)}-\tau} \phi (\Id_{P(R)}-\tau), 
I_{\tau} R_{\tau}^1)]\ \longmapsto
\\ 
([({}_{\tau} \phi, \tau I R_{\tau}^1)], 
[({}_{\Id_{P(R)}-\tau} \phi, (\Id_{P(R)}-\tau) I R_{\tau}^1)])\quad 
((\phi, I R_{\tau}^1)\in \mathcal{C}^*_{dr}(R_{\tau}),\ 
I\in \mathcal{D}_{nr}(R_{\tau}))
\end{multline*}
is an $F$--isomorphism of $Q(R_{\tau})$ and 
${Q(\tau R)\oplus Q((\Id_{P(R)}-\tau) R)}$. 
\end{proof}

Since ${\Ann_t(P(R), I)=\Ann_t(P(R), \CM(R) I)\lhd P(R), t=l, r}$ for 
semiprime alternative $R$, all ${I\lhd R}$ and ${I=N(R)}$ (standard 
conclusion of ${\Ann_t I=\Ann_t(R, I)\lhd R}$ for all ${I\lhd R}$ from the 
beginning of the proof of Proposition 3.1, \cite{GolO}, proof of 
Remark 3.11, ${N(R)\subseteq N(P(R))}$), 
${\{0\}=\Ann_t I=R\cap \Ann_t(P(R), I)=\Ann_t(P(R), I)}$ 
for all ${I\in \mathcal{E}(R)}$ and for ${I=N(R)}$ and non-degenerate $R$. 

Let's move on to non-degenerate linear Jordan algebras over $F$ with $1/2$. 
According to \cite{ZelP1}, Theorem 3, for any such (strongly) prime algebra 
${R\ne \{0\}}$ one of the following possibilities is realized: 
\begin{description}

\item[(1)] $R$ is an Albert ring ($R S^{-1}$ is a 27--dimensional Albert 
algebra over $Z(R S^{-1})$);

\item[(2)] ${R S^{-1}=\Jr(V, g)}$ is the Jordan algebra of the non-degenerate 
bilinear symmetric form $g$ on the $Z(R S^{-1})$---space $V$, 
${\dim_{Z(R S^{-1})} V> 1}$; 

\item[(3)] ${A^{(+)}\lhd R\subseteq Q^s_m(A)^{(+)}}$ for some prime 
associative algebra $A$; 

\item[(4)] ${\Sym(A, *)=\{x+x^*\mid x\in A\}\lhd R\subseteq \Sym(Q^s_m(A), *)}$ 
for some prime associative algebra $R$ with an involution $*$ uniquely extended 
from $A$ to $Q^s_m(A)$ ($Q(A)$),

\end{description}
in (1, 2) ${R S^{-1}=P(R)}$ is a simple algebra over the field 
${Z(R S^{-1})=Z(R) S^{-1}=\CM(R)}$, $S=Z(R)\setminus \{0\}\ne \emptyset$ 
(${\Jr(V, g)}$ over the field $\mathbb{F}$ with ${\dim_{\mathbb{F}} V=1}$ 
is associative, ${\Jr(V, g)=Z(\Jr(V, g))}$ is isomorphic either to 
${\mathbb{F}\oplus \mathbb{F}}$ or to the field $\mathbb{F}(\alpha)$, 
${\alpha\notin \mathbb{F}}$, ${\alpha^2\in \mathbb{F}}$). From this and the 
observations at the end of Sec. 2 follows the decomposition of any 
non-degenerate Jordan ${R=P(R)}$ into a direct sum 
${R=\alpha_1 R\oplus \alpha_2 R\oplus \alpha_3 R\oplus \alpha_4 R}$, 
${\Id_{P(R)}=\alpha_1+\alpha_2+\alpha_3+\alpha_4}$, ${\alpha_i\in B(R)}$, 
$\alpha_i \alpha_j=\delta_{ij} \alpha_i$, $\alpha_i R$ is subdirect product 
of strongly prime algebras of type $(i)$ for ${\alpha_i\ne 0}$, $\alpha_1 R$ 
for ${\alpha_1\ne 0}$ and  
${(\Id_R-\alpha_1) R=\alpha_2 R\oplus \alpha_3 R\oplus \alpha_4 R}$ are 
exceptional (not a homomorphic image of a special algebra) and 
special algebras. We detail this decomposition of $R$ using the constructions 
of \cite{ZelP, ZelP1}. 

If $R$ is a non-degenerate Jordan algebra over an integrity domain $F$ with 
1/2, then ${S(R)\cap T_H(R)=\{0\}}$, where $S$ and $T_H$ are ideals of 
$F_{Jor}\langle X\rangle$ of $s$--elements and such identities of the Jordan 
algebra of ${3\times 3}$ Hermitian matrices $H_3(\mathcal{O}(F))$ over the 
Cayley --- Dickson matrix algebra ${\mathcal{O}(F)=\mathcal{O}_F(0, 1, 1)}$ 
over $F$ that all their partial linearizations are identities of 
$H_3(\mathcal{O}(F))$, $S(R)$ and $T_H(R)$ are ideals of their values 
(images under the action of homomorphisms) on $R$, and, in particular, for 
${R=P(R)}$ 
\begin{gather*}
S(\alpha_1 R)\ =\ \alpha_1 S(R)\ =\ S(R)\,,\quad 
S((\Id_R-\alpha_1) R)\ =\ (\Id_R-\alpha_1) S(R)\ =\ \{0\}\,,
\\
T_H((\Id_R-\alpha_1) R)\ =\ (\Id_R-\alpha_1) T_H(R)\ =\ T_H(R)\,,
\quad 
T_H(\alpha_1 R)\ =\ \alpha_1 T_H(R)\ =\ \{0\}
\end{gather*}
(see introduction to \cite{ZelP1}, appendix to \cite{ZelP}; $R$ is a 
homomorphic image of a special Jordan algebra iff ${S(R)=\{0\}}$). 

For linear Jordan algebras, we use the notion of the algebra of 
$\mathfrak{M}$--quotients from \cite{JMQ}, different from Proposition 2.8.
If $R$ is a subalgebra of a Jordan algebra $Q$ over $F$ with $1/2$ and for 
any ${0\ne q\in Q}$ there is ${I\lhd R}$, 
${\Ann_R I=\{x\in R\mid x I=\{x, I, R\}=\{0\}\}=\{0\}}$, 
${\{0\}\ne I q\subseteq R}$, where  ${\{x, y, z\}=(x y) z+(z y) x-(x z) y}$ 
is the triple Jordan product of ${x, y, z\in R}$, then $Q$ is an 
\emph{algebra of $\mathfrak{M}$--quotients of $R$}. Similarly, with the choice 
${I\in \mathcal{F}}$, an algebra of $\mathfrak{M}$--quotients of $R$ with 
respect to the filter $\mathcal{F}$ is defined, where the \emph{filter 
$\mathcal{F}$} is a non-empty set of non-zero ideals of $R$ such that for any 
${I_1, I_2\in \mathcal{F}}$ there exists ${I_3\in \mathcal{F}}$, 
${I_3\subseteq I_1\cap I_2}$, $\mathcal{F}$ is a \emph{power filter} 
if for any ${I\in \mathcal{F}}$ there is ${J\in \mathcal{F}}$, 
${J\subseteq I^2}$. The \emph{maximal algebra of $\mathfrak{M}$--quotients 
$Q_m(R)$} is the algebra of $\mathfrak{M}$--quotients of $R$ in which all its 
algebras of $\mathfrak{M}$--quotients are embedded identically on $R$. 

If ${1\in R}$, $Q$ is an algebra of $\mathfrak{M}$--quotients of $R$ with 
respect to a power filter of ideals with zero annihilators 
$\mathcal{F}$, then $Q$ is an algebra of $\mathfrak{M}$--quotients of $R$ 
with respect to $\mathcal{F}$ in the sense of Proposition 2.8. In this 
case $1$ is the unit of $Q$, since in $Q$ 
\[
[r_x, r_1]\ =\ 0\,,\quad 
(r_y r_z+r_z r_y) r_1+r_{y z}\ =\ r_{y z} r_1+r_y r_z+r_z r_y\quad 
(x, y, z\in R)
\]
\cite{ZShS}, (25--27), p. 86, for any ${q\in Q}$, ${I\lhd R}$, 
${(q (r_1-\Id_Q)) I=\{0\}}$,
\begin{gather*}
q (r_1-\Id_Q) r_x r_y\ =\ q (r_1-\Id_Q) (r_x r_y+r_y r_x)\ =\ 
q (r_1-\Id_Q) r_{x y}\ =\ 0\quad (x\in R,\ y\in I)\,,
\\ 
\{0\}\ =\ ((q (r_1-\Id_Q)) R) I\ =\ ((q R) (r_1-\Id_Q)) I\ =\
((q)_R (r_1-\Id_Q)) I\ =\ ((q (r_1-\Id_Q))_R I,
\end{gather*}
${R\cap (q (r_1-\Id_Q))_R\subseteq \Ann_R J=\{0\}}$ for ${(a)_R=a M^Q(R)'}$, 
${a\in Q}$, ${J\in \mathcal{F}}$, ${J q\subseteq R}$, ${r_1=\Id_Q}$. 
At the same time, ${Q_T=(Q, \{\ ,\ ,\ \})}$ is a system of 
$\mathfrak{M}$--quotients of the linear triple Jordan system 
${R_T=(R, \{\ ,\ ,\ \})}$ with respect to $\mathcal{F}$ and for any 
${0\ne q\in Q}$ there is ${I\in \mathcal{F}}$, 
\begin{multline*}
\{0\}\ \ne\ \{(q)_{R_T}, I, R\}+\{(q)_{R_T}, R, I\}+
\{I, (q)_{R_T}, R\}\ \subseteq
\\ 
(\{q, I, R\}+\{q, R, I\}+\{I, q, R\})_{R_T}\ \subseteq\ R
\end{multline*}
for ${(A)_{R_T}=A M^{Q_T}(R_T)', A\subseteq Q, M^{Q_T}(R_T)'=F \Id_Q+
\langle\{x, y,\ \}, \{x,\ ,y\}, \{\ , x, y\}\mid x, y\in R\rangle}$ 
\cite{JMQ}, Proposition 5.2, \cite{GolO}, the observation before 
Proposition 3.5, ${\{0\}\ne K (q)_{R_T}\subseteq R}$ for 
${K\in \mathcal{F}}$, ${K\subseteq I^2}$ \cite{JMQ}, end of the proof 
of Proposition 5.2, ${(q)_{R_T}=(q)_R}$ ($Q$ and $R$ have a common 1). 
In \cite{GolO} there appears a construction of $Q_m(R)$ for a 
non-degenerate $R$ over $F$ with $1/6$ from \cite{JMQ} with respect to 
$\mathcal{E}(R)$ \cite{GolO}, the observations before Proposition 3.3.

In view of ${I\cap Z(R)\ne \{0\}}$ for any non-degenerate Jordan $PI$--algebra 
$R$ over $F$ with 1/2 and ${\{0\}\ne I\lhd R}$ (\cite{ZelP1}, Theorem 6), 
Proposition 2.8 (\cite{GolO}, Proposition 3.3 for $F$ with 1/6), we obtain 

\begin{co}
If ${R=P(R)}$ is a non-degenerate Jordan $PI$--algebra over $F$ with 
1/2, then ${O(R)=R_{\mathcal{F}'}=Q_m(R)}$ is the maximal algebra of 
$\mathfrak{M}$--quotients of $R$.
\end{co}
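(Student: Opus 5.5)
The plan is to reduce Corollary 3.5 to Proposition 2.8 with $Q=R_{\mathcal{F}'}$, and then to identify $R_{\mathcal{F}'}$ with $Q_m(R)$ through the maximality property of $\mathfrak{M}$--quotients. The hypothesis of Proposition 2.8 that we have to supply is ${I\cap Z(R)\ne \{0\}}$ for every ${\{0\}\ne I\lhd R}$. For a non-degenerate Jordan $PI$--algebra over $F$ with $1/2$ this is exactly \cite{ZelP1}, Theorem 6, quoted just before the statement.

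\textbf{Step 1.} By the observations before Proposition 2.7, $R$ has no $\mathcal{F}'$--torsion, where ${\mathcal{F}'=\mathcal{E}(Z(R))}$. Since ${R=P(R)}$, those observations also give ${R_{\mathcal{F}'}=R_{\mathcal{F}}=O(R)}$. Proposition 2.7 adds that $R_{\mathcal{F}'}=P(R_{\mathcal{F}'})$ with centre $Z(R)_{\mathcal{F}'}$, and that ${1\in R_{\mathcal{F}'}}$. So the equality ${O(R)=R_{\mathcal{F}'}}$ is essentially already in hand. We also note that $R_{\mathcal{F}'}$ inherits non-degeneracy and the Jordan identities from $R$ (Sec. 2, description of almost classical localizations).

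\textbf{Step 2.} We show that $R_{\mathcal{F}'}$ is an algebra of $\mathfrak{M}$--quotients of $R$ in the Jordan sense of \cite{JMQ}. Let ${0\ne q\in R_{\mathcal{F}'}}$ and choose ${H\in \mathcal{F}'}$ with ${\{0\}\ne H q\subseteq R}$. Put ${I=H R^1=(H)_R}$. Since ${H\subseteq Z(R)}$, we have ${I q\subseteq H R^1 q\subseteq R}$ after reassociating with central elements. We also need ${\Ann_R I=\{0\}}$. If ${x\in \Ann_R I}$, then ${x H=\{0\}}$, so ${x=0}$ by the absence of $\mathcal{F}'$--torsion. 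Hence $R_{\mathcal{F}'}\hookrightarrow Q_m(R)$ identically on $R$.

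\textbf{Step 3.} This is the converse embedding ${Q_m(R)\hookrightarrow R_{\mathcal{F}'}}$, and I expect it to be the main obstacle. The plan is to show that $Q_m(R)$ satisfies the hypotheses of Proposition 2.8 with ${Q=Q_m(R)}$:
\begin{enumerate}
\item for each ${0\ne q}$ there is an essential ${J\lhd R}$ with ${\{0\}\ne J q\subseteq R}$;
\item ${Z(R)\subseteq Z(Q_m(R))}$.
\end{enumerate}
Proposition 2.8 then gives ${Q_m(R)_{\mathcal{F}'_Q}\cong R_{\mathcal{F}'}}$ and ${Q_m(R)\subseteq R_{\mathcal{F}'}}$.

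For condition 1, an ideal $I$ with ${\Ann_R I=\{0\}}$ is essential. Indeed, if ${K\lhd R}$ and ${K\cap I=\{0\}}$, then ${K I\subseteq K\cap I=\{0\}}$ and ${\{K,I,R\}\subseteq K\cap I=\{0\}}$, so ${K\subseteq \Ann_R I=\{0\}}$.

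Condition 2 is the delicate part. For $F$ with $1/6$ it is provided by \cite{GolO}, Proposition 3.3. Over $F$ with only $1/2$, I would argue directly. Take ${z\in Z(R)}$, ${q\in Q_m(R)}$ and ${I\lhd R}$ with ${I q\subseteq R}$. Use the operator identities (25--27) of \cite{ZShS}, exactly as the paper does for the unit $1$, to show that ${[r_z,r_q]}$ and the associators involving $z$ annihilate $I$ and its triple products with $R$. Then conclude by ${\Ann_R I=\{0\}}$ and essentiality of $R$ in $Q_m(R)$.

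An alternative, if condition 2 resists, is to pass through the triple systems $R_T$, $Q_T$. The paper has already noted that $Q_T$ is a system of $\mathfrak{M}$--quotients of $R_T$. In the triple setting, centrality of $Z(R)$ follows from $\mathcal{F}'$--linearity of the relevant homomorphisms.

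\textbf{Step 4.} Combining Steps 2 and 3 and the uniqueness of $Q_m(R)$ up to isomorphism over $R$ gives ${O(R)=R_{\mathcal{F}'}=Q_m(R)}$.
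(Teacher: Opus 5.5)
Your proposal follows the paper's own argument. The paper's proof is exactly \cite{ZelP1}, Theorem 6 ($I\cap Z(R)\ne\{0\}$ for all $\{0\}\ne I\lhd R$) fed into Proposition 2.8, with $O(R)=R_{\mathcal{F}'}$ taken from the observations before Proposition 2.7 and \cite{GolO}, Proposition 3.3 for $1/6$; your Steps 1--4 only make explicit the two inclusions between $R_{\mathcal{F}'}$ and $Q_m(R)$.

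The hypothesis $Z(R)\subseteq Z(Q_m(R))$ that you single out for $F$ with only $1/2$ is not actually proved by your sketch, since the $r_1=\Id_Q$ computation does not carry over verbatim to a central $z$. The paper does not verify it separately either; it delegates this point to Proposition 2.8 and \cite{GolO}.
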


Lemma 2.4, \cite{GolO}, Lemma 2.8 and the observations before it (\cite{ZelP1}, 
Theorem 5), the above information about the structure of strongly prime 
alternative $PI$--algebras allows us to deduce 

\begin{co}
If a non-degenerate $PI$--algebra ${R=P(R)=O(R)}$ is alternative (Jordan over 
$F$ with 1/2), ${m=\sup\limits_{R\ne Q\in \Spec_{Mc}(R)} \dim_{K_Q} R_Q< \infty}$, 
then ${m(R)=\max\limits_{B\in \mathcal{U}(R)} \dim_{K_B} R_B}$ is the minimum 
number of generators of the module $R$ over ${\CM(R)=Z(R)}$, ${m(R)\leq m}$, 
where 
\[
R_B\ =\ R/P R\ =\ P(R_B)\,,\quad 
K_B\ =\ Z(R_B)\ =\ \CM(R_B)\ =\ \CM(R)_B
\] 
for any ${B\in \mathcal{U}(R)}$, ${P=B(R)\setminus B}$.
\end{co}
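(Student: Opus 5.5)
The natural route is to reduce the corollary to Lemma 2.4. We check that a non-degenerate alternative (or Jordan over $F$ with $1/2$) $PI$--algebra $R=P(R)=O(R)$ satisfies its hypotheses, with the supremum over $\Spec(R)$ replaced by the supremum over $\Spec_{Mc}(R)$. There are three things to verify: that $\CM(R)=Z(R)$ (up to the identification $z\mapsto l_z=r_z$), that the prime ideals relevant for the construction are strongly prime, and that for each such $Q$ the algebra $R_Q=R/Q\,S_Q^{-1}$ is central simple finite-dimensional over $K_Q=Z(R/Q)S_Q^{-1}$.

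For the first point we use $I\cap Z(R)\ne\{0\}$ for all $\{0\}\ne I\lhd R$. In the alternative case this follows from \cite{Mar, Row}, \cite{ZShS}, Lemma 7, Theorems 3, 7, as noted after Proposition 2.7. In the Jordan case it is \cite{ZelP1}, Theorem 6. Proposition 2.7 together with Proposition 2.8 and the observation before Proposition 2.7 then gives $O(R)=R_{\mathcal{F}'}$ and $\CM(R)\cong Z(R_{\mathcal{F}'})=Q(Z(R))$. Since $R=P(R)=O(R)$, this yields $\CM(R)=Z(R)\Id_R\cong Z(R)$, with $Z(R)=Q(Z(R))$ regular.

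For the quotient algebras we argue as follows. For $B\in\mathcal{U}(R)$, $R_B$ is prime by the argument opening the proof of Proposition 2.5. It is also non-degenerate, because non-degeneracy is a Horn-type property passing to $R/PR$ (\cite{GolO}, Lemma 2.8 and the observations before it). Hence the ideal $PR$, where $P=B(R)\setminus B$, lies in $\Spec_{Mc}(R)$. For strongly prime $PI$ quotients $R/Q$, Posner--Rowen--Markov type results apply: \cite{ZShS}, Theorems 3, 7 for alternative algebras (associative case, or a Cayley--Dickson ring) and \cite{ZelP1}, Theorem 5 for Jordan algebras (Albert ring, $\Jr(V,g)$ with $\dim V<\infty$ by the $PI$ condition, or a central order in a finite-dimensional special algebra). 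These show that $R_Q=P(R/Q)$ is central simple finite-dimensional over $K_Q=\CM(R/Q)$.

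With this in hand, the argument of Lemma 2.4 goes through with $\Spec(R)$ replaced by $\Spec_{Mc}(R)$. Remark 2.3 gives $Z(R_B)=Z(R)_B$, and $\CM(R)_B=Z(R)_B$ is a field. By Proposition 2.5 (using $M(R)=O(M(R))$, or directly the $\nu(B)$-free part of the proof of Lemma 2.4), $K_B=Z(R_B)=\CM(R_B)=\CM(R)_B$ and $R_B=P(R_B)$. Hence $\dim_{K_B}R_B\le m$, so $m(R)\le m$. The Horn formula $\Psi_k$ with $k=m(R)$ holds in every $R_B$, and by \cite{BD}, Theorem 2.3.9 it lifts to $R$. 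This shows that $R$ is generated over $Z(R)=\CM(R)$ by $m(R)$ elements, and minimality is immediate from the definition of $m(R)$.

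The main obstacle is the second step. Lemma 2.4 as stated quantifies over all of $\Spec(R)$, whereas here only strongly prime quotients are controlled. The fix is to observe that the proof of Lemma 2.4 uses only the primes $PR$, $B\in\mathcal{U}(R)$, and the $m.s.p.$ and $PI$ property of $M(R)$. We then check that each $PR$ is strongly prime and that $R$, as a subdirect product of the $R_B$ (Lemma 2.9 style), is an $m.s.p.$--algebra with $PI$ multiplication algebra, by the observations before Remark 2.2 applied to the family $\{R_B\}$.
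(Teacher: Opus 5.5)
Your proposal is correct and follows essentially the paper's argument. The paper likewise reduces to Lemma 2.4, using $I\cap Z(R)\ne\{0\}$ to get $R=R_{\mathcal{F}'}$ and $\CM(R)=Z(R)$, the inheritance of non-degeneracy by the $R_B$ (so $PR\in\Spec_{Mc}(R)$ and only strongly prime quotients matter), and the structure theory of strongly prime alternative/Jordan $PI$--algebras.

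One slip to correct: the $PI$ condition does not bound $\dim_{K_Q} V_Q$ for quotients of type $\Jr(V_Q,g_Q)$. The paper stresses this: $m<\infty$ is a genuine hypothesis only in that Jordan case and follows from the essential identity otherwise. Finite-dimensionality there must therefore come from $m<\infty$. Since your argument uses $m$ as the bound anyway, nothing breaks.
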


\begin{proof}
The condition ${m< \infty}$ applies only to the Jordan $R$ and assumes 
${\dim_{K_Q} V_Q\leq m-1}$ for all ${R_Q\cong \Jr(V_Q, f_Q)}$, 
${R\ne Q\in \Spec_{Mc}(R)}$; in other cases, it immediately follows 
from the fulfillment of the essential identity on $R$. It remains to 
note that ${I\cap Z(R)\ne \{0\}}$ for any ${\{0\}\ne I\lhd R}$, 
${R=R_{\mathcal{F}'}}$ and ${\CM(R)=Z(R)}$ (see the observations before 
Corollary 3.5, before and after Proposition 2.7).

The conclusions of \cite{ShK} and \cite{Raz}, Theorem 4.1 (on rank), p. 47, 
guarantee the equivalence for the Jordan $R$ of the condition ${m< \infty}$ 
and the finite generation of $R$ as a $\CM(R)$--algebra.
\end{proof}                              

As in Lemma 2.4, having relations for the bases of $R_B$ over $K_B$, 
${B\in \mathcal{U}(R)}$, given by Horn formulas, one can choose 
the generators of the direct summands of $R$ as $\CM(R)$--modules, 
related by the same formulas. 
For any special Jordan algebra $R$ over $F$ with $1/2$ and ${I\lhd R}$ 
we define a descending chain: ${I^{[0]}=I}$, 
${I^{[k+1]}=\{I^{[k]}, I^{[k]}, I^{[k]}\}\lhd R}$, ${k\geq 0}$. 

\begin{prop}
If a Jordan algebra ${R=P(R)=O(R)}$ over $F$ with 1/2 is a subdirect product 
of Albert rings, then $R$ is a free module over ${Z(R)=\CM(R)}$ of rank 27, 
${\overline{C} R\cong H_3(\mathcal{O}(\overline{C}))}$, 
${\overline{C}=\prod\limits_{B\in \mathcal{U}(R)}\overline{Z(R)_B}}$, 
${R\cong H_3(\mathcal{O}(Z(R)))}$ for the algebraic $R$ over the field 
${F=\overline{F}}$. 
\end{prop}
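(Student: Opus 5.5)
The plan is to reduce to the strongly prime quotients $R_B$, ${B\in \mathcal{U}(R)}$, and then lift back to $R$ with the Horn formula technique, as in the proofs of Lemma 2.4 and Proposition 3.3.

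\emph{Step 1: $Z(R)=\CM(R)$ and the quotients $R_B$.} An Albert ring is a non-degenerate Jordan $PI$--algebra, so $R$ is non-degenerate and $PI$. By the observations before Corollary 3.5 (\cite{ZelP1}, Theorem 6), ${I\cap Z(R)\ne \{0\}}$ for every ${\{0\}\ne I\lhd R}$. Proposition 2.8 and Corollary 3.5 then give ${R=R_{\mathcal{F}'}}$ and ${\CM(R)=Z(R)}$.

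By Corollary 3.6 and Proposition 2.5, for each ${B\in \mathcal{U}(R)}$ the quotient ${R_B=P(R_B)}$ is prime over the field ${K_B=Z(R_B)=\CM(R)_B}$. I must show that $R_B$ is a 27--dimensional Albert algebra. Since $R$ is a subdirect product of Albert rings, ${S(R)\ne \{0\}}$. By the decomposition ${R=\alpha_1 R\oplus \ldots\oplus \alpha_4 R}$ we have ${\alpha_1=\Id_R}$, so ${T_H(R)=\{0\}}$. Therefore $R_B$ satisfies all identities of $H_3(\mathcal{O})$ and is non-degenerate, since the property ${T_H=0}$ passes to $R_B$. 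By \cite{ZelP1}, Theorem 3, $R_B$ cannot be of type (2), (3) or (4): those are special, which would force ${S(R_B)=\{0\}}$, and that should contradict ${S(R)=\alpha_1 S(R)}$ being $B$--essential. Hence $R_B$ is of type (1), so ${\dim_{K_B} R_B=27}$ for every $B$.

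\emph{Step 2: freeness of rank $27$.} Corollary 3.6 then gives ${m(R)=27}$ as the minimal number of generators of $R$ over $Z(R)$. The constant dimension means only one $k_i$ occurs in the proof of Lemma 2.4. The Horn formula $\Psi'_{27}$ is true in every $R_B$, so Theorem 2.3.9 of \cite{BD} makes it true in $R$ itself. This yields 27 elements through which every element of $R$ is expressed uniquely over $Z(R)$, so $R$ is free of rank 27.

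Following the remark after Corollary 3.6 and \cite{BD}, proof of Theorem 3.1.11, I would instead choose the basis so that the structure constants satisfy the Horn formula describing an Albert algebra over $K_B$, that is, a basis that becomes the standard matrix--unit/octonion basis of $H_3(\mathcal{O})$ after extending scalars to $\overline{K_B}$. The needed relations among the coefficients then hold uniformly.

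\emph{Step 3: the algebras $\overline{C} R$ and the algebraic case.} Lemma 2.8 applies since $R$ is free. It gives ${\overline{C} R=R''=\prod_B \overline{K_B}\otimes_{K_B} R_B}$, and each factor is ${H_3(\mathcal{O}(\overline{K_B}))}$ because Albert algebras split over algebraically closed fields. Taking the product gives ${H_3(\mathcal{O}(\overline{C}))}$.

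For the algebraic $R$ over ${F=\overline{F}}$, the center $Z(R)_B$ of each $R_B$ is an algebraic extension of $F$. So ${Z(R)_B=F=\overline{Z(R)_B}}$ and each ${R_B\cong H_3(\mathcal{O}(F))}$ already. The Horn formula asserting the existence of a split basis with the standard structure constants of ${H_3(\mathcal{O}(\cdot))}$ then holds in all $R_B$, and Theorem 2.3.9 lifts it to $R$, giving ${R\cong H_3(\mathcal{O}(Z(R)))}$.

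The main obstacle is Step 1, namely showing that every $R_B$, and not only the original prime quotients, is an Albert algebra. I would try to use that $T_H$ is given by identities and the exceptional property ${S\ne 0}$ by non-identities localized through $B(R)$. If that fails, I would fall back on the Horn formula ``$\dim=27$'' being true on a dense set of $B$, and then use orthogonal completeness.
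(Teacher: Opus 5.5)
Your Steps 2 and 3 follow the paper's route: Lemma 2.4 and Corollary 3.6 with the single value $k=27$, then Lemma 2.9 (not 2.8) for $\overline{C}R$, and the Horn formula with split structure constants once $K_B=F$. Step 1, however, is the actual content of the proposition, and it is not proved.

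\begin{itemize}
\item The fact $T_H(R)=\{0\}$ cannot exclude special quotients. Special algebras such as $H_3(F)$, or $H_2(\mathcal{O}(F))\cong\Jr(V,g)$ with $\dim V=9$, are subalgebras of $H_3(\mathcal{O}(F))$ and satisfy all of its identities. Nor does $T_H=0$ give non-degeneracy of $R_B$, which you need in order to invoke \cite{ZelP1}, Theorem 3.
\item Your sentence about $S(R)=\alpha_1 S(R)$ being ``$B$--essential'' is not an argument. From $S(R_B)=\{0\}$ you only get $S(R)\subseteq PR$, $P=B(R)\setminus B$. Since $S(R)$ is made of values of infinitely many $s$--polynomials, nothing in your argument yields one $\beta\in B$ with $\beta S(R)=\{0\}$. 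An ultrafilter is not closed under infinite meets, and \cite{GolO}, Proposition 2.4, p. 6, needs an orthogonally complete set.
\item Your fallback fails for the same reason. \cite{BD}, Theorem 2.3.9, needs the formula to hold in every $R/PR$, and truth for a dense set of $B$ does not propagate by itself.
\end{itemize}

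The missing idea is to test speciality with one fixed $s$--identity that fails in every Albert ring: Glennie's identity $g=0$ (\cite{Gl}, Theorem 4 (a)).

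\begin{itemize}
\item The set $G$ of values of $g$ on $R=O(R)$ is orthogonally complete, because $g$ commutes with orthogonal sums: $g({\sum}^{\perp}\xi_a x_a,{\sum}^{\perp}\xi_a y_a,{\sum}^{\perp}\xi_a z_a)={\sum}^{\perp}\xi_a\, g(x_a,y_a,z_a)$.
\item If some $R_B$ were special, then $G\subseteq PR$. Hence $\beta G=\{0\}$ for some $\beta\in B$, so $g=0$ holds on the non-zero ideal $\beta R$.
\item Since $R$ is a subdirect product of Albert rings $R/Q$, some such $Q$ does not contain $\beta R$.
\item Primeness of $R/Q$ and $R=\beta R\oplus(\Id_R-\beta)R$ force $(\Id_R-\beta)R\subseteq Q$. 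So the Albert ring $R/Q=(\beta R+Q)/Q$ satisfies $g=0$, a contradiction.
\end{itemize}

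This is exactly the ``openness'' your density idea lacks: vanishing of a single polynomial at $B$ spreads to the clopen set determined by $\beta$. That is what lets the density of the original Albert quotients reach every $B\in\mathcal{U}(R)$. Once every $R_B$ is an Albert algebra over $K_B$, the rest of your argument goes through.
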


\begin{proof}
By hypothesis, the non-degenerate $PI$--algebra $R$ is a $m.s.p.$--algebra 
with a $PI$--algebra $M(R)$ (see the observations before Remark 2.2). If an 
algebra $R_B$ is special for some ${B\in \mathcal{U}(R)}$, then $R_B$ 
satisfies the homogeneous Glennie $s$--identity ${g=0}$,
\begin{multline*}
g(x, y, z)\ =\ 2 \{x, z, x\} \{x, \{z, y^2, z\}, y\}
- 2 \{y, z, y\} \{x, \{z, x^2, z\}, y\}+
\\
\{y, \{z, \{y, \{x, z, x\}, x\}, z\}, y\}
-\{x, \{z, \{x, \{y, z, y\}, y\}, z\}, x\}\ \in\ F_{Jor}\langle X\rangle\,,
\end{multline*}
which is not satisfied in any Albert ring \cite{Gl}, Theorem 4 (a),  
$G=\{g(x, y, z)\mid x, y, z\in R\}=O(G)\subseteq P R$, 
${P=B(R)\setminus B}$, there is ${\beta\in B}$, ${\beta G=\{0\}}$ 
\cite{GolO}, Proposition 2.4, p. 6, ${g=0}$ holds on 
${\{0\}\ne \beta R\lhd R}$ and there exists ${Q\in \Spec(R)}$, 
$R/Q=(\beta R+Q)/Q\oplus ((1-\beta) R+Q)/Q=(\beta R+Q)/Q$ is an Albert ring 
but satisfies ${g=0}$?! 
Hence, $R_B$ is an Albert algebra over $K_B$ for all 
${B\in \mathcal{U}(R)}$, $R$ is a free module over ${Z(R)=\CM(R)}$ of rank 27 
and a subdirect product of $R_B$, ${B\in \mathcal{U}(R)}$, 
${\overline{C} R\cong H_3(\mathcal{O}(\overline{C}))}$, 
${M(R)=O(M(R))}$ (Lemmas 2.4, 2.9, Corollary 3.6 and their proofs). 

If $F$ is an algebraically closed field and $R$ is algebraic over $F$, then
${K_Q=F}$ for all ${R\ne Q\in \Spec(R)}$ and ${R/Q=R_Q=H_3(\mathcal{O}(F))}$ 
for the Albert ring $R/Q$. Therefore, in this case for any 
${B\in \mathcal{U}(R)}$ in ${R_B\cong H_3(\mathcal{O}(F))}$ the Horm formula 
\begin{gather*}
\Upsilon\ =\ 
(\exists x_1)\ldots (\exists x_{27})[C(x_1, \ldots, x_{27})\wedge 
\Phi_{27}(x_1, \ldots, x_{27})\wedge \Phi_{27}'(x_1, \ldots, x_{27})]\,,
\\ 
C(x_1, \ldots, x_{27})\ =\ \biggl\{\bigwedge_{i, j=1}^{27} 
\biggl(x_i x_j=\sum_{l=1}^{27} \beta_{ij}^l x_l\biggr)\biggr\}\,,
\end{gather*}
is true, where $\Phi_k$, $\Phi_k'$ from Lemma 2.4, integers 
$\{\beta_{ij}^l\}$ are the structure constants of $H_3(\mathcal{O}(F))$ 
in the basis 
${\{e_{ii}, \varepsilon e_{ij}+\overline{\varepsilon} e_{ji}\mid 
i=1,2,3,\ 1\leq i< j\leq 3,\ \varepsilon\in \mathcal{E}\}}$, 
$\{e_{ij}\}$ are the matrix units of $M_3(\mathcal{O}(F))$, 
${\mathcal{E}=\{\varepsilon_{11}, \varepsilon_{22}, 
\varepsilon_{12}^{(k)}, \varepsilon_{21}^{(k)}\mid k=1, 2, 3\}}$ is the 
basis of Cayley --- Dickson matrix units of $\mathcal{O}(F)$ with the 
structure constants and involution ${x\longmapsto \overline{x}}$, 
${x\in \mathcal{O}(F)}$ from \cite{ZShS}, (24), (23), p. 61. 
So, $\Upsilon$ is true in ${R\cong H_3(\mathcal{O}(Z(R)))}$ 
\cite{BD}, Theorem 2.3.9, \cite{BMO}, Corollary 5.22. 
\end{proof}

If $R$ with 1 over $F$ with 1/2 is commutative, ${U(R)=\{0\}}$, 
${R=Z(R)+V}$, ${{}_{Z(R)} V\subseteq {}_{Z(R)} R}$, 
${x y\in Z(R)}$ for all ${x, y\in V}$, then ${R=\Jr(V, g)}$, 
${g(x, y)=x y}$, ${x, y\in V}$. It is enough to note that 
${(Z(R)\cap V) R\subseteq Z(R), 
(Z(R)\cap V) R\subseteq U(R)=\{0\}, Z(R)\cap V=\{0\}, 
R=Z(R)\oplus V}$ with the Jordan operation  
${(z+x)(z'+y)=(z z'+x y)+(z y+z' x)}$, ${z, z'\in Z(R)}$, ${x, y\in V}$. The 
fulfillment of ${U_z=0}$ for ${z\in Z(R)}$ is equivalent to ${z^2=0}$, 
${U_x=0}$ for all ${x\in V}$, ${x V=\{0\}}$. If ${\prr(Z(R))=\{0\}}$, 
${U_{z+x}=0}$ for some ${z\in Z(R)}$, ${x\in V}$, then 
\begin{gather*}
0\ =\ (z+x)^2\ =\ z^2+x^2+2 z x\ =\ z^2+x^2\ =\ z x\ =\ z^3\ =\ z\ =\ x^2\,,
\\
0\ =\ (y x+z' x) x\ =\ z' x^2+(y x) x\ =\ (y x) x\ =\ (y x)^2\ =\ y x
\quad (z'\in Z(R),\ y\in V)\,.
\end{gather*}
Therefore, the non-degeneracy of $R$ is equivalent to the semiprimeness of 
$Z(R)$ and the non-degeneracy of $g$ on $V$.

\begin{prop}
If a Jordan algebra ${R=P(R)=O(R)}$ over $F$ with 1/2 is a subdirect product 
of strongly prime Jordan algebras of type (2) (and integrity domains), then 
$R=\Jr(V, g)\ne\linebreak Z(R)$ is the Jordan algebra of the non-degenerate 
bilinear symmetric form $g$ over the $\CM(R)$--module $V$ 
(${R=\beta R\oplus (\Id_R-\beta) R}$ with 
${(\Id_R-\beta) R=\Jr(V, g)\ne\linebreak Z((\Id_R-\beta) R)}$, 
$\beta R=Z(\beta R)=Q(\beta R)$ for some ${\beta\in B(R)}$).
\end{prop}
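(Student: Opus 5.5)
The plan is to mirror the proofs of Proposition 3.7 and Lemma 2.4. We work in the ultrafilter quotients $R_B$, ${B\in \mathcal{U}(R)}$, and encode the structure of a Jordan algebra of a bilinear form by a Horn formula. We then transfer this structure back to $R$ via \cite{BD}, Theorem 2.3.9.

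\textbf{Step 1: structure of the quotients $R_B$.} The first claim is that each $R_B$ is either a field or an algebra $\Jr(V_B, g_B)$ with $\dim V_B>1$. The algebra $R$ is non-degenerate, and it is a subdirect product of strongly prime algebras of type (2) and of domains. Each such factor satisfies ${I\cap Z\ne \{0\}}$ for nonzero ideals $I$, since $R S^{-1}$ is central simple or a field. Hence $R$ is a non-degenerate $PI$--algebra, and by Corollary 3.5 and Proposition 2.7 we have ${Z(R)=\CM(R)}$ and ${1\in R}$. The identity
\[
[[x^2, y], x]\ =\ 0
\]
(degree two over the center) holds in all type (2) factors and in all domains. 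This identity passes to each $R_B$, because $R_B$ is a homomorphic image of $R$. Moreover $R_B=P(R_B)$ is prime and non-degenerate, with ${Z(R_B)=Z(R)_B}$, a field (Proposition 2.5, Remark 2.3). So $R_B$ is strongly prime and of type (2) or a field. For a fixed $B$, the algebra $R_B$ either is commutative associative or is not. This dichotomy is expressed by the Horn-type sentence ${(\forall x)(\forall y)(\forall z)((x,y,z)=0)}$. An argument like the Glennie argument in Proposition 3.7 then gives ${\beta\in B(R)}$ such that $\beta R$ is associative and every $R_B$ with ${1-\beta\in B}$ is of the form $\Jr(V_B,g_B)$ with $\dim V_B>1$. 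For $\beta R$ one has ${\beta R=Z(\beta R)}$. Equality ${\beta R=Q(\beta R)}$ follows from the self-injectivity of ${\CM(R)=Z(R)}$ (Remark 1.3) and from ${\beta R=\beta Z(R)}$.

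\textbf{Step 2: the main obstacle, producing $V$ inside $(1-\beta)R$.} This step has no finite-dimensionality bound, so Lemma 2.4 does not apply directly. I would define $V$ intrinsically rather than through bases. Every $x\in R$ satisfies a quadratic relation
\[
x^2-t(x)\,x+n(x)\ =\ 0
\]
with $t(x), n(x)\in Z(R)$. In each $R_B$ this holds, and the coefficients are unique when $x_B\notin Z(R_B)$. Set $V=\{x\in (1-\beta)R\mid x^2\in Z(R)\}$, the traceless elements. The formula saying that $x$ lies in $V$ and that ${x=z+v}$ with $z\in Z(R)$ and $v\in V$ is existential with conjunctions, hence Horn. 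It holds in every $R_B$ (take ${z=\tfrac12 t(x)}$). Theorem 2.3.9, as used in Lemma 2.4, then yields for each ${x\in R}$ a decomposition ${x=z+v}$ in $R$. Here the orthogonal completeness of $Z(R)$ and of $V$ (Remark 2.3) is needed to glue the local pieces. The hardest checks are that $V$ is a $Z(R)$--submodule closed under the relevant operations, and that ${u v\in Z(R)}$ for ${u,v\in V}$. Both are verified componentwise in each $R_B$ and then lifted, using the fact that ${S\subseteq P R}$ for all $P$ forces $S=\{0\}$ (\cite{GolO}, Proposition 2.4).

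\textbf{Step 3: the bilinear form.} By the remark preceding the statement, ${U((1-\beta)R)=\{0\}}$; this holds because no $R_B$ of type (2) is associative. Together with ${R=Z(R)+V}$ and ${VV\subseteq Z(R)}$, this gives $(1-\beta)R=\Jr(V,g)$ with ${g(x,y)=xy}$. Non-degeneracy of $R$ yields semiprimeness of $Z$ and non-degeneracy of $g$, again by that remark. Finally, ${(1-\beta)R\ne Z((1-\beta)R)}$ whenever ${1-\beta\ne 0}$, because ${\dim V_B>1}$. In the case with no domain factors we have $\beta=0$, which gives ${R=\Jr(V,g)\ne Z(R)}$.
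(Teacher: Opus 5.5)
\textbf{The gap is in Step 2: your $V$ is the wrong set.} You define $V=\{x\in(\Id_R-\beta)R\mid x^2\in Z(R)\}$ and call it the set of traceless elements, but it is not. Central elements also have central squares. In $R_B=\Jr(V_B,g_B)$ we have $(\alpha+v)^2=\alpha^2+g(v,v)+2\alpha v$, which lies in $K_B$ iff $\alpha=0$ or $v=0$. So in each $R_B$ the corresponding set is $V_B\cup K_B$, not $V_B$.

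Consequently your $V$ contains both $\Id_R-\beta$ and every non-zero traceless $v$, and this breaks everything you need:
\begin{itemize}
\item it is not closed under addition, since $(\Id_R-\beta)+v$ has non-central square;
\item $VV\subseteq Z(R)$ fails, since $(\Id_R-\beta)v=v\notin Z(R)$;
\item the decomposition $x=z+v$ produced by Theorem 2.3.9 is not unique (for central $x$ both $(z,v)=(x,0)$ and $(0,x)$ work), so there is no well-defined map $x\mapsto v_x$.
\end{itemize}
The checks you plan to do ``componentwise in each $R_B$ and then lift'' are already false in $R_B$.

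\textbf{The missing idea is an independence clause that pins down the non-central part.} The paper's Horn formula adds $\mathcal{M}(x,a)=(\forall c)[(c\notin Z(R))\vee(c(x-a)=0)\vee(c(x-a)\notin Z(R))]$, i.e.\ $Z(R)\cap Z(R)(x-a)=\{0\}$. This is an admissible Horn clause, and it holds in every $R_B$ because $K_B\cap V_B=\{0\}$. With it the decomposition $x=z_x+v_x$ becomes unique in $R$. Indeed, if $v_x+c$ with $c\in Z(R)$ also qualifies, then $cv_x\in Z(R)$, so $cv_x=0$, and then $c^2\in Z(R)\cap Z(R)(v_x+c)=\{0\}$, so $c=0$.

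The paper also quantifies over two variables at once, $(\forall x)(\forall y)(\exists a)(\exists b)$, with the extra conjunct $(x-a)(y-b)\in Z(R)$. This gives $v_xv_y\in Z(R)$ and $Z(R)\cap(Z(R)v_x+Z(R)v_y)=\{0\}$. Uniqueness then yields $v_{x+y}=v_x+v_y$ (and $Z(R)$--linearity). Hence $V=\{v_x\mid x\in R\}$ is a submodule with $VV\subseteq Z(R)$ and $R=Z(R)\oplus V$, and the observation before the proposition finishes the argument. Without this uniqueness mechanism, Step 3 has nothing to apply to.

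\textbf{Two secondary points on Step 1.}
\begin{itemize}
\item The identity $[[x^2,y],x]=0$ is vacuous in a commutative algebra, so it cannot exclude Albert or hermitian quotients $R_B$. What does the job is Zel'manov's identity $((z[r_x,r_y]^2)(z[r_x,r_y]^3))^{180}=0$ with its linearizations, which the paper uses, together with speciality (Glennie's identity) against Albert rings.
\item Proposition 2.5 does not apply as cited: $M(R)$ need not be $PI$ when $\dim V_B$ is unbounded. The primeness and centrality of $R_B$ you want come instead from Remark 2.3 and $I\cap Z\ne\{0\}$ in non-degenerate Jordan $PI$--algebras.
\end{itemize}
Your treatment of the mixed case, obtaining $\beta$ from the associator set (equivalently from the splitting $R=\tau R\oplus(\Id_R-\tau)R$, $\tau R=U(R)$, at the beginning of Sec.~3), is a valid alternative to the paper's splitting at the end of Sec.~2.
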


\begin{proof}
Let $R$ be a subdirect product of strongly prime Jordan algebras of type 
(2). Then ${U(R)=\{0\}}$ (in algebras of type (2) there are no non-zero 
associative ideals), ${\CM(R)=Z(R)}$, on ${R=R_{\mathcal{F}'}=Q_m(R)}$ 
${((z [r_x, r_y]^2)(z [r_x, r_y]^3))^{180}=0}$
and all its linearizations are satisfied, for any ${B\in \mathcal{U}(R)}$ the 
central simple algebra $R_B$ over the field $K_B$ is either associative, 
${R_B=K_B}$, or ${R_B=\Jr(V_B, g_B)}$, $g_B$ is a non-degenerate bilinear 
symmetric form on the $K_B$--space $V_B$, ${\dim_{K_B} V_B> 1}$ (Remark 3.5, 
the observations before it and before 
Proposition 2.7, Remark 2.3, \cite{GolO}, Lemma 2.8, \cite{ZelP1}, Theorem 1, 
Lemma 9) and, as a consequence, the Horn formula 
\begin{multline*}
(\forall x)(\forall y)
(\exists a)(\exists b)[(a\in Z(R))\wedge (b\in Z(R))\wedge 
((x-a)^2\in Z(R))\wedge ((y-b)^2\in Z(R))\wedge
\\ 
((x-a)(y-b)\in Z(R))\wedge \mathcal{M}(x, a)\wedge \mathcal{M}(y, b)]
\end{multline*}
is true in $R_B$, 
${\mathcal{M}(x, a)=(\forall c)[(c\notin Z(R))\vee (c (x-a)=0)\vee 
(c (x-a)\notin Z(R))]}$. So, it is true in $R$ 
\cite{BD}, Theorem 2.3.9, \cite{BMO}, Corollary 5.22, for any 
${x, y\in R\setminus Z(R)}$ there is $z_x, z_y\in Z(R)$, ${x=z_x+v_x}$, 
${y=z_y+v_y}$, ${v_x^2, v_y^2, v_x v_y\in Z(R)}$, 
${Z(R)\cap Z(R) v_x=Z(R)\cap Z(R) v_y=\{0\}}$, they are uniquely defined, 
since ${(v_x+c)^2\in Z(R)}$, ${Z(R)\cap Z(R)(v_x+c)=\{0\}}$ for 
${c\in Z(R)}$ implies ${c\in \Ann_{Z(R)} v_x}$, 
${(v_x+c) \Ann_{Z(R)} v_x=c \Ann_{Z(R)} v_x=\{0\}}$, ${0=c^2=c}$. If 
$z=z_1 v_x+z_2 v_y\in Z(R)$ for ${z_1, z_2\in Z(R)}$, then 
${(z-z_1 v_x)^2, (z-z_2 v_y)^2\in Z(R)}$, ${0=z z_1 v_x=z z_2 v_y=z^2=z}$. 
Whence it 
follows that ${Z(R)\cap (Z(R) v_x+Z(R) v_y)=\{0\}}$, ${(v_x+v_y)^2\in Z(R)}$, 
${z_{x+y}=z_x+z_y}$, ${v_{x+y}=v_x+v_y}$. These equalities are satisfied on the 
entire $R$ with ${z_x=x}$, ${v_x=0}$ for all $x\in Z(R)$. Thus, 
${R=Z(R)\oplus V=\Jr(V, g)}$ for ${V=\{v_x\mid x\in R\}}$ and non-degenerate 
${g(v, w)=v w}$, ${v, w\in V}$ (see the observation before Proposition 3.8).

If $R/P$ and $R/Q$ are subdirect products of integrity domains and strongly 
prime Jordan algebras of type (2), respectively, 
${\{0\}=P\cap Q\ne P, Q\lhd R}$, then there is ${0, 1\ne \beta\in B(R)}$, 
${R=\beta R\oplus (1-\beta) R}$, ${\gamma R=P(\gamma R)=O(\gamma R)}$ is a 
subdirect product of integrity domains for ${\gamma=\beta}$ and strongly 
prime Jordan algebra of type (2) for ${\gamma=1-\beta}$, 
${\beta R=Z(\beta R)=Q(\beta R)}$ and 
${Z((1-\beta) R)\ne (1-\beta) R=Q_m((1-\beta) R)=\Jr(V, g)}$ 
(see the end of Sec. 2). 
\end{proof}

As in \cite{ZelP1}, $\{f_i\}$ are all linearizations of 
${((z [r_x, r_y]^2)(z [r_x, r_y]^3))^{180}}$, $F_{SJor}\langle X\rangle$ is 
the \emph{free special Jordan algebra} with the set of free generators $X$ 
over $F$ with 1/2, $F_{SJor}\langle X\rangle=
\langle X\rangle\subseteq F_{Ass}\langle X\rangle^{(+)}$, 
${T=\sum\limits_i (f_i(F_{SJor}\langle X\rangle))_{F_{SJor}(\langle X\rangle)}}$ 
is the ideal of $F_{SJor}\langle X\rangle$ generated by the values of 
$\{f_i\}$ on $F_{SJor}\langle X\rangle$. Let a non-degenerate Jordan algebra $R$ 
be special, $T(R)$ be the ideal of values of $T$ on $R$, 
${T(R)\in \mathcal{E}(R)}$, and so, ${T(R)^{[k]}\in \mathcal{E}(R)}$,
${R\cap \Ann(U, T(R)^{[k]})=\{0\}}$ for any ${k\geq 0}$ and associative 
enveloping algebra $U$ of $R$ (${\Ann_t(U, I), \Ann(U, I)\lhd U}$, 
${t=l, r}$ for all ${I\lhd R}$). Following \cite{ZelP1}, $\S 4$, we  
choose among the quotient algebras of the universal associative 
enveloping of $R$ its semiprime enveloping $B$ with involution $*$, 
$T(R)=\Sym(A, *)\lhd R\subseteq \Sym(B, *)$, 
${A=\langle T(R)\rangle\subseteq B}$,
${\bigcup\limits_{k\geq 0} \Ann(B, T(R)^{[k]})=\{0\}}$, 
${I\cap T(R)\ne \{0\}}$ for all $\{0\}\ne I\lhd \Sym(B, *)$, for any 
${x\in B}$ there is ${k\geq 0}$, ${x T(R)^{[k]}+T(R)^{[k]} x\subseteq A}$, 
${\Sym(B, *)}$ inherits the non-degeneracy (and primeness, if any) 
of $R$ \cite{ZelP1}, Lemmas 19, 20 (the non-degeneracy (and primeness) of $R$ 
is inherited by all ${I\lhd R}$, this condition for $R$ is equivalent to its 
fulfillment in any ${I\in \mathcal{E}(R)}$ (see also \cite{ZShS}, Corollary 2, 
p. 377)). Assume that ${\prr(A)\ne \{0\}}$. Then 
\[
\prr(A)\cap \Sym(B, *)\ =\ \prr(A)\cap T(R)\ \subseteq\ \prr(T(R))\ =\ \{0\}\,,
\]
in case ${(\prr(A))_B\cap \Sym(B, *)\ne \{0\}}$ we can apply the reasoning 
of \cite{ZelP1} before Lemma 21 and choose ${0\ne x\in (\prr(A))_B\cap R}$, 
${k\geq 1}$, ${\{T(R)^{[k]}, x, T(R)^{[k]}\}=\{0\}}$, 
\begin{multline*}
0\ =\ a x b+b x a\ =\ a x a\ =\ a x b x a\ =\ 
a x (b_1\cdot b_2)+(b_1\cdot b_2) x a\ =
\\ 
-1/2(b_1 x a b_2+b_2 x a b_1+b_1 a x b_2+b_2 a x b_1)\ =\
-b_1 (x\cdot a) b_2-b_2 (x\cdot a) b_1\ =\\ 
b (x\cdot a) b\ =\ b_1 (x\cdot a) b_2 (x\cdot a) b_1\quad 
(a, b, b_1, b_2\in T(R)^{[k]})\,,
\end{multline*}
where $\cdot$ is the multiplication in $B^{(+)}$,
\begin{gather*}
\{0\}\ =\ T(R)^{[k]} U_x U_{T(R)^{[k]}}\ =\ 
T(R)^{[k]} U_x U_{T(R)^{[k]}} U_x\ =\ T(R)^{[k]} U_{T(R)^{[k]} U_x}\ =\ 
T(R)^{[k]} U_x\,,
\\
\{0\}\ =\ T(R)^{[k]} U_{x\cdot T(R)^{[k]}}\ =\ x\cdot T(R)^{[k]}\ =\ 
x (T(R)^{[k]})^2+(T(R)^{[k]})^2 x
\end{gather*}
(${Mc(T(R)^{[k]})=\{0\}}$), ${x\in \Ann(B, T(R)^{[k+1]})=\{0\}}$ and hence 
${(\prr(A))_B\cap \Sym(B, *)=\{0\}}$. If ${J\lhd B}$, 
${J\cap \Sym(B, *)=\{0\}}$, then ${u+u*, u u^*=-u^2\in J\cap \Sym(B, *)=\{0\}}$ 
for any $u\in J\cap J^*$, ${\{0\}=(J\cap J^*)^3=J\cap J^*}$, due to 
\[
a b+b a\ =\ 0\,,\quad a b c\ =\ -c (a b)\ =\ -a (c b)\ =\ c a b\ =\ 0\quad 
(a, b, c\in J\cap J^*) 
\]
\cite{ZelP1}, Lemma 21 (a). In particular, ${(\prr(A))_B=(\prr(A))_B^*=\{0\}}$. 
In the spirit of \cite{JMQ}, Ex-\linebreak ample 2.11 and the related conclusions of 
\cite{GT}, one can make

\begin{rem}
If $A$ is a semiprime associative algebra with involution $*$ over 
$F$ with $1/2$, then ${\Sym(I, *)\in \mathcal{E}(\Sym(A, *))}$, 
${(J)_A\in \mathcal{E}(A)}$ for all ${I\in \mathcal{E}(A)}$, 
${J\in \mathcal{E}(\Sym(A, *))}$, ${\Sym(Q^s_m(A), *)}$ is an algebra 
of $\mathfrak{M}$--quotients of ${\Sym(A, *)}$ (of any 
${J\in \mathcal{E}(\Sym(A, *))}$ when $A=\langle \Sym(A, *)\rangle$). 
\end{rem}

\begin{proof} 
Remind that  
\begin{multline*}
\mathcal{E}(A)_*\ =\ \{I\in \mathcal{E}(A)\mid I=I^*\}\ =
\\
\{I=I^*\lhd A\mid I\cap J\ne \{0\}\ \forall\, \{0\}\ne J=J^*\lhd A\}\ =\ 
\{I\cap I^*\mid I\in \mathcal{E}(A)\}
\end{multline*}
\cite{GolA}, the observations before Remark 2.23, the algebra ${\Sym(A, *)}$ 
is non-degenerate, since the presence of 
${a=a^*\in \Sym(A, *)}$, ${U_a=\{0\}}$, implies 
${a \Sym(A, *) a=0}$ and for all ${x, y\in A}$ 
\begin{multline*}
0\ =\ 1/4 a (x-x^*) a (x-x^*) a\ =\ 
1/4 a ((x+x^*)+(x-x^*)) a ((x+x^*)+(x-x^*)) a\ =
\\ 
a x a x a\ =\ a y a x a x a\ =\ -a x a y a x a\ =\ a x a\ =\ a\,. 
\end{multline*}
Moreover, ${I\cap I^*=\{0\}}$ for all 
${I\lhd A}$, ${\Sym(I, *)=\{0\}}$, and therefore 
${\Sym(I, *)\ne \{0\}}$ for any ${I\in \mathcal{E}(A)}$ (see above).

If ${I\in \mathcal{E}(A)}$, ${J\lhd \Sym(A, *)}$, ${J\cap \Sym(I, *)=\{0\}}$, 
then ${b \Sym(I, *) b=\{0\}}$, ${b x b I b x b=0}$ for all ${b\in J}$, 
${x\in I'=I\cap I^*}$ (see the previous discussion), 
\[
\{0\}\ =\ ((b x b)_A I')^2\ =\ (b x b)_A I'\ =\ b I' b\ =\ 
((b)_A I')^2\ =\ (b)_A I'\ =\ (b)_A\ =\ J\,, 
\]
${\Sym(I, *)\in \mathcal{E}(\Sym(A, *))}$.
If ${J\in \mathcal{E}(\Sym(A, *))}$, ${\{0\}\ne I=I^*\lhd A}$, then 
$\{0\}\ne \Sym(I, *)\cap J\subseteq I\cap (J)_A$, 
${(J)_A\in \mathcal{E}(A)_*}$. 
The algebra $Q^s_m(A)$ is semiprime with involution $*$ uniquely continued 
from $A$. If ${q\in \Sym(Q^s_m(A), *)}$, 
${I\in \mathcal{E}(A)}$, ${q \Sym(I, *) q=\{0\}}$, then without loss of 
generality ${I=I^*}$, ${q I+I q\subseteq A}$ and hence
${q x q I q x q=\{0\}}$ for any ${x\in I}$ (see the discussion above; 
${(x-x^*) q (x-x^*)\in \Sym(I, *)}$), 
${\{0\}=(q x q I)^2=q x q I=q I q=(q I)^2=q I}$, ${q=0}$. 
If ${q\in \Sym(Q^s_m(A), *)}$, ${I\in \mathcal{E}(A)_*}$, 
${q I+I q\subseteq A}$ and ${q\cdot \Sym(I, *)=\{0\}}$, then  
\begin{gather*}
0\ =\ q x+x q\ =\ q x^2+x^2 q\ =\ 2 q x^2\ =\ 2 x^2 q\ =
q (x\cdot y)\ =\ (x\cdot y) q\quad (x, y\in \Sym(I, *))\,,
\\
\{0\}\ =\ 
q \underbrace{\Sym(A, *)^1\cdots \Sym(A, *)^1}_k \Sym(I, *)^{[1]}\ =\ 
\Sym(I, *)^{[1]} \underbrace{\Sym(A, *)^1\cdots \Sym(A, *)^1}_k q
\end{gather*}
for all ${k\geq 0}$ and, due to ${q \Sym(I^2, *) q\subseteq \Sym(A, *)}$, 
${\Sym(I, *)^{[1]}\in \mathcal{E}(\Sym(A, *))}$, ${I^2\in \mathcal{E}(A)}$,
\begin{multline*}
\{0\}\ =\ ((q \Sym(I^2, *) q)_{\Sym(A, *)}\cap \Sym(I, *)^{[1]})^2\ =
\\ 
(q \Sym(I^2, *) q)_{\Sym(A, *)}\cap \Sym(I, *)^{[1]}\ =\ 
(q \Sym(I^2, *) q)_{\Sym(A, *)}\,,\quad q\ =\ 0\,.
\end{multline*}
Hence ${\Sym(Q^s_m(A), *)}$ is an algebra of $\mathfrak{M}$--quotients of 
${\Sym(A, *)}$. 

If ${A=\langle \Sym(A, *)\rangle}$, ${J\in \mathcal{E}(\Sym(A, *))}$, then 
${(J)_A=J+A J=J+J A\in \mathcal{E}(A)}$, 
\begin{gather*}
\Ann_t(Q^s_m(A), J)\ =\ \Ann_t(Q^s_m(A), (J)_A)\ =\ 
\{0\}\quad (t=l, r)\,,
\\ 
\{q\in Q^s_m(A)\mid q\cdot J=\{0\}\}\ \subseteq\ 
\Ann(Q^s_m(A), J^{[1]})\ =\ \{0\}\,.
\end{gather*}
As a consequence, if ${q\in \Sym(Q^s_m(A), *)}$, ${I\in \mathcal{E}(A)_*}$, 
${q I+I q\subseteq A}$, then ${q\cdot K\subseteq \Sym(A, *)}$, 
${q\cdot K^{[1]}\subseteq K}$ for ${K=J\cap \Sym(I, *)}$, 
${K^{[1]}\in \mathcal{E}(\Sym(A, *))\cap \mathcal{E}(J)}$, due to 
\begin{multline*}
q\cdot \{a, b, c\}\ =\ 
1/4 (q (a b c+c b a)+(a b c+c b a) q)\ =
\\ 
1/2((q\cdot a) b c-a (q\cdot b) c+a b (q\cdot c)+
(q\cdot c) b a-c (q\cdot b) a+c b (q\cdot a))\ =
\\ 
\{q\cdot a, b, c\}-\{a, q\cdot b, c\}+\{a, b, q\cdot c\}\in K
\quad (a, b, c\in K)\,,
\end{multline*}
and ${q\cdot K^{[1]}\ne \{0\}}$ for ${q\ne 0}$. So,  
${\Sym(Q^s_m(A), *)}$ is an algebra of $\mathfrak{M}$--quotients of $J$.
\end{proof}

\begin{lemma}
Under the conditions for 
${T(R)=\Sym(A, *)\lhd R\subseteq \Sym(B, ^*)}$ (see above)
\begin{gather*}
\Ann(B, J)\ =\ \Ann_t(B, J)\ =\ \Ann_t(B, (J)_B)\ =\ \Ann(B, (J)_B)\quad 
(J\lhd R,\ t=l, r)\,,
\\
I\cap R\in \mathcal{E}(R)\,,\quad (J)_B\in \mathcal{E}(B)_*\,,\quad 
(H)_B\in \mathcal{E}((A)_B)_*\quad 
(I\in \mathcal{E}(B),\ J\in \mathcal{E}(R),\ H\in \mathcal{E}(T(R)))\,,
\\
\Sym(B, *)\ \hookrightarrow\ \Sym(Q^s_m(A), *)\,,\quad  
\Sym(B, *)\ \hookrightarrow\ 
\Sym(Q^s_m((A)_B), *)\ \hookrightarrow\ \Sym(Q^s_m(B), *)\,, 
\end{gather*}
${\Sym(Q^s_m(B), *)}$ is an algebra of $\mathfrak{M}$--quotients of any 
${S\in \mathcal{E}(\Sym(B, *))}$ and ${\Sym(B, *)}$ is an algebra of 
$\mathfrak{M}$--quotients of $R$, ${\Sym((A)_B, *)}$, $T(R)$. 
\end{lemma}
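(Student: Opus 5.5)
The plan is to derive everything from the properties of the chosen semiprime enveloping algebra $B$ with involution recorded before Remark 3.9: $T(R)=\Sym(A, *)\lhd R\subseteq \Sym(B, *)$, $A=\langle T(R)\rangle$, $\bigcup_{k\geq 0}\Ann(B, T(R)^{[k]})=\{0\}$, $x T(R)^{[k]}+T(R)^{[k]} x\subseteq A$ for suitable $k$, and $(\prr(A))_B=\{0\}$. We then feed these into Remark 3.9.

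\textbf{Annihilator equalities.} Fix $J\lhd R$. Since $J=J^*$ consists of symmetric elements, $x J=\{0\}$ gives $J x^*=\{0\}$. For $y\in J$ and $s\in R$ we have $s\cdot y\in J$ and $s y+y s\in J$. From $x y=0$ for all $y\in J$ we get $x (s y+y s)=0$, hence $x y s=0$; iterating, $x$ kills $J U$ and $U J$ for the subalgebra $U$ generated by $R$, and hence kills $(J)_B$. Here we use that $B$ is generated by $R$ as an enveloping algebra. This shows $\Ann_l(B, J)=\Ann_l(B, (J)_B)$. The latter is an ideal of $B$ meeting its $*$-image $\Ann_r(B, (J)_B)$, and semiprimeness of $B$ (the product of the two is $\{0\}$ and they are $*$-images of each other) forces $\Ann_l=\Ann_r=\Ann$.

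\textbf{Essentiality statements.} For $I\in\mathcal{E}(B)$, $I\cap I^*\in\mathcal{E}(B)_*$. Then $\Sym(I\cap I^*, *)\cap T(R)$ is nonzero inside every nonzero ideal of $R$: if $K\lhd R$ with $K\cap I=\{0\}$, then $(K)_B\cap I\cap I^*$ is a $*$-ideal with $K\cdot(\ldots)\subseteq K\cap I$, which forces $(K)_B\cap I\cap I^*$ to annihilate $K$; since $\Ann(B,K)\cap (K)_B=\{0\}$ by semiprimeness, $K=\{0\}$. For $J\in\mathcal{E}(R)$: if $(J)_B\cap L=\{0\}$ for some $L=L^*\lhd B$, then $L\cap R$ and hence $L\cap T(R)^{[k]}$ are zero, because $L\cap R$ is an ideal of $R$ meeting $J$ trivially. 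Take the $k$ with $L\,T(R)^{[k]}\subseteq A$. By Lemma 21(a) of \cite{ZelP1} (used before Remark 3.9), an ideal meeting $\Sym$ trivially satisfies $L\cap L^*=\{0\}$, so $L=\{0\}$. The case $H\in\mathcal{E}(T(R))$ is identical inside $(A)_B$, using Remark 3.9 for $A$ with $A=\langle\Sym(A,*)\rangle$.

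\textbf{Embeddings and quotient statements.} Each $b\in B$ satisfies $b\,T(R)^{[k]}+T(R)^{[k]}\, b\subseteq A$, and $(T(R)^{[k]})_A\in\mathcal{E}(A)$ by the previous step. Therefore $l_b, r_b$ restricted to this ideal define an element of $Q^s_m(A)$; injectivity comes from $\Ann(B, T(R)^{[k]})=\{0\}$. This gives $\Sym(B,*)\hookrightarrow\Sym(Q^s_m(A),*)$, and the same argument with $(A)_B\in\mathcal{E}(B)$ gives the chain through $Q^s_m((A)_B)$ into $Q^s_m(B)$. The $\mathfrak{M}$-quotient claims then follow from Remark 3.9 applied to $B$ (and to $(A)_B$), together with essentiality of $R$, $\Sym((A)_B,*)$ and $T(R)$ in $\Sym(B,*)$. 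For a nonzero symmetric $q$ one uses the ideal $T(R)^{[k]}$, which has zero annihilator and satisfies $T(R)^{[k]}\cdot q\subseteq R$.

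\textbf{Main obstacle.} I expect the hardest part to be showing $(J)_B\in\mathcal{E}(B)_*$, i.e. transferring essentiality from $R$ up to $B$. This is where non-degeneracy, via the $\prr(A)$ computation and Lemma 21, must be used.
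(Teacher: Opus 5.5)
\textbf{There is a genuine gap.} Your annihilator argument is essentially the paper's; the paper simply uses $(J)_B=J+BJ=J+JB$ in $B=\langle R\rangle$. There is one slip: $x(sy+ys)=0$ together with $xy=0$ gives $xsy=0$, not $xys=0$, which is trivial.

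The gap is in your proof that $I\cap R\in\mathcal{E}(R)$ for $I\in\mathcal{E}(B)$. You assert $K\cdot M\subseteq K\cap I$ for $M=(K)_B\cap I\cap I^*$. But $K$ is only a Jordan ideal of $R$, while $M$ is an associative ideal of $B$ not contained in $R$, so nothing places $K\cdot M$ inside $K$. Even $K\cdot M=\{0\}$ would not by itself give $KM=MK=\{0\}$. This is precisely the step of the lemma that needs real work.

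The paper keeps the product inside $R$. It takes $S\lhd R$ with $S\cdot(I\cap R)=\{0\}$; this product does lie in $S$, because $I\cap R\subseteq R$. It gets $(I\cap R)_B\subseteq\Ann(B,(S^{[1]})_B)$ and then transfers this from $I\cap R$ to all of $\Sym(I,*)$. For that it uses that each $x\in\Sym(I,*)$ satisfies $x\cdot T(R)^{[k]}\subseteq I\cap T(R)$ and $x\,U_{T(R)^{[k]}}\subseteq I\cap T(R)$ for suitable $k$. This yields $(S^{[1]})_B\,x\,T(R)^{[k+1]}=\{0\}$, hence $(S^{[1]})_B\,x=\{0\}$. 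Finally Remark 3.9, giving $(\Sym(I,*))_B\in\mathcal{E}(B)$, yields $(S^{[1]})_B=\{0\}$, and non-degeneracy gives $S=\{0\}$.

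A second, smaller hole affects $(J)_B\in\mathcal{E}(B)_*$ and the $\mathfrak{M}$-quotient claims. From $L\cap R=\{0\}$ you jump to Lemma 21(a), which needs $L\cap\Sym(B,*)=\{0\}$. The bridge is Zelmanov's Lemma 20 (every nonzero ideal of $\Sym(B,*)$ meets $T(R)$), a property of $B$ missing from your list. Alternatively you could derive it, which you never do:
\begin{itemize}
\item for $x\in\Sym(B,*)$ with $xT(R)^{[k]}+T(R)^{[k]}x\subseteq A$ one has $x\cdot T(R)^{[k]}\subseteq\Sym(A,*)=T(R)$;
\item $x\cdot T(R)^{[k]}=\{0\}$ forces $x\in\Ann(B,T(R)^{[k]}\cdot T(R)^{[k]})\subseteq\Ann(B,T(R)^{[k+1]})=\{0\}$.
\end{itemize}
The second fact is also what gives $q\cdot T(R)^{[k]}\ne\{0\}$ in your last step. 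For $\Sym((A)_B,*)$ you should use the ideal $\Sym((T(R)^{[k]})_B,*)$, since $T(R)^{[k]}$ is not an ideal there.

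With Lemma 20 available, the paper's proof of $(J)_B\in\mathcal{E}(B)_*$ is one line: for $\{0\}\ne K=K^*\lhd B$, $\Sym(K,*)\cap T(R)$ is a nonzero ideal of $R$, hence meets $J$. So the obstacle you anticipated is not the hard one. The hard step is $I\cap R\in\mathcal{E}(R)$, exactly where your argument breaks.
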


\begin{proof}
Equalities for ${J\lhd R}$ follow from  
${(J)_B=J+B J=J+J B}$ in ${B=\langle R\rangle}$, $\Ann(B, I)=\Ann_t(B, I)$, 
${t=l, r}$ for all ${I\lhd B}$ (semiprimeness of $B$). 
Since for any ${x\in B}$ 
there is ${k\geq 0}$, ${x T(R)^{[k]}+T(R)^{[k]} x\subseteq A}$, 
\[
x (T(R)^{[k]})_A+(T(R)^{[k]})_A x\ \subseteq\ A\,,\quad 
x (T(R)^{[k]})_B+(T(R)^{[k]})_B x\ \subseteq (A)_B 
\]
with ${x T(R)^{[k]}\ne \{0\}}$ and ${T(R)^{[k]} x\ne \{0\}}$ for ${x\ne 0}$ 
(see properties of $B$). If ${x\cdot T(R)^{[k]}=\{0\}}$ for ${x\in B}$, 
${k\geq 0}$, then ${x\in\Ann(B, T(R)^{[k]}\cdot T(R)^{[k]})\subseteq 
\Ann(B, T(R)^{[k+1]})=\{0\}}$. 

Therefore, if ${x\in \Sym(B, *)}$, ${k\geq 0}$, 
${x T(R)^{[k]}+T(R)^{[k]} x\subseteq A}$, then 
${x\cdot T(R)^{[k]}\subseteq T(R)}$ and 
${x\cdot \Sym((T(R)^{[k]})_B, *)\subseteq \Sym((A)_B, *)}$,  
${\{0\}\ne x\cdot T(R)^{[k]}\subseteq x\cdot \Sym((T(R)^{[k]})_B, *)}$ 
for ${x\ne 0}$, ${\{0\}\ne T\cap T(R)\lhd R, T(R)}$ for all 
${\{0\}\ne T\lhd \Sym(B, *)}$ \cite{ZelP1}, Lemma 20. 

If ${I\in \mathcal{E}(B)}$ and ${S\lhd R}$, ${S\cdot (I\cap R)=\{0\}}$, then 
${(I\cap R)_B\subseteq \Ann(B, (S^{[1]})_B)}$, for any $x\in \Sym(I, *)$ 
there is ${k\geq 0}$, ${x\cdot T(R)^{[k]}\subseteq I\cap T(R)\subseteq 
\Ann(B, (S^{[1]})_B)}$, ${x U_{T(R)^{[k]}}\subseteq I\cap T(R)}$ and 
\begin{gather*}
\{0\}\ =\ (S^{[1]}_B)(x y+y x)\ =\ (S^{[1]})_B) y x y\ =\ 
(S^{[1]})_B x y^2\quad (y\in T(R)^{[k]})\,,
\\
\{0\}\ =\ (S^{[1]})_B x (T(R)^{[k]}\cdot T(R)^{[k]})\ =\ 
(S^{[1]})_B x T(R)^{[k+1]}\ =\ (S^{[1]})_B x\,, 
\\
\{0\}\ =\ (S^{[1]})_B \Sym(I, *)\ =\ (S^{[1]})_B (\Sym(I, *))_B\ =\ 
(S^{[1]})_B\ =\ S
\end{gather*} 
(${\prr(B)=Mc(R)=\{0\}}$, ${\Sym(I, *)\in \mathcal{E}(\Sym(B, *))}$, 
${(\Sym(I, *))_B\in \mathcal{E}(B)_*}$ and Remark 3.9),  
${I\cap R\in \mathcal{E}(R)}$. For any ${J\in \mathcal{E}(R), 
H\in \mathcal{E}(T(R)), \{0\}\ne K=K^*\lhd B, \{0\}\ne P\lhd (A)_B}$ 
we have ${\{0\}\ne \Sym(K, *)\lhd \Sym(B, *)}$, 
${K\cap T(R)\cap J\ne \{0\}}$, ${(J)_B\in \mathcal{E}(B)_*}$, 
${(H)_A\in \mathcal{E}(A)_*}$, $P\cap A\cap (H)_A\ne \{0\}$,
${(H)_B\in \mathcal{E}((A)_B)_*}$, 
${\Sym((H)_B, *)\in \mathcal{E}(\Sym((A)_B, *))}$ 
(see above, Remark 3.9). 

So, ${(T(R)^{[k]})_B\in \mathcal{E}((A)_B)_*\cap \mathcal{E}(B)_*}$, 
${\Sym((T(R)^{[k]})_B, *)\in \mathcal{E}(\Sym((A)_B, *))}$ for all 
${k\geq 0}$ (${T(R)^{[k]}\in \mathcal{E}(T(R))\cap \mathcal{E}(R)}$) 
and ${\Sym(B, *)}$ is an algebra of $\mathfrak{M}$--quotients of $R$, 
${\Sym((A)_B, *)}$, $T(R)$. 

Since ${B=\langle R\rangle}$, ${\Sym(Q^s_m(B), *)}$ is an algebra of 
$\mathfrak{M}$--quotients of all ${S\in \mathcal{E}(\Sym(B, *))}$ 
and, in particular, of all ${(J)_{\Sym(B, *)}}$, ${(H)_{\Sym(B, *)}}$ 
for ${J\in \mathcal{E}(R)}$, ${H\in \mathcal{E}(T(R))}$,
\begin{gather*}
\Ann_t(Q^s_m(B), G)\ =\ \Ann_t(Q^s_m(B), (G)_B)\ =\ 
\{0\}\quad (t=l, r)\,,
\\
\{q\in Q^s_m(B)\mid q\cdot G=\{0\}\}\ \subseteq\  
\Ann(Q^s_m(B), G^{[1]})\ =\ \{0\}\quad 
(G\in \mathcal{E}(R)\cup \mathcal{E}(\Sym(B, *)))
\end{gather*}
(Remark 3.9). In the presence of ${m\geq 0}$, 
${\bigcap\limits_{k\geq 0} T(R)^{[k]}=T(R)^{[m]}}$, for all ${x\in \Sym(B, *)}$
\begin{multline*}
x\cdot T(R)^{[m]}\ \subseteq\ T(R)\,,\quad 
x\cdot T(R)^{[m]}\ =\ q\cdot T(R)^{[m+1]}\ \subseteq
\\ 
\{x\cdot T(R)^{[m]}, T(R)^{[m]}, T(R)^{[m]}\}+
\{T(R)^{[m]}, x\cdot T(R)^{[m]}, T(R)^{[m]}\}\ \subseteq\ T(R)^{[m]}\,,
\end{multline*}
${T(R)^{[m]}\in \mathcal{E}(\Sym(B, *)), q\cdot K^{[1]}\subseteq 
\{\Sym(B, *), K, K\}+\{K, \Sym(B, *), K\}\subseteq K}$ for any 
$q\in \Sym(Q^s_m(B), *)$, ${I\in \mathcal{E}(B)_*}$, 
${q I+I q\subseteq B}$ and ${K=I\cap T(R)^{[m]}, K^{[1]}\in \mathcal{E}(C)}$
(${I\cap R\in \mathcal{E}(R)}$, 
${K, K^{[1]}\in \mathcal{E}(R)\cap \mathcal{E}(\Sym(B, *))}$), 
${\Sym(Q^s_m(B), *)}$ is an algebra of $\mathfrak{M}$--quotients 
of any subalgebra $C$ of ${\Sym(B, *)}$, ${T(R)^{[m]}\subseteq C}$ 
($C$ inherits the non-degeneracy of ${T(R)^{[m]}\in \mathcal{E}(C)}$).

Just as in \cite{ZelP1}, $\S 4$, it remains to be noted that $B$ is the symmetric 
Martindale ring of quotients of $A$ ($(A)_B$) with respect to the filter 
of ideals ${\{I_k=(T(R)^{[k]})_A\}_{k\geq 0}\subseteq \mathcal{E}(A)}$ 
(${\{I'_k=(T(R)^{[k]})_B\}_{k\geq 0}\subseteq \mathcal{E}((A)_B)}$), $B$ 
is embedded identically on $A$ ($(A)_B$) in  
${Q^s_m(A)\subseteq Q(A)}$ (${Q^s_m((A)_B)\subseteq Q((A)_B)}$), the 
identification of $B$ with its image in $Q^s_m(A)$ ($Q^s_m((A)_B)$) allows 
us to identify $*$ on $B$ with the restriction of $*$ from $Q^s_m(A)$ 
($Q^s_m((A)_B)$) to $B$ ($*$ continues from $A$ ($(A)_B$) to $Q^s_m(A)$ 
($Q^s_m((A)_B)$) by the rule: ${q^* x=(x^* q)^*}$, ${x\in I}$, 
${I\in \mathcal{E}(A)_*}$ (${I\in \mathcal{E}((A)_B)_*}$), 
${q I+I q\subseteq A}$ (${q I+I q\subseteq (A)_B}$)), 
${Q^s_m((A)_B)\hookrightarrow Q^s_m(B)}$ in accordance with $*$ \cite{GolA}, 
the observations after Corollary 2.11 with the transition to essential 
$*$--invariant ideals.
\end{proof}

As a consequence, if ${R=Q_m(R)}$ (coincides with all its algebras of 
$\mathfrak{M}$--quotients), then ${R=\Sym(B, *)=\Sym(Q^s_m(B), *)}$. 
Applying \cite{SAm}, Corollary 3.5, we obtain

\begin{co}
If under the conditions of Lemma 3.10 ${R=P(R)=O(R)}$ is a $PI$--algebra, 
then ${R=Q_m(R)=\Sym(Q(B), *)}$, the semiprime associative $PI$--algebra 
${Q^s_m(B)=Q(B)}$ is described by Propositions 3.2, 3.3.
\end{co}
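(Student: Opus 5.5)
The plan is to combine the consequence just drawn from Lemma 3.10 with the Jordan $PI$--results of Sec.~3 and then apply \cite{SAm}, Corollary 3.5, to pass from $Q^s_m(B)$ to $Q(B)$.

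First I will show that $R=Q_m(R)$. Since $R$ is a non-degenerate Jordan $PI$--algebra over $F$ with $1/2$, we have $I\cap Z(R)\ne\{0\}$ for all $\{0\}\ne I\lhd R$ (\cite{ZelP1}, Theorem 6). Because ${R=P(R)=O(R)}$, Corollary 3.5 gives ${R=O(R)=R_{\mathcal{F}'}=Q_m(R)}$. By the observation following Lemma 3.10, $R$ then coincides with all its algebras of $\mathfrak{M}$--quotients. Lemma 3.10 says that $\Sym(B,*)$, and also $\Sym(Q^s_m(B),*)$, are algebras of $\mathfrak{M}$--quotients of $R$, the latter via its essential ideal. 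Since these embed identically on $R$ into $Q_m(R)=R$, we get ${R=\Sym(B,*)=\Sym(Q^s_m(B),*)}$.

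Next I need $B$ to be a semiprime associative $PI$--algebra. $B$ is semiprime by construction, with ${\prr(B)=\{0\}}$. $B$ is generated by $R$, and $R=\Sym(B,*)$ is a Jordan $PI$--algebra. The $PI$ property of $B$ should follow from the Amitsur-type result \cite{SAm}: an associative algebra with involution whose symmetric elements satisfy a (Jordan) polynomial identity is itself $PI$. The same corollary, applied to the semiprime $PI$--algebra $B$, should give ${Q^s_m(B)=Q(B)}$. For semiprime $PI$ rings every dense one-sided ideal contains an essential two-sided ideal, which is the content of Martindale's theorem \cite{Mart1} used in Proposition 3.2. Hence ${R=\Sym(Q(B),*)}$. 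Finally, $B$ satisfies the hypothesis ${Z(J)\ne\{0\}}$ of Proposition 3.2 because it is semiprime $PI$, so $Q(B)$ is described by Propositions 3.2 and 3.3.

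The main obstacle is transferring the identity from $\Sym(B,*)$ to $B$ in a form that is \emph{proper} over an arbitrary $F$ with $1/2$. \cite{SAm} is stated for identities with a proper image in the free associative algebra. I would first pass to a multilinear proper Jordan identity by partial linearization, using $1/2$ and the fact that the identity is essential on the relevant ideal. I would then invoke \cite{SAm}, Corollary 3.5, to obtain a proper associative identity of $B$, possibly of the form ${\st_n^k=0}$ modulo $\prr(B)=\{0\}$. Semiprimeness then lets us drop the power $k$ where necessary.
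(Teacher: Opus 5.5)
Your proposal is correct and follows the paper's own route. Corollary 3.5 together with $R=O(R)$ gives $R=Q_m(R)$; the consequence drawn from Lemma 3.10 then gives $R=\Sym(B,*)=\Sym(Q^s_m(B),*)$; and \cite{SAm}, Corollary 3.5 makes $B$ a semiprime $PI$--algebra, so Proposition 3.2 yields $Q^s_m(B)=Q(B)$, described by Propositions 3.2 and 3.3.

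The paper invokes \cite{SAm} directly at that point, so two parts of your plan are unnecessary. The detour through a multilinear Jordan identity is not needed, and it is not obviously properness-preserving when only $1/2$ is invertible. Dropping the power $k$ is also not needed, because $\st_n^k=0$ is already a proper identity.
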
 

\begin{rem}
If $Q$ is an algebra of $\mathfrak{M}$--quotients of a non-degenerate 
special Jordan algebra $R$ over $F$ with $1/2$, ${T(R)\in \mathcal{E}(R)}$, 
then $Q$ is non-degenerate, special, ${T(Q)\in \mathcal{E}(Q)}$. 
\end{rem}

\begin{proof}
From the heredity of the radical $Mc$ of Jordan algebras over rings with 1/2 
on subalgebras, its ideal heredity \cite{ZelM}, \cite{GolA}, Addition 1, 
\cite{ZShS}, Corollary 2, p. 377 and ${\{0\}\ne I\cap R\lhd R}$ for all 
${\{0\}\ne I\lhd Q}$ it follows that
\[
\{0\}\ =\ Mc(Q)\cap Mc(R)\ =\ Mc(Mc(Q)\cap R)\ =\ Mc(Q)\cap R\ =\ Mc(Q)
\] 
(see also \cite{JMQ}, Propositions 4.4, 5.2). Since 
${T(R)\in \mathcal{E}(R)}$, ${\{0\}\ne T(R)\cap I\subseteq T(Q)\cap I}$ 
for all ${\{0\}\ne I\lhd Q}$, ${T(Q)\in \mathcal{E}(Q)}$. The speciality 
of $Q$ follows from \cite{PIS}, Corollary 3.7.
\end{proof}

The conclusion of Remark 3.12 can easily be transferred from $Q$ to the algebras 
of any chain ${\{Q_{\alpha}\mid \alpha\geq 0\}}$, ${Q_0=\{0\}}$, ${Q_1=R}$, 
$Q_{\alpha}$ is an algebra of $\mathfrak{M}$--quotients of $Q_{\alpha-1}$ and 
${Q_{\alpha}=\bigcup\limits_{\beta< \alpha} Q_{\beta}}$ for the non-limit and 
limit transfinites ${\alpha> 1}$, respectively. The results of \cite{Mont} 
allows us to consider the elements of such a chain as subalgebras of the 
maximal algebra of quotients $Q(R)$ of the algebra $R$ and guarantee its 
stabilization at the step corresponding to a transfinite of cardinality no 
greater than the cardinality of $Q(R)$. 
Therefore, the transition to the case ${R=Q_m(R)}$ is always possible. 
From Remark 3.9 it is easy to deduce its simplified version 

\begin{rem}
If $A$ is a semiprime associative algebra over $F$ with $1/2$, then  
${I^{(+)}\in \mathcal{E}(A^{(+)})}$, ${(J)_A\in \mathcal{E}(A)}$ 
for all ${I\in \mathcal{E}(A)}$, ${J\in \mathcal{E}(A^{(+)})}$, 
$Q^s_m(A)^{(+)}$ is an algebra of $\mathfrak{M}$--quotients of any 
${J\in \mathcal{E}(A^{(+)})}$.
\end{rem}

\begin{proof} 
Due to ${U_x=l_x r_x}$, ${x\in A}$ ($t_x$ in $A$), $A^{(+)}$ is non-degenerate. 
If ${I\in \mathcal{E}(A)}$, ${J\lhd A^{(+)}}$, ${I\cap J=\{0\}}$, then 
${I\subseteq \Ann(A, J\cdot J)}$, ${J\cdot J\subseteq \Ann I=\{0\}}$, 
${J=\{0\}}$ and ${I^{(+)}\in \mathcal{E}(A^{(+)})}$.
Let\linebreak ${J\in \mathcal{E}(A^{(+)})}$. Then 
${\{0\}\ne I\cap J\subseteq I\cap (J)_A}$ for all ${\{0\}\ne I\lhd A}$, 
$(J)_A=J+J A=J+A J\in \mathcal{E}(A^{(+)})$, 
\begin{gather*}
\Ann_t(Q^s_m(A), J)\ =\ \Ann_t(Q^s_m(A), (J)_A)\ =\ \{0\}\quad (t=l, r)\,,
\\
\{q\in Q^s_m(A)\mid q\cdot J=\{0\}\}\ \subseteq\ 
\Ann(Q^s_m(A), J^{[1]})\ =\ \{0\}\,.
\end{gather*}
If ${q\in Q^s_m(A)^{(+)}}$, ${I\in \mathcal{E}(A)}$, ${q I+I q\subseteq A}$, 
then ${I\cap J, (I\cap J)^{[1]}\in \mathcal{E}(A^{(+)})\cap \mathcal{E}(J)}$, 
\[
q\cdot (I\cap J)^{[1]}\ \subseteq\ \{q\cdot (I\cap J), I\cap J, I\cap J\}+
\{I\cap J, q\cdot (I\cap J), I\cap J\}\ \subseteq\ I\cap J\,,
\]
${q\cdot (I\cap J)^{[1]}\ne \{0\}}$ for ${q\ne 0}$. Thus, $Q^s_m(A)^{(+)}$ 
is an algebra of $\mathfrak{M}$--quotients of $J$.
\end{proof}

Consider again $R$, $A$, $B$ from Lemma 3.10. In view of \cite{ZelP1}, 
Lemma 20 and the constructions before Lemma 21, for the strongly prime $R$, 
the non-primeness of $A$ is equivalent to 
\[
\mathcal{S}\ =\ \{\{0\}\ne I\lhd B\mid \Sym(I, *)=I\cap \Sym(B, *)=\{0\}\}\ \ne\ 
\emptyset
\] 
(${I\cap A, I^*\cap A\ne \{0\}=I\cap I^*}$). We select a $F$--submodule $A'$ 
($A''$) in $A$ generated by associative words of odd (even) length from 
$T(R)$, and write down the arguments of \cite{ZelP1} after Lemma 21 in the form 

\begin{rem}
If ${\mathcal{S}\ne \emptyset}$, $P$ is the maximal element of $\mathcal{S}$, 
then ${\{0\}\ne T=(P\cap A')^3}$ is a semiprime subalgebra of $A$, 
${T^{(+)}\cong I=\{x+x^*\mid x\in T\}=\Sym(T+T^*, *)\lhd R}$, there exists a 
homomorphism ${\sigma: B\longrightarrow Q^s_m(T)}$, ${\Ker \sigma=\Ann(B, P)}$. 
For ${\Ker \sigma=P^*}$ (in particular, for the strongly prime $R$ and the 
non-prime $A$) ${\sigma|_{\Sym(B, *)}: \Sym(B, *)\hookrightarrow Q^s_m(T)^{(+)}}$. 
\end{rem}

\begin{proof}
Since for all ${k\geq 0}$, ${x_1, \ldots, x_{2 k+1}, y\in B}$ 
\begin{multline*}
(x_1\ldots x_{2 k+1})\cdot y\ =\ 
(x_1\cdot y) x_2\ldots x_{2 k+1}-1/2 x_1 y x_2\ldots x_{2 k+1}+
1/2 x_1\ldots x_{2 k+1} y\ =
\\
\sum_{i=1}^{2 k+1} (-1)^{i-1} x_1\ldots x_{i-1} (x_i\cdot y)
x_{i+1}\ldots x_{2 k+1}\,,
\end{multline*}
${A'\cdot R\subseteq A'}$, ${(T(R)\cdot T(R)) A=(T(R)\cdot T(R))(A'+A'')\subseteq
A'+T(R) A''\subseteq A'}$, 
where ${x_1\ldots x_i}$ is the product in $B$. 
For any ${y\in P}$ there is ${m\geq 0}$, 
${y T(R)^{[m]}+T(R)^{[m]} y\subseteq A}$, and, as a consequence, 
${\{0\}\ne T(R)^{[1]} T(R)^{[m]} y\subseteq 
(T(R)\cdot T(R)) T(R)^{[m]} y \subseteq P\cap A'}$ for ${y\ne 0}$ (see above; 
similarly ${\{0\}\ne y T(R)^{[m]} (T(R)\cdot T(R))\subseteq P\cap A'}$), 
\begin{gather*}
\{0\}\ \ne\ S\ =\ \{y+y^*\mid y\in P\cap A'\}\ =\ 
\Sym((P\cap A')\oplus (P^*\cap A'), *)\ \lhd\ R\,,
\\
\{0\}\ \ne\ S^{[1]}\ \subseteq\ 
I\ =\ \{x+x^*\mid x\in T\}\ =\ \Sym(T+T^*, *)\ \lhd\ R
\end{gather*}
for ${T=(P\cap A')^3}$ (${M^n=\sum\limits_{a_i\in M} F a_1\ldots a_n}$, 
${{}_F M\subseteq B}$, ${n\geq 1}$; ${(J\cap A')^{2 k+1}\cdot R\subseteq 
(J\cap A')^{2 k+1}}$, ${J\lhd B}$, ${k\geq 0}$; ${P+P^*=P\oplus P^*}$). Due to 
\[
y_1 y_2 y_3 y_4 y_5 y_6\ =\ y_1 y_2 y_3 (y_4 y_5+y_5^* y_4^*) y_6\ \in\ 
y_1 y_2 y_3 T(R) y_6\ \subseteq\ P\cap A'\quad (y_i\in P\cap A')\,,
\]
$T$ is a subalgebra of $B$, ${x\longmapsto x+x^*}$, ${x\in T}$, is an 
isomorphism of $T^{(+)}$ and $I$, $\prr(T)^{(+)}=Mc(T^{(+)})=\{0\}$ 
(ideals of $T$ with zero multiplication are ideals of $T^{(+)}$ with 
zero multiplication). 

Since ${T\cdot R\subseteq T}$ and for all ${a\in R}$, ${b, c\in T}$ 
\[
a b c\ =\ (a b+b a) c-b a c\ =\ (a b+b a) c-b (c a+a c)+b c a\ 
\in\ (b c a+T)\cap (-b c a+T)\,,
\]
${a T^2+T^2 a\subseteq T}$ and for any ${y\in B=\langle R\rangle}$ 
there is ${k\geq 1}$, ${y T^k+T^k y\subseteq T}$. If ${y\in P}$, 
${y T=\{0\}}$ or (and) ${T y=\{0\}}$, then ${y\in \Ann(B, (T)_B)}$, 
there exists ${m\geq 0}$, 
\begin{gather*}
H\ =\ T(R)^{[1]} T(R)^{[m]} y+y T(R)^{[m]} T(R)^{[1]}
\subseteq (P\cap A')\cap \Ann (T)_B\,,
\\
(H)_B^4\ \subseteq\ (H)_B (P\cap A')_B^3\ =\ (H)_B (T)_B\ =\ \{0\}\,,\quad 
H\ =\ \{0\}\,,\quad y\ =\ 0
\end{gather*}
(${(P\cap A')\cdot R\subseteq P\cap A'}$, ${(P\cap A')_B^n=
((P\cap A')^n)_B}$, ${n\geq 1}$). If ${x\in \Ann_l(B, T^s)}$, ${s\geq 1}$, 
then ${P x T^{s-1}\subseteq P\cap \Ann_l(B, T)}$ and, as proven,  
${P x T^{s-1}=\{0\}}$, ${P x T^{s-2}\in P\cap \Ann_l(B, T)}$, etc., 
${P x=\{0\}}$, ${x\in \Ann(B, P)}$. Similarly, 
${\Ann_r(B, T^s)\subseteq \Ann(B, P)}$, ${s\geq 1}$. 

So, ${\sigma: y\longmapsto [(l_y, T^k)], y\in B, k\geq 1, 
y T^k+T^k y\subseteq T,}$ is a homomorphism of $B$ into $Q^s_m(T)$, 
$\sigma(B)$ is the symmetric ring of quotients of $T$ with respect to 
the filter of ideals $\{T^m\}_{m\geq 1}\subseteq \mathcal{E}(T)$, 
${\Ker \sigma=\Ann(B, P)\supseteq P^*}$, ${\Ann(B, P)=P^*}$ is 
equivalent to ${\Sym(\Ann(B, P), *)=\{0\}}$ and in this case  
${\sigma|_{\Sym(B, *)}: \Sym(B, *)\hookrightarrow Q^s_n(T)^{(+)}}$ 
(the choice of $P$). If ${\Ann(B, P)\ne P^*}$, then $R$ is not prime, since 
${(\Sym(\Ann(B, P), *))_B\subseteq \Ann(B, P+P^*)}$, 
$\{0\}=J_1\cdot J_2\ne J_1, J_2\lhd R$, 
${J_1=R\cap (P+P^*)=R\cap \Sym(P+P^*, *)}$, ${J_2=R\cap \Sym(\Ann(B, P), *)}$. 
\end{proof}

From the constructions of \cite{ZelP1}, $\S 4$, it also follows

\begin{rem} 
If $R$ is a non-degenerate special Jordan algebra over $F$ with $1/2$, 
${I\in \mathcal{E}(R)}$, $B$ is an associative enveloping of $R$, 
${\prr(B)=\bigcup\limits_{k\geq 0} \Ann(B, I^{[k]})=\{0\}}$, 
${A=\langle I\rangle\subseteq B}$, then ${B\hookrightarrow Q^s_m(A)}$, 
${B\hookrightarrow Q^s_m((A)_B)\hookrightarrow Q^s_m(B)}$.
\end{rem}

\begin{proof}
To construct an associative enveloping of $R$ with the properties of $B$, 
it is sufficient to pass from any of its associative enveloping $B'$ to 
the quotient algebra $B'/J$ by the maximal ideal $J$ among ${J'\lhd B'}$, 
${R\cap J'=\{0\}}$, and identify $R$ with its image in $B'/J$, since 
${I\cap H\ne \{0\}}$ for all ${\{0\}\ne H\lhd B'/J, 
\{0\}\ne (I\cap H)^{[k]}\subseteq I^{[k]}\cap H}$ for all ${k\geq 0}$, 
$I\cap \prr(B'/J)\subseteq \prr(I)\subseteq Mc(I)=\{0\}$, 
${\prr(B'/J)=\{0\}}$ and 
\begin{multline*}
\Ann_t(B'/J, I^{[k]})\ =\ \Ann(B'/J, (I^{[k]})_{B'/J})\quad (t=l, r)\,,
\\
\shoveleft{
\{0\}\ =\ (I^{[k]}\cap \Ann(B'/J, (I^{[k]})_{B'/J}))\cdot 
(I^{[k]}\cap \Ann(B'/J, (I^{[k]})_{B'/J}))\ =}
\\ 
I^{[k]}\cap \Ann(B'/J, (I^{[k]})_{B'/J})\ =\ \Ann(B'/J, (I^{[k]})_{B'/J})\ =\ 
\Ann(B'/J, I^{[k]})\quad (k\geq 0)\,.
\end{multline*}
In relation to $B$, it is enough to note that
\begin{gather*}
\Ann_t(B, I^{[k]})\ =\ \Ann(B, (I^{[k]})_B)\ =\ \{0\}\quad (t=l, r)\,,
\\
\{x\in B\mid x\cdot I^{[k]}=\{0\}\}\subseteq 
\Ann(B, I^{[k+1]})\ =\ \{0\}\quad (k\geq 0)\,,
\end{gather*}
${(I^{[k]})_A\in \mathcal{E}(A)}$, ${(I^{[k]})_B\in \mathcal{E}(B)}$, 
${\Ann_t(B, A)=\Ann_t(B, (A)_B)=\Ann(B, (A)_B)=\{0\}}$ and, as a consequence, 
the left and right exact $A$ has $Q(A)$, $Q^s_m(A)$.
Since for any ${x\in B}$ there is ${k\geq 0}$, 
${x I^{[k]}+I^{[k]} x\subseteq x (I^{[k]})_A+(I^{[k]})_A x\subseteq A}$, 
${x (I^{[k]})_B+(I^{[k]})_B x\subseteq (A)_B}$ (the conclusion is similar 
to \cite{ZelP1}, Lemma 20), ${B\hookrightarrow Q^s_m(A)}$, 
${B\hookrightarrow Q^s_m((A)_B)\hookrightarrow Q^s_m(B)}$ (Lemma 3.10 and 
its proof).
\end{proof}

Return to the decomposition of the non-degenerate Jordan algebra 
${R=P(R)}$ over $F$ with $1/2$ in the direct sum ${R=R_1\oplus R_2}$ of 
the exceptional ${R=\alpha_1 R}$ and special ${R_2=(\Id_R-\alpha_1) R}$ 
ideals for ${\alpha_1\in B(R)}$ (see above). We choose the maximal ideal 
$M$ among all ${I\lhd R_2}$, 
$I\cap T(R_2)=\{0\}$, ${T(R_2)=(\Id_R-\alpha_1) T(R)}$, together with 
${\beta=\beta (\Id_R-\alpha_1)\in B(R)}$, ${\beta|_M=\Id_M}$, 
${\beta T(R_2)=\{0\}}$. Then ${R_2=R_{21}\oplus R_{22}}$ for 
${R_{21}=\beta R\supseteq M}$, ${T(R_{21})=\beta T(R)=\beta T(R_2)=\{0\}}$ 
and ${R_{22}=(\Id_R-\beta-\alpha_1) R\supseteq T(R_2)}$, 
${T(R_2)\in \mathcal{E}(R_{22})}$ (${J\lhd R_2, R}$ for all 
${J\lhd R_{22}}$; if ${J\lhd R_{22}}$, ${J\cap T(R_2)=\{0\}}$, then 
${(J+M)\cap T(R_2)=\{0\}, J=\{0\}}$), $R_{21}$ is a subdirect product 
of its strongly prime quotient algebras, each of which is either of type 
(2), or an integrity domain \cite{ZelP1}, Theorem 1, $R_{22}$ is under the 
conditions of Lemma 3.10 (for ${R=R_{22}}$). If $R=P(R)=O(R)$, then 
${R'=P(R')=O(R'), R'=R_1, R_2, R_{21}, R_{22}}$, and $R_1$, $R_{21}$ from 
Propositions 3.7, 3.8 (see the observations at the end of Sec. 2). 
Corollary 3.5 allows us to obtain 

\begin{co}
If ${R=P(R)=O(R)}$ is a non-degenerate Jordan $PI$--algebra over $F$ with $1/2$, 
then ${R=Q_m(R)=R_1\oplus R_{21}\oplus R_{22}}$, ${R'=P(R')=O(R')=Q_m(R')}$, 
${R'=R_1, R_{21}}$, $R_{22}$ are described by Propositions 3.7, 3.8 and 
Corollary 3.11. 
\end{co}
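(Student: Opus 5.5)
We first reduce the statement to the pieces already established. Since $R$ is a non-degenerate Jordan $PI$--algebra over $F$ with $1/2$, Corollary 3.5 gives $O(R)=R_{\mathcal{F}'}=Q_m(R)$. Together with $R=O(R)$ this yields $R=Q_m(R)$. Next we take the decomposition $R=R_1\oplus R_{21}\oplus R_{22}$ constructed just before the statement. Here $R_1=\alpha_1 R$ is the exceptional part, $R_{21}=\beta R$ with $T(R_{21})=\{0\}$, and $R_{22}=(\Id_R-\alpha_1-\beta)R$ with $T(R_2)\in\mathcal{E}(R_{22})$. All three are of the form $\pi R$ with $\pi\in B(R)$. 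By the observations at the end of Sec. 2 and at the beginning of Sec. 3, each $R'=R_1,R_{21},R_{22}$ satisfies $R'=P(R')=O(R')$, is non-degenerate, and inherits the $PI$ identity.

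For each summand we then verify $R'=Q_m(R')$ by applying Corollary 3.5 to $R'$ itself. It is a non-degenerate Jordan $PI$--algebra with $R'=P(R')$, so $O(R')=Q_m(R')$, and hence $R'=Q_m(R')$. Alternatively, one can use that the $\mathfrak{M}$--quotient algebras of a direct sum split componentwise via the idempotents $\alpha_1,\beta$, which are central, in $\CM(R)=Z(R)$.

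We next identify each summand with the appropriate description.
\begin{itemize}
\item $R_1$ is a subdirect product of Albert rings, so Proposition 3.7 applies.
\item $R_{21}$ is a subdirect product of strongly prime algebras of type (2) and integrity domains (\cite{ZelP1}, Theorem 1, as noted before the statement), so Proposition 3.8 applies.
\item $R_{22}$ is special with $T(R_{22})\in\mathcal{E}(R_{22})$, so it lies under the conditions of Lemma 3.10 with $R=R_{22}$. Since $R_{22}=P(R_{22})=O(R_{22})$ is $PI$, Corollary 3.11 gives $R_{22}=Q_m(R_{22})=\Sym(Q(B),*)$.
\end{itemize}

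The main obstacle is the hypothesis check for $R_1$ in Proposition 3.7. We must confirm that every strongly prime quotient of $R_1$ is an Albert ring, that is, that no type (2)–(4) quotient survives. This follows because $S(R_1)$ is essential in $R_1$ while $S$ vanishes on special algebras. Concretely, $S((\Id_R-\alpha_1)R)=\{0\}$ and $T_H(\alpha_1 R)=\{0\}$, combined with the end-of-Sec. 2 splitting lemma applied to the partition of $\Spec_{Mc}(R)$. A secondary check is that the specialness of $R_{22}$ matches the standing setup of Lemma 3.10, with $B$ chosen as the semiprime enveloping algebra.
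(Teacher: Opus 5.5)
Your route is the paper's. You split $R=R_1\oplus R_{21}\oplus R_{22}$ by idempotents of $B(R)$ (end of Sec.~2), use $R'=P(R')=O(R')$ for each summand, apply Corollary~3.5 to $R$ and to each $R'$ to get $Q_m$, and then invoke Propositions~3.7, 3.8 and Corollary~3.11 for the three pieces.

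The one thing to fix is your ``main obstacle'' paragraph, which you should simply delete.

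\textbf{The obstacle is not there.} Proposition~3.7 needs only that $R_1$ be a subdirect product of Albert rings. This holds by the very choice of $\alpha_1$: it comes from the end-of-Sec.~2 splitting applied to the partition of the strongly prime ideals of $R$ by type. You do not need every strongly prime quotient of $R_1$ to be an Albert ring.

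\textbf{Your argument for the stronger claim fails.}
\begin{itemize}
\item $T_H(R_1)=\{0\}$ cannot exclude special quotients. Special strongly prime algebras, such as the symmetric matrices $H_3(K)\subseteq H_3(\mathcal{O}(K))$ over a field $K$, satisfy all identities of $H_3(\mathcal{O}(K))$.
\item Essentiality of $S(R_1)$ in $R_1$ does nothing to prevent $S(R_1)\subseteq Q$ for a prime $Q\lhd R_1$.
\end{itemize}

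\textbf{Where the stronger claim actually comes from.} It follows from the proof of Proposition~3.7 itself. There the Glennie identity together with orthogonal completeness shows that every $(R_1)_B$ is an Albert algebra over $K_B$. Now take any proper prime $Q\lhd R_1$ and set $P=\{\alpha\in B(R_1)\mid \alpha R_1\subseteq Q\}$, a maximal ideal of $B(R_1)$. Then $Q$ contains $PR_1$, and hence equals it by simplicity of $(R_1)_B$.
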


In concluding this series of comments on conclusions of \cite{ZelP1}, we note 
that our choice of algebras of $\mathfrak{M}$--quotients is justified by the 
simplicity of the construction and the sequence of presentation, but 
more correct, in our opinion, is the description of such decompositions in 
terms of the maximal algebra of quotients from \cite{Mont, Mont1}. 

If $R$ is a prime non-Lie Mal'tsev $F$--algebra without $2$--torsion, then 
up to isomorphism ${P(R)=\mathcal{M}_{\CM(R)}(\mu, \beta, \gamma)}$ 
for some ${0\ne \mu, \beta, \gamma\in \CM(R)}$ \cite{Fil1, Fil2}, where 
${\mathcal{M}_{\CM(R)}(\mu, \beta, \gamma)}$ is a simple subalgebra of the 
Mal'tsev algebra ${\mathcal{O}_{\CM(R)}(\mu, \beta, \gamma)^{(-)}}$ of the 
Cayley --- Dickson algebra ${\mathcal{O}_{\CM(R)}(\mu, \beta, \gamma)}$ 
over the field $\CM(R)$, which consists of elements of 
${\mathcal{O}_{\CM(R)}(\mu, \beta, \gamma)}$ with zero trace, 
${\dim_{\CM(R)} \mathcal{M}_{\CM(R)}(\mu, \beta, \gamma)=7}$. 
Since below we consider Mal'tsev algebras over $F$ with $1/2$ (even $1/6$), 
the Mal'tsev $F$--algebra ${\mathcal{M}_F(\mu, \beta, \gamma)}$,  
${\mu, \beta, \gamma\in F}$, 
$\Ann_F (4 \mu+1)=\Ann_F \beta=\Ann_F \gamma=\{0\}$, can be defined as a 
free $F$--module with basis ${\{e_1, \ldots, e_7\}}$,
\begin{gather*}
e_i\cdot e_j\ =\ [e_i, e_j]\ =\ 
\begin{cases}
0\ \text{for ${i=j}$;}
\\
\alpha_{ij}(\mu, \beta, \gamma) e_{m_{ij}}\ 
\text{for ${1\leq i\ne j\leq 7}$,}
\end{cases}
\\
\alpha_{ij}(\mu, \beta, \gamma)\ =\ -\alpha_{ij}(\mu, \beta, \gamma)\ \in\ 
\{\pm 2, \pm 2 \tau, \pm 2 \tau \tau'\mid \tau\ne \tau',\ 
\tau, \tau'\in \{\alpha=(4 \mu+1)/4, \beta, \gamma\}\}\,,
\end{gather*}
${m_{ij}=m_{ji}\ne i=m_{m_{ij}j}, j}$, 
${\{m_{ij}\mid 1\leq j\ne i\leq 7\}=\{j\mid 1\leq j\ne i\leq 7\}}$, 
${\{e_0=1, e_1, \ldots, e_7\}}$ is the basis of the 
free $F$--module ${\mathcal{O}_F(\mu, \beta, \gamma)}$ with the 
structural constants in it from \cite{ZShS}, Example 3, p. 48. 

For ${F=Q(F)}$ and invertible ${\mu, \beta, \gamma\in F}$, the ideals of 
${R=\mathcal{M}_F(\mu, \beta, \gamma)}$ have the form $I R$, ${I\lhd F}$, 
since ${\{\alpha_{ij}(\mu, \beta, \gamma)\}}$ are invertible in $F$ and 
for any ${x=f_1 e_1+\ldots+f_7 e_7}$, ${f_i\in F}$,
\begin{gather*}
x r_{e_i}^2\ =\ \sum_{1\leq j\ne i\leq 7} f_j \alpha_{ji}(\mu, \beta, \gamma)
\alpha_{m_{ij} i}(\mu, \beta, \gamma) e_j\,,\quad 
[r_{e_i}^2, r_{e_j}^2]\ =\ 0\quad (i, j=1, \ldots, 7)\,,
\\
\delta_i\ =\ 
\biggl(\prod_{1\leq j\ne i\leq 7} \alpha_{i j}(\mu, \beta, \gamma)
\alpha_{m_{i j} j}(\mu, \beta, \gamma)\biggr)^{-1} 
\prod_{1\leq j\ne i\leq 7} r_{e_j}^2\,,\quad x \delta_i\ =\ f_i e_i\quad 
(i=1, \ldots, 7)\,,
\\
(x)_{\mathcal{M}_F(\mu, \beta, \gamma)}\ =\ (F f_1+\ldots+F f_7) R\ =\ 
\sum_{i=1}^7 (F f_1+\ldots+F f_7) e_i\,.
\end{gather*}
If ${\phi\in \Hom(I R, R)_{M(R)'}}$, ${I\lhd F}$, then for any ${f\in I}$ 
there is ${h\in F}$, 
\begin{gather*} 
\phi(f e_1)\ =\ \phi(f e_1 \delta_1)\ =\ \phi(f e_1) \delta_1\ =\ h e_1\,,
\\
\phi(f e_1 r_{\alpha_{1 i}(\mu, \beta, \gamma)^{-1} e_i})\ =\ 
\phi(f e_{m_{1 i}})\ =\ 
\phi(f e_1) r_{\alpha_{1 i}(\mu, \beta, \gamma)^{-1} e_i}\ =\ h e_{m_{1 i}}\quad 
(i=2, \ldots, 7)\,,
\end{gather*}
${\psi: f\longmapsto h, f\in I}$, is correctly defined, 
${\psi\in \Hom(I, F)_F, \psi=l_g|_I}$ for some ${g\in F}$, 
$\phi=l_g|_{I R}=g \Id_{I R}$. Hence ${R=P(R)=O(R)}$, 
${\CM(R)=F \Id_R\cong F}$ (Remark 1.1).  

\begin{prop}
If a Mal'tsev algebra ${R=P(R)=O(R)}$ over $F$ with $1/6$ is a subdirect 
product of non-Lie prime Mal'tsev algebras and ${M(R)=O(M(R))}$, then  
${R=\mathcal{M}_{\CM(R)}(\mu, \beta, \gamma)}$ for some invertible 
${\mu, \beta, \gamma\in \CM(R)}$. 
\end{prop}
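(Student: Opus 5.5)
The plan is to follow the scheme of Proposition 3.7: first show that every $R_B$, ${B\in \mathcal{U}(R)}$, is a central simple 7--dimensional non-Lie Mal'tsev algebra over the field $\CM(R)_B$. Then transfer its standard presentation to $R$ through a Horn formula and \cite{BD}, Theorem 2.3.9.

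\textbf{Structure of $R_B$.} Since ${M(R)=O(M(R))}$, Proposition 2.5 gives ${R_B=P(R_B)}$ and ${\CM(R_B)=\CM(R)_B}$. The proof of Proposition 2.5 also shows that $R_B$ is prime. Because $R$ has no 2--torsion (it is an algebra over $F$ with $1/6$), so does $R_B$, and Filippov's theorem \cite{Fil1, Fil2} applies whenever $R_B$ is non-Lie.

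I would exclude the Lie case as in the Glennie argument of Proposition 3.7. Suppose $R_B$ is Lie for some $B$. The Jacobian $J(x,y,z)$ vanishes on $R_B$, so the set ${G=\{J(x,y,z)\mid x,y,z\in R\}=O(G)}$ lies in $P R$. Hence there is ${\beta\in B}$ with ${\beta G=\{0\}}$ (\cite{GolO}, Proposition 2.4, p. 6), and ${\beta R\ne\{0\}}$ is a Lie ideal. Some prime non-Lie quotient $R/Q$ then satisfies ${R/Q=(\beta R+Q)/Q}$, which is Lie, a contradiction.

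Therefore ${R_B=\mathcal{M}_{\CM(R)_B}(\mu_B,\beta_B,\gamma_B)}$ has dimension 7. Over the field $K_B=\CM(R)_B$ with $1/6$, one can moreover choose the basis so that the structure constants are exactly $\alpha_{ij}(\mu_B,\beta_B,\gamma_B)$ in the normalized form recalled before the statement.

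\textbf{Generators and structure constants.} Next I would invoke Lemma 2.6. Its hypotheses hold because $R$ is anticommutative, ${R=2R=P(R)=O(R)}$, and ${R_B=R_B^2}$ since $R_B$ is simple. Also ${\sup_B \dim R_B=7}$. So $R$ is a $\CM(R)$--module with 7 generators. As in the remark after Lemma 2.4, and with $\Lambda''_k$ from the proof of Lemma 2.6, I would write a Horn formula asserting three things:
\begin{itemize}
\item there exist $x_1,\ldots,x_7$ forming a free basis over the centre of $R^1$ (the formulas $\Phi_7\wedge\Phi'_7$);
\item there exist $\mu,\beta,\gamma\in Z(R^1)$ that are invertible, which is expressible as $(\exists \mu')(\mu\mu'=1)$ inside $Z(R^1)$;
\item ${x_ix_j=\alpha_{ij}(\mu,\beta,\gamma)x_{m_{ij}}}$ for all $i,j$.
\end{itemize}
This formula is true in every $R^1_B$. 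Since ${Z(R^1)=\CM(R)}$ and ${R^1=O(R^1)}$ (the observations before Lemma 2.6), \cite{BD}, Theorem 2.3.9 makes it true in $R^1$ itself, and hence in $R$. This gives ${R\cong\mathcal{M}_{\CM(R)}(\mu,\beta,\gamma)}$ with invertible parameters.

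\textbf{Main obstacle.} I expect the hardest step to be the Horn-formula bookkeeping. First, the basis-uniqueness clause $\Phi'_7$ has to stay Horn after the parameters are quantified existentially. Second, invertibility of $4\mu+1$, $\beta$ and $\gamma$ must be encoded properly. Third, the different $\eta_i$--components produced by Theorem 2.3.9 must glue to a single triple $(\mu,\beta,\gamma)$. Because the dimension is always 7, only one component $\eta_1=\Id$ arises, and this is what I would check first. Gluing the parameters then follows by orthogonal summation of $\{\eta_i\mu_i\}$.
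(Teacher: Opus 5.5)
Your proposal follows essentially the same route as the paper. You use Proposition 2.5 together with Filippov's theorem on each $R_B$, get the non-Lie property of $R_B$ from the Glennie-type argument of Proposition 3.7 with the Jacobian, obtain the $7$ generators in $R^1$ from Lemma 2.6, and transfer a single Horn formula to $R^1$ via Theorem 2.3.9 of \cite{BD}. Two small points: Proposition 2.5 also needs $R$ to be an $m.s.p.$--algebra with a $PI$--algebra $M(R)$, which the paper gets from the observations before Remark 2.2 since each $P(R/P_a)$ is simple and $7$--dimensional; and the invertibility clause should concern $4\mu+1$, $\beta$, $\gamma$, as in the paper's formula $\Sigma$ (you note this yourself), not $\mu$ itself, since $\mu=0$ occurs in the split case.
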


\begin{proof}
By hypothesis, $R$ is a non-degenerate $m.s.p.$--algebra, ${M(R)=O(M(R))}$ 
is a $PI$--algebra (see the observations before Remark 2.2, \cite{Gol5}, the 
proof of Theorem 1.3), all $R_B$, ${B\in \mathcal{U}(R)}$ are non-Lie 
(see the beginning of the proof of Proposition 3.7 with $g$ replaced by 
${x (y z)+y (z x)+z (x y)\in F_{Lie}\langle X\rangle}$) and strongly prime 
(as in \cite{GolO}, Lemmas 2.8, 2.9, based on \cite{Gol5}, Theorem 1.3 
(can be from \cite{GolO}, Lemma 3.8 for Mal'tsev algebras as triple Lie 
system)), ${R_B=P(R_B)=\mathcal{M}_{\CM(R)_B}(\mu_B, \beta_B, \gamma_B)}$, 
${0\ne \mu_B, \beta_B, \gamma_B\in \CM(R)_B=\CM(R_B)=Z(R^1_B)}$ for all 
${B\in \mathcal{U}(R)}$, $R$ is a free $\CM(R)$--module of rank 7 and 
a subdirect product of $R_B$, ${B\in \mathcal{U}(R)}$
(Proposition 2.5, Lemmas 2.6, 2.9). Therefore, in all $R^1_B$, 
${B\in \mathcal{U}(R)}$ and $R^1$ the Horn formula 
\begin{multline*}
(\exists x_1)\cdots (\exists x_8)
(\exists \mu)(\exists \beta)(\exists \gamma)
[\Delta(x_1, \ldots, x_8, \mu, \beta, \gamma)\wedge
\\ 
\shoveright{
\Phi_8(x_1, \ldots, x_8)\wedge \Phi_8'(x_1, \ldots, x_8)\wedge 
\Sigma(\mu, \beta, \gamma)]\,,}
\\
\shoveleft{
\Delta(x_1, \ldots, x_8, \mu, \beta, \gamma)\ =}
\\
\shoveright{ 
\Delta_7(x_1, \ldots, x_8)\wedge 
\biggl\{\bigwedge_{1\leq i\ne j\leq 7} 
(x_{i+1} x_{j+1}=\alpha_{ij}(\mu, \beta, \gamma) 
x_{m_{ij}+1})\biggr\}\wedge 
\biggl\{\bigwedge_{i=1}^7 (x_{i+1}^2=0)\biggr\}\,,}
\\
\shoveleft{
\Sigma(\mu, \beta, \gamma)\ =\ (\exists x)(\exists y)(\exists z)(\forall a)
[(\mu\in Z(R))\wedge (\beta\in Z(R))\wedge (\gamma\in Z(R))\wedge (x\in Z(R))
\wedge} 
\\ 
(y\in Z(R))\wedge (z\in Z(R))\wedge (x (4 \mu+1) a=a)\wedge 
(y \beta a=a)\wedge (z \gamma a=a)]\,,
\end{multline*}
is true (Lemma 2.6, \cite{BD}, Theorems 3.1.11, 2.3.9, \cite{BMO}, 
Corollary 5.22), ${R=\mathcal{M}_{\CM(R)}(\mu, \beta, \gamma)}$ 
for some invertible ${\mu, \beta, \gamma\in \CM(R)}$.

In view of Lemma 2.6, the condition ${M(R)=O(M(R))}$ is equivalent here to 
${m(R)< \infty}$ and ${R=R^2}$ (${R_B=R_B^2}$ for all ${B\in \mathcal{U}(R)}$).
\end{proof}

\begin{co}
If a Mal'tsev algebra ${R=P(R)=O(R)}$ over a field 
${\mathbb{F}=\overline{\mathbb{F}}}$, ${\Ch \mathbb{F}\ne 2}$, is a 
subdirect product of prime non-Lie Mal'tsev algebras and is 
algebraic over $\mathbb{F}$, then $R=\mathcal{M}_{\CM(R)}(0, 1, 1)$, 
$\CM(R)$ is algebraic over $\mathbb{F}$.
\end{co}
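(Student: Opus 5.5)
The idea is to reduce to Proposition 3.15 and then use the algebraic closedness of $\mathbb{F}$ to normalize the structure constants, in the spirit of the second half of the proof of Proposition 3.7.

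\textbf{Step 1: the hypotheses of Proposition 3.15.} The first task is to verify $M(R)=O(M(R))$. By the observation at the end of the proof of Proposition 3.15, this is equivalent to $m(R)<\infty$ together with $R_B=R_B^2$ for all $B\in\mathcal{U}(R)$.
\begin{itemize}
\item Take $R\ne Q\in\Spec(R)$. The algebra $R/Q$ is a prime non-Lie Mal'tsev algebra that is algebraic over $\mathbb{F}=\overline{\mathbb{F}}$.
\item By \cite{Fil1, Fil2}, $P(R/Q)=\mathcal{M}_{\CM(R/Q)}(\mu,\beta,\gamma)$, which is $7$-dimensional over $\CM(R/Q)$.
\item Algebraicity of $R/Q$ should force $\CM(R/Q)$ to be algebraic over $\mathbb{F}$. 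The reason is that $\CM(R/Q)$ is generated by elements $\alpha$ with $\alpha I\subseteq R/Q$ for a nonzero ideal $I$, and the $r_x$ are integral. Hence $\CM(R/Q)=\mathbb{F}$ and $R/Q=P(R/Q)\cong\mathcal{M}_{\mathbb{F}}(0,1,1)$.
\item I expect this step to be the main obstacle. The fallback is to argue, as in Proposition 3.7, that $K_Q=\mathbb{F}$ for algebraic prime quotients, using that a finite-dimensional central simple algebra generated by algebraic elements over an algebraically closed field has centroid $\mathbb{F}$.
\end{itemize}
Granting this, $\mathcal{M}_{\mathbb{F}}(0,1,1)$ is simple and $7$-dimensional with $R/Q=(R/Q)^2$. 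So $R$ is an $m.s.p.$-algebra with a $PI$-algebra $M(R)$ (observations before Remark 2.2), $m(R)\le 7$, and each $R_B$ is again a $\mathcal{M}$-algebra, as in the proof of Proposition 3.15. Lemma 2.6 then gives $M(R)=O(M(R))$. Proposition 3.15 now yields $R=\mathcal{M}_{\CM(R)}(\mu,\beta,\gamma)$, and $R$ is free of rank $7$ over $\CM(R)$.

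\textbf{Step 2: normalizing the constants.} Over the algebraically closed field $\mathbb{F}$, every split algebra $\mathcal{M}_{\mathbb{F}}(\mu,\beta,\gamma)$ is isomorphic to $\mathcal{M}_{\mathbb{F}}(0,1,1)$. So in every $R_B^1$ the Horn formula from the proof of Proposition 3.15 holds with the fixed integral constants $\alpha_{ij}(0,1,1)$. In that formula, $\Sigma$ is replaced by the trivial relation $\mu=0$, $\beta=\gamma=1$, as in the formula $\Upsilon$ of Proposition 3.7. This requires $R_B$ to be the split algebra over $K_B$.

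For this I plan to show first that $K_B=\CM(R)_B$ is algebraic over $\mathbb{F}$. The argument:
\begin{itemize}
\item Every $\alpha\in\CM(R)$ is determined by coordinates. Since $R=\bigoplus_i\CM(R)x_i$, we have $\alpha x_1\in R$, and each element of $R$ is algebraic.
\item Concretely, $r_{\alpha x_j}=\alpha r_{x_j}$. If $r_{x_j}$ is integral and invertible on a suitable component, this makes $\alpha$ satisfy a polynomial over $\mathbb{F}$.
\item More robustly, $x\mapsto\alpha x$ applied to the element $x=x_2+\ldots$ gives $y\,f(r_{\alpha x})=0$. Reading this in the basis yields a polynomial identity for $\alpha$ after passing to each $B$.
\end{itemize}
So $K_B$ is algebraic over $\mathbb{F}$, hence $K_B=\mathbb{F}$, and $R_B\cong\mathcal{M}_{\mathbb{F}}(0,1,1)$.

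\textbf{Step 3: transfer to $R$.} By \cite{BD}, Theorem 2.3.9, \cite{BMO}, Corollary 5.22, the formula with the constants $(0,1,1)$ transfers to $R^1$. This gives $R=\mathcal{M}_{\CM(R)}(0,1,1)$.

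Finally, $\CM(R)$ is algebraic over $\mathbb{F}$. Take $\alpha\in\CM(R)$. The element $\alpha e_1$ is algebraic in $R$, and $r_{e_1}$ acts on $\operatorname{span}\{e_j\}$ with constants $\pm2$ and $\pm 2\cdot\frac14$, etc. Applying $f(r_{\alpha e_1})$ to $e_2$, one gets a monic-type polynomial in $\alpha$ with coefficients in $\mathbb{F}$, up to invertible integer factors (using $\Ch\mathbb{F}\ne2$), annihilating $e$. Freeness of the basis then gives $g(\alpha)=0$.
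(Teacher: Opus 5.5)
Your skeleton matches the paper's: every relevant prime quotient is the split algebra $\mathcal{M}_{\mathbb{F}}(0,1,1)$, the Horn formula with constants $(0,1,1)$ transfers from all $R^1_B$ to $R^1=O(R^1)$ by \cite{BD}, Theorem 2.3.9, and algebraicity of $\CM(R)$ is read off from $\phi e_1$ acting on a basis vector. However, the step that carries all the weight is left unproved. You need that a prime non-Lie Mal'tsev algebra which is algebraic over $\mathbb{F}=\overline{\mathbb{F}}$ has Martindale centroid $\mathbb{F}$, and hence is $\mathcal{M}_{\mathbb{F}}(0,1,1)$. The paper imports exactly this from \cite{Gol5}, Lemma 2.2 and \cite{ZShS}, corollary of Theorem 6, p.~60.

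Your justification does not give it. The hypothesis is pointwise algebraicity ($y\,{}_{y,x}f(r_x)=0$, with the polynomial depending on $y$), not integrality of $r_x$. Your ``fallback'' just restates the claim. The missing argument runs as follows.
\begin{enumerate}
\item Multiply the polynomials ${}_{y_i,x}f$ over a $\CM(R/Q)$--basis $y_1,\ldots,y_7\in R/Q$ of $P(R/Q)$. This gives $g_x\in\mathbb{F}[t]$ with $g_x(r_x)=0$ on $P(R/Q)$, so every eigenvalue of $r_x$ lies in $\mathbb{F}$.
\item For $\alpha\in\CM(R/Q)$, pick $x$ in the ideal $I_\alpha=\{x\mid \alpha x\in R/Q\}$ with $r_x$ \emph{not nilpotent}. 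This is possible: by simplicity $\CM(R/Q)I_\alpha=P(R/Q)$, so $I_\alpha$ contains a basis. The $\mathbb{F}$--span of that basis is Zariski dense, and the split form over $\overline{\CM(R/Q)}$ has an element $h$ with $he=2e\ne 0$.
\item A non-zero eigenvalue $\lambda\in\mathbb{F}$ of $r_x$ and the eigenvalue $\alpha\lambda\in\mathbb{F}$ of $r_{\alpha x}=\alpha r_x$ then force $\alpha\in\mathbb{F}$.
\end{enumerate}
Either cite the lemma or supply this argument.

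Second, in Step 1 you derive $M(R)=O(M(R))$ from ``each $R_B$ is again an $\mathcal{M}$--algebra, as in the proof of'' the Mal'tsev Proposition 3.17 (your ``3.15''). This is circular. There, $R_B=P(R_B)=\mathcal{M}_{\CM(R)_B}(\mu_B,\beta_B,\gamma_B)$ is obtained through Proposition 2.5, whose hypothesis here is precisely $M(R)=O(M(R))$.

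It is also unnecessary. $R_B=R/PR$ is itself a prime non-Lie quotient of $R$: strong primeness follows as in that proof via \cite{GolO}, Lemmas 2.8, 2.9, and non-Lie-ness from the Jacobian variant of the argument opening the proof of Proposition 3.7. It is also algebraic over $\mathbb{F}$. So the fact above, applied to $Q=PR$, gives $R_B\cong\mathcal{M}_{\mathbb{F}}(0,1,1)$ at once, and you can go straight to your Step 3. That is the paper's proof.

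Your Step 2 then becomes superfluous. As written it also has a hole: reading $y\,f(r_{\alpha x})=0$ in the basis gives relations with coefficients in $\mathbb{F}[\mu,\beta,\gamma]$, and $\mu,\beta,\gamma\in\CM(R)$ are not yet known to be algebraic. Only your eigenvalue variant would work.

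Your final paragraph is correct and matches the paper up to the choice of basis. The paper uses the basis of \cite{SA}, 3.2, with $e_1e_2=2e_2$, so that $e_2\,{}_\phi f(l_{\phi e_1})={}_\phi f(2\phi)e_2$ directly. In the basis of \cite{ZShS}, $r_{e_1}$ sends $e_2$ to a multiple of $e_{m_{21}}$. You must therefore split $f$ into even and odd parts and use $e_2r_{e_1}^2=ce_2$ with $0\ne c=\alpha_{21}\alpha_{m_{21}1}\in\mathbb{F}$; this is where $\Ch\mathbb{F}\ne 2$ enters.
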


\begin{proof}
Since the non-Lie prime quotient algebras of such an algebra $R$ are equal up 
to isomorphism to ${\mathcal{M}_{\mathbb{F}}(0, 1, 1)}$, 
${R=\mathcal{M}_{\CM(R)}(0, 1, 1)}$ \cite{Gol5}, Lemma 2.2, \cite{ZShS}, 
corollary of Theorem 6, p. 60, the previous proof and we can assume that the 
basis ${\{e_1, \ldots, e_7\}}$ of the free $\CM(R)$--module $R$ has the 
structure constants from \cite{SA}, p. 3.2 (\cite{Gol5}, the observations 
after Theorem 1.4), in particular, ${e_1 e_2=2 e_2}$. The algebraicity of $R$ 
over $\mathbb{F}$ allows one to choose for any  
${\phi\in \CM(R)}$ such a  
${{}_{\phi} f(t)=t^n+{}_{\phi} f_{n-1} t^{n-1}+
\ldots+{}_{\phi} f_1 t\in \mathbb{F}[t]}$ that 
${e_2 {}_{\phi} f(l_{\phi e_1})={}_{\phi} f(2 \phi) e_2=0}$, 
${{}_{\phi} f(2 \phi)=0}$. Therefore $\CM(R)$ is algebraic over $\mathbb{F}$ and  
modulo the identification of $\mathbb{F}$ with $\CM(R)_B$,
${B\in \mathcal{U}(R)}$, ${|\{\phi+P \CM(R)\mid P\in \Spec(B(R))\}|< \infty}$ 
for all ${\phi\in \CM(R)}$.
\end{proof}

We call a Lie $F$--algebra $R$ a \emph{Lie algebra of classical type} if 
$R$ is a free $F$--module with a Chevalley 
basis of one of the finite-dimensional simple complex Lie algebras $L$, 
${R=L(F)=F\mathbin{\otimes_{\mathbb{Z}}} {}_{\mathbb{Z}} L}$, where  
${}_{\mathbb{Z}} L$ is the integer lattice (subring) of $L$ generated by 
its Chevalley basis. 

For a semiprime Lie algebra $R$ the following conditions are equivalent: 
\begin{enumerate}

\item $R$ is \emph{special} (has an associative enveloping $PI$--algebra); 

\item $R$ is \emph{generalized special} (${\Ad(R)=M(R)}$ is a $PI$--algebra); 

\item the dimensions of the central closures of non-zero prime quotient 
algebras of $R$ (if any) over their Martindale centroids do not exceed 
some ${d(L)> 1}$ 

\end{enumerate}
\cite{BP}, \cite{Raz}, Theorem 4.1 (on rank), p. 47. In terms of Lie algebras 
of classical types, one can obtain the following description 

\begin{prop}
If a Lie algebra ${R=P(R)=O(R)}$ over a field $\mathbb{F}$, ${\Ch \mathbb{F}=0}$, 
is special, ${\Ad(R)=O(\Ad(R))}$, then there are ${0\ne \alpha_i\in B(R)}$, 
${\alpha_i \alpha_j=\delta_{ij} \alpha_i}$, ${\Id_R=\alpha_1+\ldots+\alpha_n}$, 
simple complex Lie algebras $L_i$ of dimension mo greater than $\pideg \Ad(R)$ 
and ${\mathcal{U}_i\subseteq \mathcal{U}(R)}$, ${i=1, \ldots, n}$, ${n\geq 1}$, 
${L_i\not\cong L_j}$, ${\mathcal{U}_i\cap \mathcal{U}_j\ne \emptyset}$ for 
${i\ne j}$ such that 
\begin{enumerate}

\item ${\alpha_i R}$ is a subdirect product of finite-dimensional simple 
Lie algebras ${R_B=P(R_B)}$ over fields ${\CM(R)_B=\CM(R_B)}$, 
${B\in \mathcal{U}_i}$, 
\[
\overline{R_B}\ =\ \overline{\CM(R)_B}\mathbin{\otimes_{\CM(R)_B}} R_B\ \cong\ 
L_i(\overline{\CM(R)_B})\,;
\]

\item ${\overline{C}_i \alpha_i R\cong L_i(\overline{C}_i)}$ for 
${\overline{C}_i=\prod\limits_{B\in \mathcal{U}_i} \overline{\CM(R)_B}\cong 
\alpha_i \overline{C}}$, ${\overline{C}=\prod\limits_{B\in \mathcal{U}} 
\overline{\CM(R)_B}}$, ${\mathcal{U}=\bigsqcup\limits_{i=1}^n \mathcal{U}_i}$, 
\[
R\ =\ \alpha_i R\oplus\ldots \oplus \alpha_n R\ \hookrightarrow\ 
\overline{C} R\ \cong\ 
L_1(\overline{C}_1)\oplus \ldots\oplus L_n(\overline{C}_n)\,.
\]
\end{enumerate}
\end{prop}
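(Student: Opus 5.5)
The plan follows the proofs of Propositions 3.7 and 3.15: first control every localization $R_B$, then glue by Horn formulas.

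\emph{Step 1: the localizations $R_B$.} Speciality of $R$ means $\Ad(R)=M(R)$ is a $PI$--algebra. By the equivalences listed just before the statement, the prime quotients of $R$ have central closures of bounded dimension. Hence $R$ is an $m.s.p.$--algebra with $M(R)=O(M(R))$ (observations before Remark 2.2). By Proposition 2.5, $R_B=P(R_B)$ and $\CM(R_B)=\CM(R)_B$ for every $B\in \mathcal{U}(R)$. Moreover $M(R_B)\cong M(R)_B$ is central simple finite-dimensional over $\CM(R)_B$. Thus $R_B$ is a finite-dimensional prime, hence simple, Lie algebra over a field of characteristic $0$; simplicity uses semisimplicity of prime finite-dimensional Lie algebras in characteristic $0$ and $\Ann R_B=\{0\}$. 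Its dimension is bounded by $m(R)\le \pideg \Ad(R)$, since $\ad$ embeds $R_B$ into $M(R_B)$. Also $R_B=R_B^2$, so Lemma 2.6 applies: $R$ is generated as a $\CM(R)$--module by $m(R)$ elements.

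\emph{Step 2: partition $\mathcal{U}(R)$ by type.} Since $\overline{R_B}$ is simple over an algebraically closed field of characteristic $0$, $\overline{R_B}\cong L(\overline{\CM(R)_B})$ for some simple complex $L$ with $\dim L\le \pideg \Ad(R)$. Only finitely many such $L$ exist, say $L_1,\dots,L_n$ pairwise non-isomorphic, and we set $\mathcal{U}_i=\{B\mid \overline{R_B}\cong L_i(\overline{\CM(R)_B})\}$. This is a partition of $\mathcal{U}(R)$; the condition ``$\ne\emptyset$'' in the statement should be read as $\mathcal{U}_i\cap\mathcal{U}_j=\emptyset$.

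\emph{Step 3: idempotents $\alpha_i$.} For each $i$ I need a Horn formula $\Psi_i$, in $R^1$ with $Z(R^1)=\CM(R)$ as in Lemma 2.6, that holds in $R_B^1$ exactly when $B\in\mathcal{U}_i$. This is the main obstacle, because the type of $R_B$ is detected only after extending scalars to $\overline{\CM(R)_B}$, while Horn formulas speak about $R_B$ itself. Two formulations are available to try.
\begin{itemize}
\item First choice: encode $\dim R_B=\dim L_i$ together with the Killing-form data, namely the rank, i.e.\ the dimension of a maximal toral (Cartan) subalgebra, expressed through existence of a basis via $\Phi_k,\Phi'_k$ of Lemma 2.4. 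Since $\Ch \mathbb{F}=0$, the pair (dimension, absolute rank) distinguishes all simple types except $B_l$ and $C_l$.
\item To separate $B_l$ from $C_l$: use the existence of a faithful representation, or compare invariants of $M(R_B)$ such as $\dim_{\CM(R)_B}M(R_B)$, which equals $(\dim L)^2$ and does not help. A better invariant is the dimension of the centralizer of a generic element, or existence of elements with specified degree of minimal polynomial of $\ad$. These are all expressible by $\exists/\forall$ statements with $\CM(R)$--coefficients and closed under the Boolean gluing.
\end{itemize}
If this encoding fails, fall back on \cite{BD}, Theorem 2.3.9 in its refined form: the truth of some $\Psi_{i(P)}$ at each $P$ yields orthogonal $\alpha_i$ with $\Id_R=\sum\alpha_i$ and $\Psi_i$ true in $\alpha_iR$. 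Then $\alpha_iR=P(\alpha_iR)=O(\alpha_iR)$ is a subdirect product of $R_B$, $B\in\mathcal{U}_i$, by the observations at the end of Sec.~2. This gives part~1.

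\emph{Step 4: part 2.} Each $\alpha_iR$ is a free $\alpha_i\CM(R)$--module: apply Lemma 2.6 with the formula $\Phi'$ as in Lemma 2.4, using constant dimension on $\mathcal{U}_i$. Lemma 2.9 then gives $\overline{C}_i\alpha_iR=\prod_{B\in\mathcal{U}_i}\overline{R_B}\cong\prod_B L_i(\overline{\CM(R)_B})=L_i(\overline{C}_i)$, since the rank is finite and the Chevalley structure constants are integers. Summing over $i$, with $\overline{C}=\bigoplus\alpha_i\overline{C}$, yields the embedding $R\hookrightarrow\overline{C}R\cong\bigoplus L_i(\overline{C}_i)$.
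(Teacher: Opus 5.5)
Your Step~3 has a genuine gap: the idempotents $\alpha_i$ are never constructed. The ``fallback'' to Theorem 2.3.9 of \cite{BD} is circular. In any form, that theorem needs, for every $P$, a Horn formula $\Psi_{i(P)}$ true in $R/PR$, and that is exactly what you did not write down.

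The encodings you propose also fail:
\begin{itemize}
\item The pair (dimension, absolute rank) does not separate $E_6$ from $B_6$ and $C_6$; all three have dimension $78$ and rank $6$.
\item The invariants you offer for $B_l$ versus $C_l$ coincide. The centralizer of a generic element has dimension $l$ in both, and the minimal polynomial of $\ad x$ for generic $x$ has degree $|\Phi|+1=2l^2+1$ in both.
\item More fundamentally, existential statements about $R_B$ do not detect the type. $\CM(R)_B$ is an arbitrary field of characteristic $0$, and the type of $R_B$ is visible only over $\overline{\CM(R)_B}$. For example, a nonzero $x$ with $\ad x$ of small rank exists in split $C_l$ but not in an anisotropic form.
\end{itemize}
This is why the paper uses Horn formulas with Chevalley structure constants only in Corollaries 3.18 and 3.20, where $\CM(R)_B=\mathbb{F}=\overline{\mathbb{F}}$.

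A minor point: $\dim R_B\le\pideg\Ad(R)$ does not follow from $R_B\hookrightarrow M(R_B)$, which only gives $(\pideg\Ad(R))^2$. It follows from $M(R_B)\cong M_n(\CM(R)_B)$ with $n=\dim R_B$.

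The paper does not try to detect the type by a formula at this step.
\begin{itemize}
\item It first shows that $R$ is a subdirect product of the simple $R_B$, using the decomposition $R=\eta_1R\oplus\ldots\oplus\eta_lR$ into free modules of pairwise distinct ranks $k_j$ (proof of Lemma 2.6).
\item It then sorts the ultrafilters by the finitely many possible types of $\overline{R_B}$ and applies the observation at the end of Sec.~2. Put $P_B=B(R)\setminus B$ and fix a type class $\mathcal{T}$. The ideals $\bigcap_{B\in\mathcal{T}}P_BR$ and $\bigcap_{B\notin\mathcal{T}}P_BR$ meet in $\{0\}$. Since $R=P(R)$, some $\alpha\in B(R)$ is the identity on the first and kills the second.
\item Then $(\Id_R-\alpha)R$ is a subdirect product of the $R_B$ with $B\in\mathcal{T}$, $\Id_R-\alpha\in B$, and $\alpha R$ is a subdirect product of the $R_B$ with $B\notin\mathcal{T}$, $\alpha\in B$. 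Iterating over the types gives the $\alpha_i$.
\end{itemize}
The $\mathcal{U}_i$ are the ultrafilters of type $L_i$ containing $\alpha_i$. That is why the statement only speaks of subsets $\mathcal{U}_i\subseteq\mathcal{U}(R)$ and of $\overline{C}$ over $\mathcal{U}=\bigsqcup\mathcal{U}_i$, not over all of $\mathcal{U}(R)$.

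Freeness of $\alpha_iR$ of rank $m_i$ then follows by density. Every nonzero $\alpha_i\eta_j$ lies in some $B\in\mathcal{U}_i$, so every such $j$ has $k_j=m_i$. Since the $k_j$ are distinct, $\alpha_i=\alpha_i\eta_j$ for a single $j$. After that your Step~4 via Lemma 2.9 goes through.

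If you want your stronger version with $\mathcal{U}=\mathcal{U}(R)$, you must prove that the type is locally constant on $\mathcal{U}(R)$. One way: on each $\eta_jR$, with structure constants $c$ in a fixed basis, the condition $\overline{R_B}\cong L_i(\overline{\CM(R)_B})$ is a finite Boolean combination of conditions $f(c)\in P_B\CM(R)$ with $f$ rational polynomials. This holds because the $\mathrm{GL}$--orbit of the Chevalley constants is constructible over $\mathbb{Q}$. Each such set of $B$ is clopen, because $f(c)\CM(R)=e\CM(R)$ for some $e\in B(R)$.
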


\begin{proof}
By hypothesis, $R$ is a non-degenerate Lie $m.s.p.$--algebra with a 
$PI$--algebra $\Ad(R)$, the Lie algebras $R/Q$, $P(R/Q)$ are strongly prime 
$P(R/Q)$ is simple and finite-dimensional over the field $\CM(R/Q)$, 
\begin{multline*}
n_Q\ =\ \dim_{\CM(R/Q)} P(R/Q)\ =\ \pideg \Ad(P(R/Q))\ =
\\ 
\pideg \Ad(R/Q)\ \leq\ \pideg \Ad(R)\ =\ \max_{R\ne Q\in \Spec(R)} n_Q\,,
\end{multline*}
${\Ad(P(R/Q))\cong M_{n_Q}(\CM(R/Q))}$ for all ${R\ne Q\in \Spec(R)}$
(see the observations before Remark 2.2, \cite{Gol2}, Sec. 1), ${R_B=P(R_B)}$ 
is simple and finite-dimensional over the field $\CM(R)_B=\CM(R_B)$ for all 
${B\in \mathcal{U}(R)}$ (Proposition 2.5, \cite{GolO}, Lemma 2.9). For any 
${\beta\in B(R)}$ 
\begin{gather*}
\beta R\ =\ P(\beta R)\ =\ O(\beta R)\,,\quad 
\beta \CM(R)\ =\ \CM(\beta R)\,,\quad \beta B(R)\ =\ B(\beta R)\,,
\\
R\ =\ \beta R\oplus (\Id_R-\beta) R\,,\quad   
\CM(R)\ =\ \beta \CM(R)\oplus (\Id_R-\beta) \CM(R)\,,
\end{gather*}
${B(R)=\beta B(R)\oplus (\Id_R-\beta) B(R)}$ (see the end of Sec. 2; up to 
isomorphism, $O(\beta R)$ is the ortho\-gonal completion of $\beta R$ in $R$). 
If ${\CM(R)\ne Q\in \Spec(\CM(R))}$, then ${Q=P \CM(R)}$, 
${B(R)\ne P=B(R)\cap Q\in \Spec(B(R))}$, ${\gamma Q=\gamma \CM(R)}$ 
(${\gamma\in P}$) for one ${\gamma\in \{\beta, \Id_R-\beta\}}$ and 
${\CM(R)/Q\cong (\Id_R-\gamma) \CM(R)/(\Id_R-\gamma) Q}$ (see the proof 
of Lemma 2.9). We identify $\beta \CM(R)$ and $\CM(\beta R)$ (see the end of 
Sec. 2). If ${\gamma=\Id_R-\beta}$, then 
${\beta B(R)\ne \beta P\in \Spec(\beta B(R))}$, 
$\beta\in\linebreak B=B(R)\setminus P\in \mathcal{U}(R)$, 
${\beta B=\beta B(R)\setminus \beta P\in \mathcal{U}(\beta R)}$. 
If ${B'\in \mathcal{U}(\beta R)}$, then ${\beta\in B'}$ 
(${\alpha=\alpha \beta}$ for all ${\alpha\in \beta B(R)}$) and 
${P'=(\beta B(R)\setminus B')\oplus (\Id_R-\beta) B(R)\in \Spec(B(R))}$, 
$B'=\beta B(R)\setminus \beta P'=\beta (B(R)\setminus P')$, 
${B(R)\setminus P'=B'\oplus (\Id_R-\beta) B(R)\in \mathcal{U}(R)}$. So, 
\[
\mathcal{U}(\beta R)\ =\ \{\beta B\mid \beta\in B\in \mathcal{U}(R)\}\,,\quad
\Spec(\beta B(R))\ =\ \{\beta P\mid \Id_R-\beta\in P\in \Spec(B(R))\}\,.
\]
The presence of ${0\ne \eta_i\in B(R)}$, ${\eta_i \eta_j=\delta_{ij} \eta_i}$, 
${R=\eta_1 R\oplus\ldots\oplus \eta_l R}$, 
$\eta_i R$ is a free $\eta_i \CM(R)$--module of rank $k_i$ with basis 
${\{x_{ij}\}_{j=1}^{k_i}, 1< k_1<\ldots < k_l=m(R)}$, and 
$\bigcap\limits_{P\in \Spec(B(R))} P \CM(R)=\{0\}$, implies that 
\begin{multline*}
\bigcap_{P\in \Spec(B(R))} \eta_i P R\ =\ 
\eta_i \bigcap_{P\in \Spec(B(R))} P R\ =
\\ 
\sum_{j=1}^{k_i} 
\eta_i \biggl(\bigcap_{P\in \Spec(B(R))} P \CM(R)\biggr) x_{ij}\ =\ 
\{0\}\quad (i=1, \ldots, l)\,,
\end{multline*}
${\bigcap\limits_{P\in \Spec(B(R))} P R=\{0\}, R}$ is a subdirect product 
of ${R_B, B\in \mathcal{U}(R)}$ and ${m(R)=\pideg \Ad(R)}$ (see the proofs of 
Lemmas 2.6, 2.9). Therefore, one can choose 
${\alpha_i\in B(R), \alpha_i \alpha_j=\delta_{ij}\alpha_i}$, 
$R=\linebreak\alpha_1 R\oplus\ldots \oplus \alpha_n R$, ${n\geq 1}$, 
$\alpha_i R$ is a subdirect product of Lie algebras  
${R_B\cong (\alpha_i R)_{\alpha_i B}}$ over the field  
${\CM(R)_B\cong (\alpha_i \CM(R))_{\alpha_i B}}$ of dimension $m_i$, 
${\alpha_i\in B\in \mathcal{U}_i, 
\overline{R_B}\cong L_i(\overline{\CM(R)_B})}$ for some simple complex 
Lie algebra $L_i$ of dimension ${m_i\leq m(R)}$, 
${\mathcal{U}_i\subseteq \mathcal{U}(R)}$, 
${L_i\not\cong L_j}$, ${\mathcal{U}_i\cap \mathcal{U}_j=\emptyset}$,  
${i\ne j}$ (see the end of Sec. 2). For any ${i=1, \ldots, n}$ 
\[
\alpha_i R\ =\ \alpha_i \eta_1 R\oplus\ldots \oplus \alpha_i \eta_l R\ =\ 
\alpha_i \eta_{j_{i 1}} R\oplus\ldots \alpha_i \eta_{j_{i l_i}} R\,,\quad 
\{j_{i s}\}\ =\ \{j\mid \alpha_i \eta_j\ne 0\}\,, 
\]
$\alpha_i \eta_{j_{i s}} R$ is a free 
$\alpha_i \eta_{j_{is}} \CM(R)$--module of rank $k_{j_{i s}}$, for 
any ${B\in \mathcal{U}_i}$, ${P=B(R)\setminus B}$, there is 
${t=t(P)}$, ${\alpha_i P R=\alpha_i \eta_{i j_t} P R\oplus 
\alpha_i (\Id_R-\eta_{i j_t}) R}$, 
${\dim_{(\alpha_i \CM(R))_{\alpha_i B}} (\alpha_i R)_{\alpha_i B}=
k_{j_{i t}}=m_i}$ and so, ${l_i=1}$, 
${k_{j_{i1}}=m_i}$, ${\alpha_i=\alpha_i \eta_{j_{i 1}}}$, 
${\bigcap\limits_{\genfrac{}{}{0pt}{1}{P=B(R)\setminus B,}
{B\in \mathcal{U}_i}} \alpha_i P \CM(R)=\{0\}}$, 
$\alpha_i \CM(R)$ is a subdirect product of ${(\alpha_i \CM(R))_{\alpha_i B}}$, 
${B\in \mathcal{U}_i}$, ${\overline{C}_i \alpha_i R\cong L_i(\overline{C}_i)}$, 
\[
\overline{C}_i\ \cong\ \prod_{B\in \mathcal{U}_i} 
\overline{(\alpha_i \CM(R))_{\alpha_i B}}\ \cong\ 
\prod_{B\in \mathcal{U}_i} {\alpha_i}_B \overline{\CM(R)_B}\ \cong\
\prod_{B\in \mathcal{U}} {\alpha_i}_B \overline{\CM(R)_B}\ =\ 
\alpha_i \overline{C}\,,
\]
${{\alpha_i}_B=\alpha_i+(B(R)\setminus B)}$ is 1 in $\CM(R)_B$, 
${B\in \mathcal{U}_i}$ and 0 in $\CM(R)_B$, 
${B\in \mathcal{U}\setminus \mathcal{U}_i}$ (Lemma 2.9; 
${\alpha_i=\alpha_i (\Id_R-\alpha_j)\in B(R)\setminus B}$, 
${\alpha_j\in B\in \mathcal{U}_j}$, ${j\ne i}$), 
\[
\overline{C} R\ =\ 
\overline{C} \alpha_1 R\oplus\ldots \oplus \overline{C} \alpha_n R\ \cong\ 
\overline{C}_1 \alpha_1 R\oplus\ldots \oplus \overline{C}_n \alpha_n R\ \cong\ 
L_1(\overline{C}_1)\oplus \ldots \oplus L_n(\overline{C}_n)\,.
\]
Since $R$ is a $m(R)$--generated $\CM(R)$--module and a subdirect product of 
$R_B$, ${B\in \mathcal{U}(R)}$ (see above), 
\[
\overline{C} R\ \hookrightarrow\ \overline{C}' R\ \cong\ 
\prod_{B\in \mathcal{U}(R)} K_{i(B)}(\overline{\CM(R)_B})\ \cong\ 
K_1(\overline{C}'_1)\oplus \ldots \oplus K_m(\overline{C}'_m)\,,
\]
where ${1\leq i(B)\leq m}$, $\{K_i\}_{i=1}^m$ are representatives of all 
classes of isomorphic simple Lie algebras $L$ of dimension not greater than 
$m(R)$, ${\overline{\CM(R)_B}\mathbin{\otimes_{\CM(R)_B}} R_B
\cong M(\overline{\CM(R)_B})}$ for some ${B\in \mathcal{U}(R)}$, ${m\geq n}$, 
${\overline{C}'=\prod\limits_{B\in \mathcal{U}(B)} \overline{\CM(R)_B}}$, 
${\overline{C}'_i=\prod\limits_{B\in \mathcal{U}'_i} \overline{\CM(R)_B}}$, 
\[
\mathcal{U}'_i\ =\ \{B\in \mathcal{U}(R)\mid 
\overline{\CM(R)_B}\mathbin{\otimes_{\CM(R)_B}} R_B\cong 
K_i(\overline{\CM(R)_B})\}\,.
\]
\end{proof} 

\begin{co}
If ${R=P(R)=O(R)}$ is a non-degenerate Mal'tsev $PI$--algebra with an 
algebraic regular representation over a field 
${\mathbb{F}=\overline{\mathbb{F}}}$, ${\Ch \mathbb{F}=0}$, then there are 
${0\ne \nu_i\in B(R)}$, ${\nu_i \nu_j=\delta_{ij} \nu_i}$, 
${R=\nu_1 R\oplus \ldots \oplus\nu_l R}$, ${l\geq 1}$, $\nu_i R$ has the 
form ${\mathcal{M}_{\nu_i \CM(R)}(0, 1, 1)}$ for at most one 
$i$ and ${L_i(\nu_i \CM(R))}$ for all other $i$, $L_i$ is a representative 
of the class of isomorphic simple complex Lie algebras of bounded 
dimension, ${L_i\not\cong L_j}$ for ${i\ne j}$, $\CM(R)$ is algebraic over 
$\mathbb{F}$. 
\end{co}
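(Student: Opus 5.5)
The plan is to split $R$ by an idempotent of $B(R)$ into a Lie part and a purely non-Lie part, and then treat each part with Proposition 3.17 and Corollary 3.18 (Mal'tsev) and Proposition 3.19 (Lie).

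\textbf{Step 1: the Lie/non-Lie decomposition.} Let $J=J(R)$ be the ideal generated by the values of the Jacobian $x(yz)+y(zx)+z(xy)$ on $R$.

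Non-degeneracy of $R$ gives that $R$ is a subdirect product of strongly prime quotients $R/Q$, $Q\in\Spec_{Mc}(R)$. Each such quotient is of one of two kinds:
\begin{enumerate}
\item a prime non-Lie Mal'tsev algebra, with $P(R/Q)\cong\mathcal{M}_{\CM(R/Q)}(\mu,\beta,\gamma)$ by \cite{Fil1, Fil2};
\item a strongly prime Lie $PI$--algebra, hence with $P(R/Q)$ simple finite-dimensional over $\CM(R/Q)$.
\end{enumerate}

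Apply the observation at the end of Sec. 2 to the partition of these $Q$ into the two kinds. This yields $\tau\in B(R)$ with $R=\tau R\oplus(\Id_R-\tau)R$, where $\tau R$ is a subdirect product of quotients of kind 1 and $(\Id_R-\tau)R$ of quotients of kind 2. If one kind is absent, take $\tau=0$ or $\tau=\Id_R$.

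By the same remarks, $\tau R$ and $(\Id_R-\tau)R$ again satisfy $P(\cdot)=O(\cdot)$. Algebraicity of the regular representation passes to direct summands, and so does the $PI$ property.

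\textbf{Step 2: verify $M(R)=O(M(R))$.} Both Proposition 3.17 and Proposition 3.19 require this hypothesis. I would derive it from algebraicity of the regular representation together with the $PI$ property.

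The $PI$ property bounds $\pideg M(R)$, so the dimensions $\dim_{\CM(R)_B}R_B$ are bounded uniformly in $B\in\mathcal{U}(R)$. Indeed, each $R_B$ is a finite-dimensional simple algebra over the field $\CM(R)_B$ by Proposition 2.5 and \cite{GolO}, Lemma 2.9, and its dimension is bounded by the $PI$ degree.

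We also need $R_B=R_B^2$ for all $B$, which holds because the $R_B$ are simple. Lemma 2.6 then gives $m(R)<\infty$ and $M(R)=O(M(R))$.

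\textbf{Step 3: the non-Lie summand.} Corollary 3.18 applies to $\tau R$ over $\mathbb{F}=\overline{\mathbb{F}}$. It shows that $\tau R=\mathcal{M}_{\tau\CM(R)}(0,1,1)$ and that $\tau\CM(R)$ is algebraic over $\mathbb{F}$. Put $\nu_1=\tau$. This is the unique summand of Mal'tsev type, which accounts for "at most one $i$" in the statement.

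\textbf{Step 4: the Lie summand.} Proposition 3.19 applies to $(\Id_R-\tau)R$ and gives $(\Id_R-\tau)R=\alpha_1R\oplus\dots\oplus\alpha_nR$, with $\overline{C}_i\alpha_iR\cong L_i(\overline{C}_i)$.

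Because $\mathbb{F}$ is algebraically closed and the regular representation is algebraic, the argument of Corollary 3.18 (using the eigenvalues of $\ad_h$ for a Cartan element $h$) shows that $\CM(R)$ is algebraic over $\mathbb{F}$. It follows that each field $\CM(R)_B$, being algebraic over $\mathbb{F}=\overline{\mathbb{F}}$, equals $\mathbb{F}$. Hence every $R_B$ is already split, $R_B\cong L_i(\CM(R)_B)$.

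Then, as in the last part of the proof of Proposition 3.7, the Horn formula expressing the existence of a Chevalley basis with integer structure constants of $L_i$ holds in every $(\alpha_iR)_B$. By \cite{BD}, Theorem 2.3.9, it holds in $\alpha_iR$, so $\alpha_iR\cong L_i(\alpha_i\CM(R))$.

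\textbf{Main obstacle.} The hardest step is Step 2, together with the identification $\CM(R)_B=\mathbb{F}$ in Step 4. This is where algebraicity of the regular representation must be turned into a uniform bound on the dimensions $n_Q$ and into algebraicity of $\CM(R)$. The first attempt would be to mimic the proof of Corollary 3.18: pick a basis element $e$ whose $\ad$ acts with nonzero integer eigenvalue on another basis element, and apply the annihilating polynomial of $r_{\phi e}$ to conclude that each $\phi\in\CM(R)$ is algebraic over $\mathbb{F}$.
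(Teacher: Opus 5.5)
There is a genuine gap in Steps 1 and 2, and it carries over into Step 4. Nothing in your argument shows that the strongly prime quotients are finite-dimensional, that $\dim_{\CM(R)_B}R_B$ is uniformly bounded, or that $\CM(R)_B=\mathbb{F}$.

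\textbf{Steps 1 and 2.} The hypothesis is that $R$ satisfies a nontrivial Mal'tsev identity, not that $M(R)$ is a $PI$--algebra. For Lie algebras the first does not imply the second. The Witt algebra $W_1$ in characteristic $0$ is simple and non-degenerate, satisfies a standard Lie identity, has centroid $\mathbb{F}$, and is infinite-dimensional. So ``strongly prime Lie $PI$, hence $P(R/Q)$ finite-dimensional'' (Step 1) and ``the $PI$ property bounds $\pideg M(R)$'' (Step 2) are false as stated. Only the algebraicity of the regular representation excludes such examples, and you do not use it there.

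Step 2 is also circular. Proposition 2.5 presupposes that $M(R)=O(M(R))$ is a $PI$--algebra (or that $\nu(B)\in B$), which is exactly what Step 2 is meant to prove. Without that bound you also cannot check the hypotheses of Propositions 3.17 and 3.19; specialness of the Lie summand is precisely the missing bound.

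\textbf{Step 4.} This step is circular too. The trick from Corollary 3.18 needs $x,y\in R$ with $xy=2y$ and $y$ a free generator over $\CM(R)$, so it needs $\alpha_iR\cong L_i(\alpha_i\CM(R))$ already. Proposition 3.19 only gives the split form after extending scalars to $\overline{C}_i$. You then obtain the splitting from $\CM(R)_B=\mathbb{F}$, which you obtain from algebraicity of $\CM(R)$. That trick also gives no dimension bound at all.

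\textbf{The missing idea.} Use algebraicity locally, on each $R_B$. Each $R_B$ is an $\mathbb{F}$--algebra quotient of $R$, so it is a $PI$--algebra with algebraic regular representation, and it is strongly prime (as in the proof of Proposition 3.17). The classification in \cite{Gol5}, Theorem 2.3, then gives $R_B\cong\mathcal{M}_{\mathbb{F}}(0,1,1)$ or $R_B\cong L(\mathbb{F})$ for a simple complex Lie algebra $L$ of dimension at most some $n(R)$. This yields at once $\CM(R)_B=\mathbb{F}$ and only finitely many isomorphism types.

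The paper then writes one Horn formula per type, as in Lemma 2.6 and Proposition 3.17: free generators of $R^1$ over $Z(R^1)=\CM(R)$ with the prescribed integer structure constants. \cite{BD}, Theorem 2.3.9, then gives the $\nu_i$ directly, each formula holding in at most one $\nu_iR$. So the Lie/non-Lie splitting, $M(R)=O(M(R))$, and Propositions 3.17 and 3.19 are not needed.

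Only after that is algebraicity of $\CM(R)$ proved, using $\mathfrak{sl}(2)$--triples in the Chevalley bases. Take $x=x_1+\ldots+x_l$ and $y=y_1+\ldots+y_l$ with $xy=2y$, where the $y_i$ are free generators. Applying to $y$ the polynomial that annihilates $l_{\phi x}$ gives ${}_{\phi}f(2\phi)y_i=0$ for all $i$, hence ${}_{\phi}f(2\phi)=0$.
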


\begin{proof}
Since each $R_B$, ${B\in \mathcal{U}(R)}$ is a non-degenerate algebra over 
${\CM(R)_B=\mathbb{F}}$ of dimension not greater than some ${n(R)> 1}$, $R_B$ 
is equal up to isomorphism to either ${\mathcal{M}_{\mathbb{F}}(0, 1, 1)}$ or 
$L(\mathbb{F})$ for some simple complex Lie algebra of dimension not greater 
than $n(R)$ \cite{Gol5}, Theorem 2.3 ($R_B$, ${B\in \mathcal{U}(R)}$ are 
strongly prime (see the proof of Proposition 3.17)). Consequently, in $R^1_B$ 
for ${R_B=\mathcal{M}_{\mathbb{F}}(0, 1, 1)}$ the Horn formula 
\[
(\exists x_1)\cdots (\exists x_8)
[\Delta(x_1, \ldots, x_8, 0, 1, 1)\wedge
\Phi_8(x_1, \ldots, x_8)\wedge \Phi_8'(x_1, \ldots, x_8)]
\]
is true, and for ${R_B=L_i(\mathbb{F})}$, ${i=i(B)}$, the Horn formula 
\begin{gather*}
(\exists x_1)\cdots (\exists x_{n_i+1}) 
[\Delta_{L_i}(x_1, \ldots, x_{n_i+1})\wedge
\Phi_{n_i+1}(x_1, \ldots, x_{n_i+1})
\wedge \Phi_{n_i+1}'(x_1, \ldots, x_{n_i+1})]\,,
\\
\Delta_{L_i}(x_1, \ldots, x_{n_i+1})\ =\ 
\Delta_{n_i}(x_1, \ldots, x_{n_i+1})\wedge 
\biggl\{\bigwedge_{i, j=1}^{n_i} 
\biggl(x_{i+1} x_{j+1}=\sum_{m=1}^{n_i} \alpha_{ij}^m x_{m+1}\biggr)\biggr\}\,,
\end{gather*}
is true, $\{L_i\}$ are representatives of all classes of isomorphic simple 
complex Lie algebras among Lie $R_B$, ${B\in \mathcal{U}(R)}$, $n_i$ is the 
dimension of $L_i$, ${n_i\leq n(R)}$, $\{\alpha_{ji}^m\}$ are the structure 
constants of $L_i$ in its Chevalley basis, and there are 
${0\ne \nu_i\in B(R)}$, ${\nu_i \nu_j=\delta_{ij} \nu_i}$, 
${R=\nu_1 R\oplus \ldots\oplus \nu_l R}$, ${l\geq 1}$ does not exceed the 
number of the above-mentioned Horn formulas, each of which is true in 
$\nu_i R$ for at most one $i$ (see the proofs Lemma 2.6, Proposition 3.17, 
\cite{BD}, Theorems 3.1.11, 2.3.9, \cite{BMO}, Corollary 5.22). Therefore  
${\nu_i R=\mathcal{M}_{\nu_i \CM(R)}(0, 1, 1)}$ for at most one $i$, 
${\nu_j R=L_{i_j}(\nu_j \CM(R))}$ for other ${j\ne i}$. The presence of 
$\mathfrak{sl}(2)$--triples in the Chevalley bases of $\{L_i\}$ and the 
proof of Corollary 3.18 allow us to find ${x_i, y_i, x_i y_i=2 y_i}$, among 
the generators of the free $\nu_i \CM(R)$--module $\nu_i R$, 
${i=1, \ldots, l}$ and to deduce for any ${\phi\in \CM(R)}$ from the 
algebraicity of ${\phi x}$ over $\mathbb{F}$,  
${x=x_1+\ldots+x_l}$, ${x y=2 y}$ for ${y=y_1+\ldots+y_l}$, the algebraicity 
of $\phi$ over $\mathbb{F}$ (${0=y {}_{\phi}(l_{\phi x})=
\nu_i\, {}_{\phi}f(2 \phi) y={}_{\phi}f(2 \phi) y_i}$, 
${0=\nu_i\, {}_{\phi}f(2 \phi)={}_{\phi}f(2 \phi)}$, ${i=1, \ldots, l}$).
\end{proof}

\begin{co}
If ${R=P(R)=O(R)}$ is a Mal'tsev algebra over a field $\mathbb{F}$, 
${\Ch \mathbb{F}=0}$, with a $PI$--algebra ${M(R)=O(M(R))}$, then there 
exists ${\alpha\in B(R)}$, ${R=\alpha R\oplus (\Id_R-\alpha) R}$, 
$\alpha R$ (${(\Id_R-\alpha) R}$) for ${\alpha\ne 0}$ 
(${\alpha\ne \Id_R}$) is described by Proposition 3.17 (3.19).
\end{co}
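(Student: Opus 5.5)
The plan is to split $R$ by a single idempotent of $B(R)$ into a non-Lie part and a Lie part, using the decomposition at the end of Sec.~2, and then to apply Proposition~3.17 to the first summand and Proposition~3.19 to the second.

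First I set up the two families of prime ideals. Let $\mathcal{P}_1$ be the set of ${R\ne Q\in \Spec(R)}$ with $R/Q$ non-Lie, and $\mathcal{P}_2$ the set of those with $R/Q$ a Lie algebra. Since $M(R)$ is a semiprime $PI$--algebra, $R$ is a $m.s.p.$--algebra, and it is a subdirect product of its prime quotients $R/Q$, each of which has finite dimension over $\CM(R/Q)$ (observations before Remark~2.2, \cite{Gol5}, Theorem~1.3). Put ${P=\bigcap_{Q\in \mathcal{P}_1} Q}$ and ${P'=\bigcap_{Q\in \mathcal{P}_2} Q}$, so that ${P\cap P'=\{0\}}$. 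Then $R/P'$ is a Lie algebra, because it is a subdirect product of Lie algebras. If one of $\mathcal{P}_1$, $\mathcal{P}_2$ is empty I take ${\alpha=0}$ or ${\alpha=\Id_R}$ accordingly. Otherwise I pick ${\alpha\in B(R)}$ with ${\alpha|_P=\Id_P}$ and ${\alpha P'=\{0\}}$; such an $\alpha$ exists by ${R=P(R)}$ and ${P\cap P'=\{0\}}$. By the last observation of Sec.~2, $\alpha R$ is then a subdirect product of the $R/Q$ with ${Q\in \mathcal{P}_2}$, i.e.\ it is a Lie algebra, and ${(\Id_R-\alpha)R}$ is a subdirect product of the non-Lie prime algebras $R/Q$ with ${Q\in \mathcal{P}_1}$.

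Next I transfer the hypotheses to the summands. For ${\gamma=\alpha,\Id_R-\alpha}$, the same observations (and the proof of Proposition~3.19) give ${\gamma R=P(\gamma R)=O(\gamma R)}$, ${\CM(\gamma R)=\gamma\CM(R)}$ and ${R=\alpha R\oplus(\Id_R-\alpha)R}$. I also need $M(\gamma R)$ to be a $PI$--algebra with ${M(\gamma R)=O(M(\gamma R))}$. This is where the direct-sum structure is used. Since $\gamma\in B(R)$ is central and ${\gamma R\lhd R}$, restricting operators gives ${M(\gamma R)\cong\gamma M(R)}$, which is a homomorphic image of $M(R)$; hence $M(\gamma R)$ inherits the $PI$ property. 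For orthogonal completeness I would use the equivalent condition from the proof of Proposition~2.5: $M(R)$ is a sum of $k$ products of at most $n$ operators $t_x$. That condition passes to $\gamma R$ by multiplying everything by $\gamma$, and Remark~2.2 then gives ${M(\gamma R)=O(M(\gamma R))}$.

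Finally I apply the earlier results. Proposition~3.17 applies to ${(\Id_R-\alpha)R}$ for ${\alpha\ne\Id_R}$, since ${1/6\in\mathbb{F}}$. For ${\alpha\ne0}$, $\alpha R$ is a Lie algebra with $PI$--algebra ${\Ad(\alpha R)=M(\alpha R)=O(\Ad(\alpha R))}$, so it is special by the equivalence stated before Proposition~3.19, and Proposition~3.19 applies because ${\Ch\mathbb{F}=0}$.

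I expect the main obstacle to be checking that the Lie part is exactly an orthogonal summand. Concretely, the idempotent $\alpha$ must separate $\mathcal{P}_1$ from $\mathcal{P}_2$ cleanly, so that the Lie quotients do not become mixed in the ultrafilter quotients $R_B$. I would handle this with the non-Lie identity argument from the proof of Proposition~3.17, which uses ${x(yz)+y(zx)+z(xy)}$ and Proposition~2.4, p.~6 of \cite{GolO}. That argument shows each $R_B$ is either Lie or non-Lie, with type matching $\gamma$.
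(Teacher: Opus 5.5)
Your argument is correct and is essentially the paper's own proof. It uses the $m.s.p.$ property of $R$, passes the finite-generation condition on $M(R)$ (equivalently, the bounded-length-products condition) down to $M(\beta R)$ with Remark~2.2 giving $M(\beta R)=O(M(\beta R))$, splits the non-Lie and Lie prime quotients by the observation at the end of Sec.~2, and then applies Propositions~3.17 and~3.19.

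Your $\alpha$ is the complement of the one in the statement, since your $\alpha R$ is the Lie summand, so replace $\alpha$ by $\Id_R-\alpha$. The ultrafilter-level separation you flag as the main obstacle is not needed, because the $R_B$ are handled inside the proofs of Propositions~3.17 and~3.19.
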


\begin{proof}
Such a $m.s.p$--algebra $R$ is non-degenerate, $M(R)$ is a finitely generated 
$\CM(R)$--mo\-dule, $M(\beta R)$ is a finitely generated module over 
${\beta \CM(R)\cong \CM(\beta R)}$, ${M(\beta R)=O(M(\beta R))}$ for all 
${\beta\in B(R)}$ and there is ${\alpha\in B(R)}$, 
${\alpha R=P(\alpha R)=O(\alpha R)}$ for ${\alpha\ne 0}$ is a subdirect 
product of prime non-Lie Mal'tsev algebras, 
$(\Id_R-\alpha) R=P((\Id_R-\alpha) R)=O((\Id_R-\alpha) R)$ for  
${\alpha\ne \Id_R}$ is a special Lie algebra (the proofs of Propositions 
3.17, 3.19, Remark 2.2 and the observations before it, Lemma 2.4, the end of 
Sec. 2).
\end{proof}

\begin{rem}
If $F$ is an algebra over a field $\mathbb{F}$, 
${(R, \{R_{\gamma}\}_{\gamma\in \Gamma})}$ is a homogeneously simple algebra 
over ${\mathbb{F}=\CM_{gr}(R)}$, then the homogeneous ideals of the 
$F$--algebra (${F R=F\mathbin{\otimes_{\mathbb{F}}} R}$, 
$\{F R_{\gamma}\}_{\gamma\in \Gamma}$) have the form ${I R}$, ${I\lhd F}$, 
${\mathcal{E}_{gr}({}_F F R)=\{I R\mid I\in \mathcal{E}(F)\}}$ 
(\cite{EMO}, Theorem 3.9 (1)).
\end{rem}

\begin{proof}
We identify $R$ and ${1\otimes R}$ ($x$ and ${1\otimes x}$, ${x\in R}$). If 
$\{x_a\}_{a\in A_{\gamma}}$ is a $\mathbb{F}$--basis of $R_{\gamma}$, 
$x=\linebreak \sum\limits_{i=1}^n f_i x_{a_i}\in F R_{\gamma}$, 
${a_i\in A_{\gamma}}$, ${f_i\in F}$, ${n\geq 1}$, then one can choose 
${\gamma_i\in \Gamma}$, ${\delta_i\in M^{F R}(R)'}$, 
$F R_{\beta} \delta_i\subseteq F R_{\beta+\gamma_i}$, 
${x_{a_i} \delta_i\ne 0=x_{a_j} \delta_i}$ for all ${i=1, \ldots, n}$, 
${\beta\in \Gamma}$, ${j\ne i}$. The latter follows from \cite{EMO}, 
The\-orem 3.1 in the graded version: if 
${(A, \{A_{\gamma}\}_{\gamma\in \Gamma})}$ is homogeneously prime, 
${A=P_{gr}(A)}$, then for any ${\gamma\in \Gamma}$, 
${n\geq 1}$ and linearly independent ${a_i\in A_{\gamma}}$, 
${i=1, \ldots, n}$ over $\CM_{gr}(A)$ there are ${\phi\in M_{\alpha}}$, 
${\alpha\in \Gamma}$, ${a_1 \phi\ne 0=a_i \phi}$, ${i=2, \ldots, n}$ 
(${(M(A)', \{M_{\gamma}\}_{\gamma\in \Gamma})}$ from the introduction). 
As\linebreak in \cite{EMO}, the proof is by induction on ${n\geq 1}$ with an 
obvious basis for ${n=1}$, ${a_1\ne 0}$. Let this already be proven for 
all ${1\leq n< m}$ and ${a_1 \phi=0}$ for any ${\phi\in M_{\beta}}$, 
${\beta\in \Gamma}$ (${\phi\in M(A)'}$), ${a_i \phi=0}$, ${i=2, \ldots, m}$, 
${J=\{\psi\in M(A)'\mid a_i \psi=0,\ i=2, \ldots, m-1\}}$
for ${m> 2}$, ${J=M(A)'}$ for ${m=2}$. Then $J$ is a homogeneous right ideal 
of $M(A)'$, ${a_m J\lhd_{gr} A}$, the homogeneous  
${\psi\in \Hom(a_m J, A)_{M(A)'-gr}}$, 
${\psi: a_m \phi\longmapsto a_1 \phi}$, ${\phi\in J}$, is defined correctly 
and extends to 
$\overline{\psi}\in \CM_{gr}(A)$, ${(\overline{\psi} a_m-a_1) J=\{0\}}$, 
by the induction hypothesis for ${a_2, \ldots, a_{m-1}}$ and 
${\overline{\psi} a_m-a_1}$ there are 
${\tau\in M_{\beta}}$, ${\beta\in \Gamma}$, 
${a_i \tau=0\ne (\overline{\psi} a_n-a_1) \tau}$, ${i=2, \ldots, m-1}$, 
${\tau\in J}$?! Therefore $R=(x_{a_i} \delta_i)_R=x_{a_i} \delta_i M(R)'$ 
(homogeneous simplicity of $R$), 
\[
f_i F R\ =\ (x \delta_i)_{F R}\ =\ (f_i x_{a_i} \delta_i)_{F R}\ \subseteq\ 
(x)_{F R}\ =\ (f_1 F+\ldots+f_n F) R\,.
\]
So, if ${J\lhd_{gr} {}_F F R}$, then ${J=I R}$ for  
${I\lhd F}$ generated by the coefficients of the decomposition of the elements 
of $J$ in $\bigsqcup\limits_{\gamma\in \Gamma} \{x_a\}_{a\in A_{\gamma}}$, 
${J\in \mathcal{E}_{gr}({}_F F R)}$ is equivalent to ${I\in \mathcal{E}(R)}$. 
\end{proof}

\begin{rem}
If ${(R, \{R_{\gamma}\}_{\gamma\in \Gamma})}$ is homogeneously semiprime, 
${R=P_{gr}(R)=O_{gr}(R)}$, then 
\[
\PDer_{\CM_{gr}(R)}(I R, R)\ =\ \Der_{\CM_{gr}(R)}(I R)\ =\ 
\{D|_{I R}\mid D\in \Der_{\CM_{gr}(R)}(R)\}\quad (I\lhd \CM_{gr}(R))\,.
\] 
\end{rem}

\begin{proof}
By virtue of ${B_{gr}(R)=B(\CM_{gr}(R))}$, 
${\CM_{gr}(R)=Q(\CM_{gr}(R))=O(\CM_{gr}(R))}$ and 
${I=(I\cap B_{gr}(R)) \CM_{gr}(R)}$, any  
${D\in \PDer_{\CM_{gr}(R)}(I R, R)=\Der_{\CM_{gr}(R)}(I R)}$ continues to 
$\overline{D}\in \Der_{\CM_{gr}(R)}(O(I) R)$, ${O(I)\lhd \CM_{gr}(R)}$ (see 
the notation after Remark 3.24), 
\[
\biggl(\sum_{i=1}^n 
\biggl({\sum_{a\in A_i}}^{\perp} \xi_{i a} x_{i a} \biggr) y_i\biggr) 
\overline{D}\ =\ 
\sum_{i=1}^n {\sum_{a\in A_i}}^{\perp} \xi_{i a} (x_{i a} y_i) D\ =\ 
{\sum_{(a_1, \ldots, a_n)\in A}}^{\perp} \xi_{1 a_1}\cdots \xi_{n a_n} 
\sum_{i=1}^n (x_{i a_i} y_i) D
\]
for any dense orthogonal ${\{\xi_{i a}\}_{a\in A_i}\subseteq B_{gr}(R)}$, 
${x_{i a}\in I}$, ${y_i\in R}$, ${n\geq 1}$, and dense orthogonal 
${\{\xi_{1 a_1}\cdots \xi_{n a_n}\}_{(a_1, \ldots, a_n)\in A}\subseteq B_{gr}(R)}$, 
${A=A_1\times \cdots \times A_n}$ (see the beginning of Sec. 2 and in 
\cite{GolO} Sec. 3). This continuation is correct, since from 
\[
0\ =\ \sum_{i=1}^n 
\biggl({\sum_{a\in A_i}}^{\perp} \xi_{i a} x_{i a}\biggr) y_i\ =\ 
{\sum_{(a_1, \ldots, a_n)\in A}}^{\perp} 
\xi_{1 a_1}\cdots \xi_{n a_n} \biggl(\sum_{i=1}^n x_{i a_i} y_i\biggr)
\]
it follows that
\begin{gather*}
0\ =\ \xi_{1 a_1}\cdots \xi_{n a_n}
\sum_{i=1}^n x_{i a_i} y_i\ =\ \xi_{1 a_1}\cdots \xi_{n a_n}
\sum_{i=1}^n (x_{i a_i} y_i) D\quad (a_i\in A_i,\ i=1, \ldots, n)\,,
\\
0\ =\ {\sum_{(a_1, \ldots, a_n)\in A}}^{\perp} 
\xi_{1 a_1}\cdots \xi_{n a_n} \sum_{i=1}^n (x_{i a_i} y_i) D\ =\ 
\sum_{i=1}^n {\sum_{a\in A_i}}^{\perp} \xi_{i a} (x_{i a_i} y_i) D\,,
\end{gather*}
and for any dense orthogonal ${\{\xi_a\}_{a\in A}, \{\chi_b\}_{b\in B}
\subseteq \CM_{gr}(R)}$, ${x_a, x'_b\in I}$, 
$x={\sum\limits_{a\in A}}^{\perp} \xi_a x_a,
x'={\sum\limits_{b\in B}}^{\perp} \chi_b x'_b\in O(I)$, ${y, y'\in R}$,
${\alpha\in \CM_{gr}(R)}$ 
\begin{multline*}
(\alpha x y) \overline{D}\ =\ 
\biggl(\biggl({\sum\limits_{a\in A}}^{\perp} \xi_a \alpha x_a\biggr) y\biggr) 
\overline{D}\ =\ {\sum\limits_{a\in A}}^{\perp} \xi_a \alpha (x_a y) D\ =\ 
\alpha (x y) \overline{D}\,,
\\
\shoveleft{
(x y x' y')\overline{D}\ =\ 
\biggl(\biggl({\sum_{(a, b)\in A\times B}}^{\perp} \xi_a \chi_b x_a x'_b\biggr)
y y'\biggr) \overline{D}\ =\ 
{\sum_{(a, b)\in A\times B}}^{\perp} \xi_a \chi_b (x_a y x'_b y') D\ =}
\\ 
{\sum_{(a, b)\in A\times B}}^{\perp} \xi_a \chi_b 
((x_a y) D x'_b y'+x_a y ((x'_b y') D))\ =\ 
(x y)\overline{D} x' y'+x y ((x' y') \overline{D})\,.
\end{multline*}
Since ${\mu=\mu(I\cap B_{gr}(R))=\sup I\cap B_{gr}(R)\in O(I)}$ 
\cite{BMO}, Lemma 1.17, ${\alpha=\alpha \mu}$ for all ${\alpha\in O(I)}$, 
${\mu=\mu(O(I)\cap B_{gr}(R))}$, ${O(I)=\mu \CM_{gr}(R)}$, ${O(I) R=\mu R}$, 
${R=\mu R\oplus (\Id_R-\mu) R}$ and we just have to continue 
$\overline{D}$ to ${\overline{D}\in \Der_{\CM_{gr}(R)}(R)}$, 
${\overline{D}|_{(\Id_R-\mu) R}=0}$. Similarly, we can deduce 
\[
\Hom_{\CM_{gr}(R)}(I R, R)\ =\ \End_{\CM_{gr}(R)}(I R)\ =\ 
\{\psi|_{I R}\mid \psi\in \End_{\CM_{gr}(R)}(R)\}\,.
\]
\end{proof}

Ungraded versions of Remarks 3.22, 3.23 are obtained by formal transition 
to ${\Gamma=\{0\}}$. Due to ${[\Der(R), \CM(R)]\subseteq \CM(R)}$, 
${[D, \beta]=2 \beta [D, \beta]=0}$ for any ${R=P(R)}$, ${\beta\in B(R)}$, 
$D\in \Der(R)$, ${[\Der(R), B(R)]=\{0\}}$ \cite{GolA}, Remark 2.4.  

\begin{rem}
If a Lie algebra ${R=P(R)=O(R)}$ over a field $\mathbb{F}$, ${\Ch \mathbb{F}=0}$, 
is special, ${\Ad(R)=O(\Ad(R))}$, then ${\Der_{\CM(R)} R=\ad(R)}$.
\end{rem}

\begin{proof}
Each ${D\in \Der_{\CM(R)}(R)}$ extends to ${D\in \Der_{Z(R^1)}(R^1), 
(\alpha+x) D=x D}$, $\alpha\in \CM(R)=Z(R^1)$, ${x\in R}$. For the 
algebraic system ${R^1=O(R^1)}$ with ${D\in \Der(R^1)}$ in the signature and\linebreak 
Horn formulas $\neg \mathcal{A}$, $\mathcal{A}'$, where $\mathcal{A}$ is a 
hereditary formula (${\gamma Z(R^1)\subseteq \beta Z(R^1)=Z(\beta R^1)}$, 
$\gamma=\gamma \beta$, ${\beta\in B(R)}$), 
\[
\mathcal{A}\ =\ (\forall x)[(x\notin Z(R))\vee (x D=0)]\,,\quad 
\mathcal{A}'\ =\ (\exists x)(\forall y)[(x\in [R, R]_k)\wedge (y D=[y, x])]\,,
\] 
${\mathcal{A}\rightarrow \mathcal{A}'}$ is true in $R^1_B$ for all
${B\in \mathcal{U}(R)}$, ${k=m(R)=\pideg \Ad(R)}$, since the Lie 
algebra ${R_B=P(R_B)}$ over the field ${\CM(R)_B=\CM(R_B)}$ is simple, 
${\dim_{\CM(R)_B} R_B\leq m(R)}$, ${R=R^2}$ and ${R D\subseteq R}$ for all 
${D\in \Der(R^1)}$ (see the proofs of Lemma 2.6, Proposition 3.19, 
\cite{BMO}, p. 8.13, \cite{JacL}, Theorem 6, p. 87). Therefore 
${\Der_{Z(R^1)}(R^1)=\{[\ , x]\mid x\in R\}}$, ${\Der_{\CM(R)}(R)=\ad(R)}$ 
\cite{BD}, Theorem 2.3.9, \cite{BMO}, p. 8.13, Corollary 5.22.
\end{proof}

If ${(R, \{R_{\gamma}\}_{\gamma\in \Gamma})}$, 
${(Q, \{Q\}_{\gamma\in \Gamma})}$ are $\Gamma$--graded Lie algebras, $R$ is
a homogeneous subalgebra of $Q$, for any ${0\ne q\in Q_{\gamma}}$, 
${\gamma\in \Gamma}$ there is ${I\lhd_{gr} R}$, 
${\Ann_R I=\{0\}\ne [I, q]\subseteq R}$ (equivalent to: for any 
${u\in Q_{\alpha}}$, ${v\in Q_{\beta}}$, ${\alpha, \beta\in \Gamma}$, 
${u\ne 0}$, one can choose ${x\in R_{\gamma}}$, ${\gamma\in \Gamma}$, 
${[x, u]\ne 0}$, ${[x, (v)_R]\subseteq R}$), where 
${\Ann_R I=\Ann_t I}$, ${t=l, r}$, ${(v)_R=v \Ad^Q(R)'}$, then $Q$ is a  
\emph{graded algebra of quotients of} $R$. For a homogeneously semiprime 
${(R, \{R_{\gamma}\}_{\gamma\in \Gamma})}$, the \emph{maximal graded algebra 
of quotients ${(Q_{m-gr}(R), \{Q_{m-gr}(R)_{\gamma}\}_{\gamma\in \Gamma})}$}, 
into which the graded algebras of quotients of $R$ are embeddable using  
homogenepus monomorphisms identical on $R$, can be constructed as a 
quotient set ${\mathcal{D}_{gr}(R)/\sim}$ of the set 
${\mathcal{D}_{gr}(R)=\{(D, I)\mid D\in \PDer_{gr}(I, R),\ 
I\in \mathcal{E}_{gr}(R)\}}$, where
${\mathcal{E}_{gr}(R)=\{I\lhd_{gr} R\mid I\in \mathcal{E}(R)\}}$,
\begin{gather*}
\PDer(I, R)\ =\ 
\{D\in \Hom_F(I, R)\mid (x y) D=(x D) y+x (y D)\ \forall x, y\in I\}\,,
\\
\PDer_{gr}(I, R)_{\gamma}\ =\ \{D\in \PDer(I, R)\mid I_{\alpha} D\subseteq 
R_{\alpha+\gamma}\ \forall \alpha\in \Gamma\}\quad (\gamma\in \Gamma)\,,
\\
\ad(R)|_I\ \subseteq\ 
\PDer_{gr}(I, R)\ =\ \bigoplus_{\gamma\in \Gamma} \PDer_{gr}(I, R)_{\gamma}\,,
\end{gather*}
under the equivalence relation: ${(D, I)\sim (D', I')}$ if ${D=D'}$ on 
${I\cap I'}$ (on some ${I''\in \mathcal{E}_{gr}(R)}$, 
${I''\subseteq I'\cap I''}$), with the operations 
\begin{gather*}
[(D, I)]+[(T, J)]\ =\ [(D+T, I\cap J)]\,,\quad f [(D, I)]\ =\ [(f D, I)]\,,
\\
[[(D, I)], [(T, J)]]\ =\ [([D, T], (I\cap J)^2)]
\end{gather*}
${(D, I), (T, J)\in \mathcal{D}_{gr}(R)}$, ${f\in F}$, $\Gamma$--grading 
\[
Q_{m-gr}(R)_{\gamma}\ =\ \{[(D, I)]\mid D\in \PDer_{gr}(I, R)_{\gamma},\ 
I\in \mathcal{E}_{gr}(R)\}\quad (\gamma\in \Gamma) 
\]
and ${R \hookrightarrow Q_{m-gr}(R), x\longmapsto [(\ad_x, R)], x\in R}$.
For an ungraded $R$, the definition of the algebra of quotients and the 
description of the maximal algebra of quotients $Q_m(R)$ of a semiprime 
$R$ are obtained by a formal transition to ${\Gamma=\{0\}}$. The graded 
algebras of quotients of a homogeneously semiprime $R$ are its algebras of 
quotients as an algebra without grading. The homogeneous semiprimeness 
(primeness) of $R$ and for $R$ without $6$--torsion, the homogeneous 
non-degeneracy and non-degeneracy are inherited by its graded 
algebras of quotients \cite{GolO}, the observations before Proposition 3.1, 
\cite{JMQ, QTS, OMol, Mol} (homogeneous non-degeneracy of $R$ is 
the absence of ${0\ne x\in R_{\gamma}}$, ${\gamma\in \Gamma}$, 
${\ad_x (F \Id_R+\ad(R)) \ad_x=\{0\}}$). 

\begin{rem}
If ${R=P(R)=O(R)}$ is a non-degenerate Lie $PI$--algebra with an 
algebraic adjoint representation over a field 
${\mathbb{F}=\overline{\mathbb{F}}}$, ${\Ch \mathbb{F}=0}$, then 
\[
R\ =\ Q_m({}_{\CM(R)} R)\ \cong\ \ad(R)\ =\ \Der(R)\,.
\]
\end{rem}

\begin{proof}
Since for the algebraic system $R$ with ${D\in \Der(R)}$ in the 
signature in all ${R_B, B\in \mathcal{U}(R)}$ the Horn formula 
${(\exists x)(\forall y)[(y D=y x)]}$ is true, it is true in $R$, 
${\Der(R)=\ad(R)}$ (see the proof of Corollary 3.20, \cite{BMO}, 
p. 8.13, \cite{JacL}, Theorem 6, p. 87). Moreover, 
${R=\nu_1 R\oplus \ldots\oplus \nu_l R}$, 
\[
\nu_i R\ =\ P(\nu_i R)\ =\ O(\nu_i R)\ =\ L_i(\nu_i \CM(R))\ =\
\nu_i \CM(R)\mathbin{\otimes_{\mathbb{F}}} L_i(\mathbb{F})\ =\
\nu_i \CM(R) L_i(\mathbb{F})\,,
\]
${\nu_i \CM(R)=\CM(\nu_i R)}$ for suitable ${0\ne \nu_i\in B(R)}$, 
${\nu_i \nu_j=\delta_{ij} \nu_i}$, ${l\geq 1}$ and simple complex Lie 
algebras $L_i$ of bounded dimension, ${L_i\not\cong L_j, i\ne j}$ 
(Corollary 3.20 and the end of Sec. 2), the $\CM(R)$--ideals 
($\nu_i \CM(R)$--ideals) of $\nu_i R$ have the form 
${L_i(\nu_i J)=\nu_i J R}$, ${J\lhd \CM(R)}$, and the $\CM(R)$--ideals of 
$R$ have the form  
\[
I R\ =\ \nu_1 I_1 R\oplus \ldots\oplus \nu_l I_l R\ =\ 
L_1(\nu_1 I_1)\oplus \ldots\oplus L_l(\nu_l I_l)\quad 
(I=\nu_1 I_1\oplus\ldots \oplus \nu_l I_l, I_i\lhd \CM(R))
\] 
(Remark 3.22; Martindale centroids of simple finite-dimensional algebras 
over ${\mathbb{F}=\overline{\mathbb{F}}}$ coincide with $\mathbb{F}$). 
Consequently, for all ${H\lhd {}_{\CM(R)} R}$, ${i=1, \ldots, l}$ 
\begin{gather*}
\PDer_{\CM(R)}(H, R)\ =\ \{D|_H\mid D\in \Der_{\CM(R)}(R)\}\,,
\\
\PDer_{\CM(R)}(\nu_i H, \nu_i R)\ =\ 
\{D|_{\nu_i H}\mid D\in \Der_{\CM(R)}(\nu_i R)\}\ =\ 
\{D|_{\nu_i H}\mid D\in \Der_{\CM(R)}(R)\}
\end{gather*}
(Remark 3.23, ${\Der_{\CM(R)}(\nu_i R)=\Der_{\nu_i \CM(R)}(\nu_i R)=
\{D|_{\nu_i R}\mid D\in \Der_{\CM(R)}(R)\}}$), 
\[
R\ =\ Q_m({}_{\CM(R)} R)\ =\ 
Q_m({}_{\CM(R)} \nu_1 R)\oplus\ldots \oplus Q_m({}_{\CM(R)} \nu_l R)\ \cong\ 
\Der(R)\ =\ \ad(R)\,
\]
where $Q_m({}_{\CM(R)} R)$ is the maximal algebra of quotients of the Lie
$\CM(R)$--algebra $R$.
\end{proof}  

Recall that the standard Lie enveloping of a triple Lie system 
${(R, [\ ,\ ,\ ])}$ is a $\mathbb{Z}_2$--graded Lie algebra 
${\mathcal{L}(R)=R\oplus [R, R]}$ with 
\[
\mathcal{L}(R)_1\ =\ R\,,\quad \mathcal{L}(R)_0\ =\ [R, R]\ =\ 
\biggl\{\sum_{i=1}^k [x_i, y_i,\ ]\biggl| 
x_i, y_i\in R,\ k\geq 1\biggr\}\ \subseteq\ \Der(R)
\] 
and the multiplication  
\[
[x_1+D_1, x_2+D_2]\ =\ (x_1 D_2-x_2 D_1)+([x_1, x_2,\ ]+[D_1, D_2])
\quad (x_i\in R,\ D_i\in [R, R])\,,
\]
a $F$--submodule ${I\subseteq R}$ is an ideal of $R$ if 
${[I, R, R]\subseteq I}$, 
\[
\Ann_R I\ =\ \{x\mid [x, I, R]=[x, R, I]=\{0\}\}\ =\ 
R\cap \Ann_{\mathcal{L}(R)} (I)_{\mathcal{L}(R)}\,,
\quad (I)_{\mathcal{L}(R)}\ =\ I\oplus [I, R]\,,
\]
$R$ is \emph{$R$--semiprime} (\emph{$R$--prime}) if ${[I, R, I]\ne \{0\}}$ for 
all ${\{0\}\ne I\lhd R}$ ($R$ is $R$--semiprime and ${I\cap J\ne \{0\}}$ for 
all ${\{0\}\ne I, J\lhd R}$), and \emph{non-degenerate} if ${[x,\ , x]\ne 0}$ 
for all ${0\ne x\in R}$. 

The homogeneous primeness (semiprimeness) of $\mathcal{L}(R)$ is equivalent 
to the $R$--primeness ($R$--semiprimeness) of $R$. If $R$ has no 
$6$--torsion, then the non-degeneracy of $R$ is equivalent to the 
non-degeneracy (homogeneous non-degeneracy) of $\mathcal{L}(R)$ and for 
$R$ the following are equivalent: primeness; $R$--primeness; 
${I\cap J\ne \{0\}}$ for all ${\{0\}\ne I, J\lhd R}$. A triple Lie system 
${(Q, [\ ,\ ,\ ])}$ is a \emph{Lie system of quotients of} ${(R, [\ ,\ ,\ ])}$ 
if $R$ is a subsystem of $Q$ and for any ${0\ne q\in Q}$ there exists  
${I\lhd R}$, ${\Ann_R I=\{0\}}$, ${\{0\}\ne [q, I, R]+[q, R, I]\subseteq R}$. 
The maximal system of quotients $Q_m(R)$ of a $R$--semiprime $R$, in which the 
systems of quotients of $R$ are embedded identically on $R$, can be described 
up to an isomorphism identical on $R$ as ${Q_m(R)=Q_{m-gr}(\mathcal{L}(R))_1}$ 
\cite{GolO}, the observations before and after Lemma 3.8, \cite{QTS, NLS}. We 
can consider Mal'tsev algebras as triple Lie systems and make 

\begin{rem}
In Corollary 3.18 for ${\Ch \mathbb{F}=0}$ and 3.20
${\mathcal{L}(R_T)=Q_{m-gr}({}_{\CM(R)} \mathcal{L}(R_T))}$ and 
${R_T=(R, [\ ,\ ,\ ])=Q_m({}_{\CM(R)} R_T)}$.
\end{rem}

\begin{proof}
Without loss of generality ${R=\nu_1 R\oplus\ldots \oplus \nu_l R}$, 
${0\ne \nu_i\in B(R)}$, ${\nu_i \nu_j=\delta_{ij} \nu_i}$, 
$\nu_i R=P(\nu_i R)=O(\nu_i R)=L_i(\nu_i \CM(R))$, 
${\nu_i \CM(R)=\CM(\nu_i R)}$ for 
${L_1(\mathbb{F})=\mathcal{M}_{\mathbb{F}}(0, 0, 1)}$ 
and some simple complex Lie algebras $L_i$ of bounded dimension, 
${L_i\not\cong L_j}$, ${2\leq i\ne j\leq l}$, 
and therefore ${R_T=\nu_1 R_T\oplus \ldots\oplus \nu_l R_T}$, 
\[
[x, y, z]\ =\ \begin{cases}
2 (x y) z-(y z) x-(z x) y\,,\ x, y, z\in \nu_1 R;
\\
3 (x y) z\,,\ x, y, z\in (\Id_R-\nu_1) R\,.
\end{cases}
\]

Due to the non-degeneracy of $R$ and $R_T$, 
${L_1(\mathbb{F})=J(L_1(\mathbb{F}))}$, 
${L_i(\mathbb{F})=L_i(\mathbb{F})^2}$, ${i=2, \ldots, l}$, where 
$J(A)$ is a submodule (an ideal for $A$ without $6$--torsion) of 
the Mal'tsev algebra $A$ generated by ${(a b) c+(b c) a+(c a) b}$, 
${a, b, c\in A}$, and for $A$ without $2$--torsion
\[
[r_a, r_{b c}]+[r_b, r_{c a}]+[r_c, r_{a b}]+2 r_{a (b c)+b (c a)+c (a b)}\ =\ 
r_{a (b c)+b (c a)+c (a b)}\quad (a, b, c\in A) 
\]
\cite{SA}, Theorem 3.5, (2.12), ${M(L_i(\mathbb{F}))=M(L_i(\mathbb{F})_T)=
\mathcal{L}(L_i(\mathbb{F})_T)_0, \mathcal{L}(L_i(\mathbb{F})_T)}$ 
is homogeneously simple for ${i=1, \ldots, l}$, 
${\nu_1 R=J(\nu_1 R)}$, ${(\Id_R-\nu_1) R=((\Id_R-\nu_1) R)^2}$, 
\[
M(R)\ =\ M(R_T)\ =\ \mathcal{L}(R_T)_0\,,\quad 
\CM(R)\ =\ \CM(R_T)\ \cong\ \CM_{gr}(\mathcal{L}(R_T))\ =\ 
\CM(R) \Id_{\mathcal{L}(R_T)}\,,
\]
${R_T=P(R_T)}$, ${\mathcal{L}(R_T)=P_{gr}(\mathcal{L}(R_T))}$, 
${(R_T)_B=(R_B)_T}$ is strongly prime,
${\mathcal{L}((R_T)_B)=\mathcal{L}(R_T)_B}$ is non-degenerate and 
homogeneously prime, ${\Der(\mathcal{L}(R_T)_B)=\ad(\mathcal{L}(R_T)_B)}$ 
for all ${B\in \mathcal{U}(R)}$ \cite{GolO}, Lemma 3.8, the 
observations before it, \cite{JacL}, Theorem 6, p. 87, 
${\Der(\mathcal{L}(R_T))=\ad(\mathcal{L}(R_T))}$ (see the proof of 
Remark 3.25). Since ${\mathcal{L}(R_T)\cong 
\mathcal{L}(\nu_1 R_T)\oplus\ldots \oplus \mathcal{L}(\nu_l R_T)}$ and 
\[
\mathcal{L}(\nu_i R_T)\ =\ \mathcal{L}(L_i(\nu_i \CM(R))_T)\ =\ 
\nu_i \CM(R)\mathbin{\otimes_{\mathbb{F}}} 
\mathcal{L}(L_i(\mathbb{F})_T)\ =\ 
\nu_i \CM(R) \mathcal{L}(L_i(\mathbb{F})_T)\,,
\]
${i=1, \ldots, l}$, any ${H\lhd_{gr} {}_{\CM(R)} \mathcal{L}(R_T)}$ has the 
form $I \mathcal{L}(R_T)$, ${I\lhd \CM(R)}$,
\[
\PDer_{\CM(R)}(H, \mathcal{L}(R_T))\ =\ 
\{D|_H\mid D\in \Der_{\CM(R)}(\mathcal{L}(R_T))\}\ =\ 
\{\ad_x|_H\mid x\in \mathcal{L}(R_T)\}
\] 
(Remarks 3.22, 3.23), and ${{\PDer_{gr}}_{\CM(R)}(H, \mathcal{L}(R_T))_k=
\{\ad_x|_H\mid x\in \mathcal{L}(R_T)_k\}, k=0, 1}$ for all 
${H\in \mathcal{E}({}_{\CM(R)} \mathcal{L}(R_T))=
\mathcal{E}_{gr}({}_{\CM(R)} \mathcal{L}(R_T))=
\{I \mathcal{L}(R_T)\mid I\in \mathcal{E}(\CM(R))\}}$ 
(for $x=x_k+x_l$, $x_s=\mathcal{L}(R_T)_s$, ${s\in \{k, l\}=\{0, 1\}}$, 
${\ad_x|_H\in {\PDer_{gr}}_{\CM(R)}(H, \mathcal{L}(R_T))_k}$ implies 
${[H, x_l]=\{0\}}$, ${x_l=0}$). So, 
${\mathcal{L}(R_T)=Q_{m-gr}({}_{\CM(R)} \mathcal{L}(R_T))\cong 
\Der_{\CM(R)}(\mathcal{L}(R_T))=\ad(\mathcal{L}(R_T))}$,
${R_T=Q_m({}_{\CM(R)} R_T)}$.\linebreak  
As in the proof of Remark 3.25, one can deduce 
\begin{gather*}
Q_{m-gr}({}_{\CM(R)} \mathcal{L}(R_T))\ =\ 
Q_{m-gr}({}_{\CM(R)} \mathcal{L}(\nu_1 R_T))\oplus \ldots\oplus
Q_{m-gr}({}_{\CM(R)} \mathcal{L}(\nu_l R_T))\,,
\\ 
Q_m({}_{\CM(R)} R_T)\ =\ Q_m({}_{\CM(R)} \nu_1 R_T)\oplus \ldots\oplus 
Q_m({}_{\CM(R)} \nu_l R_T)\,.
\end{gather*}
\end{proof}

The following version of Theorem 3.8 from \cite{EMO} is entirely based on its 
proof and is given in an ungraded form sufficient for our purposes. 

\begin{prop}
If an algebra ${B=P(B)}$ is prime, a $\CM(B)$--algebra ${A=P(A)}$ with 1 is 
semiprime, then the $\CM(B)$--algebra ${C=A\mathbin{\otimes_{\CM(B)}} B}$ 
is semiprime, ${C=P(C)}$, $\CM(C)=\CM(A)\otimes \Id_B\cong \CM(A)$ and 
${C=O(C)}$ for ${A=O(A)}$, ${\dim_{\CM(B)} B< \infty}$.
\end{prop}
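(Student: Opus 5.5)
We follow the proof of \cite{EMO}, Theorem 3.8, in its ungraded form, and work throughout over the field $K=\CM(B)$. The main tool is the density-type statement used in the proof of Remark 3.22: for $B=P(B)$ prime and $K$--linearly independent $b_1,\ldots,b_n\in B$, there is $\delta\in M(B)'$ with $b_1\delta\ne 0=b_i\delta$ for $i\geq 2$. Every $c\in C$ can be written as $c=\sum_{i=1}^n a_i\otimes b_i$ with the $b_i$ linearly independent over $K$ and $a_i\ne 0$. Since $l_x\otimes \Id_B$, $r_x\otimes \Id_B$ ($x\in A$, $1\in A$) and $\Id_A\otimes t_y$ ($y\in B$) all lie in $M(C)'$, applying $\Id_A\otimes\delta$ gives $a_1\otimes b_1\delta\in(c)_C$. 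Moving $b_1\delta$ to any element of $B$ by $M(B)'$ (primeness of $B$) shows that $(c)_C\supseteq (a_1)_A\otimes B$ up to nonzero factors. Hence every nonzero ideal of $C$ contains $I\otimes J$ for some $\{0\}\ne I\lhd A$ and $\{0\}\ne J\lhd B$, in the form $a\otimes J'$. Semiprimeness of $C$ then follows: if $H\lhd C$ and $H^2=\{0\}$ with $H\ne\{0\}$, we obtain $((a)_A\otimes J)^2\supseteq (a)_A^2\otimes J^2\ne\{0\}$, because $A$ is semiprime, $B$ is prime, and $K$ is a field, so tensors of nonzero elements are nonzero. This is a contradiction.

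Next comes $C=P(C)$ together with $\CM(C)=\CM(A)\otimes\Id_B$. Let $\phi\in\Hom(H,C)_{M(C)'}$ with $H\in\mathcal{E}(C)$. Replacing $H$ by a smaller essential ideal, we may assume $H\supseteq I\otimes J$ with $I\in\mathcal{E}(A)$ and $J\lhd B$ nonzero, using the previous paragraph together with an orthogonality argument applied to a maximal family of ideals $(a_\lambda)_A\otimes J_\lambda$. Fix $0\ne y\in J$. The map $x\mapsto\phi(x\otimes y)$, $x\in I$, commutes with $\Id_A\otimes M(B)'$ up to $y$. Writing $\phi(x\otimes y)=\sum a_i\otimes b_i$ and applying the density operators $\delta$ that kill $y$ shows $\phi(x\otimes y)\in A\otimes Ky$. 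This yields $\phi(x\otimes y)=\psi(x)\otimes y$ with $\psi\in\Hom(I,A)_{M(A)'}$, and $\psi$ is independent of $y$ by the same density argument applied to pairs $y,y'$. Since $A=P(A)$, we get $\psi=\alpha\Id_I$ for some $\alpha\in\CM(A)$. Consequently $\phi-\alpha\otimes\Id_B$ vanishes on $I\otimes J$, hence on $H$, by essentiality and semiprimeness. This gives quasi-injectivity, i.e.\ $C=P(C)$, and the identification of $\CM(C)$, with the map $\alpha\mapsto\alpha\otimes\Id_B$ injective because $B\ne\{0\}$. We expect this step to be the main obstacle: the delicate point is proving $\phi(x\otimes y)\in A\otimes Ky$, that is, that $\phi$ respects the $B$--factor. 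We would try first to copy the induction in the proof of Remark 3.22, which produced, for each $b_i$ independent of $y$, an operator annihilating $y$ but not $b_i$.

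Finally, assume $A=O(A)$ and $\dim_K B=d<\infty$, and fix a $K$--basis $e_1,\ldots,e_d$ of $B$, so that $C=\bigoplus_j A\otimes e_j$. Since $B(C)=B(A)\otimes\Id_B$, take a dense orthogonal $\{\xi_a\}\subseteq B(C)$ and elements $c_a=\sum_j a_{aj}\otimes e_j$. Put $a_j={\sum_a}^{\perp}\xi_a a_{aj}\in A$ and $c=\sum_j a_j\otimes e_j$. Then $\xi_a c=\xi_a c_a$ for all $a$, so $C$ is orthogonally complete, i.e.\ $C=O(C)$. The finiteness of $d$ is used exactly here, since $c$ must be a finite sum.
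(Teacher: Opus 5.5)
Your overall architecture matches the paper's: pure tensors in nonzero ideals, $\phi$ acting as $\psi\otimes\Id_B$ on them, $A=P(A)$ making $\psi$ an element of $\CM(A)$, and a basis argument for $C=O(C)$. There is, however, a genuine gap at the start.

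\textbf{The gap.} You claim that $l_x\otimes\Id_B$ and $r_x\otimes\Id_B$ lie in $M(C)'$. This is false in general, because only $A$ is assumed to have $1$. The algebra $M(C)$ is generated by $t_{u\otimes v}=t_u\otimes t_v$ and $t_{1\otimes v}=\Id_A\otimes t_v$, so every $A$--side operator comes paired with a $B$--side operator. From $a\otimes b\in H$ you therefore only get $a\mu\otimes b\nu\in H$ with $\mu,\nu$ of the same shape. Reaching $a\mu$ with $\mu$ of length $k$ pushes the second factor into the ideal $bM(B)^kM(B)'$, and these ideals may have zero intersection.

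Consequently three of your steps are unsupported:
\begin{itemize}
\item the inclusion $(a)_A\otimes J\subseteq H$, used for semiprimeness;
\item the reduction to $H\supseteq I\otimes J$ with $I\in\mathcal{E}(A)$ and a single fixed $y\in J$, used in Step~2;
\item the identity $\psi(xu)=\psi(x)u$, which you would obtain from $r_u\otimes\Id_B$.
\end{itemize}
The first two claims are in fact false in general. Take free nonassociative algebras, $A=K\{x\}^1$ unital and $B=K\{s\}$ without $1$. Test on $w_k\otimes z$, where $w_0=wx$ and $w_k=w_{k-1}w_{k-1}$, so that every monomial of $w_k$ has all leaves at depth $\geq k$. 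A degree count then shows that $(x\otimes s)_C$ contains no $I\otimes J$ with $I,J\ne\{0\}$.

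\textbf{The repair.} Use the paper's device: let the $B$--component depend on the $A$--element. Set
$U_{(H,a)}=\{b\in B\mid a\otimes b\in H\}\lhd B$ and $V_H=\{a\in A\mid U_{(H,a)}\ne\{0\}\}\lhd A$. Your density argument does show $V_H\ne\{0\}$ for $H\ne\{0\}$.
\begin{itemize}
\item For semiprimeness, take $x,x'\in V_H$ with $xx'\ne 0$ and the nonzero ideal $U=U_{(H,x)}\cap U_{(H,x')}$. Then $\{0\}\ne xx'\otimes U^2\subseteq H^2$.
\item In Step~2, define $\psi(x)$ through any $0\ne y\in U_{(H,x)}$.
\item Independence of the choice of $y$ follows from primeness: $y\eta=y'\eta'\ne 0$ for suitable $\eta,\eta'\in M(B)'$.
\item Additivity of $\psi$ follows from a common $0\ne d\in U_{(H,x)}\cap U_{(H,x')}$.
\item The identity $\psi(xa)=\psi(x)a$ follows from $(x\otimes y)(a\otimes b)=xa\otimes yb$ with $yb\ne 0$, together with that independence; $\psi(ax)=a\psi(x)$ is analogous.
\end{itemize}
Then $\psi=\bar\psi|_{V_H}$ for some $\bar\psi\in\CM(A)$. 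The map $\phi-\bar\psi\otimes\Id_B$ vanishes on the ideal $I(H)$ generated by the pure tensors in $H$, and it vanishes on $H$ by the paper's semiprimeness argument.

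With this change, your density argument for $\phi(x\otimes y)\in A\otimes Ky$ is a valid alternative to the paper's route. The paper instead takes the $x_i$ independent and uses $B=P(B)$ to make each map $y\mu\mapsto y_i\mu$ a scalar from $\CM(B)$, so it needs no density lemma. Your last step, proving $C=O(C)$ via a basis, coincides with the paper's.
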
 

\begin{proof}
Following \cite{EMO}, for any ${J\lhd C}$, ${a\in A}$ we select the ideals 
\[
U_{(J, a)}\ =\ \{b\in B\mid a\otimes b\in J\}\ \lhd\ B\,,\quad 
V_J\ =\ \{a\in A\mid U_{(J, a)}\ne \{0\}\}\ \lhd\ A\,.
\]
If ${J\ne \{0\}}$, then ${V_J^2\ne \{0\}}$ \cite{EMO}, Lemma 3.7, 
${U_{(J, x)}\ne \{0\}}$ for all ${0\ne x\in V_J}$ and hence there 
are ${x, x'\in V_J}$, ${x x'\ne 0}$, 
${\{0\}\ne U_{(J, x)}\cap U_{(J, x')}\subseteq U_{(J, x+x')}}$, 
\[
\{0\}\ \ne\ (x\otimes (U_{(J, x)}\cap U_{(J, x')}))
(x'\otimes (U_{(J, x)}\cap U_{(J, x')}))\ =\ 
(x x')\otimes (U_{(J, x)}\cap U_{(J, x')})^2\ \subseteq\ J^2\,.
\]
So, $C$ is semiprime as a ring and as an algebra over any subdomain 
of the field $\CM(B)$ ((semi-)primeness of algebras is equivalent to their 
(semi-)primeness as rings).

Let ${\phi\in \Hom(J, C)_{M(C)'}}$, ${\{0\}\ne J\lhd C}$. For any ${x\in V_J}$ 
and ${0\ne y\in U_{(J, x)}}$ we can write  
${\phi(x\otimes y)=\sum\limits_{i=1}^k x_i\otimes y_i}$ for ${k\geq 1}$, 
${\{y_i\}\subseteq B}$ and linearly independent ${\{x_i\}\subseteq A}$ over 
$\CM(B)$. Due to the choice of $\{x_i\}$, 
${\Id_A\otimes M(B)'=M^C(1\otimes B)'}$ and  
\[
\phi((x\otimes y) (\Id_A\otimes \nu))\ =\ \phi(x\otimes (y \nu))\ =\ 
\sum\limits_{i=1}^k x_i\otimes (y_i \nu)\quad 
(\nu\in M(B)')\,,
\]
${\tau_i: y \mu\longmapsto y_i \mu, \mu\in M(R)'}$, is correctly defined, 
${\tau_i\in \Hom((y)_B, B)_{M(B)'}}$ and there exists 
$\alpha_i\in \CM(B)$, ${\tau_i=\alpha_i \Id_{(x)_B}}$, ${i=1, \ldots, k}$, 
${\phi(x\otimes y)=\sum\limits_{i=1}^k x_i\otimes (\alpha_i y)=z\otimes y}$ 
for ${z=\sum\limits_{i=1}^k \alpha_i x_i}$. 

If ${0\ne y'\in U_{(J, x)}}$, ${\phi(x\otimes y')=z'\otimes y'}$ 
for ${z'\in A}$, then ${(y)_B\cap (y')_B\ne \{0\}}$ and there are 
${\eta, \eta'\in M(B)'}$, ${y \eta=y' \eta'\ne 0}$, 
\[
z\otimes (y \eta)\ =\ \phi(x\otimes (y \eta))\ =\ 
\phi(x\otimes (y' \eta'))\ =\ z'\otimes (y' \eta')\,,\quad z\ =\ z'\,. 
\]
Therefore, ${\psi: x\longmapsto z, x\in V_J}$, is correctly defined. Since 
for any ${x, x'\in V_J}$ one can choose 
${0\ne d\in U_{(J, x)}\cap U_{(J, x')}\subseteq U_{(J, x+x')}}$, 
\[
\psi(x+x')\otimes d\ =\ \phi((x+x')\otimes d)\ =\ 
\phi(x\otimes d)+\phi(x'\otimes d)\ =\ (\psi x)\otimes d+(\psi x')\otimes d\,,
\]
and for any ${a\in A}$, ${y\in U_{(J, x)}}$, ${b\in B}$, ${y b\ne 0}$,  
${(x\otimes y)(a\otimes b)=(x a)\otimes (y b)\in J}$, ${y b\in U_{(J, x a)}}$, 
\[
\psi(x a)\otimes (y b)\ =\ \phi((x a)\otimes (y b))\ =\ 
\phi(x\otimes y) (a\otimes b)\ =\ ((\psi x)\otimes y)(a\otimes b)\ =\ 
((\psi x) a)\otimes (y b)\,,
\]
${\psi(x a)=(\psi x) a}$ and similarly ${\psi(a x)=a (\psi x)}$, 
${\psi\in \Hom(V_J, A)_{M(A)}}$, there is ${\overline{\psi}\in \CM(A)}$, 
$\psi=\overline{\psi}|_{V_J}$, 
${\phi|_{I(J)}=\overline{\psi}\otimes \Id_B|_{I(J)}}$, where 
${I(H)=\langle a\otimes b\mid a\in V_H,\ b\in U_{(H, a)}\rangle\lhd C}$, 
${I(H)\subseteq H}$ for all ${H\lhd C}$ and ${I(H)\ne \{0\}}$ for 
${H\ne \{0\}}$ \cite{EMO}, Lemma 3.7. Since $C$ is semiprime, 
${\alpha H\lhd C}$ for all ${H\lhd C, H\subseteq J, 
\alpha\in \Hom(H, C)_{M(C)'}}$, ${\tau((J^2)_C)\subseteq J}$ for 
${\tau=\phi-\overline{\psi}\otimes \Id_B|_J}$, 
\begin{gather*}
I(\tau((J^2)_C))^2\ \subseteq\  
(\tau((J^2)_C)) I(\tau((J^2)_C))\ \subseteq\ \tau(J I(J))\ =\ \{0\}\,,
\\ 
\{0\}\ =\ I(\tau((J^2)_C))\ =\ \tau((J^2)_C)\ =\ 
J (\tau J)\ =\ (\tau J)^2\ =\ \tau J\,,\quad 
\phi\ =\ \overline{\psi}\otimes \Id_B|_J\,.
\end{gather*}
Thus, ${C=P(C)}$, ${\CM(C)=\CM(A)\otimes \Id_B\cong \CM(A)}$.

If ${A=O(A)}$, ${\dim_{\CM(B)} B=n< \infty}$, $\{e_i\}_{i=1}^n$ is a 
$\CM(B)$--basis of $B$, then ${C=O(C)}$, since
\[
{\sum\limits_{u\in U}}^{\perp} (\xi_u\otimes \Id_B) c_u\ =\ 
\sum\limits_{i=1}^n 
\biggl({\sum\limits_{u\in U}}^{\perp} \xi_u x_{u i}\biggr)\otimes e_i\ \in\ C
\]
for any dense orthogonal ${\{\xi_u\otimes \Id_B\}_{u\in U}\subseteq B(C)}$ 
(dense orthogonal ${\{\xi_u\}_{u\in U}\subseteq B(A)}$) and 
${\{c_u\}_{u\in U}\subseteq C}$, 
${c_u=\sum\limits_{i=1}^n x_{u i}\otimes e_i}$, ${x_{u i}\in \CM(B)}$.
\end{proof}

\begin{co}
If ${F=Q(F)}$ is a semiprime algebra over a field $\mathbb{F}$, ${R=P(R)}$ 
is a prime $\mathbb{F}$--algebra, ${\mathbb{F}=\CM(R)}$, then 
the $F$--algebra ${F R=F\mathbin{\otimes_{\mathbb{F}}} R}$ is semiprime, 
\[
F R\ =\ P({}_{\mathbb{F}} F R)\ =\ P({}_F F R)\,,\quad 
\CM(F R)\ =\ \CM({}_{\mathbb{F}} F R)\ =\ \CM({}_F F R)\ =\ 
F \Id_{F R}\ \cong\ F
\] 
and ${F R=O(F R)}$ for ${\dim_{\mathbb{F}} R< \infty}$. 
\end{co}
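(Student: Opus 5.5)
This is a direct specialisation of Proposition 3.27 with $B=R$ and $A=F$. The plan is first to check the hypotheses of 3.27, then to read off the conclusion, and finally to pass from the $\mathbb{F}$-structure of $FR$ to its $F$-structure.

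\textbf{Hypotheses.} $R=P(R)$ is prime and $\CM(R)=\mathbb{F}$, so $R$ plays the role of $B$. The algebra $F=Q(F)$ is semiprime with $1$. By the introduction, $Q(F)=P(F)=I(F)\cong\CM(F)$ for such $F$, so $F=P(F)$ as an $\mathbb{F}$-algebra with $\CM(F)=F\,\Id_F\cong F$. The fact that $\CM(F)$ consists of multiplications uses that $1\in F$ and that $\Hom(I,F)_F$ is realised by elements of $Q(F)=F$. Moreover $F$ is regular and self-injective, hence orthogonally complete: $F=O(F)$, as with $\CM(R)=Q(\CM(R))=O(\CM(R))$ in Sec.~2.

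\textbf{Conclusion over $\mathbb{F}$.} Proposition 3.27 now gives that ${}_{\mathbb{F}}FR$ is semiprime, that $FR=P({}_{\mathbb{F}}FR)$, and that
\[
\CM({}_{\mathbb{F}}FR)=\CM(F)\otimes\Id_R\cong F .
\]
The element $f\otimes\Id_R$ acts as multiplication by $f$ on $FR$, so $\CM({}_{\mathbb{F}}FR)=F\,\Id_{FR}$. If $\dim_{\mathbb{F}}R<\infty$, the last clause of 3.27 gives $FR=O(FR)$. Semiprimeness is a ring property, so it does not depend on whether $FR$ is viewed over $\mathbb{F}$ or over $F$.

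\textbf{Passage to ${}_FFR$.} This is the only step beyond citation. The multiplication algebras agree, $M({}_FFR)'=M({}_{\mathbb{F}}FR)'+F\,\Id_{FR}$, and $F\,\Id_{FR}\subseteq\CM({}_{\mathbb{F}}FR)$ commutes with all module homomorphisms into $FR$ coming from ideals. Consequently $\Hom(J,FR)_{M({}_FFR)'}=\Hom(J,FR)_{M({}_{\mathbb{F}}FR)'}$ for every ideal $J$, and $F$-ideals and $\mathbb{F}$-ideals of $FR$ have the same essential members. The Martindale centroids therefore coincide, $\CM({}_FFR)=\CM({}_{\mathbb{F}}FR)=F\,\Id_{FR}$. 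Since $FR=\CM\cdot FR$ in either sense, we also get $FR=P({}_FFR)$.

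The main obstacle is the identification $\CM(F)=F\,\Id_F$, that is, $F=P(F)$ for $F=Q(F)$ regarded as an $\mathbb{F}$-algebra. I would handle it through Remark 1.3 applied to the commutative algebra $F$ with $Z(F)=F=Q(F)$: every nonzero ideal meets the centre trivially nontrivially, so Remark 1.3 gives $F=P(F)$ and $\CM(F)=Z(F)\,\Id_F\cong F$.
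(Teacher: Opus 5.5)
Your proposal is correct and follows the paper's route: Corollary 3.28 is Proposition 3.27 with $A=F$ and $B=R$, using $F=Q(F)=P(F)\cong\CM(F)$ (Remark 1.3 is a fine way to see this) and $F=O(F)$ for the finite-dimensional clause, which the paper notes also follows from Remark 1.1. Your passage to ${}_F FR$ spells out what the paper leaves implicit, with one correction: an essential $\mathbb{F}$-ideal $J$ of $FR$ need not be $F$-closed, so the two families of essential ideals are not literally the same, but $J$ contains the essential $F$-ideal $J\,M(FR)$, and that is all the comparison of the Martindale centroids needs.
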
 

In this case ${F=P(F)\cong \CM(F)}$ and ${F R=O(F R)}$ for  
${\dim_{\mathbb{F}} R=n< \infty}$ also follows 
from Remark 1.1. Applying \cite{Gol2}, Lemma 3.4, we obtain

\begin{co}
If ${F=Q(F)}$ is a semiprime algebraic algebra over a field 
${\mathbb{F}=\overline{\mathbb{F}}}$, $R$ is a simple 
$\mathbb{F}$--algebra, ${\dim_{\mathbb{F}} R< \infty}$, 
then ${F R=P(F R)=O(F R)}$ is locally finite-dimensional 
and strongly algebraic over ${\mathbb{F}=\CM(R)}$.
\end{co}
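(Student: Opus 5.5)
The plan has three parts. First, the identifications $F R=P(F R)=O(F R)$ are taken from Corollary 3.28 and the observation after it. Second, local finite-dimensionality follows from finite-dimensionality of $R$ together with algebraicity of $F$. Third, strong algebraicity is obtained from \cite{Gol2}, Lemma 3.4, applied to the right multiplication operators.

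Since $R$ is simple and finite-dimensional over $\mathbb{F}=\overline{\mathbb{F}}$, we have $P(R)=R$ and $\CM(R)=\mathbb{F}$: $\CM(R)$ is a finite field extension of $\mathbb{F}$, hence equals $\mathbb{F}$. Thus Corollary 3.28 applies with $F=Q(F)$ semiprime. It gives $F R=P(F R)$ with $\CM(F R)=F\Id_{F R}\cong F$, and $F R=O(F R)$ because $\dim_{\mathbb{F}} R<\infty$ (alternatively, via Remark 1.1 since $F R$ is a free $F$-module of finite rank).

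For local finite-dimensionality, fix an $\mathbb{F}$-basis $e_1,\ldots,e_n$ of $R$. Take finitely many elements $\sum_i f_{ki}\otimes e_i$ of $F R$. They lie in $F_0 R$, where $F_0$ is the $\mathbb{F}$-subalgebra of $F$ generated by the finitely many coefficients $f_{ki}$. Because $F$ is commutative and algebraic over $\mathbb{F}$, each $f_{ki}$ satisfies a polynomial with coefficients in $\mathbb{F}$, so $F_0$ is spanned by finitely many monomials and $\dim_{\mathbb{F}} F_0<\infty$. Then $F_0 R$ is a finite-dimensional $\mathbb{F}$-subalgebra containing the given elements, which proves local finite-dimensionality.

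Strong algebraicity means that each $r_x$ is integral in $M(F R)$. Here I intend to invoke \cite{Gol2}, Lemma 3.4, which presumably states exactly that a locally finite-dimensional algebra over a field, or a scalar extension of a finite-dimensional algebra by a commutative algebraic algebra, is strongly algebraic. If a direct argument is needed instead, take $x\in F_0 R$ as above. Let $F_1\supseteq F_0$ be any finite-dimensional subalgebra of $F$; then $F_1 R$ is an $r_x$-invariant finite-dimensional subspace. The difficulty is that the polynomial annihilating $r_x$ must be uniform on all of $F R=\bigcup F_1 R$. Since $F R=F\otimes_{\mathbb{F}} R$ and $r_x$ is $F$-linear, $r_x$ acts on $F\otimes R$ through the finite-dimensional algebra $F_0\otimes\End_{\mathbb{F}}(R)$. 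Hence $r_x$ lies in the finite-dimensional $\mathbb{F}$-algebra $F_0\otimes M(R)$, which acts on $F R$ by $F$-linearity. An element of a finite-dimensional algebra is algebraic, and the polynomial can be chosen without constant term (multiply its minimal polynomial by $t$). Therefore ${}_x f(r_x)=0$ on all of $F R$.

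The main obstacle I expect is this uniformity point: checking that the action of $r_x$ really factors through $F_0\otimes M(R)$. This holds because $(f\otimes a)(g\otimes b)=fg\otimes ab$, so $r_{f\otimes a}=l_f\otimes r_a$ with $l_f$ multiplication on $F$, and $\{l_f\mid f\in F_0\}$ spans a finite-dimensional algebra.
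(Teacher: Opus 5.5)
Your proof is correct. For $FR=P(FR)=O(FR)$ you follow the paper: you check that $R=P(R)$ is prime with $\CM(R)=\mathbb{F}$ and apply Corollary 3.28, or alternatively Remark 1.1, using that $FR$ is a free $F$-module of rank $\dim_{\mathbb{F}}R$.

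You differ from the paper on the other two claims. The paper simply cites \cite{Gol2}, Lemma 3.4, for local finite-dimensionality and strong algebraicity. You instead give a self-contained argument in two steps:
finitely many elements of $FR$ lie in $F_0R$ with $\dim_{\mathbb{F}}F_0<\infty$;
the identity $r_{f\otimes a}=l_f\otimes r_a$ places $r_x$ in the finite-dimensional associative algebra spanned by $\{l_g\otimes\mu\mid g\in F_0,\ \mu\in M(R)\}$.
So a monic polynomial over $\mathbb{F}$ with zero constant term annihilates $r_x$ on all of $FR$, which gives exactly the uniformity you were worried about.

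The citation is shorter. Your argument removes the dependence on a lemma whose content you could only guess, and it shows more:
\begin{itemize}
\item These two properties need only that $F$ is commutative and algebraic and that $\dim_{\mathbb{F}}R<\infty$. The hypotheses $F=Q(F)$, semiprimeness of $F$ and simplicity of $R$ are used only for the $P$ and $O$ statements.
\item Every element of $M(FR)$ lies in the span of $\{l_g\otimes\mu\mid g\in F_1,\ \mu\in M(R)\}$ for some finite-dimensional subalgebra $F_1\subseteq F$. So the same reasoning makes $l_x$, and indeed every element of $M(FR)$, integral over $\mathbb{F}$.
\end{itemize}
The second point matters because the paper defines strong algebraicity only for anticommutative algebras, whereas $R$ here is an arbitrary simple finite-dimensional algebra.
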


Remarks 1.1, 1.2, 3.22 imply that for any $F_i$ and $R_i$, 
${i=1, \ldots, n}$ over ${\mathbb{F}=\overline{\mathbb{F}}}$ 
under the conditions of Corollary 3.29
${R=F_1 R_1\oplus \ldots\oplus F_n R_n=P(R)=O(R)}$ is locally 
finite-dimensional and strongly algebraic over $\mathbb{F}$, 
${\CM(R)\cong \CM(R_1)\oplus \ldots \oplus \CM(R_n)\cong 
F_1\oplus \ldots\oplus F_n}$ (compare with Corollary 3.20).

\end{document}